\newtheorem{thm}{\bf Theorem}[section]
\newtheorem{df}[thm]{\bf Definition}
\newtheorem{prop}[thm]{\bf Proposition}
\newtheorem{cor}[thm]{\bf Corollary}
\newtheorem{lem}[thm]{\bf Lemma}
\newtheorem{rem}[thm]{\bf Remark}
\newtheorem{ex}[thm]{\bf Example}
\numberwithin{equation}{section}
\newcommand{\mc}{\mathcal}
\newcommand{\mf}{\mathfrak}
\newcommand{\ms}{\mathscr}
\newcommand{\mb}{\bm}
\newcommand{\pf}{\noindent{\bfseries Proof. }}
\newcommand{\ov}{\overline}
\newcommand{\K}{{\mc K}}
\newcommand{\U}{{\mc U}}
\newcommand{\Boson}{B_q}
\newcommand{\cmB}{\Delta_{\Boson}}
\newcommand{\cP}{\mathscr{P}}
\newcommand{\cO}{\mc{O}}
\newcommand{\I}{\mathbb{I}}
\newcommand{\bi}{{\bf i}}
\newcommand{\bj}{{\bf j}}
\newcommand{\ttq}{\texttt{q}}
\newcommand{\Z}{\mathbb{Z}}
\newcommand{\Q}{\mathbb{Q}}
\newcommand{\e}{\epsilon}
\newcommand{\de}{\delta}
\newcommand{\te}{\tilde{e}}
\newcommand{\tf}{\tilde{f}}
\newcommand{\gl}{\mf{gl}}
\newcommand{\La}{\Lambda}
\newcommand{\la}{\lambda}
\newcommand{\hf}{\frac{1}{2}}
\newcommand{\red}[1]{{\color{red}#1}}
\newcommand{\ot}{\otimes}
\newcommand{\si}{(-1)^{\e_i}}
\newcommand{\bq}{{\bf q}}
\newcommand{\bff}{{\bf f}}
\begin{document}
\title
[Crystal base of the negative half of the quantum superalgebra]{Crystal base of the negative half of the quantum superalgebra $U_q(\gl(m|n))$}

\author{IL-SEUNG JANG}
\address{Department of Mathematics, Incheon National University, Incheon 22012, Korea}
\email{ilseungjang@inu.ac.kr}

\author{JAE-HOON KWON}
\address{Department of Mathematical Sciences and RIM, Seoul National University, Seoul 08826, Korea}
\email{jaehoonkw@snu.ac.kr}

\author{AKITO URUNO}
\address{Department of Mathematical Sciences, Seoul National University, Seoul 08826, Korea}
\email{aki926@snu.ac.kr}

\thanks{This work is supported by the National Research Foundation of Korea(NRF) grant funded by the Korea government(MSIT) (No.\,2019R1A2C1084833 and 2020R1A5A1016126).}

\begin{abstract}
We construct a crystal base of $U_q(\gl(m|n))^-$, the negative half of the quantum superalgebra $U_q(\gl(m|n))$. We give a combinatorial description of the associated crystal $\ms{B}_{m|n}(\infty)$, which is equal to the limit of the crystals of the ($q$-deformed) Kac modules $K(\la)$. We also construct a crystal base of a parabolic Verma module $X(\la)$ associated with the subalgebra $U_q(\gl_{0|n})$, and show that it is compatible with the crystal base of $U_q(\gl(m|n))^-$ and the Kac module $K(\la)$ under the canonical embedding and projection of $X(\la)$ to $U_q(\gl(m|n))^-$ and $K(\la)$, respectively.
\end{abstract}

\maketitle
\setcounter{tocdepth}{1}

\noindent

\section{Introduction}

The crystal base theory for the quantized enveloping algebra $U_q(\mf{g})$ associated to a symmetrizable Kac-Moody algebra $\mf{g}$ has been one of the most important tools in the representation theory of $U_q(\mf{g})$ \cite{Kas91}, reflecting its fundamental combinatorial structure.
 
For a classical Lie superalgebra $\mf{g}$, there have been several works on the crystal base of a representation of $U_q(\mf{g})$, where the representation theory of $U_q(\mf{g})$ is no longer parallel to that of a symmetrizable Kac-Moody algebra, and hence the existence of a crystal base is not easily expected. It is shown that  an irreducible polynomial representation of $U_q({\mf g})$ has a crystal base when $\mf{g}$ is a general linear Lie superalgebra $\gl(m|n)$ \cite{BKK}, and a queer Lie superalgebra $\mf{q}(n)$ \cite{GJKKK}.
When $\mf{g}$ is an orthosymplectic Lie superalgebra $\mf{osp}(m|n)$ with $m\ge 2$, the existence of a crystal base in the sense of \cite{BKK} is shown in \cite{K15,K16} for a family of irreducible representations, which corresponds to the integrable highest weight representations of the classical Lie algebras from a viewpoint of super duality \cite{CLW}.

Let $U_q(\gl(m|n))$ be the quantized enveloping algebra associated to $\gl(m|n)$ \cite{Ya94}, and let $U_q(\gl(m|n))^-$ be its negative half.
The purpose of this paper is to construct a crystal base of $U_q(\gl(m|n))^-$.

Let $P$ be the integral weight lattice for $\gl(m|n)$ and let $P^+$ be the set of dominant integral weights for the even subalgebra $\gl(m|n)_{0}=\gl(m|0)\oplus \gl(0|n)$. For $\la\in P^+$, let $V(\la)$ be the irreducible highest weight $U_q(\gl(m|n))$-module with highest weight $\la$, which is finite-dimensional.

A tensor power of the natural representation of $U_q(\gl(m|n))$ is semisimple 
and an irreducible representation appearing here is called an irreducible polynomial representation, whose highest weight $\la$ is parametrized by $\cP_{m|n}$, the set of $(m|n)$-hook partitions. 
In \cite{BKK}, it is shown that for $\la\in \cP_{m|n}$, $V(\la)$ has a crystal base $(\ms{L}(\la),\ms{B}(\la))$, and an explicit combinatorial description of its crystal $\ms{B}(\la)$ is given. An important feature of $(\ms{L}(\la),\ms{B}(\la))$ is that it is a lower (resp. upper) crystal base of $V(\la)$ as a $U_q(\gl(m|0))$-module (resp. $U_q(\gl(0|n))$-module) so that the resulting crystal structure on $\ms{B}(\la)$ and hence on $\ms{B}(\mu)\ot \ms{B}(\nu)$ for $\mu,\nu\in \cP_{m|n}$ are more involved than the case of $\gl(m+n)$ (cf.~\cite{KK}).

%
As in the case of a symmetrizable Kac-Moody algebra \cite{Kas91}, one may expect to construct a crystal of $U_q(\gl(m|n))^-$ by taking a limit of  $\ms{B}(\la)$ for $\la\in \cP_{m|n}$. But the crystals $\ms{B}(\la)$ do not seem to admit naturally a directed system of $U_q(\gl(m|n))$-crystals as $\la$ goes to infinity. Moreover, the upper crystal lattice of  an integrable highest weight $U_q(\gl(0|n))$-module is not compatible with the projection from $U_q(\gl(0|n))^-$ onto it. These are two main differences from the case of symmetrizable Kac-Moody algebras. 


In this paper, we use the crystal base of ($q$-deformed) Kac modules to construct a crystal base of $U_q(\gl(m|n))^-$.
A Kac module $K(\la)$ is the $U_q(\gl(m|n))$-module induced from an integrable highest weight $U_q(\gl(m|n)_{0})$-module $V_{m|0}(\la_+)\ot V_{0|n}(\la_-)$ with highest weight $\la=\la_++\la_-\in P^+$. It forms another important class of finite-dimensional indecomposable $U_q(\gl(m|n))$-modules.
It is shown in \cite{K14} that $K(\la)$ has a crystal base $(\ms{L}(K(\la)),\ms{B}(K(\la)))$, and when $\la\in\cP_{m|n}$ it is compatible with $(\ms{L}(\la),\ms{B}(\la))$ under the canonical projection from $K(\la)$ to $V(\la)$, that is, it preserves the crystal lattices and induces a morphism of $U_q(\gl(m|n))$-crystals from $\ms{B}(K(\la))$ onto $\ms{B}(\la)$.

We remark that we use in this paper the generalized quantum group $\U_{m|n}$ \cite{KOS}, which is isomorphic to $U_q(\gl(m|n))$ as a $\Q(q)$-algebra under mild extensions, and which has a comultiplication equal to that of a usual quantum group. The irreducible polynomial modules and Kac modules together with their crystal bases are well-defined for $\U_{m|n}$. Hence the problem is replaced by constructing a crystal base of $\U_{m|n}^-$.

We first consider a directed system of $\ms{B}(K(\la))$ as crystals over $\gl(m|n)_0$, and show that its limit $\ms{B}_{m|n}(\infty)$ has a well-defined abstract $\U_{m|n}$-crystal structure (Theorem \ref{thm: isomorphism kappa}), though $\{\,\ms{B}(K(\la))\,|\,\la\in P^+\,\}$ itself does not form a directed system of $\U_{m|n}$-crystals. 
As a set, the limit $\ms{B}_{m|n}(\infty)$ can be identified with  
\begin{equation}\label{eq:limit crystal}
\ms{B}(\K_{m|n})\times \ms{B}_{m|0}(\infty) \times \ms{B}_{0|n}(\infty), 
\end{equation}
where $\ms{B}(\K_{m|n})$ is the crystal of $K(0)$ or the subalgebra $\K_{m|n}$ of $\U_{m|n}^-$ spanned by the set of PBW-type monomials in odd root vectors, and 
$\ms{B}_{m|0}(\infty)$ and $\ms{B}_{0|n}(\infty)$ are the crystals of the subalgebras $\U_{m|0}^-\cong U_q(\gl(m))^-$ and $\U_{0|n}^-\cong U_{-q^{-1}}(\gl(n))^-$ at $q=0$, respectively. 
As to the (abstract) Kashiwara or crystal operators on $\ms{B}_{m|n}(\infty)$, the operators for $\gl(m|1)\subset \gl(m|n)$ act on the first two components following explicit combinatorial rules, and those for $\gl(0|n)\subset \gl(m|n)$ act only on the last component as usual.

Let $A_t$ be the subring of $f(q)\in \Q(q)$ regular at $q=t$ ($t=0, \infty$). 
Let 
\begin{equation*}
\begin{split}
\ms{L}(\infty) & = \ms{L}(\K_{m|n})\cdot \ms{L}_{m|0}(\infty)\cdot \ms{L}_{0|n}(\infty)
\end{split}
\end{equation*}
be an $A_0$-lattice of $\U_{m|n}^-$, where $\ms{L}(\K_{m|n})$ is the $A_0$-span of PBW-type monomials in odd root vectors, $\ms{L}_{m|0}(\infty)$ is the crystal lattice for $U_{q}(\gl(m))^-$ at $q=0$, and $\ms{L}_{0|n}(\infty)$ is the $A_0$-lattice corresponding to the $A_{\infty}$-dual crystal lattice of $U_{q}(\gl(n))^-$ at $q=\infty$. Then we introduce the crystal operators on $\U_{m|n}^-$ such that $(\ms{L}_{m|n}(\infty),\ms{B}_{m|n}(\infty))$ is a crystal base and the induced crystal is isomorphic to $\ms{B}_{m|n}(\infty)$ \eqref{eq:limit crystal} (Theorem \ref{thm:crystal base of U^-}).

To define the crystal operators on $\U_{m|n}^-$, especially for $\gl(m|0)$, we use the $\U_{m|0}$-comodule structure on the algebra $B_q$ of $q$-bosons of type $A_{m-1}$, by which  we realize the subalgebra $\K_{m|n}\cdot\U_{m|0}^-$ of $\U_{m|n}^-$ as a $B_q$-module, and then apply the crystal base theory for $B_q$-modules in \cite{Kas91}.
The crystal operators for $\gl(0|n)$ are given by twisting the usual ones with respect to the dual bar involution on $\U_{0|n}^-$.

The crystal base $(\ms{L}_{m|n}(\infty),\ms{B}_{m|n}(\infty))$ is compatible with the crystal bases of $K(\la)$ in the following sense: Let $X(\la)$ be a parabolic Verma module induced from an integrable highest weight $\U_{0|n}$-module $V_{0|n}(\la_-)$ with highest weight $\la_-$. 
Instead of the projection from $\U_{m|n}^-$ to $K(\la)$,
we consider
\begin{equation}\label{eq:projections 0}
 \xymatrixcolsep{3pc}\xymatrixrowsep{0pc}
 \xymatrix{
  \U_{m|n}^- \ar@{<-}^{\pi^\vee_-}[r] &  X(\la) \ar@{->}^{\pi_+}[r] & K(\la)
  },
\end{equation}
where $\pi_+$ is the canonical projection and $\pi_-^\vee$ is the map obtained by applying the dual of the projection\, $\U_{0|n}^- \longrightarrow V_{0|n}(\la_-)$ to the projection $\pi_-: \U_{m|n}^- \longrightarrow X(\la)$. 
We show that $X(\la)$ has a crystal base (Theorem \ref{thm:crystal base of parabolic Verma}), and the diagram \eqref{eq:projections 0} is compatible with the crystal bases of $\U_{m|n}^-$, $X(\la)$ and $K(\la)$ in the sense that it induces well-defined maps on their crystal lattices and the associated crystals at $q=0$, and the crystal operators partially commute with the maps (Corollaries \ref{cor:compatibility with U} and \ref{cor:compatibility with K}). 
The lattice $\ms{L}_{m|n}(\infty)$ is slightly different from the lattice spanned by the PBW-type basis in \cite{CHW}  which can be obtained by applying the dual bar involution on $\U_{0|n}^-$ to $\ms{L}_{0|n}(\infty)\subset \ms{L}_{m|n}(\infty)$.

We further discuss the structure of $\ms{B}_{m|n}(\infty)$.
We show that $\ms{B}_{m|n}(\infty)$ is connected if and only if $n=1$, and in general it decomposes as follows:
\begin{equation*}
 \ms{B}_{m|n}(\infty) \cong \left(\ms{B}_{m|1}(\infty)\times \ms{B}_{0|n}(\infty)\right)^{\oplus 2^{m(n-1)}},
\end{equation*}
as a $\U_{m|n}$-crystal up to shift of weights (Theorem \ref{thm:structure of Binfty}), where on the connected component $\ms{B}_{m|1}(\infty)\times \ms{B}_{0|n}(\infty)$, the crystal operators for $\gl(m|1)$ act on the first component and those for $\gl(0|n)$ act on the second component. 
%
We remark that the connected crystal $\ms{B}_{m|1}(\infty)$ for $\U_{m|1}^-$ or for $U_q(\gl(m|1))^-$ is isomorphic to the one constructed in \cite{C}. Also it would be interesting to find a categorical interpretation of the crystal structure on $\ms{B}_{m|1}(\infty)$ from a viewpoint of \cite{KS}.

The paper is organized as follows.
In Section \ref{sec:quantum superalgebra}, we recall the notion of quantum superalgebras and generalized quantum groups of finite type $A$. In Section \ref{sec: CB for HGQG}, we give necessary materials on the crystal bases for homogeneous generalized quantum groups. In Section \ref{sec:polynomial repn}, we review the crystal bases of irreducible polynomial representations and Kac modules. In Section \ref{sec:limit of Kac crystal}, we construct a crystal $\ms{B}_{m|n}(\infty)$ as a limit of the crystals of Kac modules. In Section \ref{sec:crystal base of U^-}, we show that $\U_{m|n}^-$ has a crystal base whose crystal is isomorphic to $\ms{B}_{m|n}(\infty)$ and it is compatible with \eqref{eq:projections 0}. In Appendix \ref{sec:app}, we review the crystal base theory for the algebra of $q$-bosons, which plays a crucial role in this paper.

\section{Quantum superalgebra}\label{sec:quantum superalgebra}

\subsection{Quantum superalgebra $U_q(\mf{gl}({\e}))$}\label{subsec:notations}
We assume the following notations: 
\begin{itemize}
\item[$\bullet$] $\Z_+$: the set of non-negative integers,

\item[$\bullet$] $\Bbbk=\Q(q)$ where $q$ is an indeterminate, $\Bbbk^{\times}=\Bbbk \setminus \left\{0\right\}$,

\item[$\bullet$] $\ell$: a fixed positive integer greater than $2$,

\item[$\bullet$] $\e=(\e_1,\cdots,\e_\ell)$ : a sequence with $\e_i\in \{0,1\}$ ($i=1,\dots, \ell$),

\item[$\bullet$] $m=|\{\,i\,|\,\e_i=0\,\}|$, $n=|\{\,i\,|\,\e_i=1\,\}|$,

\item[$\bullet$] $\e_{m|n}$ : a sequence with $\e_1=\dots=\e_m=0$, $\e_{m+1}=\dots=\e_{m+n}=1$ ($m+n=\ell$),
 
\item[$\bullet$] $\I= \{\,1<2<\cdots <\ell\,\}$ : a linearly ordered set with $\Z_2$-grading $\I=\I_{\ov 0}\cup\I_{\ov 1}$ such that $$\I_{\ov 0}=\{\,i\,|\,\e_i=0\,\},\quad \I_{\ov 1}=\{\,i\,|\,\e_i=1\,\},$$

\item[$\bullet$] $P$ : the free abelian group with a basis $\{\,\delta_i\,|\,i\in \I\,\}$,


\item[$\bullet$] $(\,\cdot\,|\,\cdot\,)$ : a symmetric bilinear form on $P$ such that $(\de_i|\de_j)=(-1)^{\e_i}\de_{ij}$ $(i,j\in \I)$,

\item[$\bullet$] $P^\vee={\rm Hom}_\Z(P,\Z)$ with a basis $\{\,\de^\vee_i\,|\,i\in \I\,\}$ such that $\langle \de_i, \de^\vee_j \rangle =\de_{ij}$ $(i,j\in \I)$,

\item[$\bullet$] $I=\{\,1,\ldots,\ell-1\,\}$,

\item[$\bullet$] $\alpha_i=\de_i-\de_{i+1}\in P$, $\alpha_i^\vee = \de^\vee_i-(-1)^{\e_i+\e_{i+1}}\de^\vee_{i+1}\in P^\vee$  ($i\in I$)

\item[$\bullet$] $Q=\bigoplus_{i\in I}\Z\alpha_i$, $Q^+=\sum_{i\in I}\Z_+\alpha_i$, $Q^-=-Q^+$,

\item[$\bullet$] $I_{\rm even}=\{\,i\in I\,|\,(\alpha_i|\alpha_i)=\pm 2\,\}$, 
$I_{\rm odd}=\{\,i\in I\,|\,(\alpha_i|\alpha_i)=0\,\}$,

\end{itemize}

The {\em quantum superalgebra $U_q(\gl(\e))$ associated to $\e$ or the Cartan matrix $A=(\langle \alpha_j, \alpha_i^\vee \rangle)_{i,j\in I}$} \cite{Ya94} is the associative $\Bbbk$-algebra with $1$ 
generated by $K_\mu, E_i, F_i$ for $\mu\in P$ and $i\in I$ 
satisfying
{\allowdisplaybreaks
\begin{gather*}
K_0=1, \quad K_{\mu +\mu'}=K_{\mu}K_{\mu'} \quad  (\mu, \mu' \in P), \label{eq:Weyl-rel-1-Ya}\\ 
 K_\mu E_i K_\mu^{-1}=q^{(\mu|\alpha_i)}E_i,\quad 
 K_\mu F_i K_\mu^{-1}=q^{-(\mu|\alpha_i)}F_i\quad (i\in I, \mu\in P),\label{eq:Weyl-rel-2-Ya} \\ 
 E_iF_j - (-1)^{p(i)p(j)} F_jE_i =
{(-1)^{\e_i}}\delta_{ij}\frac{K_{i} - K^{-1}_{i}}{q-q^{-1}} \quad (i,j\in I),\label{eq:Weyl-rel-3-Ya}\\
 E_i^2 = F_i^2=0 \quad (i\in I_{\rm odd}), \label{eq:Serre-rel-1-Ya} \\
 E_i E_j - (-1)^{p(i)p(j)} E_j E_i = F_i F_j - (-1)^{p(i)p(j)}  F_j F_i =0
 \quad (\text{$i,j\in I$ and $|i-j|\not= 0, 1$}), \label{eq:Serre-rel-2-Ya} \\ 
\!\!\!
\begin{array}{ll}
E_i^2 E_j- [2] E_i E_j E_i + E_j E_i^2= 0\\ F_i^2 F_j-[2] F_i F_j F_i+F_j F_i^2= 0
\end{array}
\quad (\text{$i\in I_{\rm even}$ and $|i-j|=1$}), \label{eq:Serre-rel-3-Ya}  \\
\!\!\!
\begin{array}{ll}
\left[E_i,\left[\left[E_{i-1},E_i\right]_{(-1)^{p(i-1)}q},E_{i+1}\right]_{(-1)^{(p(i-1)+p(i))p(i+1)}q^{-1}}\right]_{(-1)^{p(i-1)+p(i)+p(i+1)}}=0\\
\left[F_i,\,\left[\left[F_{i-1},\,F_i\right]_{(-1)^{p(i-1)}q},F_{i+1}\right]_{(-1)^{(p(i-1)+p(i))p(i+1)}q^{-1}}\right]_{(-1)^{p(i-1)+p(i)+p(i+1)}}=0 \end{array}\ (i\in I_{\rm odd}),\label{eq:Serre-rel-4-Ya} 
\end{gather*}
\noindent where $[a]=\frac{q^a-q^{-a}}{q-q^{-1}}$ for $a\in\Z_+$,
$p(i)=\e_i+\e_{i+1}$ $(i\in I)$, $[X,Y]_t = XY-tYX$ for $t\in \Bbbk$, and $K_i=K_{\alpha_i}$ for $i\in I$.}
We write $U_q(\gl({m|n}))=U_q(\gl(\e))$ when $\e=\e_{m|n}$.

\subsection{Generalized quantum group $\U(\gl({\e}))$}
Let us recall the notion of the {\em generalized quantum group of finite type $A$ associated to $\e$} \cite{KOS}.
Let 
\begin{itemize}
\item[$\bullet$] $q_i=\si q^{\si}$ $(i\in \I)$, that is,
\begin{equation*}
q_i=
\begin{cases}
q & \text{if $\e_i=0$},\\
-q^{-1} & \text{if $\e_i=1$},\\
\end{cases} \quad (i\in \I),
\end{equation*}

\item[$\bullet$] ${\bq}(\,\cdot\,,\,\cdot\,)$: a symmetric biadditive function from $P\times P$ to $\Bbbk^{\times}$ given by
\begin{equation*}
\bq(\mu,\nu) = \prod_{i\in \I}q_i^{\langle\mu,\delta^\vee_i\rangle \langle\nu,\delta^\vee_i\rangle}.
\end{equation*}
\end{itemize}

\begin{df}\label{def:U(e)}
{\rm
We define ${\U}(\gl(\e))$ to be the associative $\Bbbk$-algebra with $1$ 
generated by $k_\mu, e_i, f_i$ for $\mu\in P$ and $i\in I$ 
satisfying
{\allowdisplaybreaks
\begin{gather*}
k_0=1, \quad k_{\mu +\mu'}=k_{\mu}k_{\mu'} \quad (\mu, \mu' \in P),\label{eq:Weyl-rel-1} \\ 
k_\mu e_i k_{-\mu}=\bq(\mu,\alpha_i)e_i,\quad 
k_\mu f_i k_{-\mu}=\bq(\mu,\alpha_i)^{-1}f_i\quad (i\in I, \mu\in P), \label{eq:Weyl-rel-2} \\ 
e_if_j - f_je_i =\delta_{ij}\frac{k_{i} - k^{-1}_{i}}{q-q^{-1}}\quad (i,j\in I),\label{eq:Weyl-rel-3}\\
e_i^2= f_i^2 =0 \quad (i\in I_{\rm odd}),\label{eq:Weyl-rel-4}
\end{gather*}
and 
\begin{equation*}\label{eq:Serre-rel-1}
\begin{split}
&\ \, e_i e_j -  e_j e_i = f_i f_j -  f_j f_i =0
 \quad \text{($i,j \in I$ and $|i-j|\not = 0, 1$)},\\ 
&
\begin{array}{ll}
e_i^2 e_j- (-1)^{\e_i}[2] e_i e_j e_i + e_j e_i^2= 0\\ 
f_i^2 f_j- (-1)^{\e_i}[2] f_i f_j f_i+f_j f_i^2= 0
\end{array}
\quad \text{($i\in I_{\rm even}$ and $|i-j|= 1$)}, 
\end{split}
\end{equation*}
\begin{equation*}\label{eq:Serre-rel-2}
\begin{array}{ll}
  e_{i}e_{i-1}e_{i}e_{i+1}  
- e_{i}e_{i+1}e_{i}e_{i-1} 
+ e_{i+1}e_{i}e_{i-1}e_{i} \\  
\hskip 2cm - e_{i-1}e_{i}e_{i+1}e_{i} 
+ (-1)^{\e_i}[2]e_{i}e_{i-1}e_{i+1}e_{i} =0, \\ 
  f_{i}f_{i-1}f_{i}f_{i+1}  
- f_{i}f_{i+1}f_{i}f_{i-1} 
+ f_{i+1}f_{i}f_{i-1}f_{i}  \\  
\hskip 2cm - f_{i-1}f_{i}f_{i+1}f_{i} 
+ (-1)^{\e_i}[2]f_{i}f_{i-1}f_{i+1}f_{i} =0,
\end{array}\quad \text{($i\in I_{\rm odd}$)},
\end{equation*}}
where $k_i=k_{\alpha_i}$ for $i\in I$.}
\end{df}

We write $\U(\gl({m|n}))=\U(\gl(\e))$ when $\e=\e_{m|n}$.

\begin{rem}{\rm
The algebra $\U(\gl(\e))$ is the subalgebra of the generalized quantum group $\U(\e)$ of affine type $A$ in \cite{KL,KO}, which is denoted by $\ov{\U}(\e)$ or $\mathring{\U}(\e)$. 
We follow the convention in \cite[Definition 2.1]{KL} for its presentation, while $k_{\de_j}$ corresponds to $\omega_j$ in \cite[Definition 2.1]{KO}.
} 
\end{rem}

The algebra $\U(\gl(\e))$ is closely related to $U_q(\gl(\e))$ in the following sense.
Let $\Sigma$ be the bialgebra over $\Bbbk$ generated by $\sigma_j$ $(j\in \I)$  such that $\sigma_i\sigma_j=\sigma_j\sigma_i$ and $\sigma_j^2=1$ ($i,j\in \I$) where the comultiplication is given by $\Delta(\sigma_j)=\sigma_j\otimes \sigma_j$.
Let $\Sigma$ act on $U_q(\gl(\e))$ by 
{\allowdisplaybreaks 
\begin{equation*}
\begin{split}
&\sigma_j K_\mu =K_\mu,\quad
\sigma_jE_i=(-1)^{\e_j(\delta_j|\alpha_i)}E_i,\quad 
\sigma_jF_i=(-1)^{\e_j(\delta_j|\alpha_i)}F_i,
\end{split}
\end{equation*}
for $j\in \I$, $\mu\in P$ and $i\in I$ so that $U_q(\gl(\e))$ is a $\Sigma$-module algebra. Let $U_q(\gl(\e))[\sigma]$ be the semidirect product of $U_q(\gl(\e))$ and $\Sigma$. If we define $\U(\gl(\e))[\sigma]$ in the same way, then there exists an isomorphism of $\Bbbk$-algebras $\tau : U_q(\gl(\e))[\sigma] \longrightarrow \U(\gl(\e))[\sigma]$
such that $\tau(\sigma_j)=\sigma_j$ ($j\in \I$), $\tau(E_i)=e_i u_i$, $\tau(F_i)=f_i v_i$, and $\tau(K_i)=k_i w_i$ ($i\in I$) for some monomials $u_i, v_i, w_i$ in $\Sigma$ by \cite[Theorem 2.7]{KL}. 

For example, when $\e=\e_{m|n}$, we have  
\begin{equation}\label{eq:iso tau standard}
\xymatrixcolsep{2pc}\xymatrixrowsep{3pc}\xymatrix{
 U_q(\gl(m|n))[\sigma] \ \ar@{->}^{\tau}[r] &\ \U(\gl(m|n))[\sigma]},
\end{equation}
where
{\allowdisplaybreaks
\begin{align*}\label{eq:iso phi}
& \tau(K_{\de_j})= 
\begin{cases}
k_{\de_j} & \text{$(1\leq j\leq m)$},\\ 
k_{\de_j}\sigma_j & \text{$(m< j\leq \ell)$},
\end{cases}\\
& \tau(E_i)= 
\begin{cases}
e_i & \text{$(1\leq i\leq m)$},\\
-e_i(\sigma_i\sigma_{i+1})^{i+m} & \text{$(m< i\leq \ell-1)$},
\end{cases}\\
& \tau(F_i)= 
\begin{cases}
f_i & \text{$(1\leq i< m)$},\\
f_m\sigma_{m+1} & \text{$(i=m)$},\\
f_i(\sigma_i\sigma_{i+1})^{i+m+1} & \text{$(m< i\leq \ell-1)$},
\end{cases}
\end{align*}
}
(see also \cite[Proposition 4.4]{KO}).

Let $\U(\gl(\e))^+$ (resp. $\U(\gl(\e))^-$) be the subalgebra of $\U(\gl(\e))$ generated by $e_i$ (resp. $f_i$) for $i\in I$, and let $\U(\gl(\e))^0$ be the one generated by $k_\mu$ for $\mu\in P$.
We have $\U(\gl(\e))\cong \U(\gl(\e))^-\ot\U(\gl(\e))^0\ot\U(\gl(\e))^+$ as a $\Bbbk$-space, which follows from \cite[Theorem 10.5.1]{Ya94} and \eqref{eq:iso tau standard}. We call $\U(\gl(\e))^+$ (resp.~$\U(\gl(\e))^-$) the positive (resp.~negative) half of $\U(\gl(\e))$.
Note that $\U(\gl(\e))^\pm =\bigoplus_{\alpha\in Q^\pm}\U(\gl(\e))^{\pm}_\alpha$, where 
\begin{equation}\label{eq:Q grading}
 \U(\gl(\e))^{\pm}_\alpha=\{\,u\,|\,k_\mu u k_{-\mu}=\bq(\mu,\alpha)u\ (\mu\in P)\,\}.
\end{equation}
We put $|x|=\alpha$ for $x\in \U(\gl(\e))^\pm_\alpha$.
There is a Hopf algebra structure on $\U(\gl(\e))$, where the comultiplication $\Delta$, the antipode $S$ is given by 
\begin{equation}\label{eq:comult-1}
\begin{split}
& \Delta(k_\mu)=k_\mu\otimes k_\mu, \\ 
& \Delta(e_i)= 1\ot e_i + e_i\ot k_i^{-1}, \\
& \Delta(f_i)= f_i\ot 1 + k_i\ot f_i , \\  
S(k_i)= & k_i^{-1}, \quad S(e_i)=-e_i k_i, \quad S(f_i)=-k_i^{-1} f_i,
\end{split}
\end{equation}
for $\mu\in P$ and  $i\in I$.
Let $\eta$ be the anti-involution on $\U(\gl(\e))$ defined by
\begin{equation*}\label{eq:antiauto-1}
 \eta(e_i)=q_if_ik^{-1}_i,\quad \eta(f_i)=q^{-1}_ik_ie_i,\quad \eta(k_\mu)=k_\mu \quad (i\in I,\ \mu \in P).
\end{equation*}
Let $- : \U(\gl(\e))\longrightarrow \U(\gl(\e))$ be the involution of a $\Q$-algebra given by 
\begin{equation}\label{eq:bar involution}
\ov{q}=q^{-1}, \quad \ov{e_i}=e_i,\quad \ov{f_i}=f_i,\quad \ov{k_\mu}=k_{\mu}^{-1} \quad (i\in I,\ \mu \in P).
\end{equation}
For $i\in I$, let $e'_i$ and $e''_i$ denote the $\Bbbk$-linear maps on $\U(\gl(\e))^-$ defined by 
$e'_i(f_j)=\de_{ij}$ and $e''_i(f_j)=\de_{ij}$ for $j\in I$ and 
\begin{equation} \label{eq: derivation}
\begin{split}
e'_i(uv)&=e'_i(u)v + \bq{(\alpha_i,|u|)}ue'_i(v),\\
e''_i(uv)&=e''_i(u)v + \bq{(\alpha_i,|u|)}^{-1}ue''_i(v),
\end{split}
\end{equation}
for $u,v\in \U(\gl(\e))^-$ with $u$ homogeneous.

\subsection{Weight spaces and highest weight $\U(\gl(\e))$-modules}
For a $\U(\gl(\e))$-module $V$ and $\mu\in P$, let 
\begin{equation*}
V_\mu 
= \{\,u\in V\,|\,k_{\nu} u= \bq(\mu,\nu) u \ \ (\nu\in P) \,\}
\end{equation*}
be the $\mu$-weight space of $V$. For $u\in V_\mu\setminus\{0\}$, we call $u$ a weight vector with weight $\mu$ and put ${\rm wt}(u)=\mu$. 
Note that $ e_i V_\mu\subset V_{\mu+\alpha_i}$, and $f_i V_\mu\subset V_{\mu-\alpha_i}$ for $i\in I$.

Let $\U(\gl(\e))^{\ge 0}$ be the subalgebra generated by $k_\mu$ and $e_i$ for $\mu\in P$ and $i\in I$.
For $\la \in P$, let ${\bf 1}_\la=\Bbbk v_\la$ be the one-dimensional $\U(\gl(\e))^{\ge 0}$-module such that $e_i v_\la =0$ and $k_{\mu} v_\la = \bq(\la,\mu)v_\la$ for $i\in I$ and $\mu\in P$. 
Let 
\begin{equation*}
 M_\e(\la)=\U(\gl(\e))\ot_{\U(\gl(\e))^{\ge 0}}{\bf 1}_\la.
\end{equation*}
We have $M_\e(\la)=\bigoplus_{\mu}M_\e(\la)_\mu$, where the sum is over $\mu = \la-\sum_{i\in  I}c_i\alpha_i$ with $c_i\in\Z_+$ and ${\rm dim}M_\e(\la)_\mu<\infty$.
Let $V_\e(\la)$ be the maximal irreducible quotient of $M_\e(\la)$. 

\begin{rem}\label{rem:tau pullback}
{\rm 
Let $V$ be a $\U(\gl(\e))$-module with $V=\bigoplus_{\mu\in P}V_\mu$.
Then $V$ can be extended to a $\U(\gl(\e))[\sigma]$-module by defining
$\sigma_j u = (-1)^{\e_j\mu_j} u$
for $j\in \I$ and $u\in V_\mu$ with $\mu=\sum_i\mu_i\delta_i$.
Let $V^\tau=\{\,u^\tau\,|\,u\in V\,\}$ be a $U_q(\mf{gl}(\e))$-module, where $x u^\tau = (\tau(x)u)^\tau$ for $x\in U_q(\mf{gl}(\e))$ and $u\in V$. 
Then we have
$V^\tau=\bigoplus_{\mu\in P_{\ge 0}}V^\tau_\mu$, where
\begin{equation*}\label{eq:polynomial weight tau}
V^\tau_\mu 
= \{\,u\in V^\tau\,|\,K_{\nu} u= q^{(\mu|\nu)} u \ \ (\nu\in P) \,\}.
\end{equation*}}
\end{rem}

\subsection{The case of $\e_{m|n}$}\label{subsec:notations-standard}
From now on, we assume that $\e=\e_{m|n}$ so that 
\begin{gather*}
\I_0=\{\,1,\dots,m\,\},\ \I_1=\{\,m+1,\dots,m+n=\ell\,\},\\  
I_{\rm even}=I\setminus\{m\},\quad  I_{\rm odd}=\{\,m\,\}.
\end{gather*}
We let 
\begin{itemize}
\item[$\bullet$]  $\U=\U(\gl(\e_{m|n}))$, $\U^\pm = \U(\gl(\e_{m|n}))^\pm$,



\item[$\bullet$] $\U_{m,n}$ : the subalgebra  generated by $k_\mu, e_k, f_k$ ($\mu\in P, k\in I_{\rm even}$),

\item[$\bullet$] $\U^\pm_{m,n} = \U_{m,n}\cap \U^\pm$,

\item[$\bullet$]  $\U_{m|0}$ : the subalgebra generated by $k_{\de_i}, e_k, f_k$ ($i\in \I_0, 1\le k< m$),

\item[$\bullet$]  $\U_{0|n}$ : the subalgebra generated by $k_{\de_j}, e_k, f_k$ ($j\in \I_1,\, m< k\le \ell-1$),

\item[$\bullet$] $M(\la)=M_{\e_{m|n}}(\la)$, $V(\la)=V_{\e_{m|n}}(\la)$ for $\la\in P$.


\end{itemize}
Let 
\begin{equation*}\label{eq:dominant integral}
P^+=\left\{\,\la=\sum_{i\in \I}\la_i\de_i \in P\ \Bigg|\ \lambda_{1}\geq\ldots\geq\lambda_{m},\ \ \lambda_{m+1}\geq \ldots\geq \lambda_{m+n}\,\right\}.
\end{equation*}
For $\lambda\in P^+$, let 
\begin{equation*}
\begin{split}
\lambda_{+}=\sum_{i\in \I_0}\la_{i}\de_i,\ \ \ \ \
\lambda_{-}=\sum_{j\in \I_1}\la_{j}\de_j.
\end{split}
\end{equation*}

\subsection{PBW-type basis of $\U^-$} \label{subsec: PBW basis of U-}

Let 
\begin{equation*}
\begin{split}
\Phi^+&=\{\, \de_a-\de_b \,|\,   a<b  \,\}, \\
\Phi^+_{\ov 0}&=\{\,\de_a-\de_b\,|\,a<b, \ \e_a = \e_b\,\}=\{\,\alpha\in \Phi^+\,|\,(\alpha|\alpha)=\pm2\,\},\\
\Phi^+_{\ov 1}&=\{\,\de_a-\de_b\,|\,a<b, \ \e_a \neq \e_b \,\}=\{\,\alpha\in \Phi^+\,|\,(\alpha|\alpha)=0\,\}\\
\end{split}
\end{equation*} 
be the set of positive, even positive and odd positive roots of the general linear Lie superalgebra $\gl_{m|n}$, respectively (cf.~\cite{CW}). 
We have $\Phi^+=\Phi^+_{\ov 0}\cup \Phi^-_{\ov 1}$ and $\Phi^+_{\ov 0}=\Phi^+_{m|0}\cup \Phi^+_{0|n}$, where
\begin{equation*}
\begin{split}
\Phi^+_{m|0}&=\{\,\de_a-\de_b\,|\,1\le a<b\le m \,\}=\{\,\alpha\in \Phi^+\,|\,(\alpha|\alpha)=2\,\},\\
\Phi^+_{0|n}&=\{\,\de_a-\de_b\,|\,m< a<b\le \ell\,\}=\{\,\alpha\in \Phi^+\,|\,(\alpha|\alpha)=-2\,\}.
\end{split}
\end{equation*}

For homogeneous $x,y\in \U^-$, we let
\begin{equation*} \label{eq: braket}
 [x,y]_{\bq} = xy - \bq(|x|,|y|)^{-1}yx.
\end{equation*}
Let $\beta=\alpha_i+\cdots+\alpha_j \in \Phi^+$ be given with $i\le j$. 
We put
\begin{equation} \label{eq: root vectors}
\begin{split}
 \bff_\beta = 
 \begin{cases}
  {\rm ad}_q(f_j)\circ \dots \circ {\rm ad}_q(f_{m+1}) \circ {\rm ad}_q(f_{i})\circ \dots \circ {\rm ad}_q(f_{m-1})(f_m) & \text{if $\beta\in \Phi^+_{\ov 1}$},\\
   {\rm ad}_q(f_i)\circ \dots \circ   {\rm ad}_q(f_{j-1})(f_j) & \text{if $\beta\in \Phi^+_{m|0}$},\\
   {\rm ad}_q(f_j)\circ \dots \circ   {\rm ad}_q(f_{i+1})(f_i) & \text{if $\beta\in \Phi^+_{0|n}$},
 \end{cases} 
\end{split}
\end{equation}
where ${\rm ad}_q(f_k)(u)=[f_k,u]_\bq$.

Let $\prec$ be a linear order on $\Phi^+$ such that 
for $\alpha=\de_a-\de_b$ and $\beta=\de_c-\de_d \in \Phi^+$ ($\alpha\neq \beta$),
$\alpha\prec \beta$ if and only if the pair $(\alpha,\beta)$ satisfies one of the following conditions (see Example \ref{ex:B infty}):
\begin{itemize}
 \item[(1)] $\alpha\in \Phi^+_{\ov 1}$ and $\beta\in \Phi^+_{\ov 0}$,
 \item[(2)] {$\alpha, \beta \in \Phi^+_{\ov 1}$ with ($b<d$) or ($b=d$ and $a>c$),} 
 \item[(3)] $\alpha, \beta \in \Phi^+_{m|0}$ with ($b>d$) or ($b=d$ and $a>c$),
 \item[(4)] $\alpha, \beta \in \Phi^+_{0|n}$ with ($a<c$) or ($a=c$ and $b<d$),
 \item[(5)] {$\alpha \in \Phi^+_{m|0}$ and $\beta \in \Phi^+_{0|n}$}.
\end{itemize}


\begin{lem}\label{lem:commutation relation}
  Assume $\alpha, \beta \in \Phi^+$ such that $\alpha \prec \beta$. Then we have
  \begin{equation*}
    \begin{split}
     [\bff_\beta, \bff_\alpha]_{\bq} = 
     \begin{cases}
      (q^{-1} - q) \bff_\gamma \bff_\delta & \text{if $\alpha \prec \gamma \prec \delta \prec \beta$ and $\alpha+\beta=\gamma+\delta$},\\
      \bff_\gamma                          & \text{if $\gamma = \alpha + \beta \in \Phi^+$},\\
      0                                    & \text{otherwise},
     \end{cases} 
    \end{split}
  \end{equation*}
  and $\bff_\alpha^2=0$ if $\alpha \in \Phi^+_{\ov 1}$.

\end{lem}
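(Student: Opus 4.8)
The plan is to recognize the assertion as a Levendorskii--Soibelman type straightening relation for the PBW root vectors \eqref{eq: root vectors}, and to prove it by a (simultaneous) induction on the height $\mathrm{ht}(\alpha)+\mathrm{ht}(\beta)$, where $\mathrm{ht}(\de_a-\de_b)=b-a$. The engine of the induction is the pair of skew-derivations $e'_i$ from \eqref{eq: derivation}, together with the standard fact (the analogue of Kashiwara's lemma for $\U^-$) that a homogeneous $x\in\U^-$ of nonzero weight vanishes as soon as $e'_i(x)=0$ for all $i\in I$. Since $[\bff_\beta,\bff_\alpha]_\bq$ minus its proposed value is homogeneous of weight $-(\alpha+\beta)\neq 0$, it suffices to show that every $e'_i$ annihilates this difference, and that I will reduce to strictly lower-height instances of the same relation.

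Two preliminary computations make the induction run. First, from the biadditivity of $\bq$ one obtains the braided Leibniz rules
\begin{equation*}
[x,yz]_\bq=[x,y]_\bq\,z+\bq(|x|,|y|)^{-1}\,y\,[x,z]_\bq,\qquad
[xy,z]_\bq=x\,[y,z]_\bq+\bq(|y|,|z|)^{-1}\,[x,z]_\bq\,y,
\end{equation*}
valid for homogeneous $x,y,z\in\U^-$, which let me commute $e'_i$ and $\mathrm{ad}_q(f_k)$ through the defining brackets in \eqref{eq: root vectors}. Second, by induction on $\mathrm{ht}(\gamma)$, writing $\bff_\gamma=[f_k,\bff_{\gamma'}]_\bq$ with $f_k$ the outermost generator appearing in \eqref{eq: root vectors} and using $e'_i(uv)=e'_i(u)v+\bq(\alpha_i,|u|)\,u\,e'_i(v)$, I compute $e'_i(\bff_\gamma)$ explicitly: it is a scalar multiple of $\bff_{\gamma-\alpha_i}$ when $\gamma-\alpha_i\in\Phi^+$, and $0$ otherwise. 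This is the step into which the defining Weyl and Serre relations of Definition \ref{def:U(e)} feed, and where the odd generator $f_m$ (with $f_m^2=0$) must be treated separately.

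With these in hand, applying $e'_i$ to $[\bff_\beta,\bff_\alpha]_\bq$ and expanding by the braided Leibniz rule produces brackets of the form $[\bff_{\beta-\alpha_i},\bff_\alpha]_\bq$ and $[\bff_\beta,\bff_{\alpha-\alpha_i}]_\bq$ of strictly smaller total height, to which the induction hypothesis applies; matching these against $e'_i$ of the proposed right-hand side then closes the induction, the base cases (where $\alpha$ or $\beta$ is simple) being read off directly from Definition \ref{def:U(e)}. The case analysis follows the five clauses defining $\prec$: clauses (3) and (4), with $\alpha,\beta$ both in $\Phi^+_{m|0}$ or both in $\Phi^+_{0|n}$, are the classical straightening relations for $U_q(\gl(m))^-$ and $U_{-q^{-1}}(\gl(n))^-$ and may be quoted; clause (5) is immediate, since the supports of $\alpha$ and $\beta$ are disjoint, forcing $\bff_\alpha\bff_\beta=\bff_\beta\bff_\alpha$ and $\bq(|\beta|,|\alpha|)=1$, whence $[\bff_\beta,\bff_\alpha]_\bq=0$. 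The nilpotency $\bff_\alpha^2=0$ for $\alpha\in\Phi^+_{\ov 1}$ is carried through the same induction on $\mathrm{ht}(\alpha)$: writing $\bff_\alpha=[f_k,\bff_{\alpha'}]_\bq$ with $\bff_{\alpha'}$ again odd, one expands $\bff_\alpha^2$ and eliminates terms using $\bff_{\alpha'}^2=0$ together with the commutation relations already established in lower height, the base case being $f_m^2=0$.

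The main obstacle is the genuinely super part, clauses (1) and (2), where $\alpha$ is odd and $\beta$ is odd or even. Here, for each admissible pair, one must locate the unique decomposition $\alpha+\beta=\gamma+\delta$ with $\alpha\prec\gamma\prec\delta\prec\beta$ (when it exists) and verify both that the intermediate roots respect the order --- a finite but delicate check against the definition of $\prec$, which for instance forces index inequalities such as $a>c$ --- and that the resulting coefficient is exactly $q^{-1}-q$. The sign contributions of $\bq$ at the odd indices, where $\bq(\alpha,\alpha)=-1$ for odd $\alpha$, together with the anticommutation of odd root vectors, make this bookkeeping the most error-prone part; controlling these signs carefully, rather than any conceptual difficulty, is what the argument hinges on.
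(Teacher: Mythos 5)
Your argument is correct in outline, but it takes a genuinely different route from the paper. The paper's proof of Lemma \ref{lem:commutation relation} is a one-line reduction: the commutation relations are already known for the PBW root vectors of $U_q(\gl(m|n))$ (it cites \cite[(2.7)]{K14}), and they are carried over to $\U^-$ through the isomorphism $\tau$ of \eqref{eq:iso tau standard}, which matches each root vector of $U_q(\gl(m|n))$ with $\bff_\beta$ up to a monomial in the sign operators $\sigma_j$; the paper mentions a ``direct'' verification only as an unexecuted alternative, and your Levendorskii--Soibelman-type induction via the skew-derivations $e'_i$ of \eqref{eq: derivation} is a systematic way of carrying out exactly that alternative. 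Your plan is structurally sound: the braided Leibniz identities are correct consequences of biadditivity of $\bq$; clause (5) is indeed immediate (the supports are disjoint, the simple-root indices involved differ by at least $2$, and $\bq(\alpha,\beta)=1$); and there is no circularity with Proposition \ref{prop:PBW basis}, which is deduced \emph{from} this lemma and is never used by your argument. What your route buys is independence from \cite{K14} and from the sign bookkeeping of $\tau$; what it costs are two points you should make explicit. First, the vanishing criterion you invoke --- a homogeneous $x\in\U^-$ of nonzero weight with $e'_i(x)=0$ for all $i\in I$ must vanish --- is equivalent to nondegeneracy of the natural pairing on $\U^-$ for the presentation of Definition \ref{def:U(e)}, and in the super setting this is itself imported from Yamane's theory \cite{Ya94} via $\tau$ (or would have to be reproved by a $q$-boson argument as in the Appendix, which the paper sets up only for the even subalgebra $\U_{m|0}$); so your proof does not entirely escape the transfer it aims to avoid. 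Second, your intermediate claim that $e'_i(\bff_\gamma)$ is a scalar multiple of $\bff_{\gamma-\alpha_i}$ conceals that the scalar can be zero even when $\gamma-\alpha_i\in\Phi^+$: already for $\gl_3$, with $\gamma=\alpha_1+\alpha_2$ and $\bff_\gamma=[f_1,f_2]_\bq$, exactly one of $e'_1(\bff_\gamma)$, $e'_2(\bff_\gamma)$ vanishes although both $\gamma-\alpha_1$ and $\gamma-\alpha_2$ are roots, the nonvanishing one producing the factor $q^{-1}-q$. Consequently the induction closes only if these scalars are computed exactly on both sides of the relation --- which is precisely the clause (1)--(2) coefficient-and-sign bookkeeping you defer at the end; that is where all the content of a direct proof lives, and it is the part the paper sidesteps by citation.
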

\pf 
It can be proved directly or by the relations in $U_q(\gl(m|n))$ (cf.~\cite[(2.7)]{K14}) and \eqref{eq:iso tau standard}.
\qed\smallskip
  
\begin{prop}\label{prop:PBW basis}
Let
\begin{equation*}
{B}=\Bigg\{\,\overrightarrow{\prod_{\alpha\in\Phi^+}}\bff_\alpha^{m_\alpha} \,\Bigg|\, \text{$m_{\alpha}\in\Z_{\geq 0}$ $(\alpha\in\Phi^+_{\ov 0})$,\quad $m_{\alpha}=0,1$  $(\alpha\in \Phi^+_{\ov 1})$}\,\Bigg\},
\end{equation*} 
where the product is taken in increasing order with respect to $\prec$. Then
${B}$ is a $\Bbbk$-basis of $\U^-$.
\end{prop}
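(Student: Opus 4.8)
The plan is to establish the PBW-basis property in two stages, following the standard strategy for quantum groups: first show that $B$ spans $\U^-$, then show linear independence via a dimension (character) count. For the spanning claim, I would argue that the root vectors $\bff_\beta$ together with $f_i = \bff_{\alpha_i}$ generate $\U^-$, and then use Lemma \ref{lem:commutation relation} as a rewriting system. Concretely, given any monomial in the $f_i$ (equivalently in the $\bff_\beta$), whenever two adjacent factors $\bff_\beta\bff_\alpha$ appear out of order (i.e.\ with $\alpha\prec\beta$), I apply the relation
\begin{equation*}
 \bff_\beta\bff_\alpha = \bq(|\bff_\beta|,|\bff_\alpha|)^{-1}\bff_\alpha\bff_\beta + [\bff_\beta,\bff_\alpha]_{\bq}
\end{equation*}
and substitute the three explicit cases for $[\bff_\beta,\bff_\alpha]_{\bq}$ from the lemma. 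The key point is that in every case the correction term is either zero, a single root vector $\bff_\gamma$ with $\gamma=\alpha+\beta$ (strictly higher in height), or a product $\bff_\gamma\bff_\delta$ with $\alpha\prec\gamma\prec\delta\prec\beta$ and $\gamma+\delta=\alpha+\beta$. I would introduce a well-founded ordering on monomials — for instance by total degree in $Q^+$, refined by a lexicographic/inversion-count statistic with respect to $\prec$ — and check that each rewriting step strictly decreases this statistic. Termination of the rewriting then yields that every element of $\U^-$ is a $\Bbbk$-linear combination of the ordered monomials in $B$, where the constraint $m_\alpha\in\{0,1\}$ for $\alpha\in\Phi^+_{\ov 1}$ is enforced by the relation $\bff_\alpha^2=0$ for odd $\alpha$.

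For linear independence, the cleanest route is a graded-dimension comparison. Each monomial in $B$ is homogeneous of a definite weight in $Q^-$, so it suffices to show, for each $\alpha\in Q^-$, that the number of monomials in $B$ of weight $\alpha$ equals $\dim_\Bbbk \U^-_\alpha$. The generating-function (character) count of $B$ factors over the positive roots: the even roots contribute a factor $\prod_{\beta\in\Phi^+_{\ov 0}}(1-e^{-\beta})^{-1}$ and the odd roots a factor $\prod_{\beta\in\Phi^+_{\ov 1}}(1+e^{-\beta})$, giving the expected super-character of a free (super)commutative object on the root vectors. I would then match this against the known dimension/character of $\U^-$. This is where the isomorphism \eqref{eq:iso tau standard} with $U_q(\gl(m|n))$ and the triangular decomposition (from \cite[Theorem 10.5.1]{Ya94}) enter: the character of $\U^-\cong U_q(\gl(m|n))^-$ is exactly the product above, since at the classical/PBW level the dimension is computed by the positive roots with even roots contributing polynomially and odd roots contributing exterior-algebra factors.

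I anticipate the main obstacle to be the spanning step, specifically verifying that the rewriting terminates — i.e.\ that the chosen monomial statistic genuinely decreases under every application of Lemma \ref{lem:commutation relation}. The delicate case is the first branch, where $\bff_\beta\bff_\alpha$ produces $\bff_\gamma\bff_\delta$ of the \emph{same} total weight $\alpha+\beta$; here one cannot rely on degree alone, and must exploit the interleaving $\alpha\prec\gamma\prec\delta\prec\beta$ to argue that the inversion statistic strictly drops. One must confirm that the order $\prec$ on $\Phi^+$, defined by conditions (1)--(5), is compatible with this so that no infinite rewriting loop arises; this amounts to checking that $\prec$ is a "convex" order adapted to the commutation relations, in the usual sense that for $\gamma+\delta=\alpha+\beta$ with $\alpha\prec\beta$ one indeed has $\alpha\prec\gamma,\delta\prec\beta$, which is precisely the content built into Lemma \ref{lem:commutation relation}. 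Once termination is secured, the remaining verifications — homogeneity, the $\{0,1\}$-constraint on odd exponents, and the character identity — are routine.
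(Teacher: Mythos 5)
Your proposal is correct, but it distributes the work differently from the paper, whose proof is shorter and leans on transported results: the paper takes a PBW-type basis $\bar{B}$ of $U_q(\gl(m|n))^-$ from \cite[Sections 4 and 5]{CHW} (with respect to a suitable ordering of $I$), checks that $\tau(\bar{B})$ coincides with $B$ up to invertible $\sigma$-monomial factors via \eqref{eq:iso tau standard}, proves linear independence of $B$ by Bergman's diamond lemma \cite{Be} applied to the straightening relations of Lemma \ref{lem:commutation relation} (exactly the reduction system you rewrite with), and then concludes by a dimension argument. You use the same two ingredients in the opposite roles: spanning is established directly by your termination/straightening argument (which is in substance the ambiguity-free rewriting underlying the diamond-lemma step), while independence comes from the graded character count, i.e.\ from $\dim_\Bbbk \U^-_\alpha = \dim_\Bbbk U_q(\gl(m|n))^-_\alpha$ together with the known character $\prod_{\beta\in\Phi^+_{\ov 0}}(1-e^{-\beta})^{-1}\prod_{\beta\in\Phi^+_{\ov 1}}(1+e^{-\beta})$ of the latter. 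What your route buys is a more self-contained spanning step; what it costs is that two points must be made explicit to close the argument: (i) the triangular decomposition from \cite[Theorem 10.5.1]{Ya94} alone does not yield the character of the negative half — you must quote the PBW theorem for $U_q(\gl(m|n))^-$ (Yamane, or \cite{CHW}), which is the same external input the paper draws from \cite{CHW}; and (ii) the dimension comparison through $\tau$ needs the observation that $\tau(F_i)=f_iv_i$ with $v_i$ invertible in $\Sigma$ and $\sigma_j f_i = \pm f_i\sigma_j$, so monomials in the $F_i$ map to monomials in the $f_i$ times weight-preserving $\sigma$-factors, whence the $Q$-graded dimensions agree. Your anticipated obstacle (termination) is the right one and is resolved exactly as you indicate: the strict interleaving $\alpha\prec\gamma\prec\delta\prec\beta$ built into Lemma \ref{lem:commutation relation} is the convexity needed for a degree-then-inversion (or lexicographic) statistic to strictly decrease, as in standard straightening arguments for quantum groups.
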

\pf 
	Let $\bar{B}$ be a PBW-type basis of $U_q(\gl(m|n))$ given in \cite[Sections 4 and 5]{CHW} (cf. \cite{Le})
	with respect to a total ordering $\{ m < m-1 < \cdots < 1 < m+1 < \cdots < \ell -1 \}$ on $I$.
	We can check that 
	\begin{equation*}
	\tau(\bar{B})=\Bigg\{\, {\bm \sigma}  \overrightarrow{\prod_{\alpha\in\Phi^+}}\bff_\alpha^{m_\alpha} \,\Bigg|\, \text{$m_{\alpha}\in\Z_{\geq 0}$ $(\alpha\in\Phi^+_{\ov 0})$,\quad $m_{\alpha}=0,1$  $(\alpha\in \Phi^+_{\ov 1})$}\,\Bigg\},
	\end{equation*}
	where ${\bm \sigma}$ is a monomial in $\pm \sigma_i$'s depending on $(m_\alpha)_{\alpha\in\Phi^+}$.
	Using the diamond lemma \cite{Be}, we can check that $B$ is linearly independent. 
	Therefore we conclude that $B$ is a $\Bbbk$-basis of $\U^-$ by dimension argument.
\qed

\section{Crystal bases for homogeneous generalized quantum groups} \label{sec: CB for HGQG}

\subsection{Crystal base for $U_q(\gl(n))$} \label{subsec: CB for Uqgln}
Let
\begin{align} \label{eq: def of U}
U=U_q(\gl_n):=\U(\gl(n|0))=U_q(\gl(n|0)).
\end{align}
Let us recall some necessary results on the crystal bases \cite{Kas91}.
We keep the notations in Section \ref{subsec:notations}. Let $V(\la)=V_{\e_{n|0}}(\la)$ ($\la\in P^+$) for simplicity.

\subsubsection{Crystal base of $V(\la)$ at $q=0$}\label{subsec:crystal base of hw gl q=0}

Let $V$ be a $U$-module with weight space decomposition $V=\bigoplus_{\mu\in P}V_\mu$ such that $e_i$ and $f_i$ act locally nilpotently.

Let $i\in I$ be given.
For a weight vector $u\in V$,
we have $u=\sum_{k \geq 0} f_i^{(k)}u_k$, where $f_i^{(k)}=f_i^k/[k]!$ with $[k]!=[1][2]\dots[k]$, and $e_iu_k=0$ for $k\geq 0$. Then we define the lower crystal operators
\begin{equation*}\label{eq:Kashiwara operator lower}
\te^{\rm\, low}_iu=\sum_{k\geq1}f_i^{(k-1)}u_k,\quad \tf^{\rm\, low}_iu=\sum_{k\geq0}f_i^{(k+1)}u_k.
\end{equation*}
We also define the upper crystal operators 
\begin{equation*}
\te^{\rm\, up}_iu=\sum_{k\geq1}q^{-l_k+2k-1}f_i^{(k-1)}u_k,\quad 
\tf^{\rm\, up}_iu=\sum_{k\geq 0}q^{l_k-2k-1}f_i^{(k+1)}u_k,
\end{equation*}
where $l_k=({\rm wt}(u_k)|\alpha_i)$.

Let $A_0$ be the subring of $\Bbbk$ consisting of $f(q)/g(q)$ with $f(q), g(q)\in \Q[q]$ and $g(0)\neq 0$.
For $\la\in P^+$, a {\it lower crystal base} of $V(\la)$ at $q=0$ is a pair $(L^{\rm\, low}(\la),B^{\rm\, low}(\la))$ given by
\begin{equation}\label{eq:lower crystal lattice for la}
\begin{split}
L^{\rm\, low}(\la)&=\sum_{r\geq 0,\, i_1,\ldots,i_r\in I}A_0 \tf^{\rm\, low}_{i_1}\cdots\tf^{\rm\, low}_{i_r}v_\la, \\
B^{\rm\, low}(\la)&=\{\,\tf^{\rm\, low}_{i_1}\cdots\tf^{\rm\, low}_{i_r}v_\la\!\!\! \pmod{q L^{\rm\, low}(\la)}\,|\,r\geq 0, i_1,\ldots,i_r\in I\,\}\setminus\{0\},
\end{split}
\end{equation}
where $v_\la$ is a highest weight vector in $V(\la)$.  
Then the pair $(L,B)=(L^{\rm\, low}(\la),B^{\rm\, low}(\la))$ is a {\em crystal base} of $V=V(\la)$ with respect to $\te_i=\te^{\rm\, low}_i$ and $\tf_i=\tf^{\rm\, low}_i$ ($i\in I$) in the following sense:
\begin{itemize}
\item[(C1)] $L$ is an $A_0$-lattice of $V$ and $L=\bigoplus_{\mu \in P}L_\mu$, where $L_\mu=L \cap V_\mu$,

\item[(C2)] $B$ is a $\Q$-basis of $L/qL$,

\item[(C3)] $B= \bigsqcup_{\mu \in P} B_\mu$ where $B_\mu \subset (L/qL)_\mu$,

\item[(C4)] $\tilde{e}_i L \subset L, \tilde{f}_i L \subset L$ and $\tilde{e}_iB \subset B \cup \{0\}, \tilde{f}_iB \subset B \cup \{0\}$ for $i \in I$,

\item[(C5)] $\tilde{f}_ib=b'$ if and only if $\tilde{e}_ib'=b$ for $i\in I$ and $b, b' \in B$.
\end{itemize}

We define an {\em upper crystal base} $(L^{\rm\, up}(\la),B^{\rm\, up}(\la))$ of $V(\la)$ at $q=0$ in the same way as in \eqref{eq:lower crystal lattice for la} using $\tilde{f}^{\rm\, up}_i$ for $i\in I$, which also satisfies the above properties (C1)--(C5) with respect to $\te_i=\te^{\rm\, up}_i$ and $\tf_i=\tf^{\rm\, up}_i$ ($i\in I$).

Let $(\ ,\ )_\la$ be a unique non-degenerate symmetric bilinear form on $V(\la)$ satisfying
\begin{equation*}
 (v_\la,v_\la)_\la=1,\quad 
 (e_iu,v)_\la=(u,f_iv)_\la,\quad 
 (k_\mu u, v)_\la = (u,k_\mu v)_\la,
\end{equation*}
for $u,v\in V(\la)$, $i\in I$ and $\mu\in P$. Then we have the following:
\begin{itemize}
\item[(1)] $L^{\rm\, up}(\la)=\{\,u\,|\,(u,L^{\rm\,low}(\la))_\la\subset A_0\,\}\subset V(\la)$, 

\item[(2)] $(\te^{\rm\, low}_ib,b')_\la=(b,\tf^{\rm\, up}_ib')_\la$ for $i \in I$, $b\in B^{\rm\, low}(\la)$ and $b'\in B^{\rm\, up}(\la)$,

\item[(3)] $(\tf^{\rm\, low}_ib,b')_\la=(b,\te^{\rm\, up}_ib')_\la$ for $i \in I$, $b\in B^{\rm\, low}(\la)$ and $b'\in B^{\rm\, up}(\la)$,
\end{itemize}
where we still denote the bilinear form $(\ ,\ )_\la{|_{q=0}}$ by $(\ ,\ )_\la$.

\subsubsection{Crystal base of $V(\la)$ at $q=\infty$}\label{subsec:crystal base of hw gl q=inf}
Let $\ov{\,\cdot\,}$ also denote a $\Q$-linear involution on $V(\la)$ such that $\ov{v}=\ov{x}v_\la$ for $v=x v_\la$ with $x\in U^-$, where $\ov{\,\cdot\,}$ is given in \eqref{eq:bar involution}. Let $A_\infty=\ov{A_0}$.
Then $(\ov{L^{\rm\, low}(\la)},\ov{B^{\rm\, low}(\la)})$ can be viewed as a lower crystal base of $V(\la)$ at $q=\infty$, where the associated crystal operators are given by 
\begin{equation*}
 \te^{\rm\, \ov{low}}_i:= - \circ \te^{\rm\, low}_i\circ -,\quad \tf^{\rm\, \ov{low}}_i:= - \circ \tf^{\rm\, low}_i\circ -\quad (i\in I).
\end{equation*}

Let $\sigma_\la$ be a $\Q$-linear involution of $V(\la)$ given by
$(\sigma_\la(u),v)_\la = \ov{(u,\ov{v})_\la}$ for $u,v\in V(\la)$.
Indeed, we have $\sigma_\la = \ov{\,\cdot\,}$.
Then $(\sigma_\la(L^{\rm\, up}(\la)),\sigma_\la(B^{\rm\, up}(\la)))$ can be viewed as an upper crystal base of $V(\la)$ at $q=\infty$, where the associated crystal operators are given by 
\begin{equation*}
 \te^{\rm\, \ov{up}}_i:= \sigma_\la \circ \te^{\rm\, up}_i\circ \sigma_\la,\quad \tf^{\rm\, \ov{up}}_i:= \sigma_\la \circ \tf^{\rm\, up}_i\circ \sigma_\la\quad (i\in I).
\end{equation*}
Indeed, for $u=\sum_{k \geq 0} f_i^{(k)}u_k$ with $e_iu_k=0$ ($k\geq 0$), we have
\begin{equation*}\label{eq:Kashiwara operators on hw up infty}
\tilde{e}^{\rm\, \ov{up}}_iu=\sum_{k\geq1}q^{l_k-2k+1}f_i^{(k-1)}u_k,\quad 
\tilde{f}^{\rm\, \ov{up}}_iu=\sum_{k\geq 0}q^{-l_k+2k+1}f_i^{(k+1)}u_k,
\end{equation*}
where $l_k=({\rm wt}(u_k)|\alpha_i)$.
Then it follows from the facts in case of $q=0$ that
\begin{itemize}
\item[(1)] $\sigma_\la(L^{\rm\, up}(\la))=\{\,u\,|\,(u,\ov{L^{\rm\,low}(\la)})_\la\subset A_\infty\,\}\subset V(\la)$, 

\item[(2)] $(\te^{\rm\, \ov{low}}_ib,b')_\la=(b,\tf^{\rm\, \ov{up}}_ib')_\la$ for $i \in I$, $b\in \ov{B^{\rm\, low}(\la)}$ and $b'\in \sigma_\la(B^{\rm\, up}(\la))$,

\item[(3)] $(\tf^{\rm\, \ov{low}}_ib,b')_\la=(b,\te^{\rm\, \ov{up}}_ib')_\la$ for $i \in I$, $b\in \ov{B^{\rm\, low}(\la)}$ and $b'\in \sigma_\la(B^{\rm\, up}(\la))$. 
\end{itemize}

\subsubsection{Crystal base of $U^-$ at $q=0$ and $\infty$}\label{subsec:crystal base of gl verma}
Recall that $U^-$ be the subalgebra of $U$ generated by $f_i$ for all $i \in I$.
Let $i\in I$ be given.
For homogeneous $u\in U^-$, we have $u=\sum_{k\ge 0}f_i^{(k)}u_k$ with $e'_i(u_k)=0$ ($k\ge 0$) (cf.~\eqref{eq: derivation}), and define
\begin{equation}\label{eq:Kashiwara operators for U^-}
\te_i u = \sum_{k\ge 1}f_i^{(k-1)}u_k,\quad \tf_i u = \sum_{k\ge 0}f_i^{(k+1)}u_k.
\end{equation}
A crystal base of $U^-$ at $q=0$ is a pair $(L(\infty),B(\infty))$ given by
\begin{equation} \label{eq: CB for U-}
\begin{split}
L(\infty)&=\sum_{r\geq 0,\, i_1,\ldots,i_r\in I}A_0 \tf_{i_1}\cdots\tf_{i_r}1, \\
B(\infty)&=\{\, \,\tf_{i_1}\cdots\tf_{i_r}1\!\!\! \pmod{q L(\infty)}\,|\,r\geq 0, i_1,\ldots,i_r\in I\,\}\setminus\{0\},
\end{split}
\end{equation}
which satisfies the conditions (C1)--(C5) where $(L,B)=(L(\infty),B(\infty))$ and $V=U^-$ with weight space \eqref{eq:Q grading} and with respect to \eqref{eq:Kashiwara operators for U^-}.

Let $(\ ,\ )$ be a unique non-degenerate symmetric bilinear form on $U^-$ satisfying $(1,1)=1$ and $(e'_iu,v)=(u,f_iv)$ for $u,v\in U^-$ and $i\in I$.
Then we have the following:
\begin{itemize}
\item[(1)] $L(\infty)=\{\,u\,|\,(u,L(\infty))\subset A_0\,\}\subset U^-$, 

\item[(2)] $(\te_ib,b')=(b,\tf_ib')$ for $i \in I$ and $b, b'\in B(\infty)$,
\end{itemize}
{where we still denote by $(\ , \ )$ the bilinear form $(\ , \ ){|_{q=0}}$ on $B(\infty)$.}
Then $(\ov{L(\infty)},\ov{B(\infty)})$ can be viewed as a crystal base of $U^-$ at $q=\infty$, where the associated crystal operators are given by 
\begin{equation*}
 {\te}^{\, -}_i:= - \circ \te_i\circ -,\quad {\tf}^{\, -}_i:= - \circ \tf_i\circ -\quad (i\in I).
\end{equation*}

Let $\sigma$ be the $\Q$-linear involution on $U^-$ such that $(\sigma(u),v)= \ov{(u,\ov{v})}$ for $u,v\in U^-$.
Another crystal base of $U^-$ at $q=\infty$ is the pair $(\sigma(L(\infty)),\sigma(B(\infty)))$ \cite{Kim, LNT}, where the associated crystal operators are given by 
\begin{equation*}
 {\te}^{\, \sigma}_i:= \sigma \circ \te_i\circ \sigma,\quad  {\tf}^{\, \sigma}_i:= \sigma \circ \tf_i\circ \sigma\quad (i\in I).
\end{equation*}

\begin{lem} \label{eq: sigma sl2}
Let $u\in U^-_\alpha$ and $i\in I$ be given. 
Then $u$ can be written uniquely as 
$$u=\sum_{k\ge 0}u_kf_i^{(k)},$$ where $e'_i(u_k)=0$ for $k\ge 0$, and 
\begin{equation*}
\te^{\,\sigma}_i u = \sum_{k\ge 1}q^{l_k-2k+2}u_k f_i^{(k-1)},\quad
\tf^{\,\sigma}_i u = \sum_{k\ge 0}q^{-l_k+2k}u_k f_i^{(k+1)},
\end{equation*}
where $l_k=({\rm wt}(u_k)|\alpha_i)$.
\end{lem}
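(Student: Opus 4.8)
The plan is to reduce everything to a single ``string formula'' describing how the involution $\sigma$ converts a right decomposition $u=\sum_k u_kf_i^{(k)}$ into a left one, and then to read off $\te^{\,\sigma}_i,\tf^{\,\sigma}_i$ from $\sigma\circ\te_i\circ\sigma$ and $\sigma\circ\tf_i\circ\sigma$, using that $\te_i,\tf_i$ act on the \emph{left} decomposition \eqref{eq:Kashiwara operators for U^-} without any power of $q$. All the $q$-powers in the statement will be produced by the two applications of $\sigma$.

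First I would record the basic properties of $\sigma$. From $(\sigma(u),v)=\ov{(u,\ov v)}$ one sees at once that $\sigma$ is a $q$-antilinear, weight-preserving involution, and pairing against $f_i$ (note $(f_i,f_i)=1$) gives $\sigma(f_i)=f_i$. Next, for all $v$,
\[
 (\sigma(u),f_iv)=\ov{(u,f_i\ov v)}=\ov{(e'_i(u),\ov v)}=(\sigma(e'_i(u)),v),
\]
using $\ov{f_i}=f_i$, the adjunction $(e'_iu,v)=(u,f_iv)$ and $|\ov v|=|v|$; comparing with $(\sigma(u),f_iv)=(e'_i(\sigma(u)),v)$ yields $\sigma\circ e'_i=e'_i\circ\sigma$, so $\sigma$ preserves $\ker e'_i$. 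Finally, $e''_i=\ov{\;\cdot\;}\circ e'_i\circ\ov{\;\cdot\;}$ gives $(\sigma(f_iu),v)=(\sigma(u),e''_i(v))$, which together with the identity $(U^-f_i)^{\perp}=\ker e''_i$ for Kashiwara's form shows $\sigma(f_iU^-)\subseteq U^-f_i$ (and symmetrically $\sigma(U^-f_i)\subseteq f_iU^-$).

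The heart of the argument is the string formula: for $w\in\ker e'_i$ with $l=({\rm wt}(w)|\alpha_i)$ and $k\ge0$,
\[
 \sigma\big(f_i^{(k)}w\big)=q^{\,k(k-1-l)}\,\sigma(w)\,f_i^{(k)},\qquad
 \sigma\big(w\,f_i^{(k)}\big)=q^{\,k(k-1-l)}\,f_i^{(k)}\,\sigma(w).
\]
For the first identity I would show $\sigma(f_i^{(k)}w)\in(\ker e'_i)f_i^{(k)}$. Writing $\sigma(f_i^{(k)}w)=\sum_j g_jf_i^{(j)}$ with $g_j\in\ker e'_i$ and applying $e'_i$ (which sends the $j$-th term to a nonzero multiple of $g_jf_i^{(j-1)}$ for $j\ge1$), comparison with $\sigma(e'_i(f_i^{(k)}w))=q^{k-1}\sigma(f_i^{(k-1)}w)$ forces, by induction on $k$, that $g_j=0$ for $j\neq0,k$; the surviving $g_0\in\ker e'_i$ vanishes because $\sigma(f_i^{(k)}w)\in\sigma(f_iU^-)\subseteq U^-f_i=\bigoplus_{j\ge1}(\ker e'_i)f_i^{(j)}$ for $k\ge1$. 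Thus $\sigma(f_i^{(k)}w)=g_kf_i^{(k)}$ with $g_k\in\ker e'_i$ of weight ${\rm wt}(w)$, i.e.\ $g_k$ is a scalar times $\sigma(w)$. The scalar $q^{k(k-1-l)}$ is pinned down by the recursion coming from $\sigma\circ e'_i=e'_i\circ\sigma$: using the rank-one computations $e'_i(f_i^{(k)}w)=q^{-(k-1)}f_i^{(k-1)}w$ and $e'_i(\sigma(w)f_i^{(k)})=q^{\,l-k+1}\sigma(w)f_i^{(k-1)}$ from the twisted Leibniz rule \eqref{eq: derivation}, $q$-antilinearity of $\sigma$ gives $c_k=c_{k-1}+2(k-1)-l$ with $c_0=0$, whence $c_k=k(k-1-l)$; the second identity follows by applying $\sigma$ to the first and renaming. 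This also yields the existence and uniqueness of the right decomposition $u=\sum_k u_kf_i^{(k)}$ with $e'_i(u_k)=0$, as the $\sigma$-image of the (unique) left decomposition of $\sigma(u)$.

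With the string formula in hand the computation is routine. Writing $u=\sum_k u_kf_i^{(k)}$ with $e'_i(u_k)=0$ and $l_k=({\rm wt}(u_k)|\alpha_i)$, it gives the left decomposition $\sigma(u)=\sum_k q^{\,k(k-1-l_k)}f_i^{(k)}\sigma(u_k)$ with $\sigma(u_k)\in\ker e'_i$. Applying $\tf_i$ (shifting $f_i^{(k)}\mapsto f_i^{(k+1)}$) and then $\sigma$ once more, using $q$-antilinearity and the string formula in the opposite direction, gives
\[
 \tf^{\,\sigma}_i u=\sum_{k\ge0}q^{-k(k-1-l_k)+(k+1)(k-l_k)}\,u_kf_i^{(k+1)}=\sum_{k\ge0}q^{-l_k+2k}\,u_kf_i^{(k+1)},
\]
and likewise, since $\te_i$ shifts $f_i^{(k)}\mapsto f_i^{(k-1)}$,
\[
 \te^{\,\sigma}_i u=\sum_{k\ge1}q^{-k(k-1-l_k)+(k-1)(k-2-l_k)}\,u_kf_i^{(k-1)}=\sum_{k\ge1}q^{\,l_k-2k+2}\,u_kf_i^{(k-1)},
\]
as claimed. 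The main obstacle is the string formula, and within it the proportionality $\sigma(f_i^{(k)}w)\in(\ker e'_i)f_i^{(k)}$: this is exactly where the interaction of $\sigma$ with \emph{both} skew derivations $e'_i,e''_i$, equivalently with the left and right $f_i$-string filtrations of $U^-$, enters, resting on $(U^-f_i)^{\perp}=\ker e''_i$. Once proportionality is known, the weight-bookkeeping producing the exponents is a purely rank-one $\mf{sl}_2$ computation.
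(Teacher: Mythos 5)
Your proposal is correct, and the exponents you obtain agree with the lemma. Structurally it is the same proof as the paper's: both compute $\tf^{\,\sigma}_i=\sigma\circ\tf_i\circ\sigma$ (and likewise $\te^{\,\sigma}_i$) by converting the right decomposition of $u$ into the left decomposition of $\sigma(u)$, using that $\ker e'_i$ is $\sigma$-stable, with all powers of $q$ produced by commuting $\sigma$ across the $f_i$-strings. The real difference is how the key commutation --- your ``string formula'' $\sigma\bigl(f_i^{(k)}w\bigr)=q^{k(k-1-l)}\sigma(w)f_i^{(k)}$ for $w\in\ker e'_i$ --- is obtained. The paper imports from \cite{Kim} the structural identities $\sigma(x)=q^{N(\alpha)}(\ov{x})^{\ast}$ and $\sigma(xy)=q^{(|x|\,|\,|y|)}\sigma(y)\sigma(x)$, from which the string formula is a one-line specialization (via $\sigma\bigl(f_i^{(k)}\bigr)=q^{k(k-1)}f_i^{(k)}$), and it deduces $(e'_i)^{\sigma}=e'_i$ from the conjugation identities $\ast\circ e'_i\circ\ast=q^{(\alpha|\alpha_i)+2}e''_i$ and $-\circ e''_i\circ-=e'_i$. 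You instead get $\sigma\circ e'_i=e'_i\circ\sigma$ in one line from the defining property $(\sigma(u),v)=\ov{(u,\ov{v})}$ and the adjunction $(e'_iu,v)=(u,f_iv)$, and you prove the string formula self-containedly by a support argument plus a scalar recursion (your recursion $c_k=c_{k-1}+2(k-1)-l$, $c_0=0$, correctly gives $c_k=k(k-1-l)$, matching the paper's exponent $k(k-1)-k(\alpha_i|{\rm wt}(w))$). Your route buys independence from Kimura's global anti-multiplicativity, which is stronger than what the lemma needs, at the cost of length; the paper's route is shorter but rests on the imported structure theory.

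Two small repairs are needed in your write-up. First, inside the proof of the string formula you expand $\sigma\bigl(f_i^{(k)}w\bigr)=\sum_j g_jf_i^{(j)}$ with $g_j\in\ker e'_i$, i.e.\ you already use the right decomposition $U^-=\bigoplus_j(\ker e'_i)f_i^{(j)}$, yet you present its existence only afterwards as a consequence of the string formula; as written this is circular. Establish it first, with tools you already invoke: applying $\ast$ to Kashiwara's left decomposition gives $U^-=\bigoplus_j(\ker e''_i)f_i^{(j)}$ (since $\ast\circ e'_i\circ\ast$ is a nonzero multiple of $e''_i$ on each weight space), and then applying the bar involution, which fixes each $f_i^{(j)}$ and interchanges $\ker e'_i$ with $\ker e''_i$, yields $U^-=\bigoplus_j(\ker e'_i)f_i^{(j)}$. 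Second, your phrase ``$g_k\in\ker e'_i$ of weight ${\rm wt}(w)$, i.e.\ $g_k$ is a scalar times $\sigma(w)$'' is not justified by weights alone, since these weight spaces of $\ker e'_i$ are generally not one-dimensional; but no harm results, because your induction already forces the proportionality: comparing $e'_i\bigl(g_kf_i^{(k)}\bigr)$, a nonzero multiple of $g_kf_i^{(k-1)}$, with $e'_i\sigma\bigl(f_i^{(k)}w\bigr)=q^{k-1}\sigma\bigl(f_i^{(k-1)}w\bigr)=q^{k-1+c_{k-1}}\sigma(w)f_i^{(k-1)}$ and invoking uniqueness of the right decomposition gives $g_k\in\Bbbk\,\sigma(w)$. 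With these reorderings the argument is complete and correct.
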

\pf Let $\ast$ be the $\Bbbk$-algebra anti-automorphism of $U$ such that $e_i^\ast=e_i$, $f_i^\ast=f_i$, and $k_\mu^\ast=k_{-\mu}$ for $i\in I$ and $\mu\in P$. 
Then $\sigma(x) = q^{N(\alpha)} (\ov{x})^\ast$ for $x\in U^-_\alpha$, where $N(\alpha)=\hf(\alpha|\alpha)+(\alpha|\rho)$ with $\rho$ the half sum of positive roots in $\Phi^+$. Also we have $\sigma(xy)=q^{(|x|||y|)}\sigma(y)\sigma(x)$ for homogeneous $x,y\in U^-$ \cite[Proposition 3.2]{Kim}.

On the other hand, we have 
$\left(\ast\circ e'_i\circ\ast\right) (x) =q^{(\alpha|\alpha_i)+2}e_i''(x)$ and 
$\left(-\circ e''_i\circ - \right)(x)= e_i'(x)$ for $i \in I$ and $x\in U^-_\alpha$.
Then we have $(e'_i)^\sigma:=\sigma\circ e'_i\circ \sigma =e'_i$. In particular, we have ${\rm Ker}\,e'_i={\rm Ker}\,(e'_i)^\sigma$.

Let $\sigma(u)=\sum_{k\ge 0}f_i^{(k)}v_k$ with $e'_i(v_k)=0$ ($k\ge 0$). 
Then 
\begin{equation*}
\tf_i \sigma(u)=\sum_{k\ge 0}f_i^{(k+1)}v_k. 
\end{equation*}
Applying $\sigma$ to $\sigma(u)$ and $\tf_i \sigma(u)$, we get
\begin{equation*}
\begin{split}
 u&=\sum_{k\ge 0}q^{k(k-1)-(k\alpha_i||v_k|)} \sigma(v_k) f_i^{(k)},\\
 \tf^{\,\sigma}_i u &= \sum_{k\ge 0}q^{k(k+1)-((k+1)\alpha_i||v_k|)} \sigma(v_k) f_i^{(k+1)}.
\end{split}
\end{equation*}
Putting $u_k=q^{k(k-1)-(k\alpha_i||v_k|)} \sigma(v_k)\in {\rm Ker}(e'_i)^\sigma={\rm Ker} \, e'_i$, we get
\begin{equation*}
\begin{split}
 u=\sum_{k\ge 0}u_k f_i^{(k)}, \quad
  \tf^{\,\sigma}_i u = \sum_{k\ge 0}q^{2k-(\alpha_i||v_k|)} u_k f_i^{(k+1)}.
\end{split}
\end{equation*}
The proof for $\te^{\,\sigma}_i u$ is similar.
\qed\smallskip

Finally, let us review the relation between the crystal bases of $V(\la)$ and $U^-$ at $q=0$ and $\infty$.
Let $\pi_\la : U^- \longrightarrow V(\la)$ be the canonical projection as a $U^-$-module. Then it induces a surjective $A_0$-linear map
\begin{equation*}\label{eq:}
\xymatrixcolsep{2pc}\xymatrixrowsep{3pc}\xymatrix{
 \pi_\la  :  L(\infty)   \ \ar@{->}[r] &\ L^{\rm low}(\la)},
\end{equation*}
such that $\ov{\pi}_\la(B(\infty))=B(\la)\cup \{0\}$ and  $\ov{\pi}_\la(\tf_i b)= \tf^{\rm \,low}_i \ov{\pi}_\la(b)$ for $i\in I$ and $b\in B(\infty)$, where $\ov{\pi}_\la$ is the induced map from $L(\infty)/qL(\infty)$ to $L^{\rm low}(\la)/qL^{\rm low}(\la)$. 
On the other hand, we have an embedding
$\pi^\vee_\la : V(\la)^\vee \longrightarrow (U^-)^\vee$ where $V(\la)^\vee$ and $(U^-)^\vee$ are the restricted duals of $V(\la)$ and $U^-$. If we identify $V(\la)$ and $U^-$ with $V(\la)^\vee$ and $(U^-)^\vee$ by $(\ ,\ )_\la$ and $(\ ,\ )$, respectively, then we have an $A_0$-linear map
\begin{equation*}\label{eq:}
\xymatrixcolsep{2pc}\xymatrixrowsep{3pc}\xymatrix{
 \pi_\la^\vee :  L^{\rm\, up}(\la)   \ \ar@{->}[r] &\ L(\infty)},
\end{equation*}
such that $\ov{\pi}^\vee_\la(B^{\rm\, up}(\la))\subset B(\infty)$ and $\ov{\pi}^\vee_\la(\te^{\rm \,up}_i b)= \te_i \ov{\pi}^\vee_\la(b)$ for $i\in I$ and $b\in B^{\rm\, up}(\la)$, where $\ov{\pi}^\vee_\la$ is the induced map from $L^{\rm\, up}(\la)/qL^{\rm\, up}(\la)$ to $L(\infty)/qL(\infty)$.

The map $\pi^\vee_\la$ is also compatible with the crystal bases at $q=\infty$ as follows.

\begin{lem}\label{lem:embedding}
Under the above hypothesis, we have
\begin{equation*}\label{eq:}
\xymatrixcolsep{2pc}\xymatrixrowsep{3pc}\xymatrix{
 \pi_\la^\vee :  \sigma_\la(L^{\rm\, up}(\la))   \ \ar@{->}[r] &\ \sigma(L(\infty))},
\end{equation*}
where $\ov{\pi}^\vee_\la(\sigma_\la(B^{\rm\, up}(\la)))\subset \sigma(B(\infty))$ and $\ov{\pi}^\vee_\la(\te^{\rm \,\ov{up}}_i b)= \te^{\,\sigma}_i \ov{\pi}^\vee_\la (b)$ for $i\in I$ and $b\in \sigma_\la(B^{\rm\, up}(\la))$. 
\end{lem}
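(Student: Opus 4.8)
The plan is to reduce the entire statement to a single intertwining relation between $\pi_\la^\vee$ and the involutions $\sigma_\la$ and $\sigma$, and then to transport the already-established compatibility at $q=0$ through these involutions. Concretely, I would first prove that
\begin{equation*}
\pi_\la^\vee\circ\sigma_\la = \sigma\circ\pi_\la^\vee
\end{equation*}
as ($\ov{\,\cdot\,}$-semilinear) maps from $V(\la)$ to $U^-$. Granting this, the three assertions of the lemma become formal consequences of the corresponding statements for $(\pi_\la^\vee, L^{\rm up}(\la), B^{\rm up}(\la))$ at $q=0$ recorded just before the lemma.

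To establish the intertwining relation, I would use the description of $\pi_\la^\vee$ as the transpose of the projection $\pi_\la: U^-\longrightarrow V(\la)$, $u\mapsto u\,v_\la$, under the identifications of $V(\la)$ and $U^-$ with their restricted duals via $(\ ,\ )_\la$ and $(\ ,\ )$. This gives the adjunction $(\pi_\la^\vee(v),u)=(v,\pi_\la(u))_\la$ for $v\in V(\la)$ and $u\in U^-$. Using the defining properties $(\sigma_\la(v),w)_\la=\ov{(v,\ov w)_\la}$ and $(\sigma(x),u)=\ov{(x,\ov u)}$, together with the compatibility $\ov{\pi_\la(u)}=\pi_\la(\ov u)$ (immediate from $\ov{u\,v_\la}=\ov u\,v_\la$), one computes for all $u\in U^-$
\begin{equation*}
(\pi_\la^\vee(\sigma_\la(v)),u)=(\sigma_\la(v),\pi_\la(u))_\la=\ov{(v,\pi_\la(\ov u))_\la}=\ov{(\pi_\la^\vee(v),\ov u)}=(\sigma(\pi_\la^\vee(v)),u),
\end{equation*}
and the non-degeneracy of $(\ ,\ )$ yields the claim. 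I expect this step, the verification of the intertwining identity, to be the only real content of the proof.

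With the intertwining relation in hand, the rest is bookkeeping. For the lattice inclusion, since $\pi_\la^\vee$ is $\Bbbk$-linear while $\sigma_\la,\sigma$ are $\ov{\,\cdot\,}$-semilinear carrying $A_0$-lattices to $A_\infty$-lattices, the relation gives $\pi_\la^\vee(\sigma_\la(L^{\rm up}(\la)))=\sigma(\pi_\la^\vee(L^{\rm up}(\la)))\subseteq\sigma(L(\infty))$, using $\pi_\la^\vee(L^{\rm up}(\la))\subseteq L(\infty)$. Passing to the quotients modulo $q^{-1}$, the induced map $\ov{\pi}^\vee_\la$ intertwines the maps induced by $\sigma_\la$ and $\sigma$; hence from $\ov{\pi}^\vee_\la(B^{\rm up}(\la))\subseteq B(\infty)$ we obtain $\ov{\pi}^\vee_\la(\sigma_\la(B^{\rm up}(\la)))\subseteq\sigma(B(\infty))$.

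Finally, for the crystal operators I would write $b=\sigma_\la(b_0)$ with $b_0\in B^{\rm up}(\la)$, recall $\te^{\rm\,\ov{up}}_i=\sigma_\la\circ\te^{\rm\,up}_i\circ\sigma_\la$ and $\te^{\,\sigma}_i=\sigma\circ\te_i\circ\sigma$, and compute both sides through the intertwining relation and the $q=0$ identity $\ov{\pi}^\vee_\la(\te^{\rm\,up}_i b_0)=\te_i\,\ov{\pi}^\vee_\la(b_0)$; both $\ov{\pi}^\vee_\la(\te^{\rm\,\ov{up}}_i b)$ and $\te^{\,\sigma}_i\,\ov{\pi}^\vee_\la(b)$ then reduce to the common value $\sigma(\te_i\,\ov{\pi}^\vee_\la(b_0))$ modulo $q^{-1}$. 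The mild care needed here is only in tracking the semilinear reductions modulo $q^{-1}$, but the $\sigma$-twisted operators $\te^{\,\sigma}_i$ and $\te^{\rm\,\ov{up}}_i$ were defined precisely so that these conjugations match; no genuine obstacle arises beyond the intertwining identity itself.
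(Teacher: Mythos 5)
Your proposal is correct and takes essentially the same route as the paper: both rest on the adjunction $(\pi_\la^\vee(u),v)=(u,\pi_\la(v))_\la$ together with the defining properties of $\sigma_\la$ and $\sigma$ and the bar-compatibility $\ov{\pi_\la(v)}=\pi_\la(\ov{v})$, the paper deducing the lattice inclusion from the dual characterization of $\sigma_\la(L^{\rm\,up}(\la))$ and then asserting the commutativity of the square $\pi_\la^\vee\circ\sigma_\la=\sigma\circ\pi_\la^\vee$. Your only deviation is organizational and harmless: you prove that intertwining identity first (thereby supplying the verification of the commutative diagram that the paper leaves implicit) and then read off the lattice, signed-basis, and crystal-operator statements formally from the recorded $q=0$ facts.
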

\pf Recall that we have for $u\in V(\la)$ and $v\in U^-$
\begin{equation*}
 (\pi^\vee_\la(u),v) = (u,\pi_\la(v))_\la.
\end{equation*}
Let $u\in \sigma_\la(L^{\rm\, up}(\la))$ be given.
Since $\sigma_\la(L^{\rm\, up}(\la))=\{\,u\,|\,(u,\ov{L^{\rm\,low}(\la)})_\la\subset A_\infty\,\}\subset V(\la)$, we have
\begin{equation*}
 (\pi^\vee_\la(u),\ov{L(\infty)}) = (u,\pi_\la(\ov{L(\infty)}))_\la = (u,\ov{\pi_\la(L(\infty))})_\la=(u,\ov{L^{\rm low}(\la)})_\la\subset A_\infty.
\end{equation*}
Hence $\pi^\vee_\la(u)\in \sigma(L(\infty))$.
Now we have $\ov{\pi}^\vee_\la(\te^{\rm \,\ov{up}}_i b)= \te^{\,\sigma}_i \ov{\pi}^\vee_\la (b)$ for $i\in I$ and $b\in \sigma_\la(B^{\rm\, up}(\la))$ since the following diagram is commutative
\begin{equation*}
 \xymatrixcolsep{2pc}\xymatrixrowsep{3.5pc}\xymatrix{
  L^{\rm\, up}(\la)   \ \ar@{->}^{\pi_\la^\vee}[r] \ar@{->}_{\sigma_\la}[d] &\  L(\infty) \ar@{->}^{\sigma}[d]\\
  \sigma_\la(L^{\rm\, up}(\la))   \ \ar@{->}_{\pi_\la^\vee}[r] &\ \sigma(L(\infty))}.
\end{equation*}
The proof completes.
\qed

\subsection{Crystal bases for $\U_{m|0}$ and $\U_{0|n}$}\label{eq:crystal base for homogeneous case}
In this subsection, we define the crystal operators for $\U_{m|0}$ and $\U_{0|n}$, which will be used in the later sections. For notational convenience, we assume in this subsection that $\U_{0|n} = \U(\gl(\epsilon_{0|n}))$, which is generated by $k_{\de_j}, e_k, f_k$ ($1\le j\le n,\, 1\le k\le n-1$) by shifting the indices $j$ and $k$ (cf.~ Section \ref{subsec:notations-standard}).
}
\subsubsection{$\U_{m|0}$}
First, consider the case of $\U_{m|0}=U_q(\gl_m)$.
We let  
\begin{equation}\label{eq:crystal for m|0}
 (\ms{L}_{m|0}(\infty),\ms{B}_{m|0}(\infty))=(L(\infty),B(\infty)), 
\end{equation}
where $(L(\infty),B(\infty))$ is the crystal base of $U_q(\gl_m)^-$ at $q=0$ in Section \ref{subsec:crystal base of gl verma}.

For $\la\in P^+$, let $V_{m|0}(\la)$ and $V(\la)$ denote the irreducible representations of $\U_{m|0}$ and $U_q(\gl_m)$ with highest weight $\la$, respectively, which coincide in this case.
Let
\begin{equation}\label{eq:crystal for m|0 hw}
 (\ms{L}_{m|0}(\la),\ms{B}_{m|0}(\la))=(L^{\rm\, low}(\la),B^{\rm\, low}(\la)), 
\end{equation}
where $(L^{\rm\, low}(\la),B^{\rm\, low}(\la))$ is the lower crystal base of $V(\la)$ at $q=0$ in Section \ref{subsec:crystal base of hw gl q=0}.
We simply denote by $\te_i$ and $\tf_i$ for $i=1,\dots,m-1$ the crystal operators in \eqref{eq:crystal for m|0} and \eqref{eq:crystal for m|0 hw}.

\subsubsection{$\U_{0|n}$} \label{subsec: crystal for 0|n}
Next, consider the case of $\U_{0|n}$.
%
Let $\ttq$ be an indeterminate.
There exists an isomorphism of $\Q$-algebras 
\begin{equation}\label{eq:psi}
 \xymatrixcolsep{2pc}\xymatrixrowsep{3pc}\xymatrix{
 \psi : U_\ttq(\gl_n) \ \ar@{->}[r] &\ \U_{0|n}},
\end{equation}
given by $\psi(\ttq)=-q^{-1}$, $\psi(e_i)=e_i$, $\psi(f_i)=f_i$, and $\psi(k_\mu)=k_\mu$ for $i=1,\dots,n-1$ and $\mu\in P$.
We let
{
\begin{equation}\label{eq:crystal for 0|n}
\begin{split}
\ms{L}_{0|n}(\infty) &= \psi\circ\sigma(L(\infty)),\\
\ms{B}_{0|n}(\infty) &= \psi\circ\sigma(B(\infty)) \,\cup\, \left(-\psi\circ\sigma(B(\infty)) \right),
\end{split}
\end{equation}
where $(\sigma(L(\infty)),\sigma(B(\infty)))$ is the crystal base of $U_\ttq(\gl_n)^-$ at $\ttq=\infty$ in Section \ref{subsec:crystal base of gl verma}.
}

For $\la\in P^+$, let $V_{0|n}(\la)$ and $V(\la)$ denote the irreducible representations of $\U_{0|n}$ and $U_\ttq(\gl_n)$ with highest weight $\la$, respectively. There exists a unique  $\Q$-linear isomorphism $\psi_\la: V(\la) \longrightarrow V_{0|n}(\la)$ such that $\psi_\la(v_\la)=v_\la$ and $\psi_\la(x v)=\psi(x)\psi_\la(v)$ for $x\in U_\ttq(\gl_n)$ and $v\in V(\la)$. 
Let
{
\begin{equation}\label{eq:crystal for 0|n hw}
\begin{split}
	\ms{L}_{0|n}(\la) &= \psi_\la\circ\sigma_\la(L^{\rm\, up}(\la)), \\
	\ms{B}_{0|n}(\la) &= \psi_\la\circ\sigma_\la(B^{\rm\, up}(\la)) \,\cup\, \left( -\psi_\la\circ\sigma_\la(B^{\rm\, up}(\la)) \right),
\end{split}
\end{equation}
where $(\sigma_\la(L^{\rm\, up}(\la)),\sigma_\la(B^{\rm\, up}(\la)))$ is the upper crystal base of $V(\la)$ at $\ttq=\infty$ in Section \ref{subsec:crystal base of hw gl q=inf}.
}

Let us denote by $\te^{\,\infty}_i$ and $\tf^{\,\infty}_i$ for $i=1,\dots,n-1$ the crystal operators on \,$\U_{0|n}^-$ and $V_{0|n}(\la)$ 
induced from \eqref{eq:crystal for 0|n} and \eqref{eq:crystal for 0|n hw}, respectively.

By Lemma \ref{lem:embedding}, we have
\begin{equation}\label{eq:dual embedding}
\xymatrixcolsep{2pc}\xymatrixrowsep{3pc}\xymatrix{
 \pi_\la^\vee :  \ms{L}_{0|n}(\la)  \ \ar@{->}[r] &\ \ms{L}_{0|n}(\infty)},
\end{equation}
where $\ov{\pi}^\vee_\la(\ms{B}_{0|n}(\la))\subset \ms{B}_{0|n}(\infty)$ and $\ov{\pi}^\vee_\la(\te^{\,\infty}_i b)= \te^{\,\infty}_i \ov{\pi}^\vee_\la (b)$ for $i=1,\dots,n-1$ and $b\in \ms{B}_{0|n}(\la)$.

\subsubsection{Crystal operators for $\U_{0|n}$}
Let $u\in V_{0|n}(\la)$ be a weight vector. 
For $i=1,\dots,n-1$, if $u=\sum_{k \geq 0} f_i^{(k)}u_k$ with $e_iu_k=0$ ($k\geq 0$), then since $\sigma_\la(u) = \ov{u}$ we have
\begin{equation}\label{eq:induced Kashiwara operators}
\te^{\,\infty}_iu=\sum_{k\geq1}\pm q^{-l_k+2k-1}f_i^{(k-1)}u_k,\quad 
\tf^{\,\infty}_iu=\sum_{k\geq 0}\pm q^{l_k-2k-1}f_i^{(k+1)}u_k,
\end{equation}
where $\pm$ depends on the weight of $u_k$ and $k$, and {$l_k= - ({\rm wt}(u_k)|\alpha_i)$} 
(recall the difference of symmetric bilinear forms on the weight lattices for $U_q(\gl_n)=\U_{n|0}$ and $\U_{0|n}$). 
We have
\begin{equation}\label{eq:induced crystal base}
\begin{split}
\ms{L}_{0|n}(\la)&=\sum_{r\geq 0,\, i_1,\ldots,i_r\in I}A_0 \tf^{\,\infty}_{i_1}\cdots\tf^{\,\infty}_{i_r}v_\la, \\
\ms{B}_{0|n}(\la)&=\{\,\pm \,\tf^{\,\infty}_{i_1}\cdots\tf^{\,\infty}_{i_r}v_\la\!\!\! \pmod{q \ms{L}_{0|n}(\la)}\,|\,r\geq 0, i_1,\ldots,i_r\in I\,\}\setminus\{0\}.
\end{split}
\end{equation}
The pair $(L,B)=(\ms{L}_{0|n}(\la),\ms{B}_{0|n}(\la))$ satisfies the conditions (C1)--(C5) in Section \ref{subsec:crystal base of hw gl q=0} with respect to \eqref{eq:induced Kashiwara operators} except that (C2) and (C5) are replaced by 
\begin{itemize}
 \item[(C$2'$)] $B$ is a signed basis of $L/qL$, that is $B=\mathbf{B}\cup -\mathbf{B}$ where $\mathbf{B}$ is a $\Q$-basis of $L/qL$, 
 
 \item[(C$5'$)] $\tf_ib=b'$ if and only if $\te_ib'=\pm b$ for $i\in I$ and $b, b' \in B$, where $\te_i=\te^{\,\infty}_i$ and $\tf_i=\tf^{\,\infty}_i$.
\end{itemize}

We may replace $\te^{\,\infty}_i$ and $\tf^{\,\infty}_i$ ($i=1,\dots,n-1$) for $(\ms{L}_{0|n}(\la),\ms{B}_{0|n}(\la))$ with
\begin{equation}\label{eq:modified induced Kashiwara operators}
\tilde{e}_iu=\sum_{k\geq1}q^{-l_k+2k-1}f_i^{(k-1)}u_k,\quad 
\tilde{f}_iu=\sum_{k\geq 0}q^{l_k-2k-1}f_i^{(k+1)}u_k.
\end{equation}
More precisely, if we let $(\ms{L}_{0|n}'(\la),\ms{B}_{0|n}'(\la))$ be the pair defined as in \eqref{eq:induced crystal base} with respect to \eqref{eq:modified induced Kashiwara operators}, then we have the following.

\begin{lem}\label{lem:comparison of crystal bases}
Under the above hypothesis, we have
\begin{itemize}
 \item[(1)] $(\ms{L}_{0|n}(\la),\ms{B}_{0|n}(\la))=(\ms{L}_{0|n}'(\la),\ms{B}_{0|n}'(\la))$,

 \item[(2)] $\tilde{x}^{\,\infty}_ib \equiv \pm \tilde{x}_i b \pmod{q\ms{L}_{0|n}(\la)}$ for $b\in \ms{B}_{0|n}(\la)$, $x=e, f$ and $i=1,\dots,n-1$.
\end{itemize}
\end{lem}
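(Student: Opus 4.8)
The plan is to show that the two families of operators differ only by an overall sign that depends on the weight of the vector but \emph{not} on the position $k$ inside an $i$-string; this sign therefore factors out of the defining sums, and both assertions follow formally.

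First I would isolate the sign occurring in \eqref{eq:induced Kashiwara operators}. Since $\te^{\,\infty}_i,\tf^{\,\infty}_i$ are obtained from the upper crystal operators $\te^{\rm\,\ov{up}}_i,\tf^{\rm\,\ov{up}}_i$ of $U_\ttq(\gl_n)$ at $\ttq=\infty$ by transporting along $\psi_\la\circ\sigma_\la$ with $\psi(\ttq)=-q^{-1}$, every monomial $\ttq^{\,e}$ appearing in $\tf^{\rm\,\ov{up}}_i$ becomes $(-q^{-1})^{e}=(-1)^{e}q^{-e}$. Comparing with \eqref{eq:induced Kashiwara operators}, the sign on the $k$-th summand $f_i^{(k+1)}u_k$ is $(-1)^{l_k+1}$, where $l_k=-({\rm wt}(u_k)|\alpha_i)$. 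The key observation is that this sign is independent of $k$: writing $u=\sum_k f_i^{(k)}u_k$ for a weight vector of weight $\mu$, one has ${\rm wt}(u_k)=\mu+k\alpha_i$, hence $l_k-l_0=-k(\alpha_i|\alpha_i)=2k$ is even, so $(-1)^{l_k+1}=(-1)^{l_0+1}$ for all $k$. The same computation applies to $\te^{\,\infty}_i$. Consequently, for every weight vector $u$ of weight $\mu$,
\begin{equation*}
\tf^{\,\infty}_i u = \theta_i(\mu)\,\tf_i u, \qquad \te^{\,\infty}_i u = \theta_i(\mu)\,\te_i u, \qquad \theta_i(\mu) := (-1)^{(\mu|\alpha_i)+1}\in\{\pm1\},
\end{equation*}
where $\te_i,\tf_i$ are the sign-free operators of \eqref{eq:modified induced Kashiwara operators}.

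With this identity in hand, assertion (2) is immediate: applying it to a representative of $b\in\ms{B}_{0|n}(\la)$ of weight $\mu$ and reducing modulo $q\ms{L}_{0|n}(\la)$ gives $\tilde{x}^{\,\infty}_i b=\theta_i(\mu)\,\tilde{x}_i b\equiv\pm\,\tilde{x}_i b$ for $x=e,f$. For assertion (1) I would argue by induction on $r$ that
\begin{equation*}
\tf^{\,\infty}_{i_1}\cdots\tf^{\,\infty}_{i_r}v_\la = c\;\tf_{i_1}\cdots\tf_{i_r}v_\la
\end{equation*}
for a sign $c\in\{\pm1\}$ equal to the product of the weight-signs $\theta_{i_j}$ along the string, the intermediate weights being tracked step by step from $v_\la$. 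Since $c$ is a unit in $A_0$, the generating vectors of $\ms{L}_{0|n}(\la)$ and $\ms{L}'_{0|n}(\la)$ in \eqref{eq:induced crystal base} agree up to units, whence $\ms{L}_{0|n}(\la)=\ms{L}'_{0|n}(\la)$; and because both $\ms{B}_{0|n}(\la)$ and $\ms{B}'_{0|n}(\la)$ are by construction stable under $b\mapsto-b$, the sign $c$ is absorbed and the two signed bases coincide as sets.

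The only real work, and hence the main obstacle, is the sign bookkeeping in the first step: one must keep careful track of the substitution $\ttq=-q^{-1}$ together with the fact that the symmetric form on the weight lattice of $\U_{0|n}$ is the negative of the one on $U_\ttq(\gl_n)=\U_{n|0}$, so that the exponent of $(-1)$ comes out precisely as $(-1)^{l_k+1}$ with $l_k\equiv l_0\pmod 2$. Once the parity of $l_k-l_0$ is seen to be even the rest is formal, and there are no convergence or well-definedness issues since each $u$ lies in a finite direct sum of $i$-strings.
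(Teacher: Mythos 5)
There is a genuine gap, and it sits exactly at the step you yourself flagged as ``the only real work'': the sign in \eqref{eq:induced Kashiwara operators} is \emph{not} independent of $k$, so it does not factor out, and the exact identity $\tf^{\,\infty}_iu=\theta_i(\mu)\,\tf_iu$ on weight vectors is false. In transporting $\tf^{\rm\,\ov{up}}_i$ along $\psi_\la$ you correctly converted the coefficient $\ttq^{-l_k+2k+1}$ into $(-1)^{l_k+1}q^{l_k-2k-1}$, but you overlooked two further sign sources: the divided powers and the string components themselves change under $\ttq\mapsto -q^{-1}$. Since $[k]_{-q^{-1}}=(-1)^{k+1}[k]_q$, one has $\psi(f_i^{(k)})=\eta_k f_i^{(k)}$ with $\eta_k=(-1)^{\lfloor k/2\rfloor}$; writing $u=\psi_\la(u')$ with $u'=\sum_k f_i^{(k)}u'_k$, the normalized components are $u_k=\eta_k\,\psi_\la(u'_k)$, and the $k$-th summand of $\tf^{\,\infty}_iu$ picks up the extra factor $\eta_{k+1}\eta_k=(-1)^k$. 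Altogether the sign is $c_k=(-1)^{l_k+k+1}=(-1)^{l_0+k+1}$ (your parity observation $l_k\equiv l_0 \pmod 2$ is correct but does not kill the $(-1)^k$), which alternates with $k$ --- this is precisely why the paper states that the sign depends ``on the weight of $u_k$ \emph{and} $k$''. For a weight vector supported on string depths of both parities, which occurs as soon as weight spaces are not one-dimensional ($n\ge 3$, or any reducible situation), $\tf^{\,\infty}_iu$ and $\tf_iu$ are not proportional. Hence your derivation of (2), and the induction for (1) built on the same exact identity, both collapse.

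The lemma survives because it only asserts equality modulo $q\ms{L}_{0|n}(\la)$, and a correct proof must exploit that, as the paper's does: since $\ms{L}_{0|n}(\la)$ is invariant under $\te^{\,\infty}_i,\tf^{\,\infty}_i$, each component $u_k$ and each summand $q^{l_k-2k-1}f_i^{(k+1)}u_k$ lies \emph{separately} in $\ms{L}_{0|n}(\la)$; dropping the signs $c_k$ summand by summand then shows that the sign-free operators \eqref{eq:modified induced Kashiwara operators} also preserve the lattice, whence $\ms{L}'_{0|n}(\la)\subset\ms{L}_{0|n}(\la)$. For $b\in\ms{B}_{0|n}(\la)$ one writes $b\equiv(\tf^{\,\infty}_i)^ku \pmod{q\ms{L}_{0|n}(\la)}$ with $u$ a string head, compares the two operators power by power to get $\tf_ib\equiv\pm\,\tf^{\,\infty}_ib$, concludes $\ms{B}_{0|n}(\la)=\ms{B}'_{0|n}(\la)$, and finally obtains $\ms{L}_{0|n}(\la)=\ms{L}'_{0|n}(\la)$ from Nakayama's lemma, since $\ms{B}_{0|n}(\la)$ is a signed basis of $\ms{L}_{0|n}(\la)/q\ms{L}_{0|n}(\la)$. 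There is no shortcut via a global weight-dependent sign $\theta_i(\mu)$.
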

\pf Let $i=1,\dots,n-1$ be given. For $u\in \ms{L}_{0|n}(\la)$, let $u=\sum_{k \geq 0} f_i^{(k)}u_k$ with $e_iu_k=0$ ($k\geq 0$).
Then
\begin{equation*}
\tf_i^{\infty}u=\sum_{k\geq 0}c_k q^{l_k-2k-1}f_i^{(k+1)}u_k,
\end{equation*}
where $c_k\in \{\pm1\}$ for each $k$ depending on $k$ and $u_k$.
Since $\ms{L}_{0|n}(\la)$ is invariant under $\te^{\,\infty}_i$ and $\tf^{\,\infty}_i$, we have 
\begin{equation*}
 u_k\in \ms{L}_{0|n}(\la),\quad 
 q^{l_k-2k-1}f_i^{(k+1)}u_k\in \ms{L}_{0|n}(\la)\quad (k\ge 0).
\end{equation*}
This implies that $\tf_iu=\sum_{k\geq 0}q^{l_k-2k-1}f_i^{(k+1)}u_k\in \ms{L}_{0|n}(\la)$. Similarly, $\te_i u\in \ms{L}_{0|n}(\la)$.
Hence $\ms{L}_{0|n}(\la)$ is invariant under $\te_i$ and $\tf_i$ for $i=1,\dots,n-1$ and $\ms{L}'_{0|n}(\la)\subset \ms{L}_{0|n}(\la)$.  
  
Let $b\in \ms{B}_{0|n}(\la)$ be given. 
We have
$b\equiv (\tf^{\,\infty}_i)^k u \pmod{q\ms{L}_{0|n}(\la)}$ for some $k\ge 0$ and $u\in \ms{B}_{0|n}(\la)$ with $\te^{\,\infty}_iu=0$.  
By the previous argument, we have $b\equiv \pm \tf_i^k u \pmod{q\ms{L}_{0|n}(\la)}$ and hence 
\begin{equation*}
 \tf_i b \equiv \pm (\tf^{\,\infty}_i)^{k+1} u\equiv \pm \tf^{\,\infty}_ib \pmod{q\ms{L}_{0|n}(\la)}.
\end{equation*}
This shows that $\ms{B}_{0|n}(\la) = \ms{B}'_{0|n}(\la)$. Since $\ms{B}_{0|n}(\la)$ is a signed $\Q$-basis of $\ms{L}_{0|n}(\la)/q\ms{L}_{0|n}(\la)$, we have $\ms{L}_{0|n}(\la) = \ms{L}'_{0|n}(\la)$ by Nakayama lemma.
\qed\smallskip

Similarly, we can replace $\te^{\,\infty}_i$ and $\tf^{\,\infty}_i$ on $\ms{L}_{0|n}(\infty)$ by
\begin{equation}\label{eq:Kashiwara operator L(infty) for 0|n}
\te_i u = \sum_{k\ge 1}q^{-l_k+2k-2}u_k f_i^{(k-1)},\quad
\tf_i u = \sum_{k\ge 0}q^{l_k-2k}u_k f_i^{(k+1)},
\end{equation}
for $u=\sum_{k\ge 0}u_kf_i^{(k)} \in \U_{\alpha}^-$ with $e'_i(u_k)=0$ ($k\ge 0$) and 
$l_k=-({\rm wt}(u_k)|\alpha_i)$ (cf.~Lemma \ref{eq: sigma sl2}).

\subsubsection{PBW-type bases for $\U_{m|0}$ and $\U_{0|n}$} 
Let $U$ be as in \eqref{eq: def of U}. 
Recall the $\Bbbk$-algebra automorphism $T_i : U \longrightarrow U$ for $i = 1, \dots, n-1$, which is $T_{i,1}''$ in \cite[37.1.3]{Lu}, given by
\begin{align*}
T_i(e_j) &= 
\begin{cases}
	-f_i k_i & \text{if $i=j$,} \\
	e_j & \text{if $|i-j|>1$,} \\
	e_i e_j - q^{-1} e_j e_i & \text{if $|i-j|=1$,}
\end{cases} 
\quad
T_i(k_j) =
\begin{cases}
	k_j^{-1} & \text{if $i=j$,} \\
	k_j & \text{if $|i-j| > 1$,} \\
	k_ik_j & \text{if $|i-j|=1$,}
\end{cases}
\\
T_i(f_j) &= 
\begin{cases}
	-k_i^{-1}e_i & \text{if $i=j$,} \\
	f_j & \text{if $|i-j|>1$,} \\
	f_j f_i - q f_i f_j & \text{if $|i-j|=1$.}
\end{cases} 
\end{align*}
for $j = 1, \dots, n-1$.
%
Let $\bi = (i_1, \dots, i_N)$ be a reduced expression of the longest element $w_0$ of Weyl group for $\gl_n$. 
For $1\le k\le N$, let $\beta_k = s_{i_1} \dots s_{i_{k-1}}(\alpha_k) \in \Phi^+$ and $f_{\beta_k}(\bi)= T_{i_1} \dots T_{i_{k-1}}(f_k)$, where $s_i$ denotes the simple reflection for $i$.

First consider the case of $\U_{m|0} = U_q(\gl_m)$,
and let $\bi=(i_1, \dots, i_M)$ be the reduced expression of $w_0$ for $\gl_m$ given by 
$\bi = \bi_{1} \cdot \bi_{2} \cdot \,\dots\, \cdot \bi_{m-1}$,
where $\bi_k = (m-1, m-2, \dots, k)$ for $1 \le k \le m-1$, and $\bi_k \cdot \bi_{k+1}$ denotes the concatenation of $\bi_k$ and $\bi_{k+1}$.
Then the linear order $\beta_1<\dots<\beta_M$ is equal to the one on $\Phi^+_{m|0}$ in Section \ref{subsec: PBW basis of U-} and
\begin{equation*}
	\bff_\beta = f_\beta(\bi)\quad (\beta \in \Phi^+_{m|0}),
\end{equation*}
where $\bff_\beta$ is given in \eqref{eq: root vectors}.
Hence 
\begin{equation} \label{eq: PBW m0}
B_{m|0} := \left\{\, \bff_{\beta_1}^{(c_1)} \dots \bff_{\beta_M}^{(c_M)} \, \big| \, {\bf c}=(c_k) \in \Z_+^M \,\right\} 
\end{equation}
is an $A_0$-basis of $\ms{L}_{m|0}(\infty)$,
where  $\bff_{\beta_k}^{(c_k)} = \frac{1}{[c_k]!}\bff_{\beta_k}^{c_k}$ for $1 \le k \le M$ \cite{Lu90a}. 

Next, we consider the case of $\U_{0|n}$ and let $\bj=(j_1,\dots,j_N)$ be the reduced expression of the longest element for $\gl_n$ given by 
$\bj = \bj_{n-1} \cdot \bj_{n-2} \cdot \, \dots \, \cdot \bj_1$, where $\bj_k = (1, 2, \dots, k)$ for $1 \le k \le n-1$. 
Then the induced linear order $\gamma_1<\dots<\gamma_N$ on $\Phi^+_{0|n}$ is equal to the one in Section \ref{subsec: PBW basis of U-}, where $\gamma_k= s_{j_1}\dots s_{j_{k-1}}(\alpha_k)$ for $1\le k\le N$, and
\begin{equation*}\label{eq:img of root vectors for 0|n}
	\bff_\gamma = \psi(f_\gamma(\bj))\quad (\gamma\in \Phi^+_{0|n}),
\end{equation*}
where $\bff_\gamma$ is given in \eqref{eq: root vectors} and $\psi$ is the isomorphism in \eqref{eq:psi}. 
We also denote by $\sigma$ the $\Q$-linear involution on $\U_{0|n}^-$ defined by $\psi \circ \sigma \circ \psi^{-1}$. Then it follows from \eqref{eq:crystal for 0|n} that 
\begin{equation} \label{eq: PBW 0n}
 \sigma(B_{0|n}) := \left\{\, \sigma\left(\bff_{\gamma_1}^{(c_1)} \dots \bff_{\gamma_N}^{(c_N)}\right) \, \big| \, {\bf c}=(c_k) \in \Z_+^N \,\right\}
\end{equation} 
is an $A_0$-basis of $\ms{L}_{0|n}(\infty)$.

\section{Crystal bases of polynomial modules and Kac modules}\label{sec:polynomial repn}

\subsection{Polynomial representations $V(\la)$}
Let $P_{\geq0}=\sum_{i\in \I}\Z_+\delta _i$. 
Let $\cO_{\geq0}$ be the category of $\U$-modules with objects $V$ such that
$V=\bigoplus_{\mu\in P_{\ge 0}}V_\mu$ with $\dim V_\mu < \infty$.
It is closed under taking submodules and tensor products. 

\begin{rem}{\rm 
Let $V\in \cO_{\geq0}$ be given and consider $V^\tau$ (cf.~Remark \ref{rem:tau pullback}). Then we have $V^\tau\in \cO_{int}$ by \cite[Proposition 4.6]{KO}, where $\cO_{int}$ is the category of $U_q(\gl(\e))$-modules introduced in \cite[Definition 2.2]{BKK}.
 }
\end{rem}

Let $\cP$ be the set of all partitions $\la=(\la_i)_{i\ge 1}$ with $\la_1\ge\la_2\ge\dots$. 
A partition $\la=(\la_i)_{i\ge 1}\in \cP$ is called an $(m|n)$-hook partition if $\la_{m+1}\leq n$ (cf.~\cite{BR}). Let $\cP_{m|n}$ be the set of all $(m|n)$-hook partitions.
For $\la\in \cP_{m|n}$, let
\begin{equation}\label{eq:highest weight correspondence}
\La_\la = \la_1\de_1 +\cdots +\la_m\de_m + \mu_1\de_{m+1}+\cdots+\mu_n\de_{m+n},
\end{equation}
where $(\mu_1,\ldots,\mu_n)$ is the conjugate of the partition $(\la_{m+1},\la_{m+2},\ldots)$. The map $\la \mapsto \La_\la$ is injective, and hence we may identify $\la\in \cP_{m|n}$ with $\La_\la\in P_{\geq 0}$.

\begin{prop}[{\cite[Proposition 4.8]{KO})}]\label{prop:irr poly rep}
Let $V$ be an irreducible $\U$-module in $\cO_{\geq 0}$. Then $V\cong V(\la)$ for some $\la\in \cP_{m|n}$. 
\end{prop}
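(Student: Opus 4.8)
The plan is to prove the statement in two stages: first, to show that every irreducible $V\in\cO_{\ge 0}$ is automatically a highest weight module, and second, to identify its highest weight via the pullback functor $V\mapsto V^\tau$ together with the classification of \cite{BKK}.

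For the first stage, I would exploit the auxiliary $\Z$-grading coming from total degree. Writing $\mu=\sum_i\mu_i\de_i$, the linear form $\mu\mapsto\sum_i\mu_i$ vanishes on every simple root $\alpha_i=\de_i-\de_{i+1}$, hence on all of $Q$. Since the generators $e_i,f_i,k_\mu$ shift weights only by elements of $Q$ (or not at all), this total degree is constant on the $\U$-module generated by any weight vector; as $V$ is irreducible, all its weights lie in $\{\mu\in P_{\ge 0}\mid\sum_i\mu_i=d\}$ for a single $d\in\Z_+$. This set is finite, so together with $\dim V_\mu<\infty$ it forces $\dim V<\infty$. I would then choose a weight $\La$ maximal for the order $\mu\le\mu'\Leftrightarrow\mu'-\mu\in Q^+$ and a nonzero $v_\La\in V_\La$; maximality gives $V_{\La+\alpha_i}=0$, hence $e_iv_\La=0$, so $v_\La$ is a highest weight vector. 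Using the triangular decomposition $\U\cong\U^-\ot\U^0\ot\U^+$ and irreducibility, $V=\U v_\La=\U^- v_\La$, so $V$ is a highest weight module and therefore $V\cong V_\e(\La)=V(\La)$ for some $\La\in P_{\ge 0}$.

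For the second stage, I would extend $V$ to a $\U[\sigma]$-module as in Remark \ref{rem:tau pullback} and form $V^\tau$. Because $\sigma_j$ acts on each weight space by the scalar $(-1)^{\e_j\mu_j}$ and every $U_q(\gl(\e))$-submodule is weight-graded (the $K_\mu$ act diagonally), the $U_q(\gl(\e))$-submodules of $V^\tau$ coincide with the $\U$-submodules of $V$; hence $V^\tau$ is irreducible. By \cite[Proposition 4.6]{KO} we have $V^\tau\in\cO_{int}$, so $V^\tau$ is an irreducible polynomial $U_q(\gl(m|n))$-module, and by Remark \ref{rem:tau pullback} its weights, now recorded through $K_\nu u=q^{(\mu|\nu)}u$, are the same elements $\mu\in P_{\ge 0}$; in particular its highest weight is again $\La$. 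The BKK classification then says that the highest weight of an irreducible object of $\cO_{int}$ equals $\La_\la$ as in \eqref{eq:highest weight correspondence} for some $(m|n)$-hook partition $\la\in\cP_{m|n}$. Transporting back along $\tau$ gives $\La=\La_\la$, whence $V\cong V(\La_\la)=V(\la)$ with $\la\in\cP_{m|n}$.

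I expect the main obstacle to be bookkeeping rather than conceptual: one must verify carefully that the extension to $\U[\sigma]$ and the pullback through $\tau$ preserve irreducibility and match highest weights correctly across the two weight conventions, namely the $\bq(\mu,\,\cdot\,)$-eigenvalues for $\U$ versus the $q^{(\mu|\,\cdot\,)}$-eigenvalues for $U_q(\gl(\e))$. The genuinely substantive input is the BKK classification of irreducible polynomial representations; the remainder is the triangular-decomposition argument combined with tracking the total-degree grading, both of which are routine.
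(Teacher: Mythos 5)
Your proof is correct and takes essentially the same route as the paper, which itself defers to \cite[Proposition 4.8]{KO}: exactly as in your second stage, one transports an irreducible $V\in\cO_{\geq 0}$ along $\tau$ to an irreducible object $V^\tau$ of $\cO_{int}$ (the paper records $V^\tau\in\cO_{int}$ via \cite[Proposition 4.6]{KO}, and your verification that submodule lattices match under $\tau$ because all submodules are weight-graded is the right justification of irreducibility) and then invokes the classification of irreducibles in $\cO_{int}$ from \cite{BKK}. Your first stage (finite-dimensionality and existence of a highest weight vector via the total-degree grading, which is constant on $Q$) is sound but strictly optional here, since the BKK classification already applies to arbitrary irreducibles in $\cO_{int}$ without knowing in advance that they are highest weight modules.
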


\subsection{Crystal base of $V(\la)$}

Let us recall the notion of crystal base of a $\U$-module $V$, which is introduced in \cite{BKK} for a $U_q(\gl({m|n}))$-module $V^\tau$. Here we follow the convention in \cite{KO} for $\U$-modules. 

Let $V$ be a $\U$-module which has a weight space decomposition.
The crystal operators $\tilde{e}_i$ and $\tilde{f}_i$ on $V$ for $i\in I$ are given by the ones in Section \ref{eq:crystal base for homogeneous case} for $i\neq m$ and 
\begin{equation*}
\tilde{e}_m u =\eta(f_m) u =q^{-1}k_me_m u,\quad \tilde{f}_m u=f_m u\quad (u\in V).
\end{equation*}

\begin{df}\label{def:crystal for poly}
{\rm A pair $(L,B)$ is a {\em crystal base of $V$} if it satisfies the following conditions:
\begin{itemize}
\item[(1)] $L$ is an $A_0$-lattice of $V$ and $L=\bigoplus_{\mu \in P}L_\mu$, where $L_\mu=L \cap V_\mu$,

\item[(2)] $B$ is a signed basis of $L/qL$, that is $B=\mathbf{B}\cup -\mathbf{B}$ where $\mathbf{B}$ is a $\Q$-basis of $L/qL$,

\item[(3)] $B= \bigsqcup_{\mu \in P} B_\mu$ where $B_\mu \subset (L/qL)_\mu$,

\item[(4)] $\tilde{e}_i L \subset L, \tilde{f}_i L \subset L$ and $\tilde{e}_iB \subset B \cup \{0\}, \tilde{f}_iB \subset B \cup \{0\}$ for $i \in I$,

\item[(5)] $\tilde{f}_ib=b'$ if and only if $\tilde{e}_ib'=\pm b$ for $i\in I$ and $b, b' \in B$.
\end{itemize}
}
\end{df}

Note that a crystal base of a $U_q(\gl(m|n))$-module is defined in \cite{BKK}, where $\tilde{E}_i$ and $\tilde{F}_i$ denote the crystal operators.
The following lemma explains how it is related to a crystal base of a $\U$-module.

\begin{lem}\label{lem:comparison with crystal for super case}
Let $V$ be a $\U$-module in $\mc{O}_{\ge 0}$.
If $(L,B)$ is a crystal base of $V^\tau$ with the crystal operators $\tilde{E}_i$ and $\tilde{F}_i$ $(i \in I)$ as a $U_q(\gl(m|n))$-module, then $(L,B)$ is also a crystal base of $V$. Furthermore, for $b\in B$ and $i\in I$, we have $\tf_ib\equiv \pm \tilde{F}_i b \pmod{qL}$.
\end{lem}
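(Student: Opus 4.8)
The plan is to reduce everything to a comparison of the two families of crystal operators. Conditions (1)--(3) of Definition \ref{def:crystal for poly} concern only the lattice $L$, the signed basis $B$, and the weight space decomposition; since $V$ and $V^\tau$ share the same underlying $\Bbbk$-space and the $\U$-weight spaces $V_\mu$ coincide with the $U_q(\gl(m|n))$-weight spaces $V^\tau_\mu$ by Remark \ref{rem:tau pullback}, these conditions hold for $V$ as soon as they hold for $V^\tau$. Thus the whole lemma reduces to showing that, as operators on $L$,
\[
\te_i u \equiv \pm\,\tilde E_i u, \qquad \tf_i u \equiv \pm\,\tilde F_i u \pmod{qL},
\]
with a sign depending only on the weight of $u$. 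Granting this, the stabilities $\te_i L\subset L$, $\tf_i L\subset L$ and $\te_i B\subset B\cup\{0\}$, $\tf_i B\subset B\cup\{0\}$ in (4), condition (5), and the final congruence $\tf_i b\equiv\pm\tilde F_i b\pmod{qL}$ all follow from the corresponding properties of $\tilde E_i,\tilde F_i$ together with the fact that $B$ is a signed basis.

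To compare the operators I would use the explicit form of $\tau$ in \eqref{eq:iso tau standard}. On a weight vector $u\in V_\mu$ the element $\sigma_j$ acts by the scalar $(-1)^{\e_j\mu_j}$ (Remark \ref{rem:tau pullback}), so each $\tau(E_i)$ and $\tau(F_i)$ acts as $\pm e_i$ and $\pm f_i$, with sign constant on each weight space. Since both pairs of operators are built from the $\mathfrak{sl}_2$-string decomposition in the direction $i$, such a weight-constant sign on $e_i$ (resp.\ $f_i$) propagates to a weight-constant sign on the induced crystal operators, which is exactly the statement to be proved.

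The comparison then splits into three cases according to $i$. For $1\le i<m$ we have $\tau(E_i)=e_i$ and $\tau(F_i)=f_i$, and both $\te_i,\tf_i$ (from Section \ref{eq:crystal base for homogeneous case}) and $\tilde E_i,\tilde F_i$ are the lower crystal operators for $\U_{m|0}=U_q(\gl_m)$, so they coincide on the nose. For the odd index $i=m$, a direct computation using $\tau(E_m)=e_m$, $\tau(F_m)=f_m\sigma_{m+1}$ and $\tau(K_m)=k_m\sigma_{m+1}$ shows that $\tf_m u=f_m u$ and $\te_m u=q^{-1}k_m e_m u$ agree with $\tilde F_m u$ and $\tilde E_m u$ up to the sign $(-1)^{\mu_{m+1}}$; here the nilpotency $f_m^2=0$ makes the string decomposition trivial. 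For $m<i\le\ell-1$, where $\te_i=\te^{\,\infty}_i$ and $\tf_i=\tf^{\,\infty}_i$ are the twisted operators of Section \ref{subsec: crystal for 0|n} and $\tilde E_i,\tilde F_i$ are the upper crystal operators for $U_q(\gl(0|n))$, I would insert the $\sigma$-signs coming from $\tau(E_i),\tau(F_i)$ into the upper-crystal-operator formulas of Section \ref{subsec:crystal base of hw gl q=0} and match them with \eqref{eq:induced Kashiwara operators}, the agreement up to sign being an instance of the comparison behind Lemma \ref{lem:comparison of crystal bases}.

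The main obstacle is this last case $m<i$, where the two families do not coincide on the nose and the matching demands careful bookkeeping of three sources of discrepancy: the sign twists from the $\sigma_j$ in $\tau(E_i),\tau(F_i)$; the opposite convention in the symmetric bilinear form on the weight lattice of $\U_{0|n}$ versus $U_q(\gl_n)$, which is why $l_k=-({\rm wt}(u_k)|\alpha_i)$ appears in \eqref{eq:induced Kashiwara operators}; and the substitution $\ttq=-q^{-1}$ built into $\psi$ in \eqref{eq:psi}, which forces the relevant crystal base of $U_\ttq(\gl_n)$ to be taken at $\ttq=\infty$. Reconciling these so that the powers of $q$ match and all signs collapse into the single $\pm$ of the statement, uniformly along each $i$-string, is the delicate point; once it is settled, conditions (4), (5) for $V$ and the congruence follow at once from those for $V^\tau$.
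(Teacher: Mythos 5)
Your treatment of conditions (1)--(3) and your three-way case split agree with the paper's proof, which likewise dismisses $i<m$ as immediate and concentrates on $i\ge m$. The genuine gap is your central reduction: the claim that a weight-constant sign on $f_i$ (coming from the $\sigma$-monomials in $\tau(F_i)$) ``propagates'' to an operator congruence $\tf_i u\equiv\pm\tilde{F}_i u\pmod{qL}$ on all of $L$, with sign depending only on ${\rm wt}(u)$. This does not follow from the weight-space comparison alone: in the string decomposition $u=\sum_{k}f_i^{(k)}u_k$ the components $u_k$ lie in different weight spaces, so converting $f_i^{(k)}u_k$ into $\pm F_i^{(k)}u_k$ accumulates a sign depending on $k$ as well as on ${\rm wt}(u_k)$ --- exactly the situation of the twisted operators \eqref{eq:induced Kashiwara operators}, whose signs the paper explicitly records as depending ``on the weight of $u_k$ and $k$''. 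A uniform sign would additionally require that the $\sigma$-sign be constant along each $\alpha_i$-string (for $i>m$ this is true, but for a specific reason: $(\sigma_i\sigma_{i+1})^{i+m+1}$ acts on $V_\mu$ by $(-1)^{(i+m+1)(\mu_i+\mu_{i+1})}$ and $\mu_i+\mu_{i+1}$ is unchanged under adding $\alpha_i$, so the signs telescope), a feature you never identify; instead you flag the uniform reconciliation of signs along $i$-strings as ``the delicate point'' and defer it. But that deferred point is precisely the content of the lemma, so as written the proof is incomplete.

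The paper avoids the strong operator congruence altogether, and this is where its route differs from yours. For condition (4) it argues term by term: since $(L,B)$ is a crystal base of $V^\tau$, each string component $u_k$ and each weighted term $q^{l_k-2k-1}F_i^{(k+1)}u_k$ already lies in $L$; since $f_i^{(k+1)}u_k=\pm F_i^{(k+1)}u_k$, every summand of $\tf_i u$ lies in $L$ no matter how the signs vary with $k$. For condition (5) and the final congruence it invokes [BKK, Lemma 2.5]: every $b\in B$ satisfies $b\equiv\tilde{F}_i^{\,k}u\pmod{qL}$ for a single $i$-string, so modulo $qL$ only one term of the decomposition survives and all $k$-dependent signs collapse into the single $\pm$ of the statement --- the same mechanism as in Lemma \ref{lem:comparison of crystal bases}, which the paper's proof says it follows. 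To repair your argument you would have to either prove the string-constancy of the signs case by case from \eqref{eq:iso tau standard} (also checking that the cited formula (2.12) of [BKK] carries no further $k$-dependent signs), or weaken your reduction to the per-basis-vector congruence obtained from single-string concentration, as the paper does.
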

\pf The proof is similar to that of Lemma \ref{lem:comparison of crystal bases}.
Let $(L,B)$ be a crystal base of $V^\tau$ as a $U_q(\gl(m|n))$-module in the sense of \cite[Definition 2.4]{BKK}.

It is clear that $(L,B)$ satisfies the conditions (1)--(3) in Definition \ref{def:crystal for poly}. It suffices to check the conditions (4) and (5).

Suppose that $u\in L$ is given. It is also clear that $L$ satisfies the condition (4) when $1\le i<m$. Suppose that $i>m$. Let $u\in L$ be given with $u=\sum_{k \geq 0} f_i^{(k)}u_k$. 
By \eqref{eq:iso tau standard} and \cite[(2.12)]{BKK}, we have
\begin{equation*}
 u=\sum_{k \geq 0}\pm F_i^{(k)}u_k,\quad 
 \tilde{F}_i u = \sum_{k\geq 0}\pm q^{l_k-2k-1}F_i^{(k+1)}u_k\in L,
\end{equation*}
where $\pm$ depends on the weight of $u_k$ and $k$ (cf.~\eqref{eq:modified induced Kashiwara operators}). 
Since $L$ is a crystal lattice for $V^\tau$, we have 
$u_k\in L$ and $q^{l_k-2k-1}F_i^{(k+1)}u_k\in L$  $(k\ge 0)$.
This implies that $q^{l_k-2k-1}f_i^{(k+1)}u_k\in L$ for $k\ge 0$, and 
\begin{equation*}
\tf_i u = \sum_{k\geq 0} q^{l_k-2k-1}f_i^{(k+1)}u_k\in L.
\end{equation*}
Hence $L$ is invariant under $\tf_i$. The proof for $i=m$ is the same. 
  
Let us show that $B$ satisfies the condition (5).
It is clear when $1 \le i < m$.
Suppose that $m<i\le n-1$.  Let $b\in B$ be given. By \cite[Lemma 2.5]{BKK}  
$b\equiv \tilde{F}_i^k u \pmod{qL}$ for some $k\ge 0$ and $u$ with $\tilde{E}_iu=0$.  
By the previous argument, we have $b\equiv \pm \tf_i^k u \pmod{qL}$ and hence 
$\tf_i b \equiv \pm \tilde{F}_ib \pmod{qL}$.
This shows that $B \cup \{0\}$ is invariant under $\tf_i$. The proof for $i=m$ is the same.
\qed\smallskip

\begin{thm}\label{thm:crystal base of poly repn}
For $\la\in\cP_{m|n}$, let
\begin{equation*}
\begin{split}
\ms{L}(\la)&=\sum_{r\geq 0,\, i_1,\ldots,i_r\in I}A_0 \tilde{x}_{i_1}\cdots\tilde{x}_{i_r}v_\la, \\
\ms{B}(\la)&=\{\,\pm \,{\tilde{x}_{i_1}\cdots\tilde{x}_{i_r}v_\la}\!\!\! \pmod{q \ms{L}(\la)}\,|\,r\geq 0, i_1,\ldots,i_r\in I\,\}\setminus\{0\},
\end{split}
\end{equation*}
where $v_\la$ is a highest weight vector in $V(\la)$ and $x=e, f$ for each $i_k$. 
Then $(\ms{L}(\la),\ms{B}(\la))$ is a crystal base of $V(\la)$.
\end{thm}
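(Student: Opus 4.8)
The plan is to reduce the statement to the crystal base theory of \cite{BKK} via Lemma \ref{lem:comparison with crystal for super case}. First I would invoke \cite{BKK}: for $\la\in\cP_{m|n}$, the $U_q(\gl(m|n))$-module $V(\la)^\tau$ admits a crystal base $(L,B)$ in the sense of \cite[Definition 2.4]{BKK}, where $L$ is the $A_0$-span of the vectors $\tilde{X}_{i_1}\cdots\tilde{X}_{i_r}v_\la$ and $B$ consists of the nonzero reductions $\pm\tilde{X}_{i_1}\cdots\tilde{X}_{i_r}v_\la \bmod qL$, the operators $\tilde{X}_i$ ($X=E,F$) being the BKK crystal operators. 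Note that words in both $E$ and $F$ genuinely occur here, since under the mixed lower/upper convention $v_\la$ is not a source of the crystal. By Lemma \ref{lem:comparison with crystal for super case}, this same pair $(L,B)$ is a crystal base of $V(\la)$ as a $\U$-module in the sense of Definition \ref{def:crystal for poly}, and moreover $\tilde{x}_i b\equiv \pm\tilde{X}_i b \pmod{qL}$ for all $b\in B$ and $i\in I$ (the case $x=e$, $X=E$ following from the case $x=f$, $X=F$ via condition (5) of the two definitions). Thus it suffices to prove $(\ms{L}(\la),\ms{B}(\la))=(L,B)$.

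For the lattice, the inclusion $\ms{L}(\la)\subseteq L$ is immediate: $v_\la\in L$ and $L$ is stable under $\tilde{e}_i,\tilde{f}_i$ by condition (4), so every monomial $\tilde{x}_{i_1}\cdots\tilde{x}_{i_r}v_\la$ lies in $L$. For the reverse inclusion I would use that, by the construction of $B$, each $b\in B$ satisfies $b\equiv \pm\tilde{X}_{i_1}\cdots\tilde{X}_{i_r}v_\la \pmod{qL}$ for some word. Replacing each $\tilde{X}_{i_k}$ by $\tilde{x}_{i_k}$ one factor at a time and using the congruence $\tilde{x}_i\equiv\pm\tilde{X}_i\pmod{qL}$ together with the $\tilde{x}_i$-stability of $L$, I obtain $b\equiv \pm\tilde{x}_{i_1}\cdots\tilde{x}_{i_r}v_\la \pmod{qL}$, whose right-hand side lies in $\ms{L}(\la)$. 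Hence $L\subseteq \ms{L}(\la)+qL$, and combined with $\ms{L}(\la)\subseteq L$ this gives $L=\ms{L}(\la)+qL$. Since $L$ is a finitely generated module over the local ring $A_0$ with maximal ideal $qA_0$, Nakayama's lemma yields $L=\ms{L}(\la)$.

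With $\ms{L}(\la)=L$ in hand, both signed bases are computed in the same quotient $L/qL$: the congruences above show $B\subseteq\ms{B}(\la)$, while each generator $\tilde{x}_{i_1}\cdots\tilde{x}_{i_r}v_\la \bmod qL$ lies in $B\cup\{0\}$ because $B$ is stable under $\tilde{e}_i,\tilde{f}_i$ up to sign and zero; hence $\ms{B}(\la)=B$, completing the identification. The step I expect to be the main obstacle is exactly this identification of the two generating recipes: because the super crystal operators $\tilde{x}_i$ agree with the BKK operators $\tilde{X}_i$ only modulo $q$ and only up to sign, the two constructions do not produce the same lattice on the nose, so a direct termwise comparison fails and one must combine the reachability of $B$ from $v_\la$ with a Nakayama argument to force the $A_0$-spans to coincide.
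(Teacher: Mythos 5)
Your proposal is correct and follows essentially the same route as the paper, whose proof simply cites \cite[Theorem 5.1]{BKK} together with Lemma \ref{lem:comparison with crystal for super case}. The identification $(\ms{L}(\la),\ms{B}(\la))=(L,B)$ that you work out explicitly---replacing $\tilde{X}_{i_k}$ by $\tilde{x}_{i_k}$ one factor at a time modulo $q$ and concluding by Nakayama---is exactly the argument the paper leaves implicit, being the same mechanism used in the proof of Lemma \ref{lem:comparison of crystal bases}.
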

\pf It follows from Lemma \ref{lem:comparison with crystal for super case} and \cite[Theorem 5.1]{BKK}. 
\qed\smallskip

An explicit combinatorial description of $\ms{B}(\la)$ can be found in \cite{BKK}.
As in the case of crystal bases of $U_q(\gl(m|n))$-modules \cite[Proposition 2.8]{BKK}, we have a tensor product theorem for crystal bases of $\U$-modules as follows (see \cite[Proposition 3.4]{KO} and \cite[Section 2.2]{KY} for $\U(\gl(\e))$-modules)).
We remark that the map $\tau$ in \eqref{eq:iso tau standard} does not preserve the comultiplications, so we may not obtain this directly from \cite[Proposition 2.8]{BKK}.

\begin{prop}\label{prop:tensor product rule}
Let $V_1, V_2\in \cO_{\geq 0}$ be given. Suppose that $(L_k,B_k)$ is a crystal base of $V_i$ for $k=1,2$. 
Then $(L_1\otimes L_2, B_1\otimes B_2)$ is a crystal base of $V_1\otimes V_2$, where $B_1\otimes B_2\subset (L_1/qL_1)\otimes (L_2/qL_2)=(L_1\otimes L_2)/(qL_1\otimes L_2)$. 
Moreover, for $i\in I$, $\te_i$ and $\tf_i$ act on $B_1\otimes B_2$ as follows:
\begin{itemize}
\item[(1)] 
if $i=m$, then
{\allowdisplaybreaks
\begin{equation*}\label{eq:tensor product rule for odd +}
\begin{split}
\te_m(b_1\otimes b_2)=&
\begin{cases}
\te_m  b_1\otimes b_2, & \text{if }\langle {\rm wt}(b_1),\alpha^\vee_m\rangle>0, \\ 
 b_1\otimes  \te_m b_2, & \text{if }\langle {\rm wt}(b_1),\alpha^\vee_m\rangle=0,
\end{cases}
\\
\tf_m(b_1\otimes b_2)=&
\begin{cases}
 \tf_m b_1\otimes b_2, & \text{if }\langle {\rm wt}(b_1),\alpha^\vee_m\rangle >0, \\ 
b_1\otimes  \tf_m b_2, & \text{if }\langle {\rm wt}(b_1),\alpha^\vee_m\rangle=0,
\end{cases}
\end{split}
\end{equation*}}
\item[(2)] 
if $i< m$, then
{\allowdisplaybreaks
\begin{equation*}\label{eq:tensor product rule for even -}
\begin{split}
&\te_i(b_1\otimes b_2)= \begin{cases}
\te_i b_1 \otimes b_2, & \text{if $\varphi_i(b_1)\geq\varepsilon_i(b_2)$}, \\ 
 b_1 \otimes \te_ib_2, & \text{if $\varphi_i(b_1)<\varepsilon_i(b_2)$},\\
\end{cases}
\\
&\tf_i(b_1\otimes b_2)=
\begin{cases}
\tf_ib_1 \otimes  b_2, & \text{if $\varphi_i(b_1)>\varepsilon_i(b_2)$}, \\
b_1 \otimes \tf_i  b_2, & \text{if $\varphi_i(b_1)\leq\varepsilon_i(b_2)$}, 
\end{cases}
\end{split}
\end{equation*}}

\item[(3)] 
if $i>m$, then
{\allowdisplaybreaks
\begin{equation*}\label{eq:tensor product rule for even +}
\begin{split}
&\te_i(b_1\otimes b_2)= \begin{cases}
 b_1 \otimes \te_ib_2, & \text{if $\varphi_i(b_2)\geq\varepsilon_i(b_1)$}, \\ 
s_i \te_ib_1 \otimes  b_2, & \text{if $\varphi_i(b_2)<\varepsilon_i(b_1)$},\\
\end{cases}
\\
&\tf_i(b_1\otimes b_2)=
\begin{cases}
 b_1 \otimes \tf_ib_2, & \text{if $\varphi_i(b_2)>\varepsilon_i(b_1)$}, \\
 s_i \tf_ib_1 \otimes b_2, & \text{if $\varphi_i(b_2)\leq\varepsilon_i(b_1)$}, 
\end{cases}
\end{split}
\end{equation*}}
where $s_i=(-1)^{({\rm wt}(b_1)|\alpha_i)}$.
\end{itemize}
Here we put $\varepsilon_i(b)=\max\{k\geq 0 \,|\ \te_i^k b  \neq 0 \}$ and $\varphi_i(b)=\max\{k\geq 0 \,|\ \tf_i^k b \neq 0 \}$ for $b\in B_1, B_2$.
\end{prop}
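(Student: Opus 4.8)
The plan is to reduce the statement to a rank one computation for each index $i\in I$ separately, since on $V_1\ot V_2$ the operators $\te_i,\tf_i$ depend only on the action of $e_i,f_i,k_i$ (and of $k_me_m,f_m$ when $i=m$) together with the comultiplication \eqref{eq:comult-1}. Conditions (1)--(3) of Definition \ref{def:crystal for poly} are immediate: $L_1\ot L_2$ is an $A_0$-lattice of $V_1\ot V_2$ with a weight space decomposition, and $B_1\ot B_2=\pm(\mathbf{B}_1\ot\mathbf{B}_2)$ is a signed $\Q$-basis of $(L_1\ot L_2)/q(L_1\ot L_2)$ graded by weight. The substance is therefore the stability of $L_1\ot L_2$ under $\te_i,\tf_i$ (condition (4)), the inverse relation (condition (5)), and the explicit formulas; all of these are local in $i$, and once the rank one analysis below yields lattice stability and the stated formulas modulo $q(L_1\ot L_2)$, conditions (4) and (5) follow because each rule maps $b_1\ot b_2$ into $B_1\ot B_2\cup\{0\}$ and the $\te_i$- and $\tf_i$-rules are mutually inverse up to sign.

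For the even indices I would separate $i<m$ and $i>m$. When $i<m$ we have $\e_i=\e_{i+1}=0$, hence $q_i=q$, and $e_i,f_i,k_i$ generate an ordinary copy of $U_q(\mf{sl}_2)$ on which $\te_i,\tf_i$ are the lower crystal operators of Section \ref{subsec:crystal base of gl verma}. Since \eqref{eq:comult-1} is precisely Kashiwara's lower comultiplication convention, the classical tensor product rule for lower crystal bases \cite{Kas91} applies and yields formula (2), including the $A_0$-lattice stability. When $i>m$ we have $\e_i=\e_{i+1}=1$, and by the construction in Section \ref{subsec: crystal for 0|n} the operators $\te_i,\tf_i$ are obtained by transporting, through the isomorphism $\psi:U_\ttq(\gl_n)\longrightarrow\U_{0|n}$ of \eqref{eq:psi} and the involution $\sigma$, the \emph{upper} crystal operators of $U_\ttq(\gl_n)$ at $\ttq=\infty$. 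Accordingly, the rule here reduces to the upper-crystal-at-$\ttq=\infty$ tensor rule for $U_\ttq(\gl_n)$-modules: the $\sigma$-twist (upper versus lower) interchanges the roles of $b_1$ and $b_2$, while the specialization $\ttq=-q^{-1}$ introduces the scalar $s_i=(-1)^{({\rm wt}(b_1)|\alpha_i)}$, producing formula (3). I would read off these signs directly from \eqref{eq:induced Kashiwara operators} and check their compatibility with the signed crystal base conditions for $\U_{0|n}$.

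The odd index $i=m$ is the genuinely new case, and I would compute it by hand. Here $e_m^2=f_m^2=0$, $\tf_m u=f_m u$, $\te_m u=q^{-1}k_me_m u$, and $\Delta(f_m)=f_m\ot 1+k_m\ot f_m$. Applying $\tf_m$ to a representative of $b_1\ot b_2$ and using $f_m^2=0$, the $\alpha_m$-string in each tensor factor has length at most one, so the computation is finite; keeping track of the scalar by which $k_m$ acts on a weight vector (via $\bq(\,\cdot\,,\,\cdot\,)$) and comparing powers of $q$, one recovers the dichotomy of formula (1) according to the sign of $\langle{\rm wt}(b_1),\alpha_m^\vee\rangle$. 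The same finite computation shows that $L_1\ot L_2$ is stable under $\te_m$ and $\tf_m$.

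The main obstacle will be the case $i>m$: one must track both the interchange of $b_1$ and $b_2$ and the sign $s_i$ through the composite $\psi\circ\sigma$ and the substitution $\ttq=-q^{-1}$, and verify that the signs in \eqref{eq:induced Kashiwara operators} together with those coming from \eqref{eq:comult-1} are absorbed consistently so that the inverse relation (5) holds with the correct $\pm$. As the remark preceding the statement indicates, one cannot simply pull \cite[Proposition 2.8]{BKK} back along $\tau$, since $\tau$ does not preserve the comultiplications; nevertheless Lemma \ref{lem:comparison with crystal for super case} gives $\tf_ib\equiv\pm\tilde{F}_ib\pmod{q(L_1\ot L_2)}$, so after the rank one reduction the only discrepancy between the two coproducts is a sign twist by monomials in the $\sigma_j$, which is harmless for the signed crystal $B_1\ot B_2$. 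Alternatively, one may cite the tensor product theorem for $\U(\gl(\e))$-module crystal bases in \cite[Proposition 3.4]{KO} and \cite[Section 2.2]{KY} and reconcile it with the conventions used here.
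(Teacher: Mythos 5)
Your proposal is correct in outline, but it takes a genuinely different route from the paper: the paper offers no proof of Proposition \ref{prop:tensor product rule} at all --- it records the warning that $\tau$ in \eqref{eq:iso tau standard} does not intertwine the comultiplications (so \cite[Proposition 2.8]{BKK} cannot be pulled back directly) and then defers the statement to the tensor product theorems already established for $\U(\gl(\e))$-modules in \cite[Proposition 3.4]{KO} and \cite[Section 2.2]{KY}. Your rank-one reduction is in effect a self-contained reconstruction of what those references contain, and its three cases are sound: conditions (1)--(3) of Definition \ref{def:crystal for poly} are indeed immediate; for $i<m$ the rank-one subalgebra is an honest $U_q(\mf{sl}_2)$ acting without signs on modules in $\cO_{\geq 0}$, since $\bq(\alpha_i,\mu)=q^{\langle\mu,\alpha_i^\vee\rangle}$ there, so \cite[Theorem 1]{Kas91} applies verbatim; for $i=m$ the computation is a two-term one, and the dichotomy in (1) is positivity versus vanishing of $\langle{\rm wt}(b_1),\alpha_m^\vee\rangle=\mu_m+\mu_{m+1}\geq 0$, using $\bq(\alpha_m,\mu)=(-1)^{\mu_{m+1}}q^{\mu_m+\mu_{m+1}}$ together with the observation that $\mu_m=\mu_{m+1}=0$ forces $\tf_m b_1=\te_m b_1=0$ on weights in $P_{\geq 0}$; for $i>m$ the swap of tensor factors comes from combining upper-type operators with the fixed lower coproduct \eqref{eq:comult-1}, and $s_i$ from $\bq(\alpha_i,\mu)=(-1)^{(\mu|\alpha_i)}q^{(\mu|\alpha_i)}$. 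What your approach buys is independence from the external citations; what it costs is exactly the sign bookkeeping you identify as the main obstacle, and there you should be precise on two points. First, with the sign-free operators \eqref{eq:modified induced Kashiwara operators} the formulas in (3) hold only up to sign --- already for the natural $\U_{0|2}$-string one finds $\tf_i(v_1\ot v_1)\equiv -\,v_1\ot \tf_i v_1 \pmod{qL_1\ot L_2}$ --- so the exact equalities with $s_i$ must be verified for the genuine operators of \eqref{eq:induced Kashiwara operators}, with their intrinsic weight-dependent signs, or else stated only in the signed basis. Second, your fallback via Lemma \ref{lem:comparison with crystal for super case} plus the assertion that the two coproducts differ merely by ``a sign twist by monomials in the $\sigma_j$'' is not a shortcut: that the super coproduct and \eqref{eq:comult-1} are related by such a twist on tensor products of weight modules is precisely the nontrivial content flagged by the paper's remark, and it is in essence what \cite{KY} proves; if you take that road you must establish the twist identity rather than assert it.
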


\subsection{$q$-deformed Kac module $K(\la)$}

Let $\K=\K_{m|n}$ be the subalgebra of $\U^-$ generated by $\bff_\alpha$ $(\alpha\in \Phi_1^+)$.
For $S\subset \Phi^+_{\ov 1}$ with $S=\{\,\beta_1\prec \cdots \prec \beta_r\,\}$, we put
\begin{equation}\label{eq: monomial F_S}
{\bf f}_S = {\bf f}_{\beta_1}\cdots {\bf f}_{\beta_r},
\end{equation}  
where we assume that ${\bf f}_S=1$ when $S=\emptyset$.
Then $B_{K}=\{\,{\bf f}_S\,|\,S\subset \Phi^+_{\ov 1}\,\}$ is a $\Bbbk$-basis of $\mc K$, and 
\begin{equation}\label{eq:decomp of U^-}
 \U^- \cong \K\otimes \U^-_{m,n}\cong  \K \ot \U^-_{m|0}\ot \U^-_{0|n},
\end{equation}
as a $\Bbbk$-space by Proposition \ref{prop:PBW basis}, where the isomorphism is given by multiplication.

Let $\lambda\in P^+$ be given. 
Let $V_{m,n}(\lambda)$ be the irreducible $\U_{m,n}$-module with highest weight $\lambda$, and let $V_{m|0}(\la_+)$ (resp. $V_{0|n}(\la_-)$) the irreducible highest weight module over $\U_{m|0}$ (resp. $\U_{0|n}$) with highest weight $\la_+$ (resp. $\la_-$). 
Note that $V_{m,n}(\la)\cong V_{m|0}(\la_+)\otimes V_{0|n}(\la_-)$ as a $\U_{m|0}\ot\U_{0|n}$-module since $\U_{m,n}\cong \U_{m|0}\ot\U_{0|n}$. 

Let $\mc{P}$ be the subalgebra of $\U$ generated by $\U_{m,n}$ and $e_m$, and extend $V_{m,n}(\lambda)$ to a $\mc{P}$-module in an obvious way. 
Then we define 
\begin{equation*}
K(\lambda) = \U\otimes_{\mc{P}}V_{m,n}(\lambda).
\end{equation*}
We call $K(\la)$ the ($q$-deformed) {\it Kac module with highest weight $\la$}. Let $1_\la$ denote a highest weight vector of $K(\la)$ with weight $\la$. 
Since $\mc{P}\cong \U^-_{m,n}\ot \U^0 \ot \U^+$ as a $\Bbbk$-space,  
we have as a $\Bbbk$-space
\begin{equation} \label{eq: decomposition of Kac module}
 K(\la) \cong \K\ot V_{m,n}(\la)\cong \K\ot V_{m|0}(\la_+)\otimes V_{0|n}(\la_-).
\end{equation} 

\subsection{Crystal base of $K(\la)$}

Let $\la\in P^+$ be given. The $\Bbbk$-linear map $e'_m$ on $\U^-$ in \eqref{eq: derivation} induces a $\Bbbk$-linear map on $K(\la)$ in \cite[Section 4.2]{K14}.

We define a crystal base of $K(\la)$ in the same way as in Definition \ref{def:crystal for poly} where $\tilde{e}_i$ and $\tilde{f}_i$ on $V$ for $i\in I$ are given by the ones in Section \ref{eq:crystal base for homogeneous case} for $i\neq m$ and 
\begin{equation}\label{eq:eM fM for K}
\te_m u = e'_m(u), \ \ \ \tf_m u = f_m u.
\end{equation}
Then we have the following by \cite[Theorem 4.7 and Corollary 4.9]{K14}.
\begin{thm}\label{thm: crystal base of Kac module}
Let
\begin{equation*}
\begin{split}
\ms{L}({K(\lambda)})&=\sum_{r\geq 0,\, k_1,\ldots,k_r\in I}A_0 \tilde{x}_{k_1}\cdots\tilde{x}_{k_r}1_\lambda, \\
\ms{B}(K(\lambda))&=\{\,\pm \,{\tilde{x}_{k_1}\cdots\tilde{x}_{k_r}1_\lambda}\!\!\! \pmod{q\ms{L}({K(\lambda)})}\,|\,r\geq 0, k_1,\ldots,k_r\in I\,\}\setminus\{0\},
\end{split}
\end{equation*}
where $x=e, f$ for each $k_i$.
Then $\left(\ms{L}({K(\lambda)}),\ms{B}(K(\lambda))\right)$ is a crystal base of $K(\lambda)$. 
\end{thm}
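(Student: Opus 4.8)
The plan is to construct the candidate crystal base explicitly from the $\Bbbk$-space factorization $K(\la)\cong \K\ot V_{m|0}(\la_+)\ot V_{0|n}(\la_-)$ of \eqref{eq: decomposition of Kac module}, and then verify the five axioms of Definition \ref{def:crystal for poly}. For the odd subalgebra I would set $\ms{L}(\K)=\bigoplus_{S\subseteq \Phi^+_{\ov 1}}A_0\,\bff_S$ with $\ms{B}(\K)=\{\,\bff_S \bmod q\ms{L}(\K)\,\}$, and take
\begin{equation*}
L=\ms{L}(\K)\ot \ms{L}_{m|0}(\la_+)\ot \ms{L}_{0|n}(\la_-),\qquad
B=\ms{B}(\K)\ot \ms{B}_{m|0}(\la_+)\ot \ms{B}_{0|n}(\la_-),
\end{equation*}
using the crystal bases \eqref{eq:crystal for m|0 hw} and \eqref{eq:crystal for 0|n hw} of the two even factors. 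Axioms (1)--(3) are then immediate from the factorization, and the real work is to establish the invariance and combinatorial axioms (4)--(5) for the crystal operators.

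First I would dispose of the even operators $i\neq m$. Writing $f_i(\bff_S\,1_\la)$ in PBW form via the commutation rules of Lemma \ref{lem:commutation relation}, one sees that the even root vectors produced are pushed onto the $V_{m,n}(\la)$-factor while the odd ones remain in $\K$; the net effect is that the induced action of $\te_i,\tf_i$ ($i<m$) on $\ms{B}(\K)\ot \ms{B}_{m|0}(\la_+)$ and of $\te_i,\tf_i$ ($i>m$) on $\ms{B}(\K)\ot \ms{B}_{0|n}(\la_-)$ is governed by the tensor product rule of Proposition \ref{prop:tensor product rule}, with the remaining even factor a spectator. This reduces (4)--(5) for $i\neq m$ to the homogeneous crystal base theory of Section \ref{eq:crystal base for homogeneous case} applied to each even factor, together with the induced $\U_{m,n}$-crystal structure on $\K$.

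The crux, and the step I expect to be the main obstacle, is the odd operator $i=m$, acting by $\tf_m=f_m$ and $\te_m=e'_m$ as in \eqref{eq:eM fM for K} (the latter extended to $K(\la)$ as in \cite[Section 4.2]{K14}). Because $\alpha_m\in\Phi^+_{\ov 1}$ we have $\bff_{\alpha_m}^2=f_m^2=0$ by Lemma \ref{lem:commutation relation}, so the $m$-arrows are fermionic: each $b\in B$ is joined by $\tf_m$ to at most one partner, which is what yields the signed condition (5). The difficulty is that $f_m\bff_S$ is not in PBW order, and reordering via Lemma \ref{lem:commutation relation} produces correction terms $(q^{-1}-q)\bff_\gamma\bff_\delta$; one must track these carefully to show that, modulo $q\ms{L}(\K)$, multiplication by $f_m$ and the skew-derivation $e'_m$ induce mutually inverse bijections up to sign on $B$, and that the even parts extracted by $e'_m$ act correctly on $V_{m,n}(\la)$. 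Granting this, $L$ is stable under all $\te_i,\tf_i$ and $B$ is a signed basis of $L/qL$ stable up to sign and $0$, giving (4)--(5).

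Finally I would identify the concrete pair $(L,B)$ with the abstractly defined $(\ms{L}(K(\la)),\ms{B}(K(\la)))$ generated from $1_\la$. It suffices to check that $B$ is generated from the highest weight element $\bff_\emptyset\ot v_{\la_+}\ot v_{\la_-}$ by the crystal operators: the even operators sweep out the two even factors, while $\tf_m$ together with the ${\rm ad}_q(f_k)$-description \eqref{eq: root vectors} of the odd root vectors builds up all of $\ms{B}(\K)$. This yields $\ms{L}(K(\la))=L$ and $\ms{B}(K(\la))=B$, completing the proof and recovering \cite[Theorem 4.7 and Corollary 4.9]{K14}.
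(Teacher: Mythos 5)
Your route is genuinely different from the paper's, and it is incomplete at exactly the hard point. The paper does not verify the axioms of Definition \ref{def:crystal for poly} directly at all: it observes that $K(\la)^\tau$ is isomorphic to a Kac module over $U_q(\gl(m|n))$ via the isomorphism $\tau$ of \eqref{eq:iso tau standard}, invokes \cite[Theorem 4.7 and Corollary 4.9]{K14} to obtain a crystal base of $K(\la)^\tau$ in the sense of \cite{K14}, and then transfers it to the $\U$-setting by the same sign-bookkeeping argument as in Lemma \ref{lem:comparison with crystal for super case}, since $\tilde{F}_i$ and $\tf_i$ agree up to sign modulo $qL$; the lattice, the signed basis, and conditions (4)--(5) then carry over verbatim. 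What you propose is instead a from-scratch verification inside $\U$, i.e.\ essentially a re-proof of \cite[Theorem 4.7]{K14}.

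The genuine gap is your treatment of $i=m$: the statement that $e'_m$ and $f_m$ preserve $L$ and induce mutually inverse bijections up to sign on $B$ modulo $qL$ is announced (``Granting this, \dots'') but never proved, and it is precisely the technical heart of the theorem. Observing $\bff_{\alpha_m}^2=f_m^2=0$ only bounds the length of $m$-strings; it does not control the correction terms $(q^{-1}-q)\bff_\gamma\bff_\delta$ arising when $f_m\bff_S\,1_\la$ is reordered via Lemma \ref{lem:commutation relation}, nor the even root-vector contributions that land on the $V_{m,n}(\la)$-factor and must be shown to lie in $q\ms{L}(K(\la))$ --- this $A_0$-integrality analysis occupies a substantial part of \cite{K14} and cannot be skipped. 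A second, smaller gap: your reduction for $i\neq m$ to the tensor product rule of Proposition \ref{prop:tensor product rule} presupposes an isomorphism of modules over the even subalgebra of the shape $K(\la)\cong K(0)\ot V_{m,n}(\la)$ (a tensor identity), or a $B_q$-module argument of the kind the paper deploys later in the proof of Theorem \ref{thm:crystal base of U^-}; as stated, the factorization \eqref{eq: decomposition of Kac module} is only $\Bbbk$-linear, so ``governed by the tensor product rule'' is an assertion, not a consequence. Likewise the final identification of your explicit pair $(L,B)$ with the generated pair $(\ms{L}(K(\la)),\ms{B}(K(\la)))$ requires connectedness of the crystal, which is again part of what \cite{K14} establishes. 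If these steps were carried out you would have a self-contained proof, but at the level of detail given, the paper's two-line transfer through $\tau$ and Lemma \ref{lem:comparison with crystal for super case} is the complete argument and yours is an outline.
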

\pf We can check that $K(\la)^\tau$ is isomorphic to a Kac-module over $U_q(\gl(m|n))$ \cite[Section 2.5]{K14}. It is shown \cite[Theorem 4.7 and Corollary 4.9]{K14} that the pair $(L,B)$ is a crystal base of $K(\la)^\tau$ in the sense of \cite[Definition 4.6]{K14}, where
\begin{equation*}\label{eq:crystal base of Kac module}
\begin{split}
L&=\sum_{r\geq 0,\, k_1,\ldots,k_r\in I}A_0 \tilde{X}_{k_1}\cdots\tilde{X}_{k_r}1_\lambda, \\
B&=\{\,\pm \,{\tilde{X}_{k_1}\cdots\tilde{X}_{k_r}1_\lambda}\!\!\! \pmod{qL}\,|\,r\geq 0, k_1,\ldots,k_r\in I\,\}\setminus\{0\},
\end{split}
\end{equation*}
with $X=E, F$ for each $k_i$. 
    By the same argument as in the proof of Lemma \ref{lem:comparison with crystal for super case}, $(L,B)$ is a crystal base of $K(\la)$, where we can replace $\tilde{E}_i$ and $\tilde{F}_i$ with $\te_i$ and $\tf_i$, respectively.
\qed\smallskip

\begin{cor}\label{cor:crystal base of K(0)}
Under the above hypothesis, we have as an $A_0$-module
\begin{equation*}
\ms{L}(K(\la)) \cong \ms{L}(K(0))\ot \ms{L}_{m|0}(\la_+)\ot \ms{L}_{0|n}(\la_-), 
\end{equation*}
where $\ms{L}(K(0))$ is the $A_0$-span of $\bff_S  1_0$ for $S\subset \Phi^+_{\ov 1}$.
\end{cor}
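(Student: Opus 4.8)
The plan is to deduce the statement from the $\Bbbk$-space decomposition \eqref{eq: decomposition of Kac module}. Under the multiplication isomorphism $K(\la)\cong \K\ot V_{m|0}(\la_+)\ot V_{0|n}(\la_-)$ the highest weight vector $1_\la$ corresponds to $1_0\ot v_+\ot v_-$, where $v_\pm$ denote the highest weight vectors of $V_{m|0}(\la_+)$ and $V_{0|n}(\la_-)$. Writing $L'$ for the image of $\ms{L}(K(0))\ot\ms{L}_{m|0}(\la_+)\ot\ms{L}_{0|n}(\la_-)$, i.e.\ the $A_0$-submodule $\ms{L}(K(0))\cdot\ms{L}_{m|0}(\la_+)\cdot\ms{L}_{0|n}(\la_-)$ of $K(\la)$, I would reduce the corollary to the single equality of $A_0$-lattices $\ms{L}(K(\la))=L'$. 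Since \eqref{eq: decomposition of Kac module} already splits $K(\la)$ as a $\Bbbk$-space, $L'$ is automatically $A_0$-free of the product rank, so this lattice equality immediately upgrades to the asserted tensor-product isomorphism.

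For the inclusion $\ms{L}(K(\la))\subseteq L'$ I would verify that $1_\la\in L'$ and that $L'$ is stable under every crystal operator $\te_i,\tf_i$; since $\ms{L}(K(\la))$ is by definition the $A_0$-span of all crystal words applied to $1_\la$ (Theorem \ref{thm: crystal base of Kac module}), stability yields the inclusion. I would organize the verification by the node $i$. For the odd node $i=m$ the operators act on the $\K$-slot alone: by \eqref{eq:eM fM for K} we have $\tf_m=f_m$ and $\te_m=e'_m$, and since $\alpha_m$ is the $\prec$-minimal element of $\Phi^+$, left multiplication by $f_m$ takes $\bff_S$ to $\bff_{S\cup\{\alpha_m\}}$ (or to $0$) with unit coefficient, while the induced action of $e'_m$ on $K(\la)$ in the sense of \cite[Section 4.2]{K14} likewise preserves the $\K$-slot. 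For $i<m$ (resp.\ $i>m$) only the generators of $\U_{m|0}$ (resp.\ $\U_{0|n}$) enter, so I would aim to reduce the claim to stability of $\ms{L}(K(0))\cdot\ms{L}_{m|0}(\la_+)$ (resp.\ $\ms{L}(K(0))\cdot\ms{L}_{0|n}(\la_-)$); making this reduction precise is where the cross terms discussed below intervene.

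For the reverse inclusion $L'\subseteq\ms{L}(K(\la))$ I would show that $L'$ is generated from $1_\la$ by crystal words modulo $q\ms{L}(K(\la))$ and conclude by Nakayama's lemma: the operators $\tf_i$ with $i<m$ should generate $\ms{L}_{m|0}(\la_+)$ out of $v_+$, those with $i>m$ should generate $\ms{L}_{0|n}(\la_-)$ out of $v_-$, and $\tf_m$ together with the even operators should generate $\ms{L}(K(0))$ out of $1_0$, the latter because every $\bff_\beta$ with $\beta\in\Phi^+_{\ov 1}$ arises from $f_m$ by the iterated $\bq$-bracket in \eqref{eq: root vectors}. Matching these generators against a spanning set of $L'$ would then give the inclusion.

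The hard part is that no crystal operator acts diagonally on the three slots: for $i\neq m$ the generator $f_i$ does not commute with the odd root vectors $\bff_\alpha$ (Lemma \ref{lem:commutation relation}), so $\te_i,\tf_i$ genuinely entangle the $\K$-slot with the $V_{m|0}(\la_+)$ or $V_{0|n}(\la_-)$ slot, and the tensor product rule of Proposition \ref{prop:tensor product rule} contributes further cross terms and the signs $s_i$. The crux is to show that all such contributions lie in $q\ms{L}(K(\la))$, so that the factorization survives at $q=0$. I expect to dispose of this exactly as in Theorem \ref{thm: crystal base of Kac module}: pull the problem back through \eqref{eq:iso tau standard} to the Kac module $K(\la)^\tau$ over $U_q(\gl(m|n))$, where the explicit combinatorial description of $\ms{B}(K(\la))$ and its factorization into an odd part, a $\gl(m|0)$ part, and a $\gl(0|n)$ part are available from \cite[Corollary 4.9]{K14}, and then transport the resulting lattice identity back using the comparison $\tf_i\equiv\pm\tilde F_i\ \pmod{q\ms{L}(K(\la))}$ established in Lemma \ref{lem:comparison with crystal for super case}.
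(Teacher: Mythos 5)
Your proposal is correct in substance and ultimately draws on the same source as the paper, but it packages the argument quite differently. The paper's proof of Corollary \ref{cor:crystal base of K(0)} is a two-line citation: \cite[Theorem 4.7]{K14} already exhibits a crystal base of $K(\la)^\tau$ whose lattice is exactly $\ms{L}(K(0))\ot\ms{L}_{m|0}(\la_+)\ot\ms{L}_{0|n}(\la_-)$, and the uniqueness theorem \cite[Theorem 4.10]{K14} identifies that lattice with the crystal-word lattice of Theorem \ref{thm: crystal base of Kac module}; Lemma \ref{lem:comparison with crystal for super case} then transports the identity to the $\U$-setting. You instead re-derive the uniqueness step by hand: stability of $L'=\ms{L}(K(0))\cdot\ms{L}_{m|0}(\la_+)\cdot\ms{L}_{0|n}(\la_-)$ under all $\te_i,\tf_i$ gives $\ms{L}(K(\la))\subseteq L'$, and connectedness plus Nakayama gives the reverse inclusion --- precisely the ``standard argument'' the authors themselves run at the end of the proof of Theorem \ref{thm:crystal base of parabolic Verma}, so this is a viable substitute for the uniqueness citation, at the cost of redoing what \cite[Theorem 4.10]{K14} provides wholesale. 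Incidentally, your $i=m$ analysis is exact and not merely true modulo $q$: since $\bq$ is symmetric, $e'_m(\bff_\beta)=0$ for every odd $\beta\neq\alpha_m$, and since $\alpha_m$ is $\prec$-minimal, both $\tf_m=f_m\cdot$ and $\te_m=e'_m$ act purely on the $\K$-slot with unit coefficients.

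One step is wrong as literally written, though your closing deferral to \cite{K14} absorbs it. In the reverse inclusion you propose to generate the three slots separately (the $\tf_i$ with $i<m$ producing $\ms{L}_{m|0}(\la_+)$ from $v_+$, and so on). This cannot be run slot by slot, for the reason you yourself identify earlier: for $i\neq m$ the operators obey the tensor product rule of Proposition \ref{prop:tensor product rule} and may act on the $\K$-component instead of $b_\pm$, so reaching all of $\ms{B}(\K)\times\ms{B}_{m|0}(\la_+)\times\ms{B}_{0|n}(\la_-)$ from $1_0\ot v_+\ot v_-$ requires the entangled combinatorics of Proposition \ref{prop: description of BKla} together with the bicrystal decomposition \eqref{eq: BK} --- i.e.\ exactly the content of \cite[Corollary 4.9]{K14}, not independent generation of the factors. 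Similarly, the observation that each $\bff_\beta$ is an iterated $\bq$-bracket in $f_m$ shows $\bff_S 1_\la$ lies in the $\U^-$-orbit of $1_\la$, which is weaker than lying in the crystal-operator orbit modulo $q\ms{L}(K(\la))$. Once both the cross-term stability for $i\neq m$ and the connectedness are routed through \cite[Theorem 4.7, Corollary 4.9]{K14} via $\tau$ and Lemma \ref{lem:comparison with crystal for super case}, as you indicate in your final paragraph, the double-inclusion argument closes and yields the corollary.
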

\pf It follows from \cite[Theorem 4.7]{K14} and the uniqueness of the crystal base of $K(\la)$ \cite[Theorem 4.10]{K14}.
\qed\smallskip

By Corollary \ref{cor:crystal base of K(0)}, we may identify $\ms{L}(K(0))$ and $\ms{B}(K(0))$ with 
\begin{equation*}\label{eq:LK BK}
 \ms{L}(\K):=\bigoplus_{S\subset \Phi^+_{\ov 1}}A_0\bff_S,\quad \ms{B}(\K):=\{\,\pm {\bf f}_S \pmod{q\ms{L}(\K)}\,|\,S\subset \Phi^+_{\ov 1}\,\},
\end{equation*}
as an $A_0$-module and its signed $\Q$-basis, respectively. 
We can also deduce the following from \cite[Theorem 4.11]{K14}.
\begin{thm}\label{main result - compatibility}
 Let $\lambda\in \cP_{m|n}$ be given. Let $\pi_\lambda : K(\la) \longrightarrow V(\la)$ be the $\U$-module homomorphism such that $\pi_\la(1_\la)=v_\la$.
Then  
\begin{enumerate}
\item[\em (1)] $\pi_\la(\ms{L}(K(\la)))=\ms{L}(\la)$,

\item[\em (2)] $\ov{\pi}_\la(\ms{B}(K(\la)))= \ms{B}(\la) \cup\{0\}$, where $\ov{\pi}_\lambda : \ms{L}(K(\la))/q\ms{L}(K(\la)) \rightarrow \ms{L}(\la)/q\ms{L}(\la)$ is the induced $\Q$-linear map,

\item[\em (3)] $\ov{\pi}_\la$ restricts to a weight preserving bijection 
$$\ov{\pi}_\la : \{\,b\in \ms{B}(K(\la))\,|\,\ov{\pi}_\la(b)\neq 0 \,\} \longrightarrow \ms{B}(\la),$$ 
which commutes with $\te_i$ and $\tf_i$ for $i\in I$.
\end{enumerate}
\end{thm}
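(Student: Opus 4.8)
The plan is to deduce the statement from its $U_q(\gl(m|n))$-counterpart \cite[Theorem 4.11]{K14} by transporting everything through the isomorphism $\tau$ of \eqref{eq:iso tau standard}. First I would check that $\pi_\la$ corresponds under $\tau$ to the canonical projection $K(\la)^\tau \longrightarrow V(\la)^\tau$ of the super Kac module onto its irreducible head: both maps are characterized by sending the highest weight vector $1_\la$ to $v_\la$, and $\tau$ intertwines the two module structures. As recorded in the proofs of Theorems \ref{thm:crystal base of poly repn} and \ref{thm: crystal base of Kac module}, the pairs $(\ms{L}(\la),\ms{B}(\la))$ and $(\ms{L}(K(\la)),\ms{B}(K(\la)))$ coincide, as an $A_0$-lattice together with a signed $\Q$-basis, with the crystal bases of $V(\la)^\tau$ and $K(\la)^\tau$ built from the super crystal operators $\tilde{E}_i,\tilde{F}_i$.

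Given this, (1) and (2) are immediate: they refer only to the lattices, their signed bases modulo $q$, and the map $\pi_\la$, none of which is altered by $\tau$, so they follow verbatim from the lattice and basis assertions of \cite[Theorem 4.11]{K14}. For (3), that result already furnishes a weight-preserving bijection $\ov{\pi}_\la$ between $\{\,b\in\ms{B}(K(\la))\,|\,\ov{\pi}_\la(b)\neq 0\,\}$ and $\ms{B}(\la)$ which commutes with $\tilde{E}_i$ and $\tilde{F}_i$ for every $i\in I$, so it remains only to replace $\tilde{E}_i,\tilde{F}_i$ by $\te_i,\tf_i$. For $i\neq m$ this is routine: by Lemma \ref{lem:comparison with crystal for super case} (applied on $V(\la)$) and the analogous comparison in the proof of Theorem \ref{thm: crystal base of Kac module} (applied on $K(\la)$), one has $\te_i b \equiv \pm\,\tilde{E}_i b$ and $\tf_i b \equiv \pm\,\tilde{F}_i b \pmod{qL}$, where the sign is determined by weight data and the position in the $i$-string. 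Since $\ov{\pi}_\la$ is weight preserving and carries $\tilde{F}_i$-strings to $\tilde{F}_i$-strings, the signs attached to $b$ and to $\ov{\pi}_\la(b)$ agree, whence $\ov{\pi}_\la(\tf_i b)=\tf_i\,\ov{\pi}_\la(b)$ and similarly for $\te_i$.

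The delicate point, which I expect to be the main obstacle, is the odd index $i=m$, where the two modules do not even use the same recipe: by \eqref{eq:eM fM for K} one has $\te_m=e'_m$ on $K(\la)$, whereas on the polynomial module $\te_m=\eta(f_m)=q^{-1}k_me_m$. Commutation of $\ov{\pi}_\la$ with $\te_m$ is therefore not formal, and I would handle it by showing that on the respective crystal lattices modulo $q$ each of these two prescriptions induces, up to sign, the super operator $\tilde{E}_m$ used in \cite[Theorem 4.11]{K14}; this is precisely what the $i=m$ case of the comparison arguments provides, once one tracks that the relevant sign functions are again determined by weight data and so are matched by the weight-preserving $\ov{\pi}_\la$. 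The $\tf_m$-side is easier, since $\tf_m=f_m$ on both modules and $\pi_\la$ is a $\U$-module homomorphism, so $\ov{\pi}_\la$ trivially intertwines $\tf_m=f_m$. Combining the even and odd cases then yields the commutation asserted in (3).
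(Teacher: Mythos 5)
Your proposal is correct and takes essentially the same approach as the paper: the paper proves this theorem precisely by deducing it from \cite[Theorem 4.11]{K14}, which is the route you follow, with the passage from $\tilde{E}_i,\tilde{F}_i$ to $\te_i,\tf_i$ handled by the same comparison arguments as in Lemma \ref{lem:comparison with crystal for super case} and the proof of Theorem \ref{thm: crystal base of Kac module}, whose $i=m$ cases are already covered there. Your one slight overstatement is treating the odd index as a fresh obstacle: the genuinely delicate matching of $e'_m$ on $K(\la)$ with $q^{-1}k_me_m$ on $V(\la)$ under the projection is exactly the content of \cite[Theorem 4.11]{K14} itself (in the $U_q(\gl(m|n))$ setting), and since $\tau$ relates each $\U$-operator to its super counterpart by a sign acting on weight vectors, your plan of inducing each prescription from its super analogue up to sign, and then invoking that theorem, is precisely what the paper's citation accomplishes.
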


\section{Limit of crystals of Kac-modules}\label{sec:limit of Kac crystal}

\subsection{$\U$-crystals} \label{subsec: U-crystals}
Let us give some additional terminologies and conventions.
A {\em $\U$-crystal} is a set 
$B$ together with the maps ${\rm wt} : B \rightarrow P$,
$\varepsilon_i, \varphi_i: B \rightarrow \mathbb{Z}\cup\{-\infty\}$ and
$\te_i, \tf_i: B \rightarrow B\cup\{ {\bf 0} \}$ for $i\in I$ such that for $b\in B$,
\begin{itemize}
\item[(1)]  
$\varphi_i(b) =\langle {\rm wt}(b),\alpha^\vee_i \rangle +
\varepsilon_i(b)$  ($i\neq m$) and  
$\varphi_m(b) + \varepsilon_m(b)\in\{\,0,1\,\}$,

\item[(2)] $\varepsilon_i(\te_i b) = \varepsilon_i(b) - 1,\ \varphi_i(\te_i b) =
\varphi_i(b) + 1,\ {\rm wt}(\te_ib)={\rm wt}(b)+\alpha_i$ if $\te_i b \in B$,

\item[(3)] $\varepsilon_i(\tf_i b) = \varepsilon_i(b) + 1,\ \varphi_i(\tf_i b) =
\varphi_i(b) - 1,\ {\rm wt}({\tf_i}b)={\rm wt}(b)-\alpha_i$ if $\tf_i b \in B$,

\item[(4)] $\tf_i b = b'$ if and only if $b = \te_i b'$ for $b' \in B$,

\item[(5)] $\te_ib=\tf_ib={\bf 0} $ when $\varphi_i(b)=-\infty$,
\end{itemize}
where ${\bf 0}$ is a formal symbol and $-\infty$ is the smallest
element in $\Z\cup\{-\infty\}$ such that $-\infty+n=-\infty$
for all $n\in\Z$. As usual, a $\U$-crystal becomes an $I$-colored oriented graph, where $b\stackrel{i}{\rightarrow}b'$ if and only if $b'=\tf_{i}b$ for $b, b'\in B$ and $i\in I$.

For example, let $(L,B)$ be a crystal base of a $\U$-module $V=V(\la)$ or $K(\la)$ in Sections \ref{sec:polynomial repn}. Then $B/\{\pm 1\}$ is a $\U$-crystal with $\varepsilon_i(b)=\max\{\,k\,|\,\te_i^kb\neq {\bf 0}\,\}$ and $\varphi_i(b)=\max\{\,k\,|\,\tf_i^kb\neq {\bf 0}\,\}$ for $i\in I$ and  $b\in B/\{\pm 1\}$, where ${\bf 0}$ is the zero vector.
For simplicity, we often denote by $B$ (instead of $B/\{\pm 1\}$) the crystal associated to a crystal base $(L,B)$, where $B$ is a signed basis of $L/qL$.
For $\la \in P$, we denote by $T_{\la}=\{ t_\la \}$ be a $\U$-crystal such that ${\rm wt}(t_\la) = \la $, $\te_i t_\la = \tf_i t_\la = {\bf 0}$ and $\varepsilon_i(t_\la)=\varphi_i(t_\la)=-\infty$ for $i \in I$.

Let $B_1$ and $B_2$ be $\U$-crystals.
For $b_i \in B_i$ ($i=1,2$), we say that $b_1$ is equivalent to $b_2$, and write $b_1 \equiv b_2$ if there is an isomorphism of $\U$-crystals $\phi : C(b_1) \rightarrow C(b_2)$ such that $\phi(b_1) = b_2$, where $C(b_i)$ is the connected component of $b_i$ in $B_i$ for $i = 1, 2$.

\subsection{Crystal $\ms{B}(K(\la))$} \label{subsec: BKla}
Let us recall the crystal structure of $\ms{B}(K(\la))$ for $\la\in P^+$ \cite{K14}.
By Corollary \ref{cor:crystal base of K(0)}, we have a bijection
\begin{equation} \label{eq: iso bkla}
 \xymatrixcolsep{2pc}\xymatrixrowsep{0pc}\xymatrix{
 \ms{B}({K(\lambda)})   \ \ar@{->}[r] &\ \cP(\Phi^-_{\ov 1})\times \ms{B}_{m|0}(\la_+) \times \ms{B}_{0|n}(\la_-) \\
 \bff_S \ot b_+\ot b_- \ \ar@{|->}[r] &\ (-S,b_+,b_-)
 },
\end{equation}
where $\cP(\Phi^-_{\ov 1})$ is the power set of $\Phi^-_{\ov 1}=-\Phi^+_{\ov 1}$, and $-S=\{-\beta\,|\,\beta\in S\}$ for $S\subset \Phi^+_{\ov 1}$. 
We identify $\cP(\Phi^-_{\ov 1})$ with $\ms{B}(\K)$.

Let $\prec$ denote the linear order in Section \ref{subsec: PBW basis of U-} restricted on $\Phi^+_{\ov 1}$, and 
let $\prec'$ be a linear order on $\Phi^+_{\ov 1}$ such that $\alpha \prec' \beta$ if and only if $(a>c)$ or $(a=c, \ b>d)$.
for $\alpha, \beta\in \Phi^+_{\ov 1}$ with $\alpha=\de_a-\de_b$ and $\beta=\de_c-\de_d$. For $\alpha,\beta\in \Phi^-_{\ov 1}$, we define $\alpha\prec \beta$ (resp. $\alpha\prec' \beta$) if and only if $-\alpha\prec-\beta$ (resp. $-\alpha\prec'-\beta$). 
 
Let $S\in \cP(\Phi^-_{\ov 1})$ be given with $S=\{\,\beta_1\prec\ldots\prec\beta_r\,\}=\{\,\beta'_1\prec'\ldots\prec'\beta'_r\,\}$. 
Then $\te_i S$ and $\tf_iS$ for $i\in I$ is given as follows:

{\em Case 1}. For $i=m$, we have
\begin{equation*}\label{tilde e0  f0 action on K(0)}
\begin{split}
\te_m S&=
\begin{cases}
{S\setminus\{-\alpha_m\}} & \text{if $-\alpha_m\in S$}, \\
{\bf 0} & \text{if $-\alpha_m\not\in S$},
\end{cases} \ \  \ \
\tf_m S=
\begin{cases}
{S\cup\{-\alpha_m\}}  & \text{if $-\alpha_m\not\in S$}, \\
{\bf 0} & \text{if $-\alpha_m\in S$}.
\end{cases}
\end{split}
\end{equation*}
Here we understand $0$ as the zero vector in $\ms{L}(\K)/q\ms{L}(\K)$.

{\em Case 2}.
Suppose that $i\neq m$. First, we have for $k=1,\ldots,r$ 
\begin{equation*}
\begin{split}
\te_i \beta_k &=
\begin{cases}
\beta_k+\alpha_i & \text{if $\beta_k+\alpha_i\in \Phi^-_{\ov 1}$}, \\
{\bf 0} & \text{otherwise},
\end{cases} \ \ \ 
\tf_i \beta_k =
\begin{cases}
\beta_k-\alpha_i  & \text{if $\beta_k-\alpha_i\in \Phi^-_{\ov 1}$}, \\
{\bf 0} & \text{otherwise}.
\end{cases}
\end{split}
\end{equation*}
Next we identify $S$ with $\beta_1\ot\dots\ot\beta_r$ when $i<m$ and with $\beta'_1\ot\dots\ot\beta'_r$ when $i>m$.
Then we define $\te_i S$ and $\tf_iS$ by the tensor product rule given in Proposition \ref{prop:tensor product rule}.
Indeed, we have
\begin{equation} \label{eq: BK}
	\cP(\Phi^-_{\ov 1}) \cong \bigsqcup_{\substack{\ell(\la) \leq m \\ \ell(\la^t) \leq n}} \ms{B}_{m|0}(\la) \times \ms{B}_{0|n}(\la^t),
\end{equation}
as a $(\U_{m|0},\U_{0|n})$-bicrystal, where $\ell(\la)$ is the length and $\la^t$ is the transpose of the partition $\la$ (see \cite{DK,K07}).

Now, the crystal structure on $\ms{B}({K(\lambda)})$ can be described as follows.
\begin{prop}[{\cite[Proposition 5.1]{K14}}] \label{prop: description of BKla}
Let $(S,b_+,b_-)\in \ms{B}({K(\lambda)})$ be given. For $i\in I$ and $x=e,f$, we have
\begin{equation*}
\widetilde{x}_i(S,b_+,b_-)=
\begin{cases}
(S',b'_+,b_-) & \text{if $i<m$ and $\widetilde{x}_i(S
\otimes b_+ )=S'\otimes b'_+$,} \\
(S'',b_+,b_-'') & \text{if $i>m$ and $\widetilde{x}_i(S
\otimes b_- )=S''\otimes b''_-$},\\
(\widetilde{x}_mS, b_+,b_-) & \text{if $i=m$},
\end{cases}
\end{equation*}
where we assume that $\widetilde{x}_i(S,b_+,b_-)={\bf 0}$ if any
of its components on the right-hand side is ${\bf 0}$.
\end{prop}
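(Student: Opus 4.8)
The plan is to determine the three families of crystal operators separately, treating $i=m$ by a direct computation and reducing the cases $i<m$ and $i>m$ to the tensor product rule of Proposition \ref{prop:tensor product rule}. Throughout I use the $\Bbbk$-space decomposition $K(\la)\cong \K\ot V_{m|0}(\la_+)\ot V_{0|n}(\la_-)$ from \eqref{eq: decomposition of Kac module}, the crystal-lattice decomposition of Corollary \ref{cor:crystal base of K(0)}, and the identification \eqref{eq: iso bkla} of $\ms{B}(K(\la))$ with triples $(S,b_+,b_-)$; recall also that $\te_i,\tf_i$ for $i<m$ (resp.\ $i>m$) are the crystal operators for $\U_{m|0}$ (resp.\ $\U_{0|n}$), and that the $(\U_{m|0},\U_{0|n})$-bicrystal structure on $\ms{B}(\K)=\cP(\Phi^-_{\ov 1})$ is already fixed by \eqref{eq: BK} and the description in Case 2.

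First I would dispose of $i=m$. Since $\alpha_m=\de_m-\de_{m+1}$ is the $\prec$-minimal odd positive root and $\bff_{\alpha_m}=f_m$ by \eqref{eq: root vectors}, the factor $\bff_{\alpha_m}$ is leftmost in every monomial $\bff_S$ of \eqref{eq: monomial F_S}. Hence by \eqref{eq:eM fM for K} left multiplication gives $\tf_m(\bff_S\ot v)=\bff_{\{\alpha_m\}\cup S}\ot v$ when $\alpha_m\notin S$, while $\bff_{\alpha_m}^2=0$ (Lemma \ref{lem:commutation relation}) forces $\tf_m(\bff_S\ot v)=\mathbf{0}$ otherwise; dually the skew-derivation $e'_m=\te_m$ of \eqref{eq: derivation} deletes the leftmost $\bff_{\alpha_m}$ modulo $q\ms{L}(K(\la))$. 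In all cases $v\in V_{m|0}(\la_+)\ot V_{0|n}(\la_-)$ is untouched, so $\widetilde{x}_m(S,b_+,b_-)=(\widetilde{x}_m S,b_+,b_-)$, which is Case 1 with the passenger factors $b_+,b_-$.

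The cases $i\neq m$ rest on exhibiting $K(\la)$, after restriction to $\U_{m|0}$ (resp.\ $\U_{0|n}$), as a tensor product to which Proposition \ref{prop:tensor product rule} applies. The key structural input is that \eqref{eq: decomposition of Kac module} upgrades to an isomorphism of $\U_{m,n}$-modules $K(\la)\cong K(0)\ot V_{m,n}(\la)$ on which $\U_{m,n}$ acts through the coproduct \eqref{eq:comult-1}; this is the tensor identity for the induced module $K(\la)=\U\ot_{\mc{P}}V_{m,n}(\la)$, and one checks it on the generators $e_k,f_k$ ($k\in I_{\rm even}$) using the PBW straightening of Lemma \ref{lem:commutation relation}. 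Because $\U_{m,n}\cong \U_{m|0}\ot\U_{0|n}$ acts on $V_{m,n}(\la)=V_{m|0}(\la_+)\ot V_{0|n}(\la_-)$ through the respective factors, restricting to $\U_{m|0}$ makes $V_{0|n}(\la_-)$ a passenger and identifies the $\U_{m|0}$-crystal of $K(\la)$ with the tensor product $\ms{B}(\K)\ot\ms{B}_{m|0}(\la_+)$; applying Proposition \ref{prop:tensor product rule}(2) to $S\ot b_+$ (with $S$ read in the order $\prec$) yields $\widetilde{x}_i(S,b_+,b_-)=(S',b'_+,b_-)$. Symmetrically, restricting to $\U_{0|n}$ makes $V_{m|0}(\la_+)$ a passenger, and Proposition \ref{prop:tensor product rule}(3) applied to $S\ot b_-$ (with $S$ read in the order $\prec'$, whence the signs $s_i$) gives $\widetilde{x}_i(S,b_+,b_-)=(S'',b_+,b''_-)$.

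The main obstacle is to make the factorization of the previous paragraph work at the level of crystal bases rather than of bare modules: one must verify that the coproduct governing the $\U_{m|0}$- and $\U_{0|n}$-actions is compatible with the lattice $\ms{L}(K(\la))=\ms{L}(\K)\cdot\ms{L}_{m|0}(\la_+)\cdot\ms{L}_{0|n}(\la_-)$ of Corollary \ref{cor:crystal base of K(0)}, so that the induced operators are exactly the tensor product ones, with the correct signs in Proposition \ref{prop:tensor product rule}(3) (these signs are genuine, since the $\U_{0|n}$-operators are the twisted operators of Section \ref{subsec: crystal for 0|n}). A more economical alternative, which avoids re-deriving the tensor product rule in this induced setting, is to transport the whole question to $U_q(\gl(m|n))$ via \eqref{eq:iso tau standard}: there $K(\la)^\tau$ is a genuine Kac module, Proposition \ref{prop:tensor product rule} is replaced by the tensor product theorem of \cite{BKK}, and the argument is exactly \cite[Proposition 5.1]{K14}; Lemma \ref{lem:comparison with crystal for super case} then guarantees that the crystal structure so obtained agrees, up to the signs already present, with the one for $\U$.
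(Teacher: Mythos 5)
Your proposal is correct in substance, but note what the paper actually does with this statement: it gives no proof at all. The proposition is imported wholesale from \cite[Proposition 5.1]{K14} (hence the bracketed citation in its header), the passage from $U_q(\gl(m|n))$ to $\U$ being covered by the $\tau$-comparison \eqref{eq:iso tau standard} together with Lemma \ref{lem:comparison with crystal for super case}, exactly as in the proof of Theorem \ref{thm: crystal base of Kac module}: the crystal operators of $\U$ and of $U_q(\gl(m|n))$ agree modulo $q\ms{L}(K(\la))$ up to sign, and signs are invisible in a signed basis. So your closing ``economical alternative'' is precisely the paper's route, and that part of your proposal stands as is.

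Your direct argument is a genuinely different route, and much of it is sound: the $i=m$ case is correct and even exact rather than merely modulo $q$ (since $\alpha_m$ is $\prec$-minimal in $\Phi^+_{\ov 1}$, one has $f_m\bff_S=\bff_{S\cup\{\alpha_m\}}$ or $0$ by $f_m^2=0$, while $e'_m(\bff_\beta)=0$ for odd $\beta\neq\alpha_m$ and $e'_m$ annihilates $\U^-_{m|0}$ and $\U^-_{0|n}$, so $e'_m$ strips the leftmost $\bff_{\alpha_m}$ on the nose, up to a sign coming from $\bq(\alpha_m,\alpha_m)=-1$). The weak point is the reduction for $i\neq m$: the assertion that \eqref{eq: decomposition of Kac module} upgrades to a $\U_{m,n}$-module isomorphism $K(\la)\cong K(0)\ot V_{m,n}(\la)$ \emph{compatible with the lattice of Corollary \ref{cor:crystal base of K(0)} and with the crystal operators} is exactly where all the work of \cite{K14} lies; ``one checks it on the generators'' is not a proof. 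The paper's own proof of Theorem \ref{thm:crystal base of U^-} shows what such a verification costs even in the $\la=0$-type situation: establishing the $B_q$-module isomorphism \eqref{eq: K dot U- isom K ot U- for m|0} requires a genuine computation with $e'_i$ and $e''_i$ on $\K$ (in particular $e''_i(\bff_\beta)=0$ for $i<m$), not just PBW straightening via Lemma \ref{lem:commutation relation}. For $i>m$ there is the additional wrinkle you gesture at but do not resolve: the $\U_{0|n}$-operators are the twisted ($q=\infty$-type) ones of Section \ref{subsec: crystal for 0|n}, so the signs $s_i$ of Proposition \ref{prop:tensor product rule}(3) must be tracked through the identification of $S$ with $\beta'_1\ot\cdots\ot\beta'_r$ in the order $\prec'$. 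Since you flag this compatibility as the main obstacle and then fall back on the \cite{K14} transport, the proposal as a whole is acceptable, with the understanding that its first half is an outline whose crucial step is supplied by the citation rather than by the sketch.
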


Recall from \cite{Kas93} that $\ms{B}_{m|0}(\la_+)$ and $\ms{B}_{0|n}(\la_-)$ can be viewed as subscrystals of $\ms{B}_{m|0}(\infty) \ot T_{\la_+}$ and $\ms{B}_{0|n}(\infty)\ot T_{\la_-}$, respectively as follows:
	\begin{equation} \label{eq: iso 0nla}
	\begin{split}
		\ms{B}_{m|0}(\la_+) &\simeq \{ \, b_+ \ot t_{\la_+} \in \ms{B}_{m|0}(\infty) \ot T_{\la_+} \, | \, \varepsilon_i^*(b_+) \leq \langle \la_+ , \alpha_i^\vee \rangle \text{  for } 1 \leq i < m  \, \}, \\
	\ms{B}_{0|n}(\la_-) &\simeq \{ \, b_- \ot t_{\la_-} \in \ms{B}_{0|n}(\infty) \ot T_{\la_-} \, | \, \varepsilon_i^*(b_-) \leq \langle \la_- , \alpha_i^\vee \rangle \text{  for } m < i \leq \ell-1 \, \},
	\end{split}
	\end{equation}
where $*$ denotes the involution on $\ms{B}_{m|0}(\infty)$ and $\ms{B}_{0|n}(\infty)$ \cite[Theorem 2.1.1]{Kas93} and $\varepsilon_i^*(b)=\varepsilon_i(b^*)$.

\subsection{Crystal $\ms{B}(K(\infty))$}\label{subsec:limit of B(K(la))}

For $\la,\mu\in P^+$, we define $\la<\mu$ if and only if $\mu-\la=\nu\in P^+$.

Let $b=(S,b_+,b_-)\in \ms{B}({K(\lambda)})$ be given. 
We observe that
\begin{equation}\label{eq:theta embedding}
 b_+={X}v_{\la_+},\quad S\ot b_-={Y}(S_0\ot v_{\la_-}),
\end{equation}
where ${X}$ is a product of $\tf_i$'s for $i<m$, ${Y}$ is a product of $\tf_i$'s for $i>m$, and $S_0\subset \Phi^-_{\ov 1}$ such that $\te_i(S_0\ot v_{\la_-})={\bf 0}$ for all $i>m$. 
Here we regard $S\in \ms{B}_{0|n}(\eta) \subset \ms{B}(\K)$ for some $\eta\in P^+$ as an element of a $\U_{0|n}$-crystal, and hence $S\ot b_-\in \ms{B}_{0|n}(\xi)\subset \ms{B}_{0|n}(\eta)\ot \ms{B}_{0|n}(\la_-)$ for some $\xi\in P^+$.
By Proposition \ref{prop: description of BKla}, we may write
\begin{equation}\label{eq:theta embedding 2}
 b=(S,b_+,b_-)=Y(S_0,Xv_{\la_+},v_{\la_-}).
\end{equation}

For $\la < \mu$, we define a map
\begin{equation*} \label{eq: Theta map}
 \xymatrixcolsep{2pc}\xymatrixrowsep{0pc}\xymatrix{
 \Theta_{\la,\mu} : \ms{B}({K(\la)})  \ \ar@{->}[r] &\ \ms{B}({K(\mu)}) \\
 \quad (S,b_+,b_-) \ \ar@{|->}[r] &\ (S',b'_+,b'_-)
 },
\end{equation*}
by $b'_+={X}v_{\mu_+}$ and $S'\ot b'_-={Y}(S_0\ot v_{\mu_-})$ where ${X}$, $S_0$ and ${Y}$ are given in \eqref{eq:theta embedding}. 
Note that $\Theta_{\la,\mu}$ is a well-defined injective map, and 
for $\la<\mu<\nu$
\begin{equation}\label{eq: transitive of Theta}
 \Theta_{\mu,\nu}\circ\Theta_{\la,\mu}=\Theta_{\la,\nu}.
\end{equation}

\begin{lem} \label{lem: crystal equiv on K0}
Let $i,j\in I$ such that $i < m < j$.
Then we have
\begin{enumerate}[\em (1)]
	\item $\tf_i \tf_j S = \tf_j \tf_i S$,
	
	\item if $\tf_i S \neq 0$, then $\tf_i S \equiv S$ as elements of $\U_{0|n}$-crystals,
	\vskip 1mm
	
	\item if $\tf_j S \neq 0$, then $\tf_j S \equiv S$ as elements of $\U_{m|0}$-crystals.
\end{enumerate}
\end{lem}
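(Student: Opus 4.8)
The plan is to deduce all three parts from the $(\U_{m|0},\U_{0|n})$-bicrystal decomposition \eqref{eq: BK}, which carries all of the real content. First I would fix the component containing $S$ and write $S=(P,Q)$ under \eqref{eq: BK}, with $P\in\ms{B}_{m|0}(\la)$ and $Q\in\ms{B}_{0|n}(\la^t)$ for the unique partition $\la$ (with $\ell(\la)\le m$ and $\ell(\la^t)\le n$) indexing that component. The crucial feature I would invoke is that, although the $\U_{m|0}$-operators $\te_i,\tf_i$ ($i<m$) are defined through the order $\prec$ while the $\U_{0|n}$-operators $\te_j,\tf_j$ ($j>m$) are defined through the order $\prec'$, under the identification \eqref{eq: BK} the former act only on the factor $P$ and the latter only on $Q$, and each piece $\ms{B}_{m|0}(\la)\times\ms{B}_{0|n}(\la^t)$ is stable under both families.

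Granting this, for (1) I would simply observe that, for $i<m<j$, the operators $\tf_i$ and $\tf_j$ modify disjoint factors of $(P,Q)$, so $\tf_i\tf_j S=(\tf_iP,\tf_jQ)=\tf_j\tf_iS$; in the degenerate cases where $\tf_iP=\mathbf 0$ or $\tf_jQ=\mathbf 0$, neither operator affects the other factor, so both composites vanish. For (2), assuming $\tf_iS\neq 0$ (hence $\tf_iP\neq\mathbf 0$), I note that $\tf_iS=(\tf_iP,Q)$ stays in the same piece; since the $\U_{0|n}$-operators fix the first coordinate and $\ms{B}_{0|n}(\la^t)$ is a connected (highest weight) crystal, the $\U_{0|n}$-connected components of $S$ and of $\tf_iS$ are $\{P\}\times\ms{B}_{0|n}(\la^t)$ and $\{\tf_iP\}\times\ms{B}_{0|n}(\la^t)$, respectively. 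The bijection $(\tf_iP,Q')\mapsto(P,Q')$ is the identity on the second factor, hence preserves the $\U_{0|n}$-weight and commutes with $\te_j,\tf_j$ and $\varepsilon_j,\varphi_j$ ($j>m$); it therefore realizes a $\U_{0|n}$-crystal isomorphism carrying $\tf_iS$ to $S$, which gives $\tf_iS\equiv S$. Part (3) I would obtain by the mirror-image argument, using the connectedness of $\ms{B}_{m|0}(\la)$ and the map $(P',\tf_jQ)\mapsto(P',Q)$.

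I expect the only genuine obstacle to be packaged entirely inside \eqref{eq: BK}: reconciling the two orders $\prec$ and $\prec'$ so that the two crystal structures commute and act factorwise is the nontrivial combinatorial statement (an RSK-type bijection, cf.\ \cite{DK,K07}). Since that identification is already available, the three assertions here are formal consequences of the general principle that two commuting crystal actions on a product act on separate factors, and that a single operator from one side induces an isomorphism of connected components for the other.
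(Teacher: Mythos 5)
Your proof is correct and follows essentially the same route as the paper: the paper's own proof simply identifies $\ms{B}(\K)=\cP(\Phi^-_{\ov 1})$ with $m\times n$ binary matrices and invokes the $(\gl_m,\gl_n)$-bicrystal structure of \cite{DK,K07}, which is exactly what the decomposition \eqref{eq: BK} packages. Your write-up merely makes explicit the factorwise action and the component-isomorphism argument that the paper leaves implicit.
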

\pf We may regard $\ms{B}(\K)=\cP(\Phi^-_{\ov 1})$ as the set of $m\times n$ binary matrices, where $S\in \cP(\Phi^-_{\ov 1})$ is identified with the matrix $M=(m_{ab})$ ($1\le a\le m<b\le \ell$) with $m_{ab}=1$ if and only if $-\de_a+\de_b\in S$. Then we may apply the $(\gl_m,\gl_n)$-bicrystal structure on $\ms{B}(\K)$ (see \cite{DK,K07}).
\qed 

\begin{lem}\label{lem:embedding Theta}
Under the above hypothesis, we have 
\begin{enumerate}[\em (1)]
\item if $\tf_ib\neq {\bf 0}$ for some $i\neq m$, then 
$\Theta_{\la,\mu}(\tf_ib)=\tf_i\Theta_{\la,\mu}(b)$,

\item if $\tf_mb\neq {\bf 0}$ and $\tf_m S_0 \neq 0$, then $\Theta_{\la,\mu}(\tf_mb)=\tf_m \Theta_{\la,\mu}(b)\neq {\bf 0}$ for all $\la<\mu$,

\item if $\tf_mb\neq {\bf 0}$ and $\tf_m S_0 = 0$, then there exists $M_b\in \Z_+$ such that $\tf_m \Theta_{\la,\mu}(b) = {\bf 0}$ for $\mu$ with $\langle \mu,\alpha^\vee_{m+1}\rangle>M_b$.

\end{enumerate}
\end{lem}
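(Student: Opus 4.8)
The plan is to compute everything through the explicit parametrization \eqref{eq:theta embedding 2}, writing $b=Y(S_0,Xv_{\la_+},v_{\la_-})$ and $\Theta_{\la,\mu}(b)=Y(S_0,Xv_{\mu_+},v_{\mu_-})$, where $X$ (resp. $Y$) is a word in $\tf_i$ with $i<m$ (resp. $i>m$) and $S_0$ is the $\U_{0|n}$-highest weight part with $\te_j(S_0\ot v_{\la_-})={\bf 0}$ for $j>m$; the crystal operators act as in Proposition \ref{prop: description of BKla}. First I would record the auxiliary fact that $\tf_i$ ($i<m$) and $\tf_j$ ($j>m$) commute on $\ms{B}(K(\la))$: since these act on the $\U_{m|0}$- and $\U_{0|n}$-factors of the bicrystal $\ms{B}(\K)$ of \eqref{eq: BK} respectively (together with $b_+$, resp. $b_-$), the commutativity reduces to Lemma \ref{lem: crystal equiv on K0}, which guarantees that applying an operator of one colour does not alter the crystal class seen by the other. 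This lets me push any $\tf_i$ with $i\neq m$ through $Y$ so that it acts on the inner configuration $(S_0,Xv_{\la_+},v_{\la_-})$.

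For (1) with $i>m$ the argument is then immediate: $\tf_i$ leaves $b_+$ untouched and only lengthens the $\U_{0|n}$-word, so the recipe of $\tf_ib$ is $(X,S_0,\tf_iY)$; applying $\Theta_{\la,\mu}$, which by definition applies the same $\U_{0|n}$-word to $v_{\mu_-}$, gives $\Theta_{\la,\mu}(\tf_ib)=(\tf_iY)(S_0\ot v_{\mu_-})=\tf_i\Theta_{\la,\mu}(b)$. For (1) with $i<m$, after pushing $\tf_i$ onto $(S_0,Xv_{\la_+})$ I would invoke the stabilization $\ms{B}_{m|0}(\la_+)\hookrightarrow \ms{B}_{m|0}(\infty)\ot T_{\la_+}$ from \eqref{eq: iso 0nla}: replacing $v_{\la_+}$ by $v_{\mu_+}$ changes only the $T$-factor, so $\tf_i$ acts identically on the $\ms{B}_{m|0}(\infty)$-part whenever $\tf_ib\neq{\bf 0}$; together with Lemma \ref{lem: crystal equiv on K0}(2) (which ensures that if $\tf_i$ modifies $S_0$ it does not disturb the $\U_{0|n}$-highest weight condition), this yields $\Theta_{\la,\mu}(\tf_ib)=\tf_i\Theta_{\la,\mu}(b)$.

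The substance of the lemma is the case $i=m$, where by Case 1 of Proposition \ref{prop: description of BKla} the operator $\tf_m$ simply adjoins the odd root $-\alpha_m=\de_{m+1}-\de_m$ to the $S$-component. I would work in the $m\times n$ matrix model of $\ms{B}(\K)$ used in the proof of Lemma \ref{lem: crystal equiv on K0}, in which $-\alpha_m$ occupies the slot $(a,b)=(m,m+1)$ and the operators $\tf_{m+k}$ ($k\ge1$) shift entries among the columns $>m$; crucially no $\tf_j$ with $j>m$ can fill row $m$, column $m+1$, since that would require the colour $m$. For (2), where $-\alpha_m\notin S_0$, this slot stays empty throughout $Y$, so $-\alpha_m$ is inert: the recipe of $\tf_mb$ is $(X,S_0\cup\{-\alpha_m\},Y)=(X,\tf_mS_0,Y)$ and, since $Y$ can never re-create the entry $(m,m+1)$, we get $-\alpha_m\notin S'$ and hence $\Theta_{\la,\mu}(\tf_mb)=\tf_m\Theta_{\la,\mu}(b)\neq{\bf 0}$ for every $\mu$.

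The genuinely delicate point is (3), where $-\alpha_m\in S_0$ while $\tf_mb\neq{\bf 0}$, i.e. the entry $(m,m+1)$ present in $S_0$ has been vacated in $S$. This forces $Y$ to contain an operator $\tf_{m+1}$ that acts on $S_0$ (moving that entry into column $m+2$) rather than on $v_{\la_-}$. The plan is to analyse the $\U_{0|n}$-tensor rule of Proposition \ref{prop:tensor product rule}(3): since $\varphi_{m+1}(v_{\mu_-})$ grows with $\langle\mu,\alpha^\vee_{m+1}\rangle$ while the values of $\varepsilon_{m+1}$ on the intermediate configurations built from $S_0$ stay bounded by the fixed length of $Y$, once $\langle\mu,\alpha^\vee_{m+1}\rangle$ exceeds this bound every $\tf_{m+1}$ in $Y$ is diverted onto the $v_{\mu_-}$-factor. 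Then the entry $(m,m+1)$ of $S_0$ is never moved, so $-\alpha_m\in S'$ and $\tf_m\Theta_{\la,\mu}(b)={\bf 0}$; this threshold is the required $M_b$. I expect this last step — making precise that a sufficiently large $\alpha^\vee_{m+1}$-gap pushes all the relevant lowering operators onto $v_{\mu_-}$, and extracting an explicit $M_b$ — to be the main obstacle, since it requires tracking the signatures in the iterated tensor rule rather than in a single application.
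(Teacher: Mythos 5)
Your proposal is correct and follows essentially the same route as the paper's own proof: for (1) you use the bicrystal commutation of Lemma \ref{lem: crystal equiv on K0} plus the $e$-strict stabilization \eqref{eq: iso 0nla} to see that the routing of $\tf_i$ is unchanged when $v_{\la_\pm}$ is replaced by $v_{\mu_\pm}$, and for (2)--(3) you use exactly the paper's mechanism that only $\tf_{m+1}$ can touch the slot $(m,m+1)$, with all $\tf_{m+1}$-factors of $Y$ diverted onto $v_{\mu_-}$ once $\langle\mu,\alpha^\vee_{m+1}\rangle$ exceeds a bound depending on $b$ (the paper's decomposition $Y(S_0\ot v_{\mu_-})=(Y_1S_0)\ot(Y_2v_{\mu_-})$ with $Y_1$ free of $\tf_{m+1}$). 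The one point you leave implicit that the paper states explicitly is that in (2) the datum $\tf_mS_0$ is again $\U_{0|n}$-highest, i.e. $\te_j((\tf_mS_0)\ot v_{\la_-})={\bf 0}$ for all $j>m$ — needed for $(S_0\cup\{-\alpha_m\},Xv_{\la_+},v_{\la_-})$ to be a legitimate normal form to which $\Theta_{\la,\mu}$ applies — though this is routine since the extra $+$ contributed by $-\alpha_m$ to the $(m{+}1)$-signature cannot increase $\varepsilon_{m+1}$, and your ``inertness'' assertion is otherwise the same (equally compressed) claim the paper makes by citing Propositions \ref{prop:tensor product rule}(3) and \ref{prop: description of BKla}.
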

\pf 
(1) Let $b=(S,b_+,b_-)$ be given such that $\tf_ib\neq {\bf 0}$. 
We have $b=Y(S_0,Xv_{\la_+},v_{\la_-})$ by \eqref{eq:theta embedding}.
If $i>m$, then it follows immediately from Proposition \ref{prop: description of BKla} and the definition of $\Theta_{\la,\mu}$ that $\Theta_{\la,\mu}(\tf_ib)=\tf_i\Theta_{\la,\mu}(b)$.
So we assume that $i < m$.

Suppose that $\tf_i \left( S \ot b_+ \right) = \left( \tf_i S \right) \ot b_+$, which is equivalent to 
\begin{equation} \label{eq:fi S0 ot la+}
	\tf_i(S_0\ot X v_{\la_+})=\left(\tf_iS_0\right) \ot  X v_{\la_+}
\end{equation}
since $S\equiv S_0$ by Lemma \ref{lem: crystal equiv on K0}(3).
Since embedding of $\ms{B}_{m|0}(\la_+)$ into $\ms{B}_{m|0}(\mu_+) \ot T_{\la_+ - \mu_+}$ is $e$-strict
(see \cite[Lemma 7.1.2]{Kas00}), we have $\varepsilon_i(b_+) = \varepsilon_i(b_+')$, and hence
\begin{equation} \label{eq:fi S0 ot mu}
	\tf_i \left( S_0 \ot Xv_{\mu_+} \right) = \left( \tf_i S_0 \right) \ot Xv_{\mu_+}.
\end{equation}
Since
$\tf_ib
  = Y(\tf_iS_0,Xv_{\la_+},v_{\la_-})$ by Lemma \ref{lem: crystal equiv on K0} and \eqref{eq:fi S0 ot la+},
it follows from Lemma \ref{lem: crystal equiv on K0} and \eqref{eq:fi S0 ot mu} that
\begin{align*}
 \Theta_{\la,\mu}(\tf_ib)&
  = Y(\tf_i(S_0,Xv_{\mu_+},v_{\mu_-}))
  = \tf_i(Y(S_0,Xv_{\mu_+},v_{\mu_-}))
  =\tf_i\Theta_{\la,\mu}(b).
\end{align*}
The proof for the case when $\tf_i \left( S \ot b_+ \right) = S \ot \left( \tf_ib_+ \right)$ is similar.

(2) Suppose that $b=Y(S_0,Xv_{\la_+},v_{\la_-})=(Y_1S_0,Xv_{\la_+},Y_2v_{\la_-})$ for some $Y_1$ and $Y_2$, which are products of $\tf_i$'s for $i > m$.

Since $\tf_mb\neq {\bf 0}$ and $\tf_m S_0\neq {\bf 0}$, that is, $-\alpha_m \notin S$ and $-\alpha_m \notin S_0$, we see from Propositions \ref{prop:tensor product rule}(3), \ref{prop: description of BKla} and the crystal structure on $\cP(\Phi^-_{\ov 1})$ that $Y_1\tf_mS_0=\tf_mY_1S_0$. Hence
\begin{equation}\label{eq:f_m commuting}
\begin{split}
 \tf_m (Y_1S_0,Xv_{\la_+},Y_2v_{\la_-})
 &=(\tf_mY_1S_0,Xv_{\la_+},Y_2v_{\la_-}) \\
 &=(Y_1\tf_mS_0,Xv_{\la_+},Y_2v_{\la_-})\\
 &=Y(\tf_mS_0,Xv_{\la_+},v_{\la_-}),
\end{split}
\end{equation}
and the equation \eqref{eq:f_m commuting} is also true when $\la_{\pm}$ is replaced by $\mu_\pm$.
Since $\te_i((\tf_mS_0)\ot v_{\la_-})={\bf 0}$ for all $i>m$, we have by \eqref{eq:f_m commuting}
\begin{align*}
 \Theta_{\la,\mu}(\tf_m b)
 =\Theta_{\la,\mu}(Y(\tf_mS_0,Xv_{\la_+},v_{\la_-}))
 =Y(\tf_mS_0,Xv_{\mu_+},v_{\mu_-})
 =\tf_m\Theta_{\la,\mu}(b).
\end{align*}

(3) We may take $\varphi_{m+1}(v_{\mu_-}) = \langle \mu_- ,\alpha^\vee_{m+1}\rangle$ to be large enough so that ${Y}(S_0 \otimes v_{\mu_-}) = ({Y}_1 S_0) \otimes ({Y}_2 v_{\mu_-})$ for some $Y_1$ and $Y_2$, which are products of $\tf_i$'s for $i > m$, and $Y_1$ has no factor $\tf_{m+1}$ (recall Proposition \ref{prop:tensor product rule}).
Since $\tf_mS_0={\bf 0}$ and $Y_1$ has no factor $\tf_{m+1}$, we have $\tf_m Y_1S_0=Y_1\tf_mS_0={\bf 0}$ and hence
\begin{equation*}
 \tf_m \Theta_{\la, \mu}(b)
 = \tf_m (Y_1S_0,Xv_{\mu_+},Y_2v_{\mu_-})
 =  (\tf_m Y_1S_0,Xv_{\mu_+},Y_2v_{\mu_-})
 =  {\bf 0}.
\end{equation*}
\qed

\begin{rem}
{\rm Let $b\in \ms{B}(K(\la))$ be such that $\tf_m b\neq {\bf 0}$. Due to Lemma \ref{lem:embedding Theta}(3), it may happen $\tf_m \Theta_{\la,\mu}(b)={\bf 0}$ for some $\mu > \la$, and hence we have $\Theta_{\la,\mu}(\tf_m b)\neq \tf_m \Theta_{\la,\mu}(b)$ in general.
}
\end{rem}

Let $b^\la \in \ms{B}(K(\la))$ and $b^\mu \in \ms{B}(K(\mu))$ ($\la,\mu \in P^+$) be given. We write $b^\la \sim b^\mu$ if there exists $\nu \in P^+$ such that
$\la,\mu<\nu$ and $\Theta_{\la,\nu}(b^\la) = \Theta_{\mu,\nu}(b^\mu)$. It is not difficult to see that $\sim$ defines an equivalence relation on 
$\bigsqcup_{\la\in P^+}\ms{B}(K(\la))$.

\begin{lem} \label{lem: equivalence condition}
For $b^\la \in \ms{B}(K(\la))$ and $b^\mu \in \ms{B}(K(\mu))$, let $b^\la=Y^\la (S_0^\la,X^\la v_{\la_+},v_{\la_-})$, $b^\mu=Y^\mu (S_0^\mu,X^\mu v_{\mu_+},v_{\mu_-})$ as in \eqref{eq:theta embedding 2}.
Then $b^\la \sim b^\mu$ is equivalent to
\begin{equation*} \label{eq: equivalence condition}
	 S^\la_0 = S^\mu_0 , \quad X^\la u_+ = X^\mu u_+, \quad Y^\la u_- = Y^\mu u_-,
\end{equation*}
where $u_\pm$ denotes the highest weight elements $1$ in $\ms{B}_{m|0}(\infty)$ and $\ms{B}_{0|n}(\infty)$, respectively.
\end{lem}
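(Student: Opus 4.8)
The plan is to track the three components of $\Theta_{\la,\nu}(b^\la)$ separately, using the stable embeddings of $\ms{B}_{m|0}(\nu_+)$ and $\ms{B}_{0|n}(\nu_-)$ into $\ms{B}_{m|0}(\infty)\ot T_{\nu_+}$ and $\ms{B}_{0|n}(\infty)\ot T_{\nu_-}$ from \eqref{eq: iso 0nla}, and to show that once $\nu$ is sufficiently dominant the equality $\Theta_{\la,\nu}(b^\la)=\Theta_{\mu,\nu}(b^\mu)$ becomes equivalent to the three stated equalities, which are themselves independent of $\nu$. First I would record the effect of $\Theta_{\la,\nu}$ on components: by \eqref{eq:theta embedding 2} and Proposition \ref{prop: description of BKla}, the operators $\tf_i$ $(i>m)$ making up $Y^\la$ act only on the $\ms{B}(\K)$- and $\U_{0|n}$-factors and fix the $\ms{B}_{m|0}$-factor, so
\begin{equation*}
\Theta_{\la,\nu}(b^\la) = Y^\la(S_0^\la,\, X^\la v_{\nu_+},\, v_{\nu_-}),
\end{equation*}
whose middle component is $X^\la v_{\nu_+}$ and whose $(\ms{B}(\K),\U_{0|n})$-part is $Y^\la(S_0^\la\ot v_{\nu_-})$.

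For the $\ms{B}_{m|0}$-component I would use that, for $\nu_+$ dominant enough that $\varepsilon_i^*(X^\la u_+)\le\langle\nu_+,\alpha_i^\vee\rangle$ for all $i<m$, the element $X^\la v_{\nu_+}$ corresponds under \eqref{eq: iso 0nla} to $(X^\la u_+)\ot t_{\nu_+}$; here $\tf_i(b\ot t_{\nu_+})=(\tf_i b)\ot t_{\nu_+}$ by the tensor product rule of Proposition \ref{prop:tensor product rule}(2), since $\varepsilon_i(t_{\nu_+})=-\infty$. As these embeddings are injective, $X^\la v_{\nu_+}=X^\mu v_{\nu_+}$ if and only if $X^\la u_+=X^\mu u_+$. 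For the $(\ms{B}(\K),\U_{0|n})$-part I would argue that, for $\nu_-$ large, $S_0^\la\ot v_{\nu_-}$ is $\U_{0|n}$-highest weight (the defining condition $\te_i(S_0^\la\ot v_{\la_-})={\bf 0}$ persists as $\nu_-\ge\la_-$), so it generates a component isomorphic to $\ms{B}_{0|n}(\xi^\la)$ with $\xi^\la={\rm wt}(S_0^\la)+\nu_-$. Embedding this component into $\ms{B}_{0|n}(\infty)\ot T_{\xi^\la}$ sends its highest weight vector to $u_-\ot t_{\xi^\la}$, hence sends $Y^\la(S_0^\la\ot v_{\nu_-})$ to $(Y^\la u_-)\ot t_{\xi^\la}$ (up to sign, which is immaterial on $B/\{\pm1\}$). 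Since the highest weight vector of a connected component is unique and recovers $S_0^\la$ as its $\ms{B}(\K)$-factor, the assignment $Y^\la(S_0^\la\ot v_{\nu_-})\mapsto(S_0^\la,Y^\la u_-)$ is injective, so $Y^\la(S_0^\la\ot v_{\nu_-})=Y^\mu(S_0^\mu\ot v_{\nu_-})$ if and only if $S_0^\la=S_0^\mu$ and $Y^\la u_-=Y^\mu u_-$.

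Combining the three, for $\nu$ sufficiently dominant we get $\Theta_{\la,\nu}(b^\la)=\Theta_{\mu,\nu}(b^\mu)$ if and only if the three equalities hold. One direction of the lemma is then immediate: given the equalities, I choose such a $\nu$ with $\la,\mu<\nu$, whence $b^\la\sim b^\mu$. For the converse, if $b^\la\sim b^\mu$ via some $\nu_0$, I would promote $\nu_0$ to a large $\nu$ using transitivity \eqref{eq: transitive of Theta}: applying $\Theta_{\nu_0,\nu}$ to $\Theta_{\la,\nu_0}(b^\la)=\Theta_{\mu,\nu_0}(b^\mu)$ yields $\Theta_{\la,\nu}(b^\la)=\Theta_{\mu,\nu}(b^\mu)$ at the large $\nu$, from which the three equalities follow by the stabilization above.

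The main obstacle is the $(\ms{B}(\K),\U_{0|n})$-step: I must ensure that in the tensor product $\ms{B}(\K)\ot\ms{B}_{0|n}(\nu_-)$ the \emph{full} element $S_0$ of the finite crystal (not merely its weight) is recovered from the highest weight vector, while the residual position within the growing component is faithfully recorded by $Y^\la u_-\in\ms{B}_{0|n}(\infty)$. This hinges on the stabilization of the embeddings \eqref{eq: iso 0nla}, the uniqueness of highest weight vectors in each connected component, and Lemma \ref{lem: crystal equiv on K0} to commute the factors of $Y^\la$ past $S_0$ cleanly; keeping the choice of $\nu$ uniform in both $\la$ and $\mu$ is the one technical point requiring care.
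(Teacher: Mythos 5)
Your proposal is correct and follows essentially the same route as the paper's proof: use transitivity \eqref{eq: transitive of Theta} to promote the comparison to a sufficiently dominant $\nu$, and then read off the three equalities component-wise via the stable embeddings \eqref{eq: iso 0nla}. The only (harmless) variation is that where the paper simply takes $\nu \gg \nu'$ so that $\Theta_{\gamma,\nu}(b^\gamma)$ separates completely as $(S^\gamma_0, X^\gamma v_{\nu_+}, Y^\gamma v_{\nu_-})$, you recover $S_0$ and $Y u_-$ from the unique highest weight element of the connected component of $S_0 \ot v_{\nu_-}$ together with its image in $\ms{B}_{0|n}(\infty)\ot T_{\xi}$, which amounts to the same stabilization argument.
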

\pf
Suppose $b^\la \sim b^\mu$. Then there exists $ \nu' > \la, \mu $ such that $\Theta_{\la,\nu'}(b^\la) = \Theta_{\mu,\nu'}(b^\mu)$.
Choose $ \nu \gg \nu' $ (that is, $\nu-\nu'$ is sufficiently large) such that 
$$
\Theta_{\gamma,\nu}(b^\gamma) = Y^\gamma (S^\gamma_0, X^\gamma v_{\nu_+}, v_{\nu_-})= (S^\gamma_0, X^\gamma v_{\nu_+}, Y^\gamma v_{\nu_-}) \quad (\gamma = \la, \mu).
$$ 
By \eqref{eq: transitive of Theta}, we have $\Theta_{\la,\nu}(b^\la) = \Theta_{\mu,\nu}(b^\mu)$, that is,   
$(S^\la_0, X^\la v_{\nu_+}, Y^\la v_{\nu_-}) = (S^\mu_0, X^\mu v_{\nu_+}, Y^\mu v_{\nu_-})$. 
Hence we have $X^\la u_+ = X^\mu u_+$ and $Y^\la u_- = Y^\mu u_-$ by \eqref{eq: iso 0nla}, and $S^\la_0=S^\mu_0$.
The proof of the converse is similar. \qed\smallskip
 
Let $\ms{B}(K(\infty))$ be the set of equivalence classes with respect to $\sim$.
For $b \in \ms{B}(K(\infty))$, let us write $b = (b^\la)_{\la \in P^+}$, where 
\begin{enumerate}
	\item $b^\la\in \ms{B}(K(\la))\cup\{{\bf 0}\}$ for all $\la \in P^+$,
	\item $b^\mu\sim b^\nu$ for non-zero $b^\mu$ and $b^\nu$.
\end{enumerate}

Let $b=(b^\la)_{\la\in P^+} \in \ms{B}(K(\infty))$ be given.
Note that $b^\la\neq {\bf 0}$ for some $\la\in P^+$.  
We define ${\rm wt}(b)= {\rm wt}(b^\la)-\la$ for some $\la$ with $b^\la\neq 0$.
For $i\in I$, we define $\te_i b$ and $\tf_i b$ as follows:
\begin{itemize}
 \item[(1)] Suppose that $i\neq m$. We define $\tilde{x}_i b = \left(\tilde{x}_i b^\la\right)_{\la\in P^+}$ $(x=e,f)$, where we assume that $\tilde{x}_i b={\bf 0}$ if $\tilde{x}_i b^\la={\bf 0}$ for all $\la\in P^+$. 

 \item[(2)] Suppose that $i=m$. We define $\tf_m b$ to be the equivalence class of $\tf_m b^\mu$ if there exists $\mu\in P^+$ such that $\tf_m b^\nu\neq {\bf 0}$ for all $\nu\ge\mu$, and define $\tf_m b ={\bf 0}$ otherwise.
We define $\te_m b=b'$ if $\tf_m b'=b$ for some $b\in \ms{B}(K(\infty))$, and $\te_m b={\bf 0}$, otherwise.
 
\end{itemize}
Put
\begin{equation}\label{eq:ep phi}
\varepsilon_i(b)=\max\{\,k\,|\,\te_i^kb\neq {\bf 0}\,\},\quad
 \varphi_i(b) =
 \begin{cases}
 \langle {\rm wt}(b),\alpha^\vee_i \rangle + \varepsilon_i(b) & \text{for $i\neq m$},\\
 \max\{\,k\,|\,\tf_i^kb\neq {\bf 0}\,\} & \text{for $i= m$}.
 \end{cases}
\end{equation}

\begin{lem}\label{lem:B(K(infty)) crystal}
Under the above hypothesis, $\ms{B}(K(\infty))$ is a well-defined $\U$-crystal. 
\end{lem}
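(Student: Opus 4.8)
The plan is to verify directly the five defining axioms of a $\U$-crystal from Section~\ref{subsec: U-crystals}, after checking that the weight, the operators $\te_i,\tf_i$, and the statistics $\varepsilon_i,\varphi_i$ descend to equivalence classes. For the weight, if $b^\la\sim b^\mu$ are nonzero with $\Theta_{\la,\nu}(b^\la)=\Theta_{\mu,\nu}(b^\mu)$, then since $\Theta_{\la,\nu}$ only replaces $v_{\la_\pm}$ by $v_{\nu_\pm}$ in \eqref{eq:theta embedding 2} and hence raises the weight by $\nu-\la$, comparing weights on both sides gives ${\rm wt}(b^\la)-\la={\rm wt}(b^\mu)-\mu$, so ${\rm wt}(b)$ is well defined. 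Throughout I would use that, by Lemma~\ref{lem: equivalence condition}, a class $b$ is determined by the triple $(S_0,Xu_+,Yu_-)$, so these data, in particular whether $-\alpha_m\in S_0$, are invariants of $b$.

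Next I would establish well-definedness of the operators, splitting according to $i=m$ or $i\neq m$. For $i\neq m$ the operators act componentwise, and Lemma~\ref{lem:embedding Theta}(1) shows that $\Theta_{\la,\mu}$ intertwines $\tf_i$ (hence $\te_i$) whenever the result is nonzero; applying this to a relation $\Theta_{\la,\nu}(b^\la)=\Theta_{\mu,\nu}(b^\mu)$ yields $\tf_i b^\la\sim\tf_i b^\mu$, so the family $(\tf_i b^\la)_{\la}$ is again consistent and defines an element of $\ms{B}(K(\infty))$ (or ${\bf 0}$). For $i=m$ the componentwise recipe fails, as the Remark following Lemma~\ref{lem:embedding Theta} records, which is exactly why $\tf_m$ is defined through its eventual behaviour. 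Here Lemmas~\ref{lem:embedding Theta}(2),(3) do the essential work: if $-\alpha_m\notin S_0$, then choosing a base $\la$ with $b^\la\neq{\bf 0}$ (for which $\tf_m b^\la\neq{\bf 0}$), part~(2) gives $\tf_m\Theta_{\la,\mu}(b^\la)=\Theta_{\la,\mu}(\tf_m b^\la)\neq{\bf 0}$ for all $\mu>\la$, so $\tf_m b^\nu\neq{\bf 0}$ eventually and the common class of the $\tf_m b^\nu$, with invariants $(S_0\cup\{-\alpha_m\},Xu_+,Yu_-)$, is well defined; if $-\alpha_m\in S_0$, then part~(3) forces $\tf_m b^\mu={\bf 0}$ for all sufficiently dominant $\mu$, so the defining condition fails and $\tf_m b={\bf 0}$. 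Thus $\tf_m b\neq{\bf 0}$ if and only if $-\alpha_m\notin S_0$, with $\tf_m$ adjoining $-\alpha_m$ to $S_0$; the dual statement for $\te_m$ (removing $-\alpha_m$) follows since $\te_m$ is defined as the inverse of the injective map $\tf_m$.

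It then remains to check axioms (1)--(5). Axiom (5) is vacuous: for $i\neq m$ we have $\varphi_i(b)=\langle{\rm wt}(b),\alpha_i^\vee\rangle+\varepsilon_i(b)$ with $\varepsilon_i(b)$ finite, since the $i$-action lives in $\ms{B}(\K)\ot\ms{B}_{m|0}(\infty)$ for $i<m$ and in $\ms{B}_{0|n}(\infty)$ for $i>m$, both with finite $i$-strings; for $i=m$ the relations $\tf_m^2=\te_m^2={\bf 0}$ force $\varepsilon_m,\varphi_m\in\{0,1\}$, so $\varphi_i(b)\neq-\infty$ always. For $i\neq m$, axiom (1) holds by the definition \eqref{eq:ep phi} of $\varphi_i$; axioms (2),(3) follow from the componentwise action together with ${\rm wt}(\tf_ib)={\rm wt}(b)-\alpha_i$ and $\langle\alpha_i,\alpha_i^\vee\rangle=2$, which give $\varphi_i(\tf_ib)=\varphi_i(b)-1$; and axiom (4) is inherited from the crystals $\ms{B}(K(\la))$ (equivalently from the $\gl_{m|0}$- and $\gl_{0|n}$-crystal structures acting on $S\ot Xu_+$ and $S\ot Yu_-$). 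For $i=m$, the toggle description of $\te_m,\tf_m$ on $S_0$ gives ${\rm wt}(\tf_mb)={\rm wt}(b)-\alpha_m$ and $\varepsilon_m(\tf_mb)=\varepsilon_m(b)+1$, $\varphi_m(\tf_mb)=\varphi_m(b)-1$, proving (2),(3); exactly one of $\te_m b,\tf_m b$ is nonzero according as $-\alpha_m\in S_0$ or not, so $\varepsilon_m(b)+\varphi_m(b)=1\in\{0,1\}$, giving (1), while (4) holds by the very definition of $\te_m$ as the inverse of $\tf_m$.

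The main obstacle is the well-definedness of $\te_m$ and $\tf_m$: unlike the $i\neq m$ case these are not componentwise operators, and their consistency across the directed system $\{\Theta_{\la,\mu}\}$ rests entirely on the quantitative vanishing and non-vanishing statements of Lemmas~\ref{lem:embedding Theta}(2) and (3). Once the characterization $\tf_m b\neq{\bf 0}\iff -\alpha_m\notin S_0$ is in hand, every remaining verification reduces to the known crystal structures on $\ms{B}(\K)$, $\ms{B}_{m|0}(\infty)$ and $\ms{B}_{0|n}(\infty)$.
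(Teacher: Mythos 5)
Your proposal is correct and follows the paper's proof exactly: the paper likewise derives well-definedness of $\te_i,\tf_i$ for $i\neq m$ from Lemma~\ref{lem:embedding Theta}(1) and of $\te_m,\tf_m$ from Lemma~\ref{lem:embedding Theta}(2) and (3), dismissing the remaining crystal axioms as easy to check. Your write-up simply fills in those routine verifications (the weight shift under $\Theta_{\la,\mu}$, the characterization $\tf_m b\neq{\bf 0}\iff -\alpha_m\notin S_0$, and the axiom checks), all of which are consistent with the paper's constructions.
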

\pf 
 The well-definedness of $\te_i, \tf_i$ for $i\neq m$ follows from Lemma \ref{lem:embedding Theta}(1), while $\te_m, \tf_m$ are well-defined by Lemma \ref{lem:embedding Theta}(2) and (3). The other conditions are easy to check.
\qed
\smallskip

Now, we give another description of $\ms{B}(K(\infty))$.
Let $b=(b^\la)_{\la\in P^+}\in \ms{B}(K(\infty))$ be given and choose $\la\in P^+$ such that $b^\la =Y(S_0^\la,Xv_{\la_+},v_{\la_-}) \neq {\bf 0}$.
Then we define
\begin{equation*}\label{eq:kappa}
 \xymatrixcolsep{2pc}\xymatrixrowsep{0pc}\xymatrix{
 \kappa : \ms{B}({K(\infty)})  \ \ar@{->}[r] &\ \ms{B}(\K)\times \ms{B}_{m|0}(\infty) \times \ms{B}_{0|n}(\infty) \\
 \quad b \ \ar@{|->}[r] &\ (S,Xu_+,Yu_-)
 },
\end{equation*}
where $u_\pm$ denotes the highest weight elements $1$ in $\ms{B}_{m|0}(\infty)$ and $\ms{B}_{0|n}(\infty)$, respectively. 
It is well-defined by Lemma \ref{lem: equivalence condition}. Indeed $\kappa$ is a bijection since it is clearly surjective and injective by Lemma \ref{lem: equivalence condition}.

On the other hand, we define a $\U$-crystal structure on $\ms{B}(\K)\times \ms{B}_{m|0}(\infty) \times \ms{B}_{0|n}(\infty)$ as follows: 
Let $b=(S,b_+,b_-)$ be given. 
We let
${\rm wt}(b) = {\rm wt}(S) + {\rm wt}(b_+) + {\rm wt}(b_-)$,
and let $\varepsilon_i(b)$ and $\varphi_i(b)$ be as in \eqref{eq:ep phi}. 
For $i\in I$ and $x=e,f$, we define
\begin{equation}\label{eq:abstract crystal B(infty)}
\begin{split}
 \tilde{x}_i b &= 
 \begin{cases}
 (S',b'_+,b_-) & \text{if $i<m$ and $\tilde{x}_i (S\ot b_+)= S'\ot b'_+$},\\
 (S,b_+,\tilde{x}_ib_-) & \text{if $i>m$},\\
 (\tilde{x}_mS,b_+,b_-) & \text{if $i=m$},
 \end{cases}
\end{split}
\end{equation}
where $\tilde{x}_i (S\ot b_+)$ is defined by \eqref{eq:tensor product rule for Boson}.
Here we assume that $\tilde{x}_i b={\bf 0}$ if any of its component on the right-hand side is ${\bf 0}$.

\begin{thm}  \label{thm: isomorphism kappa}
The map $\kappa$ is an isomorphism of $\U$-crystals. 
Hence the crystal $\ms{B}(K(\infty))$ is isomorphic to $\ms{B}(\K)\times \ms{B}_{m|0}(\infty) \times \ms{B}_{0|n}(\infty)$.
\end{thm}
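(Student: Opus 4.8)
The plan is to build on the fact, already verified above, that $\kappa$ is a bijection, so that only the compatibility of the two crystal structures remains. Since $\varepsilon_i$ and $\varphi_i$ on both sides are determined by ${\rm wt}$ and the operators $\te_i,\tf_i$ through \eqref{eq:ep phi}, it is enough to show that $\kappa$ preserves ${\rm wt}$ and intertwines $\te_i,\tf_i$ for every $i\in I$. Weight preservation I would dispatch first by direct bookkeeping: for a representative $b^\la=Y(S_0,Xv_{\la_+},v_{\la_-})$ the words $X$ and $Y$ drop the weight by the same element of $Q^-$ whether applied to $v_{\la_+},v_{\la_-}$ or to the highest weight elements $u_+,u_-$, whence ${\rm wt}(b)={\rm wt}(b^\la)-\la={\rm wt}(\kappa(b))$. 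The remaining task, $\kappa(\tilde{x}_ib)=\tilde{x}_i\kappa(b)$ for $x=e,f$, I would split according to $i=m$, $i>m$, $i<m$, in each case unwinding the source operators through representatives $b^\la$ (Proposition \ref{prop: description of BKla}) and comparing with \eqref{eq:abstract crystal B(infty)}.

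For $i=m$ the argument is essentially contained in Lemma \ref{lem:embedding Theta}. Choosing a representative and applying Proposition \ref{prop: description of BKla} together with Case 1 of the rule on $\cP(\Phi^-_{\ov 1})$, the operator $\tf_m$ only adjoins or deletes $-\alpha_m$ in the $\ms{B}(\K)$-component. When $-\alpha_m\notin S_0$, part (2) of Lemma \ref{lem:embedding Theta} shows that $\tf_m$ is stable along $\Theta_{\la,\mu}$ and yields the invariant first component $S_0\cup\{-\alpha_m\}$, matching $\tf_mS_0$ in $\ms{B}(\K)$; when $-\alpha_m\in S_0$, part (3) gives $\tf_m\Theta_{\la,\mu}(b)={\bf 0}$ for large $\mu$, so $\tf_mb={\bf 0}$, matching $\tf_mS_0={\bf 0}$. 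The identity for $\te_m$ follows by inverting. For $i>m$ the target operator acts only on the $\ms{B}_{0|n}(\infty)$-factor. On the source side $\tilde{x}_i$ acts on $S\ot b_-=Y(S_0\ot v_{\la_-})$; since $S_0\ot v_{\la_-}$ is $\U_{0|n}$-highest, the outcome is again $Y'(S_0\ot v_{\la_-})$ with the same $S_0$, and by the bicrystal description \eqref{eq: BK} and Lemma \ref{lem: crystal equiv on K0}(2) the $\U_{m|0}$- and $\ms{B}(\K)$-data feeding $\kappa$ are left untouched. It then remains to identify $Y'u_-$ with $\tilde{x}_i(Yu_-)$ in $\ms{B}_{0|n}(\infty)$, which I would obtain by choosing $\la_-$ large and invoking the stabilization \eqref{eq: iso 0nla}, \eqref{eq:dual embedding}, under which the word action on $S_0\ot v_{\la_-}$ and the word action on $u_-$ coincide.

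The genuine difficulty is the case $i<m$, where the source operator acts on $S\ot b_+$ by the finite $\U_{m|0}$-tensor rule of Proposition \ref{prop:tensor product rule}(2) inside $\ms{B}_{m|0}(\la)\ot\ms{B}_{m|0}(\la_+)$, whereas the target operator acts on $S\ot Xu_+$ by the $q$-boson rule \eqref{eq:tensor product rule for Boson} inside $\ms{B}(\K)\ot\ms{B}_{m|0}(\infty)$. First I would use Lemma \ref{lem: crystal equiv on K0}(3) and \eqref{eq: BK} to replace $S$ by $S_0$ in the finite tensor computation, since the $\U_{m|0}$-content of $S$ and $S_0$ agree, and then embed $\ms{B}_{m|0}(\la_+)\hookrightarrow\ms{B}_{m|0}(\infty)\ot T_{\la_+}$, which is $e$-strict by \cite[Lemma 7.1.2]{Kas00}, carrying $b_+=Xv_{\la_+}$ to $Xu_+\ot t_{\la_+}$. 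The hard part will then be to show that, for $\la_+$ large, the finite rule applied to $S_0\ot(Xu_+\ot t_{\la_+})$ coincides with the $q$-boson rule \eqref{eq:tensor product rule for Boson} applied to $S_0\ot Xu_+$; this is exactly where the $B_q$-module structure of the appendix is meant to encode the $\la_+\to\infty$ limit of the finite tensor product rule, and I expect the comparison of the relevant $\varepsilon_i,\varphi_i$ values across this embedding to be the technical core of the proof.

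Granting the three cases, $\kappa$ intertwines all $\te_i,\tf_i$ and preserves ${\rm wt}$, hence also $\varepsilon_i$ and $\varphi_i$ by \eqref{eq:ep phi}; being bijective it is therefore an isomorphism of $\U$-crystals, and the stated decomposition of $\ms{B}(K(\infty))$ is immediate.
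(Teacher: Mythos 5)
Your overall strategy---reduce to intertwining, choose a representative $b^\la$ with $\la\gg 0$, and split into the cases $i=m$, $i>m$, $i<m$---is exactly the paper's proof. Your $i=m$ case (via Lemma \ref{lem:embedding Theta}(2),(3)) and $i>m$ case (push the operator entirely onto the $v_{\la_-}$-factor for $\la$ large, then transport along \eqref{eq:dual embedding}/\eqref{eq: iso 0nla}) match the paper's argument, with two small remarks: for $i>m$ the relevant part of Lemma \ref{lem: crystal equiv on K0} is (3), not (2); and the replacement of $S$ by $S_0$ is automatic rather than something to argue, since for $\la\gg0$ the representative already has the stabilized form $b^\la=(S_0,Xv_{\la_+},Yv_{\la_-})$ (this is how the paper's proof, and the proof of Lemma \ref{lem: equivalence condition}, set things up).

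The one place you stop short is $i<m$, and there you also misjudge where the difficulty lies. There is no ``$\la_+\to\infty$ limit of the finite tensor rule'' to be established through the $B_q$-module theory of the appendix: the rule \eqref{eq:tensor product rule for Boson} is \emph{verbatim} the same formula as Proposition \ref{prop:tensor product rule}(2), with the branching governed solely by $\varphi_i$ of the first factor and $\varepsilon_i$ of the second. So the only inputs to compare are $\varphi_i(S)$, which is literally common to both sides, and $\varepsilon_i(Xv_{\la_+})$ versus $\varepsilon_i(Xu_+)$, and their equality is exactly the content of the $e$-strict embedding \eqref{eq: iso 0nla} that you already invoke; for $\la_+\gg0$ the $\tf_i$-action also commutes with this embedding, so the second-factor outputs agree as well. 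This is why the paper disposes of $i<m$ in a single line citing \eqref{eq: iso 0nla}. The $B_q$-machinery of Appendix \ref{sec:app} enters only later, in defining the crystal operators on $\U^-$ and proving Theorem \ref{thm:crystal base of U^-}; it plays no role in this purely combinatorial isomorphism. With that observation inserted in place of your ``I expect'' step, your proposal closes and coincides with the paper's proof.
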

\pf It is enough to show that $\kappa$ is a homomorphism of $\U$-crystals.
Let $b \in \ms{B}(K(\infty))$ be given with $\kappa(b)=(S,Xu_+,Yu_-)$ for some $X$ and $Y$.
Choose a component $b^\la=(S,Xv_{\la_+},Yv_{\la_-})$ in $b$ such that $\la \gg 0$.\smallskip

If $i < m$, then we have $\kappa(\tilde{x}_i b) = \tilde{x}_i \kappa(b)$ ($x=e,f$) by \eqref{eq: iso 0nla}. 
If $i > m$, then by our choice of $\la$, we may assume that $\tilde{x}_ib^\la=(S,Xv_{\la_+},Yv_{\la_-})=(S,Xv_{\la_+},\tilde{x}_iYv_{\la_-})$, which implies $\kappa(\tilde{x}_i b) = \tilde{x}_i \kappa(b)$ ($x=e,f$). 
Finally, it is clear from the definition of $\kappa$ that $\kappa(\tilde{x}_m b)=\tilde{x}_m \kappa(b)$ ($x=e,f$) for $b\in \ms{B}(K(\infty))$, where we assume that $\kappa({\bf 0})={\bf 0}$.
Therefore, $\kappa$ is an isomorphism of $\U$-crystals. \qed

\section{Crystal base of $\U^-$}\label{sec:crystal base of U^-}

\subsection{Crystal base of $\U^-$}

For $i\in I$, we define $\te_i$ and $\tf_i$ on $\U^-$ as follows:
Let $u\in \U^-$ be given which is homogeneous.
\begin{enumerate}
 \item Suppose that $i<m$. We have $u=\sum_{k\ge 0}f_i^{(k)}u_k$ with $e'_i(u_k)=0$ ($k\ge 0$), and then define
\begin{equation}\label{eq:Kashiwara operators for U^-:<M}
\te_i u = \sum_{k\ge 1}f_i^{(k-1)}u_k,\quad \tf_i u = \sum_{k\ge 0}f_i^{(k+1)}u_k,
\end{equation}
in the same way as in \eqref{eq:Kashiwara operators for U^-}.
 
 \item If $i=m$, then we define
\begin{equation}\label{eq:Kashiwara operators for U^-:M}
\te_m u = e'_m(u),\quad \tf_m u = f_m u,
\end{equation} 
in the same way as in \eqref{eq:eM fM for K}.
 
 \item Suppose that $i>m$. We have $u=u_1 u_2 u_3$ for unique $u_1\in \mc{K}$, $u_2\in \U_{m|0}^-$ and $u_3\in \U_{0|n}^-$ (up to scalar multiplication) by \eqref{eq:decomp of U^-}, and then define
\begin{equation}\label{eq:Kashiwara operators for U^-:>M}
\te_i u =  u_1u_2(\te_iu_3),\quad \tf_i u =  u_1u_2(\tf_iu_3),
\end{equation} 
where $\te_iu_3$ and $\tf_iu_3$ are given in \eqref{eq:Kashiwara operator L(infty) for 0|n}.
\end{enumerate}
We define a crystal base of $\U^-$ as in Definition \ref{def:crystal for poly} with respect to 
 \eqref{eq:Kashiwara operators for U^-:<M}, 
 \eqref{eq:Kashiwara operators for U^-:M}, 
 \eqref{eq:Kashiwara operators for U^-:>M}, and
 with weight space \eqref{eq:Q grading}.  
Then we have the following, which is the main result in this paper. 

\begin{thm}\label{thm:crystal base of U^-}
Suppose that $m, n > 0$.
Let 
\begin{equation*}
\begin{split}
\ms{L}(\infty) & = \ms{L}(\K)\cdot \ms{L}_{m|0}(\infty)\cdot \ms{L}_{0|n}(\infty), \\
\ms{B}(\infty)&=\ms{B}(\K) \cdot \ms{B}_{m|0}(\infty) \cdot \ms{B}_{0|n}(\infty),
\end{split}
\end{equation*}
where $\cdot$ denotes the multiplication in $\U^-$ and the induced one in $\ms{L}(\infty)/q\ms{L}(\infty)$ respectively.
Then $(\ms{L}(\infty),\ms{B}(\infty))$ is a crystal base of $\U^-$, and as $\U$-crystals
\begin{equation*}
\begin{split}
 \ms{B}(\infty) & \cong \ms{B}(K(\infty)). 
\end{split}
\end{equation*}
\end{thm}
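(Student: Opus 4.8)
The plan is to verify the two assertions through the triangular decomposition $\U^- \cong \K \ot \U^-_{m|0} \ot \U^-_{0|n}$ of \eqref{eq:decomp of U^-}, splitting the work according to the three ranges $i<m$, $i=m$ and $i>m$. Writing a homogeneous $u \in \U^-$ as $u = u_1 u_2 u_3$ with $u_1 \in \K$, $u_2 \in \U^-_{m|0}$ and $u_3 \in \U^-_{0|n}$, the lattice $\ms{L}(\infty)$ is identified via \eqref{eq:decomp of U^-} with $\ms{L}(\K)\ot_{A_0}\ms{L}_{m|0}(\infty)\ot_{A_0}\ms{L}_{0|n}(\infty)$ and $\ms{B}(\infty)$ with the set $\ms{B}(\K) \times \ms{B}_{m|0}(\infty) \times \ms{B}_{0|n}(\infty)$. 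This is exactly the underlying set of $\ms{B}(K(\infty))$ via the bijection $\kappa$ of Theorem \ref{thm: isomorphism kappa}, so it suffices to prove that $(\ms{L}(\infty), \ms{B}(\infty))$ satisfies conditions (1)--(5) of Definition \ref{def:crystal for poly} and that the operators $\te_i, \tf_i$ induce the abstract $\U$-crystal structure \eqref{eq:abstract crystal B(infty)}; the isomorphism $\ms{B}(\infty) \cong \ms{B}(K(\infty))$ is then immediate from Theorem \ref{thm: isomorphism kappa}.

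The heart of the argument is the range $i \le m$, which concerns only the factor $\K \cdot \U^-_{m|0}$: for $i<m$ the operators \eqref{eq:Kashiwara operators for U^-:<M} commute past $u_3$ (since $|i-j| \ge 2$ for $j > m$, so $e'_i$ annihilates $u_3$ and $f_i$ commutes with it), and for $i=m$ the operators \eqref{eq:Kashiwara operators for U^-:M} leave $u_2, u_3$ fixed. I would first realize $\K \cdot \U^-_{m|0}$ as a module over the $q$-boson algebra $B_q$ of type $A_{m-1}$, with $f_i$ acting by left multiplication and $e'_i$ by the skew-derivation \eqref{eq: derivation} for $1 \le i \le m-1$; this uses the $\U_{m|0}$-comodule structure on $B_q$ together with the commutation relations of Lemma \ref{lem:commutation relation}, which show that $f_i$ and $e'_i$ move past the odd monomials $\bff_S$ while staying inside $\K \cdot \U^-_{m|0}$. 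Granting this, the crystal base theory for $B_q$-modules recalled in Appendix \ref{sec:app} applies and yields that $\left(\ms{L}(\K)\cdot\ms{L}_{m|0}(\infty),\ \ms{B}(\K)\cdot\ms{B}_{m|0}(\infty)\right)$ is a crystal base on which $\te_i, \tf_i$ $(i<m)$ act on $\ms{B}(\K) \times \ms{B}_{m|0}(\infty)$ by the boson tensor product rule \eqref{eq:tensor product rule for Boson}, which is precisely the $i<m$ clause of \eqref{eq:abstract crystal B(infty)}. For $i=m$, the relation $f_m^2=0$ and the fact that $f_m=\bff_{\alpha_m}$ is $\prec$-smallest among the odd roots make $\tf_m=f_m\cdot$ adjoin $-\alpha_m$ to $S$ cleanly, while $\te_m$ deletes it; thus $\te_m,\tf_m$ act on the $\K$-factor alone, fixing $b_+$ and $b_-$, preserve $\ms{L}(\infty)$, and realize $\tilde{x}_m(S, b_+, b_-) = (\tilde{x}_m S, b_+, b_-)$.

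For $i > m$, the operators \eqref{eq:Kashiwara operators for U^-:>M} act only on $u_3 \in \U^-_{0|n}$ through the $q = \infty$ operators \eqref{eq:Kashiwara operator L(infty) for 0|n}, for which $(\ms{L}_{0|n}(\infty), \ms{B}_{0|n}(\infty))$ is a crystal base by Section \ref{subsec: crystal for 0|n}; since $u_1, u_2$ are spectators, lattice stability and the crystal axioms for these indices are inherited directly, and the action on $\ms{B}(\infty)$ is $\tilde{x}_i(S, b_+, b_-) = (S, b_+, \tilde{x}_i b_-)$, the $i>m$ clause of \eqref{eq:abstract crystal B(infty)}. Collecting the three ranges gives conditions (1)--(5) of Definition \ref{def:crystal for poly} and the coincidence of the induced crystal with \eqref{eq:abstract crystal B(infty)}; by Theorem \ref{thm: isomorphism kappa} the latter is $\ms{B}(K(\infty))$, proving both claims.

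The main obstacle is the $B_q$-module claim of the second paragraph: one must verify that left multiplication by $f_i$ and the skew-derivation $e'_i$ $(1 \le i \le m-1)$ satisfy the defining $q$-boson relations on all of $\K \cdot \U^-_{m|0}$, not merely on $\U^-_{m|0}$, that the product lattice $\ms{L}(\K)\cdot\ms{L}_{m|0}(\infty)$ is stable under them, and that it feeds correctly into the boson crystal-base machinery so as to reproduce the tensor product rule on $\ms{B}(\K) \times \ms{B}_{m|0}(\infty)$. This reduces to a careful bookkeeping of how $f_i$ and $e'_i$ interact with the odd PBW monomials $\bff_S$, for which Lemma \ref{lem:commutation relation} and the comodule structure on $B_q$ supply the needed control; once the $B_q$-module is identified, the remaining verifications are the standard applications of the results in Appendix \ref{sec:app}.
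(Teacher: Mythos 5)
Your proposal is correct and follows essentially the same route as the paper's proof: reduction via the triangular decomposition \eqref{eq:decomp of U^-}, the $B_q$-module realization of $\K\cdot\U_{m|0}^-$ for $i<m$ combined with the boson tensor product rule of Theorem \ref{thm: tensor product rule for Boson}, a direct check on the $\K$-factor for $i=m$, the spectator argument reducing $i>m$ to $(\ms{L}_{0|n}(\infty),\ms{B}_{0|n}(\infty))$, and the final identification with $\ms{B}(K(\infty))$ through Theorem \ref{thm: isomorphism kappa}. The one step you flag as the main obstacle but leave open is resolved in the paper exactly as you anticipate: the multiplication map $\phi: K(0)\ot\U_{m|0}^-\to \K\cdot\U_{m|0}^-$ of \eqref{eq: K dot U- isom K ot U- for m|0} is shown to be $B_q$-linear for the $\cmB$-twisted action by an explicit computation whose key input is the identity $e''_i(\bff_\beta)=0$ for $i<m$ and $\beta\in\Phi^+_{\ov 1}$, after which Theorem \ref{thm: tensor product rule for Boson} delivers the product lattice and the tensor product rule on $\ms{B}(\K)\times\ms{B}_{m|0}(\infty)$ just as you describe.
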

\pf
The pair $(\ms{L}(\infty),\ms{B}(\infty))$ clearly satisfies the conditions (1)--(3) in Definition \ref{def:crystal for poly}.
Let us check the conditions (4) and (5) in Definition \ref{def:crystal for poly}. It is easy to check when $i=m$ by Corollary \ref{cor:crystal base of K(0)} and \eqref{eq: iso bkla}.

Suppose that $i<m$.
It is enough to consider $( \ms{L}(\K)\cdot \ms{L}_{m|0}(\infty),  \ms{B}(\K)\cdot \ms{B}_{m|0}(\infty))$ 
since $e'_i(u)=0$ and $f_i$ commutes with $u$ for any $u \in \U^{-}_{0|n}$.
Let $B_q=B_q(\U_{m|0})$ (see Appendix \ref{app:B_q}).
Recall the $\Bbbk$-linear isomorphism \eqref{eq:decomp of U^-}
given by multiplication.
We can check that there is a well-defined action of $B_q$ on $\U^-$, 
where the actions of $e'_i$, $f_i$, and $k_\mu$ for $i<m$ and $\mu\in P$ are given by \eqref{eq: derivation}, left multiplication, and conjugation, respectively. 
Moreover, $\K \cdot \U_{m|0}^-$ is a $B_q$-submodule of $\U^-$ by Lemma \ref{lem:commutation relation} and
\begin{align*} \label{eq: e_m' and f_m on K}
	e_i'(\bff_{\beta})
	=
	\begin{cases}
		\bff_{\beta-\alpha_{i}} 	&	 \text{if $\beta-\alpha_{i} \in \Phi_{1}^{+}$}, \\
		0 							&	 \text{otherwise},
	\end{cases}
	\quad	(i<m, \beta \in \Phi_{1}^{+}).
\end{align*}
On the other hand, we may regard $K(0) \ot \U_{m|0}^-$ as a $B_q$-module via \eqref{eq: comul on B} since $K(0)$ is a finite-dimensional $\U_{m|0}$-module and $\U_{m|0}^-$ is a $B_q$-module.
Note that $K(0) =\U\ot_{\mc{P}}V_{m,n}(0) = \K \ot V_{m,n}(0)$ by \eqref{eq: decomposition of Kac module}, where $V_{m,n}(0)$ is a trivial $\U_{m,n}$-module spanned by $v_0$.
 
We first claim that the $\Bbbk$-linear isomorphism
\begin{equation}\label{eq: K dot U- isom K ot U- for m|0}
 \xymatrixcolsep{3pc}\xymatrixrowsep{0pc}\xymatrix{
\phi : K(0) \ot \U_{m|0}^- \ \ar@{->}[r] &\ \K \cdot \U_{m|0}^- \\
 (u_1\ot v_0) \ot u_2 \ar@{|->}[r] &\ u_1 u_2
 }
\end{equation}
is an isomorphism of $B_q$-modules. It suffices to show that 
the map \eqref{eq: K dot U- isom K ot U- for m|0} preserves the action of $B_q$. 
Let ${\rm ad}_q: \U \longrightarrow {\rm End}_{\Bbbk}(\U)$ be the adjoint representation on $\U$ with respect to \eqref{eq:comult-1},
where
\begin{equation*} \label{eq: quantum adjoint}
\begin{split}
	& {\rm ad}_q(k_\mu)(u) = k_\mu u k_\mu^{-1}, \\
	& {\rm ad}_q(e_i)(u) = \left( e_i u - u e_i \right) k_i, \\
	& {\rm ad}_q(f_i)(u) = f_i u - k_i u k_i^{-1} f_i,
\end{split}
\end{equation*}
for $i \in I$, $\mu \in P$ and $u \in \U$.
We write ${\rm ad}_q(u)(x)=u\cdot x$ for simplicity.

Let $u_1 \in \K$ and $u_2 \in \U_{m|0}^-$ be homogeneous elements.
First we have
\begin{equation*} 
\begin{split}
e'_i(u_1 u_2) & = e'_i(u_1)u_2 + \bq (\alpha_i, |u_1|) u_1 e'_i(u_2)
= e'_i(u_1)u_2 + (k_i \cdot u_1) e'_i(u_2),\\
e'_i ((u_1\ot v_0) \ot u_2) & = (q^{-1}-q) ( k_i e_i (u_1\ot v_0) ) \ot u_2 + ( k_i (u_1\ot v_0) ) \ot e'_i (u_2),
\end{split}
\end{equation*}
by \eqref{eq: derivation} and \eqref{eq: comul on B}, respectively.
On the other hand, we have
\begin{equation*}
\begin{split}
k_i(u_1 \ot v_0)
& = k_i u_1 k^{-1}_i \ot v_0 \\
& = ( k_i \cdot u_1 ) \ot v_0, \\
(q^{-1}-q) k_i e_i (u_1\ot v_0) 
& = (q^{-1}-q) (k_i e_i u_1)\ot v_0 \\
& = (q^{-1}-q) (k_i (e_i u_1 - u_1 e_i) k_i k_i^{-1})\ot v_0\\
& = (q^{-1}-q) k_i \left( \frac{k_i e''_i(u_1) - k_i^{-1}e'_i(u_1)}{q-q^{-1}} \right) \ot v_0 \\
& = ( e'_i(u_1) - k^2_ie''_i(u_1) ) \ot v_0\\
& = e'_i (u_1) \ot v_0,
\end{split}
\end{equation*}
where the last equation follows from the fact that
$e''_i ( \bff_{\beta} ) = 0$ for $i<m$ and $\beta \in \Phi^+_1$, which can be checked directly.
Therefore $\phi (e'_i ((u_1\ot v_0) \ot u_2))=e'_i(u_1 u_2)=e'_i \phi  ((u_1\ot v_0) \ot u_2)$.
Also we have
\begin{equation*} 
\begin{split}
	f_i u_1 u_2 & = (f_i \cdot u_1) u_2 + k_i u_1 k_i^{-1} f_i u_2
	= (f_i \cdot u_1) u_2 + (k_i \cdot u_1) f_i u_2,\\
	f_i ((u_1\ot v_0) \ot u_2 ) & = ( f_i (u_1\ot v_0) ) \ot u_2 + ( k_i (u_1\ot v_0) ) \ot f_i u_2,
\end{split}
\end{equation*}
by \eqref{eq: comul on B}, where
\begin{equation*}
	f_i (u_1\ot v_0) 
	= (f_i u_1) \ot v_0 
	=  (f_i u_1 - k_i u_1 k_i^{-1} f_i) \ot v_0
	= ( f_i \cdot u_1 ) \ot v_0.
\end{equation*}
This implies that $\phi (f_i ((u_1\ot v_0) \ot u_2 )) = f_i \phi ((u_1\ot v_0) \ot u_2 )$.
It is clear that $\phi (k_i ((u_1\ot v_0) \ot u_2 )) = k_i \phi ((u_1\ot v_0) \ot u_2 )$.
Therefore $\phi$ is $\Boson$-linear, which proves the claim.

Since $(\ms{L}(K(0)) \ot \ms{L}_{m|0}(\infty),\ms{B}(K(0)) \ot \ms{B}_{m|0}(\infty))$ 
is a crystal base of $K(0) \ot \U_{m|0}^-$ as a $\Boson$-module by Theorem \ref{thm: tensor product rule for Boson},
$(\ms{L}(\K) \cdot \ms{L}_{m|0}(\infty),\ms{B}(\K) \cdot \ms{B}_{m|0}(\infty))$ is a crystal base 
of $\K \cdot \U_{m|0}^-$ as a $\Boson$-module by \eqref{eq: K dot U- isom K ot U- for m|0},
and therefore satisfies the conditions Definition \ref{def:crystal for poly}(4) and (5).

Next suppose that $i>m$. 
It is enough to consider $(\ms{L}_{0|n}(\infty), \ms{B}_{0|n}(\infty))$ by \eqref{eq:Kashiwara operators for U^-:>M},
and $(\ms{L}_{0|n}(\infty), \ms{B}_{0|n}(\infty))$ satisfies the conditions Definition \ref{def:crystal for poly}(4) and (5) by Section \ref{subsec: crystal for 0|n}.

Therefore we conclude that $(\ms{L}(\infty),\ms{B}(\infty))$ is a crystal base of $\U^-$, where 
$\ms{B}(\infty) \cong \ms{B}(\K) \times \ms{B}_{m|0}(\infty) \times \ms{B}_{0|n}(\infty)$ as a $\U$-crystal by \eqref{eq:abstract crystal B(infty)}.
Moreover, we have $\ms{B}(\infty) \cong \ms{B}(K(\infty))$ by Theorem \ref{thm: isomorphism kappa}.
This completes the proof.
\qed

\subsection{Parabolic Verma module}

Let $\mc{Q}$ be the subalgebra of $\U$ generated by $\U_{0|n}$, $\U^0$ and $\U^+$. 
Let $\la\in P^+$ be given.
We extend $V_{0|n}(\la_-)$ to a $\mc{Q}$-module by $e_i v_{\la_-}=0$ and $k_\mu v_{\la_-}=\bq(\la,\mu)v_{\la_-}$ for $i\le m$ and $\mu\in P$. 
Define a $\U$-module
\begin{equation*}
X(\lambda) = \U\otimes_{\mc{Q}}V_{0|n}(\la_-).
\end{equation*}
Let $1_\la$ denote a highest weight vector of $X(\la)$. 
Since $\mc{Q}\cong \U^-_{0|n}\ot \U^0 \ot \U^+$ as a $\Bbbk$-space,  
we have as a $\Bbbk$-space
\begin{equation}\label{eq:decomp of M(la)}
 X(\la) \cong \K\ot \U_{m|0}^-\ot V_{0|n}(\la_-).
\end{equation} 
We define $\te_i$, $\tf_i$ ($i\in I$) on $X(\la)$ 
as in \eqref{eq:Kashiwara operators for U^-:<M}, \eqref{eq:Kashiwara operators for U^-:M} for $i\le m$, and 
as in \eqref{eq:modified induced Kashiwara operators} for $i>m$. We may also define a crystal base of $X(\la)$ as in Definition \ref{def:crystal for poly}.

Let us assume that $\ms{B}(\K)\times \ms{B}_{m|0}(\infty) \times \ms{B}_{0|n}(\la_-)$ is a $\U$-crystal, where for $b=(S,b_+,b_-)$, $i\in I$ and $x=e,f$,
\begin{equation}\label{eq:abstract crystal parabolic Verma}
\begin{split}
 {\rm wt}(b) &= {\rm wt}(S) + {\rm wt}(b_+) + {\rm wt}(b_-),\\
 \tilde{x}_i b &= 
 \begin{cases}
 (S',b'_+,b_-) & \text{if $i<m$ and $\tilde{x}_i (S\ot b_+)=S'\ot b'_+$},\\
 (S'',b_+,b'_-) & \text{if $i>m$ and $\tilde{x}_i (S\ot b_-)=S''\ot b'_-$},\\
 (\tilde{x}_mS,b_+,b_-) & \text{if $i=m$},
 \end{cases}
\end{split}
\end{equation}
and $\varepsilon_i(b)$, $\varphi_i(b)$ are defined as in \eqref{eq:ep phi}. 
Here we assume that $\tilde{x}_i b={\bf 0}$ if any of its component on the right-hand side is ${\bf 0}$.

\begin{thm}\label{thm:crystal base of parabolic Verma}
Let 
\begin{equation*}
\begin{split}
\ms{L}(X(\la))&=\sum_{r\geq 0,\, k_1,\ldots, k_r\in I}A_0 \tilde{x}_{k_1}\cdots\tilde{x}_{k_r}1_\la, \\
\ms{B}(X(\la))&=\{\, \,\pm\,\tilde{x}_{k_1}\cdots\tilde{x}_{k_r}1_\la\!\!\! \pmod{q \ms{L}(X(\la))}\,|\,r\geq 0, k_1,\ldots, k_r\in I\,\}\setminus\{0\},
\end{split}
\end{equation*}
where $x=e, f$ for each $k_i$.
Then we have the following:
\begin{itemize}
	\item[(1)] $(\ms{L}(X(\la)),\ms{B}(X(\la)))$ is a crystal base of $X(\la)$,
	
	\item[(2)] $\ms{L}(X(\la)) \,= \left(\ms{L}(\K)\cdot \ms{L}_{m|0}(\infty)\right)\ot \ms{L}_{0|n}(\la_-)$,
	
	\item[(3)] $\ms{B}(X(\la)) \cong \left( \ms{B}(\K) \times \ms{B}_{m|0}(\infty)  \times \ms{B}_{0|n}(\la_-) \right)  \ot T_{\la_+} $ as a $\U$-crystal, and it is connected.
\end{itemize}
\end{thm}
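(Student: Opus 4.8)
The plan is to use the $\Bbbk$-space decomposition \eqref{eq:decomp of M(la)} to set
$L' = \left(\ms{L}(\K)\cdot\ms{L}_{m|0}(\infty)\right)\ot\ms{L}_{0|n}(\la_-)$ and
$B' = \left(\ms{B}(\K)\cdot\ms{B}_{m|0}(\infty)\right)\ot\ms{B}_{0|n}(\la_-)$, and to prove that $(L',B')$ satisfies the conditions of Definition \ref{def:crystal for poly} and is generated from $\overline{1_\la}$ by the crystal operators. Once this is done, the definitions of $\ms{L}(X(\la))$ and $\ms{B}(X(\la))$ force $\ms{L}(X(\la)) = L'$ and $\ms{B}(X(\la)) \cong B'$, so (1) and (2) follow simultaneously, while the abstract crystal isomorphism in (3) will be read off from the tensor-product rules computed along the way. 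The whole argument is a hybrid of Theorem \ref{thm: crystal base of Kac module} (finite $\U_{0|n}$-direction) and Theorem \ref{thm:crystal base of U^-} (infinite $\U_{m|0}$-direction), and the verification of conditions (4) and (5) splits according to the range of the index $i$.

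For $i\le m$ I would argue exactly as in Theorem \ref{thm:crystal base of U^-}: the factor $V_{0|n}(\la_-)$ is annihilated by $e'_i$ and commutes with $f_i$, so these indices only see $\K\cdot\U_{m|0}^-$, which is a $\Boson$-module whose crystal base is $\left(\ms{L}(\K)\cdot\ms{L}_{m|0}(\infty),\ms{B}(\K)\cdot\ms{B}_{m|0}(\infty)\right)$ via the $\Boson$-isomorphism \eqref{eq: K dot U- isom K ot U- for m|0} and Theorem \ref{thm: tensor product rule for Boson}. The case $i=m$ is handled by Corollary \ref{cor:crystal base of K(0)} and \eqref{eq: iso bkla} as before, the operators $\te_m=e'_m$ and $\tf_m=f_m\cdot$ acting only on the $\K$-component.

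The genuinely new case, and the main obstacle, is $i>m$. Here $V_{0|n}(\la_-)$ is no longer a spectator, since $\K$ itself carries a nontrivial $\U_{0|n}$-module structure (its odd root vectors transform under $\U_{0|n}$), while $\U_{m|0}^-$ commutes with every $e_i,f_i$ for $i>m$. Accordingly I would regard $\K\ot V_{0|n}(\la_-)$ as a tensor product of $\U_{0|n}$-modules with $\U_{m|0}^-$ inert, and invoke the tensor product theorem, Proposition \ref{prop:tensor product rule} for $\U_{0|n}$-crystals, together with the bicrystal decomposition \eqref{eq: BK}, which exhibits $(\ms{L}(\K),\ms{B}(\K))$ as a $\U_{0|n}$-crystal base of $\K$. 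The care required is that the operators for $i>m$ are the modified/dual ones \eqref{eq:modified induced Kashiwara operators}, which agree with the induced ones only up to sign by Lemma \ref{lem:comparison of crystal bases}, so conditions (4)--(5) must be checked in the signed sense of (C$2'$)--(C$5'$). This reproduces precisely the rule $\tilde{x}_i(S\ot b_-)=S''\ot b'_-$ of \eqref{eq:abstract crystal parabolic Verma}, and it is exactly the feature that distinguishes $X(\la)$ from $\U^-$, where the $i>m$ operators leave the $\K$-component fixed.

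Finally, for (3) I would observe that $X(\la)$ is a cyclic $\U$-module generated by $1_\la$: indeed $V_{0|n}(\la_-)=\U_{0|n}v_{\la_-}$ and $\U_{0|n}\subset\mc{Q}$, so $X(\la)=\U\cdot(1\ot v_{\la_-})$. Hence $\ms{B}(X(\la))$ equals the connected component of $\overline{1_\la}$ once $B'$ is shown to be generated from $\overline{1_\la}$ by the crystal operators, and the tensor-product computations above yield the isomorphism $\ms{B}(X(\la))\cong\left(\ms{B}(\K)\times\ms{B}_{m|0}(\infty)\times\ms{B}_{0|n}(\la_-)\right)\ot T_{\la_+}$, the shift $T_{\la_+}$ recording that the $\U_{m|0}$-direction is realized through $\U_{m|0}^-$ at highest weight $\la_+$. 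Connectedness is then immediate, since every element of $\ms{B}(X(\la))$ has the form $\pm\,\tilde{x}_{k_1}\cdots\tilde{x}_{k_r}\overline{1_\la}$ and therefore lies in the connected component of $\overline{1_\la}$; conceptually this is explained by the $i>m$ operators now acting through the tensor $S\ot b_-$, thereby linking the configurations of $S$ that stay disconnected inside $\ms{B}(\infty)$.
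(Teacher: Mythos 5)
Your setup and the verification of Definition \ref{def:crystal for poly}(4)--(5) track the paper's proof closely: for $i<m$ you reuse the $B_q$-module argument from the proof of Theorem \ref{thm:crystal base of U^-}, for $i=m$ you use Corollary \ref{cor:crystal base of K(0)} with \eqref{eq: iso bkla}, and for $i>m$ your plan to treat $\K\ot V_{0|n}(\la_-)$ via the tensor product rule is essentially an unpacking of what the paper obtains by simply citing Theorem \ref{thm: crystal base of Kac module} and Corollary \ref{cor:crystal base of K(0)} (note one wrinkle: Proposition \ref{prop:tensor product rule} is stated for modules in $\cO_{\geq 0}$, whereas $\la_-$ need not be polynomial, so the citation of the Kac-module result is the cleaner route). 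Up to this point the proposal is sound.

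The genuine gap is in (3), which is where the real content of the theorem sits. Connectedness of $\ms{B}(X(\la))$ \emph{as you define it} is tautological -- it is generated from $\ov{1_\la}$ by construction. What must be proved is that the full product crystal $\ms{B}'=\left(\ms{B}(\K)\cdot\ms{B}_{m|0}(\infty)\right)\ot\ms{B}_{0|n}(\la_-)\ot T_{\la_+}$ is connected; without this, the ``forcing'' step $\ms{L}(X(\la))=\ms{L}'$, $\ms{B}(X(\la))\cong\ms{B}'$ fails, since a priori $(\ms{L}(X(\la)),\ms{B}(X(\la)))$ could be a proper subobject of $(\ms{L}',\ms{B}')$. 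Your appeal to cyclicity of $X(\la)$ is a non sequitur: crystal operators are not algebra generators, and indeed $\U^-$ is cyclic over itself while $\ms{B}_{m|n}(\infty)$ is disconnected for $n>1$ (Theorem \ref{thm:structure of Binfty}), so cyclicity alone can never yield connectedness. Your closing remark that the $i>m$ operators ``link the configurations of $S$'' names the right mechanism but is not an argument. The paper closes this gap with a concrete device you are missing: the embedding $\iota_\la:\ms{B}(K(\la))\to\ms{B}'$, $(S,b_+,b_-)\mapsto (S,b_+,b_-)\ot t_{\la_+}$ (using $\ms{B}_{m|0}(\la_+)\subset\ms{B}_{m|0}(\infty)\ot T_{\la_+}$ from \eqref{eq: iso 0nla}), which intertwines the $\tf_i$ by comparing Proposition \ref{prop: description of BKla} with \eqref{eq:abstract crystal parabolic Verma}; then, since $\ms{B}'$ as an $I$-colored graph depends only on $\la_-$ and not on $\la_+$, one may take $\la_+\gg 0$ so that any prescribed $b\in\ms{B}'$ lies in the image of $\iota_\la$, and the connectedness of $\ms{B}(K(\la))$ (Theorem \ref{thm: crystal base of Kac module}) transports to $\ms{B}'$. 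Only then does the standard argument, using $\ms{L}'_\la=A_0 1_\la$ and connectedness of $\ms{B}'$, give $(\ms{L}(X(\la)),\ms{B}(X(\la)))=(\ms{L}',\ms{B}')$ and hence all three assertions.
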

\pf 
Let
\begin{equation*}
	\begin{split}
	\ms{L}' & = \left( \ms{L}(\K)\cdot \ms{L}_{m|0}(\infty) \right) \ot \ms{L}_{0|n}(\la_-),\\
	\ms{B}' & = \left( \ms{B}(\K) \cdot \ms{B}_{m|0}(\infty) \right) \ot \ms{B}_{0|n}(\la_-) \ot T_{\la_+}.
	\end{split}
\end{equation*}
We first claim that $(\ms{L}', \ms{B}')$ is a crystal base of $X(\la)$.
It clearly satisfies the conditions Definition \ref{def:crystal for poly}(1)--(3).
So it is enough to check the conditions Definition \ref{def:crystal for poly}(4)--(5), which is clear when $i=m$ by Corollary \ref{cor:crystal base of K(0)} and \eqref{eq: iso bkla}.

Suppose that $i<m$. We have shown in the proof of Theorem \ref{thm:crystal base of U^-} that 
$$(\ms{L}(\K) \cdot \ms{L}_{m|0}(\infty),\ms{B}(\K) \cdot \ms{B}_{m|0}(\infty))$$ is a crystal base of $\K \cdot \U_{m|0}^-$ as a $\Boson$-module.
This implies the conditions Definition \ref{def:crystal for poly}(4) and (5).
Suppose that $i>m$.
It is enough to consider 
\begin{equation} \label{eq: parabolic Verma i>m}
( (\ms{L}(\K) \cdot 1) \ot \ms{L}_{0|n}(\la_-),  (\ms{B}(\K) \cdot 1) \ot \ms{B}_{0|n}(\la_-)),
\end{equation}
since $e_i$ and $f_i$ commute with $\U^{-}_{m|0}$.
By Theorem \ref{thm: crystal base of Kac module} and Corollary \ref{cor:crystal base of K(0)}, the pair \eqref{eq: parabolic Verma i>m} satisfies the conditions Definition \ref{def:crystal for poly}(4)--(5).
Therefore $(\ms{L}', \ms{B}')$ is a crystal base of $X(\la)$, and as a $\U$-crystal 
$$\ms{B}' \cong \left( \ms{B}(\K) \times \ms{B}_{m|0}(\infty)  \times \ms{B}_{0|n}(\la_-) \right)  \ot T_{\la_+}.$$

Next, we claim that the $\U$-crystal $\ms{B}'$ is connected. 
Let
\begin{equation*}
	\xymatrixcolsep{2pc}\xymatrixrowsep{0pc}\xymatrix{
	\iota_\la : \ms{B}({K(\la)})  \ \ar@{->}[r] &\ \ms{B}' \\
	\quad (S, b_+, b_-) \ \ar@{|->}[r] &\ (S, b_+, b_-) \ot t_{\la_+}
	},
\end{equation*}
where we regard $\ms{B}_{m|0}(\la_+) \subset \ms{B}_{m|0}(\infty) \ot T_{\la_+}$ by \eqref{eq: iso 0nla}.
Let $b_1, b_2 \in \ms{B}(K(\la))$ such that $\tf_i b_1 = b_2$ for some $i \in I$.
Then it is straightforward to check that $\tf_i \iota_\la(b_1) = \iota_\la(b_2)$
by comparing Proposition \ref{prop: description of BKla} and \eqref{eq:abstract crystal parabolic Verma}.

Now let $b \in \ms{B}'$ be given. 
Note that $\ms{B'}$ depends not on $\la_+$, but only on $\la_-$ as an $I$-colored oriented graph.
So we may assume that $\la_+ \gg 0$ so that $b=\iota_\la(b_0)$ for a unique $b_0 \in \ms{B}({K(\la)})$.
Since $\ms{B}({K(\la)})$ is connected by Theorem \ref{thm: crystal base of Kac module}, we have $b_0 = \tilde{x}_{k_1}\cdots\tilde{x}_{k_r}1_\la$ for some $r\geq 0, k_1,\ldots, k_r\in I$, where $x=e,f$ for each $k_i$.
By the previous arguments, we have 
\begin{equation*}
b = \iota_\la (b_0) = \iota_\la (\tilde{x}_{k_1}\cdots\tilde{x}_{k_r}1_\la) = \tilde{x}_{k_1}\cdots\tilde{x}_{k_r}1_\la.
\end{equation*}
Therefore $\ms{B}'$ is connected.

Finally, we conclude by standard arguments that $(\ms{L}(X(\la)),\ms{B}(X(\la)))=(\ms{L}',\ms{B}')$,
since $\ms{L}'_\la = \ms{L}(X(\la))_\la = A_0 1_\la$ and $\ms{B}'$ is connected.
\qed

\begin{cor}\label{cor:compatibility with U}
 A crystal base of $X(\la)$ is unique up to scalar multiplication.
\end{cor}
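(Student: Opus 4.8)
The plan is to compare an arbitrary crystal base $(L,B)$ of $X(\la)$ (in the sense of Definition \ref{def:crystal for poly}) with the explicit crystal base $(\ms{L}(X(\la)),\ms{B}(X(\la)))$ produced in Theorem \ref{thm:crystal base of parabolic Verma}, and to show they agree after rescaling. First I would use that the top weight space is one--dimensional: by \eqref{eq:decomp of M(la)} we have $X(\la)_\la=\Bbbk 1_\la$, so $L_\la$ is a rank--one $A_0$--lattice of $\Bbbk 1_\la$, and after multiplying the crystal base by a scalar in $\Bbbk^\times$ (this is the ``up to scalar multiplication'') we may assume $1_\la\in L$, hence $L_\la=A_0 1_\la$ and $\ov{1_\la}\in B$ is the unique source. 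Since $1_\la\in L$ and $L$ is stable under all $\te_i,\tf_i$, every $\te_{k_1}\cdots\tf_{k_r}1_\la$ lies in $L$, giving the easy inclusion $\ms{L}(X(\la))\subseteq L$.

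For the reverse inclusion I would exploit that all weight spaces $X(\la)_\mu$ are finite--dimensional (again by \eqref{eq:decomp of M(la)}): thus $\ms{L}(X(\la))_\mu$ and $L_\mu$ are $A_0$--lattices of the same finite rank in $X(\la)_\mu$, so the inclusion induces a weight--preserving $\Q$--linear map $\ov\iota:\ms{L}(X(\la))/q\ms{L}(X(\la))\to L/qL$ which intertwines all $\te_i,\tf_i$ and fixes $\ov{1_\la}$; by a Nakayama argument $\ms{L}(X(\la))=L$ as soon as $\ov\iota$ is injective (equivalently surjective, equivalently $B=C(\ov{1_\la})$ is connected). To prove injectivity I would argue by descending induction on the weight, which lies in $\la-Q^+$ so that maximal elements of any nonempty subset exist: a $\ker\ov\iota$--element of maximal weight is not the source, so it equals $\tf_i b'$ for some $i$ and some $b'$ of strictly higher weight with $\ov\iota(b')\neq 0$; for $i\neq m$ the crystal identity $\varphi_i=\langle\mathrm{wt},\alpha_i^\vee\rangle+\varepsilon_i$ forces $\varepsilon_i(\ov\iota(b'))<\varepsilon_i^{\ms{B}}(b')$, and applying $\te_i$ (which commutes with $\ov\iota$) then produces a nonzero $\ker\ov\iota$--element of strictly higher weight, contradicting maximality.

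The main obstacle is the odd index $i=m$, for which $\te_m=e'_m$ and $\tf_m=f_m$ do not obey the linear relation $\varphi_m=\langle\mathrm{wt},\alpha_m^\vee\rangle+\varepsilon_m$ (one only has $\varphi_m+\varepsilon_m\in\{0,1\}$), so the ``going up'' step fails and a maximal kernel element could in principle be created by $\tf_m$. I would resolve this using the structural input already available: by Corollary \ref{cor:crystal base of K(0)} and \eqref{eq: iso bkla} the operators $\te_m,\tf_m$ simply remove or adjoin $-\alpha_m$ on the factor $\ms{B}(\K)=\cP(\Phi^-_{\ov 1})$, and in the proof of Theorem \ref{thm:crystal base of U^-} the pair $(\ms{L}(\K)\cdot\ms{L}_{m|0}(\infty),\ms{B}(\K)\cdot\ms{B}_{m|0}(\infty))$ was identified with \emph{the} crystal base of the $B_q$--module $\K\cdot\U_{m|0}^-$, which is unique by the crystal base theory for $B_q$--modules (Appendix \ref{sec:app}); together with the uniqueness of the crystal base of the integrable highest weight $\U_{0|n}$--module $V_{0|n}(\la_-)$ and the tensor decomposition \eqref{eq:decomp of M(la)}, this pins down the lattice in the $f_m$--direction and excludes the pathological case. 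Combining the two cases shows $\ov\iota$ is injective, whence $\ms{L}(X(\la))=L$ and $\ms{B}(X(\la))=B$ up to sign, proving uniqueness up to scalar.
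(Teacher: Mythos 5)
Your overall skeleton---normalize the top weight space so that $L_\la=A_0 1_\la$, deduce $\ms{L}(X(\la))\subseteq L$, and then compare the two lattices by a maximal-weight (``climbing'') argument---is exactly the standard argument the paper intends: Corollary \ref{cor:compatibility with U} is stated without separate proof because it follows from Theorem \ref{thm:crystal base of parabolic Verma} by the same reasoning used at the end of that theorem's proof (``$\ms{L}'_\la=A_0 1_\la$ and $\ms{B}'$ is connected''), mirroring the uniqueness proof for $K(\la)$ in \cite[Theorem 4.10]{K14}. However, your resolution of the odd direction $i=m$---the step you yourself flag as the main obstacle---contains a genuine gap. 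There is no uniqueness theorem for crystal bases of $\Boson$-modules to invoke: Corollary \ref{cor: Boson module has crystal} asserts \emph{existence} only, and uniqueness actually fails for decomposable modules in $\mc{O}$. The $\Boson$-module $\K\cdot\U_{m|0}^-\cong K(0)\ot\U_{m|0}^-$ decomposes into shifted copies of $U^-$ \emph{with multiplicities} (cf.\ \eqref{eq: BK} and Corollary \ref{cor:Bla ot Binf}), and whenever the same highest weight occurs twice one may rotate the two-dimensional space of highest weight vectors to obtain a one-parameter family of distinct crystal lattices. Worse, even granting uniqueness for the three factors, applying it to an arbitrary crystal base $(L,B)$ of $X(\la)$ presupposes that $L$ is compatible with the factorization \eqref{eq:decomp of M(la)} (say, of the form $L_1\ot L_2\ot L_3$), which is precisely the kind of statement uniqueness is supposed to deliver; the argument is circular at its crux.

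There is also a smaller, repairable slip: elements of $\ker\ov\iota$ are vectors in $\ms{L}(X(\la))/q\ms{L}(X(\la))$ and need not belong to $\ms{B}(X(\la))$, so ``it equals $\tf_i b'$'' is unjustified. The fix is to observe that a kernel vector $v$ of maximal weight satisfies $\te_i v=0$ for \emph{all} $i\in I$ (each $\te_i v$ is again a kernel vector, of strictly higher weight), and since $\te_i$ sends distinct basis elements to distinct basis elements up to sign, $v$ must be supported on elements of $\ms{B}(X(\la))$ annihilated by every $\te_i$. What your induction then really needs---and what simultaneously replaces your $i\neq m$ string computation and the faulty $i=m$ step---is the single combinatorial fact that the only element of $\ms{B}(X(\la))\cong\left(\ms{B}(\K)\times\ms{B}_{m|0}(\infty)\times\ms{B}_{0|n}(\la_-)\right)\ot T_{\la_+}$ killed by all $\te_i$ is the highest weight element $1_\la$: for $S\neq\emptyset$ one raises a suitably extremal root of $S$ (using $\te_m$ if $-\alpha_m\in S$), exactly as in the proof of Proposition \ref{prop:Bm1 cnn}, and for $S=\emptyset$ the tensor product rule shows $\te_i(\emptyset\ot b_\pm)=\emptyset\ot\te_i b_\pm$, reducing to the known statements for $\ms{B}_{m|0}(\infty)$ and $\ms{B}_{0|n}(\la_-)$. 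This ``no fake sources'' property is the substance behind the paper's appeal to connectedness in Theorem \ref{thm:crystal base of parabolic Verma}(3); once it is established, your comparison of lattices closes and yields the corollary.
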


Let 
\begin{equation*} 
 \xymatrixcolsep{3pc}\xymatrixrowsep{0pc}\xymatrix{
  \U^-  \ar@{->}^{\pi_-}[r] &  X(\la) \ar@{->}^{\pi_+}[r] & K(\la).
  }
\end{equation*}
be the canonical projections of $\U^-$-modules.
By \eqref{eq:decomp of U^-}, \eqref{eq:decomp of M(la)} and \eqref{eq:dual embedding}, we have an embedding $\pi^\vee_- : X(\la) \longrightarrow \U^-$ of a $\Bbbk$-space so that we have
\begin{equation}\label{eq:projections}
 \xymatrixcolsep{3pc}\xymatrixrowsep{0pc}\xymatrix{
  \U^-  \ar@{<-}^{\pi^\vee_-}[r] &  X(\la) \ar@{->}^{\pi_+}[r] & K(\la).
  }
\end{equation}
The crystal bases for $\U^-$, $X(\la)$, and $K(\la)$ are compatible with \eqref{eq:projections} in the following sense.

\begin{cor}\label{cor:compatibility with K}
Under the above hypothesis, we have
\begin{itemize}
 \item[(1)] $\pi_+(\ms{L}(X(\la)))=\ms{L}(K(\la))$, 
 \item[(2)] $\ov{\pi}_+(\ms{B}(X(\la)))=\ms{B}(K(\la))\cup \{0\}$, where $\ov{\pi}_+$ is the $\Q$-linear map induced from $\pi_+$,
 \item[(3)] $\ov{\pi}_+$ restricts to a weight preserving bijection 
$$\ov{\pi}_+ : \{\,b\in \ms{B}(X(\la))\,|\,\ov{\pi}_+(b)\neq 0 \,\} \longrightarrow \ms{B}(K(\la)),$$ 
which commutes with $\te_i$, $\tf_i$ for $i\in I$.
\end{itemize}
\end{cor}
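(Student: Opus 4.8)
The plan is to reduce the whole statement to the canonical projection $\pi_{\la_+}:\U_{m|0}^-\longrightarrow V_{m|0}(\la_+)$ of $\U_{m|0}=U_q(\gl_m)$-modules, whose compatibility with crystal bases is recorded in Section \ref{subsec:crystal base of gl verma}. First I would identify $\pi_+$ concretely. Under the $\Bbbk$-space decompositions \eqref{eq:decomp of M(la)} and \eqref{eq: decomposition of Kac module}, the only difference between $X(\la)$ and $K(\la)$ is the middle tensor factor ($\U_{m|0}^-$ versus $V_{m|0}(\la_+)$). Since $\pi_+$ is $\U$-linear with $\pi_+(1_\la)=1_\la$, it acts as the identity on the generators $\bff_\alpha$ $(\alpha\in\Phi^+_{\ov 1})$ of $\K$ and on $V_{0|n}(\la_-)$, while on the $\U_{m|0}^-$-factor it intertwines the $f_i$ $(i<m)$ and hence coincides with $\pi_{\la_+}$; that is, $\pi_+={\rm id}_{\K}\ot\pi_{\la_+}\ot{\rm id}_{V_{0|n}(\la_-)}$.

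For part (1) I would combine the lattice descriptions $\ms{L}(X(\la))=(\ms{L}(\K)\cdot\ms{L}_{m|0}(\infty))\ot\ms{L}_{0|n}(\la_-)$ from Theorem \ref{thm:crystal base of parabolic Verma}(2) and $\ms{L}(K(\la))=\ms{L}(\K)\ot\ms{L}_{m|0}(\la_+)\ot\ms{L}_{0|n}(\la_-)$ from Corollary \ref{cor:crystal base of K(0)} with the $\gl_m$-fact $\pi_{\la_+}(\ms{L}_{m|0}(\infty))=\ms{L}_{m|0}(\la_+)$. Using the $\Boson$-module isomorphism $\phi$ of \eqref{eq: K dot U- isom K ot U- for m|0} to untangle $\K\cdot\U_{m|0}^-$ as $K(0)\ot\U_{m|0}^-$ (so that $\pi_+$ corresponds to ${\rm id}_{K(0)}\ot\pi_{\la_+}$), applying $\pi_+={\rm id}\ot\pi_{\la_+}\ot{\rm id}$ factorwise then yields $\pi_+(\ms{L}(X(\la)))=\ms{L}(K(\la))$.

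For parts (2) and (3) I would compare the explicit crystal rules \eqref{eq:abstract crystal parabolic Verma} for $\ms{B}(X(\la))\cong(\ms{B}(\K)\times\ms{B}_{m|0}(\infty)\times\ms{B}_{0|n}(\la_-))\ot T_{\la_+}$ with Proposition \ref{prop: description of BKla} for $\ms{B}(K(\la))\cong\ms{B}(\K)\times\ms{B}_{m|0}(\la_+)\times\ms{B}_{0|n}(\la_-)$, where $\ms{B}_{m|0}(\la_+)\hookrightarrow\ms{B}_{m|0}(\infty)\ot T_{\la_+}$ as in \eqref{eq: iso 0nla}. The induced map $\ov{\pi}_+$ is ${\rm id}\times\ov{\pi}_{\la_+}\times{\rm id}$ on the three factors, so $\ov{\pi}_{\la_+}(\ms{B}_{m|0}(\infty))=\ms{B}_{m|0}(\la_+)\cup\{0\}$ gives part (2). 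To see that $\ov{\pi}_+$ commutes with $\te_i,\tf_i$: for $i=m$ and $i>m$ the operators act only on the $S$ and $b_-$ components, which $\ov{\pi}_+$ leaves unchanged, so commutativity is immediate; for $i<m$ they act on $S\ot b_+$, and the claim becomes that ${\rm id}_{\ms{B}(\K)}\ot\ov{\pi}_{\la_+}$ commutes with the tensor-product Kashiwara operators.

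The main obstacle is precisely this last $i<m$ case, where I must verify that the projection on the middle factor is compatible with the tensor product rule of Proposition \ref{prop:tensor product rule}, including the correct tracking of the elements sent to $0$. For this I would invoke the $e$-strictness of the embedding \eqref{eq: iso 0nla} (cf.\ \cite[Lemma 7.1.2]{Kas00}), which guarantees $\varepsilon_i(b_+)=\varepsilon_i(\ov{\pi}_{\la_+}(b_+))$ whenever $\ov{\pi}_{\la_+}(b_+)\neq 0$, so that the branch selected in the tensor product rule is unchanged under $\ov{\pi}_+$; together with the fact that $\ov{\pi}_{\la_+}$ commutes with $\tf_i$ and restricts to a bijection commuting with $\te_i,\tf_i$ (Section \ref{subsec:crystal base of gl verma}), this establishes the commutation. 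Part (3) then follows since $\ov{\pi}_+$ is weight preserving and bijective on the locus where it does not vanish.
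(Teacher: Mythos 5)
Your proposal is correct and takes essentially the same route as the paper, which leaves this corollary as a direct consequence of Theorem \ref{thm:crystal base of parabolic Verma} together with Corollary \ref{cor:crystal base of K(0)} and the embedding \eqref{eq: iso 0nla}: one factorizes $\pi_+$ as ${\rm id}_{\K}\ot\pi_{\la_+}\ot{\rm id}_{V_{0|n}(\la_-)}$ and reduces everything to the known compatibility of $\pi_{\la_+}$ with the $\gl_m$-crystal bases of Section \ref{subsec:crystal base of gl verma}. Your explicit handling of the $i<m$ branch selection via the $e$-strictness of \eqref{eq: iso 0nla} is precisely the device the paper itself invokes (cf.\ the proof of Lemma \ref{lem:embedding Theta}) and correctly fills in the only nontrivial point.
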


\begin{cor}
Under the above hypothesis, we have  
\begin{itemize}
 \item[(1)] $\pi^\vee_-(\ms{L}(X(\la)))\subset \ms{L}(\infty)$,
 
 \item[(2)] $\ov{\pi}^\vee_-(\ms{B}(X(\la)))\subset \ms{B}(\infty)$, where $\ov{\pi}^\vee_-$ is the $\Q$-linear map induced from $\pi_-^\vee$,
 
 \item[(3)] $\ov{\pi}^\vee_-$ restricts to a bijection 
\begin{equation*}
 \ov{\pi}^\vee_- : \ms{B}(X(\la))  \longrightarrow \{\,b=b_1b_2b_3\in \ms{B}(\infty)\,|\ b_3 \ot t_{\la_-} \in \ms{B}_{0|n}(\la_-) \,\},
\end{equation*} 
which commutes with $\te_i$, $\tf_i$ for $i<m$, where each $b \in \ms{B}(\infty)$ is uniquely written as $b=b_1b_2b_3 \in \ms{B}(\infty)$ with $b_1\in \ms{B}(\K), b_2\in \ms{B}_{m|0}(\infty), b_3\in \ms{B}_{0|n}(\infty)$.

\end{itemize}
\end{cor}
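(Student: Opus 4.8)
The plan is to reduce the entire statement to the compatibility for $\U_{0|n}$ already recorded in \eqref{eq:dual embedding} (Lemma \ref{lem:embedding}), exploiting that $\U^-$, $X(\la)$ and $K(\la)$ all share the common factor $\K\cdot\U_{m|0}^-$ on which $\pi^\vee_-$ is the identity. The first step is to make $\pi^\vee_-$ explicit. By construction $\pi_-\colon\U^-\to X(\la)$ is, under the tensor factorizations \eqref{eq:decomp of U^-} and \eqref{eq:decomp of M(la)}, the identity on $\K\ot\U_{m|0}^-$ and the canonical projection $\U_{0|n}^-\to V_{0|n}(\la_-)$ on the last factor. Dualizing that last projection with respect to the bilinear forms of Section \ref{subsec:crystal base of gl verma} and identifying $V_{0|n}(\la_-)$ and $\U_{0|n}^-$ with their restricted duals, I obtain
\begin{equation*}
 \pi^\vee_- = \mathrm{id}_{\K\ot\U_{m|0}^-}\ot\,\pi^\vee_{\la_-},
\end{equation*}
where $\pi^\vee_{\la_-}\colon V_{0|n}(\la_-)\to\U_{0|n}^-$ is precisely the embedding appearing in \eqref{eq:dual embedding}; concretely $\pi^\vee_-(u_1u_2\ot w)=u_1u_2\,\pi^\vee_{\la_-}(w)$ for $u_1\in\K$, $u_2\in\U_{m|0}^-$ and $w\in V_{0|n}(\la_-)$.

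Granting this, parts (1) and (2) are immediate. Since $\ms{L}(X(\la))=(\ms{L}(\K)\cdot\ms{L}_{m|0}(\infty))\ot\ms{L}_{0|n}(\la_-)$ by Theorem \ref{thm:crystal base of parabolic Verma}(2) and $\pi^\vee_{\la_-}(\ms{L}_{0|n}(\la_-))\subset\ms{L}_{0|n}(\infty)$ by \eqref{eq:dual embedding}, multiplying out gives $\pi^\vee_-(\ms{L}(X(\la)))=(\ms{L}(\K)\cdot\ms{L}_{m|0}(\infty))\cdot\pi^\vee_{\la_-}(\ms{L}_{0|n}(\la_-))\subset\ms{L}(\infty)$, which is (1). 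Passing to $q=0$ and using $\ov{\pi}^\vee_{\la_-}(\ms{B}_{0|n}(\la_-))\subset\ms{B}_{0|n}(\infty)$ then shows that $\ov{\pi}^\vee_-$ sends a class $(S,b_+,b_-)$ to $(S,b_+,\ov{\pi}^\vee_{\la_-}(b_-))\in\ms{B}(\K)\cdot\ms{B}_{m|0}(\infty)\cdot\ms{B}_{0|n}(\infty)=\ms{B}(\infty)$, which is (2).

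For (3), I would first note that, by \eqref{eq:dual embedding} together with the realization \eqref{eq: iso 0nla} of $\ms{B}_{0|n}(\la_-)$ inside $\ms{B}_{0|n}(\infty)\ot T_{\la_-}$, the map $\ov{\pi}^\vee_{\la_-}$ is injective with image exactly $\{\,b_3\in\ms{B}_{0|n}(\infty)\mid b_3\ot t_{\la_-}\in\ms{B}_{0|n}(\la_-)\,\}$ (it is the identification that forgets $t_{\la_-}$). Since $\ms{B}(X(\la))\cong\ms{B}(\K)\times\ms{B}_{m|0}(\infty)\times\ms{B}_{0|n}(\la_-)$ up to the weight shift $T_{\la_+}$ and $\ov{\pi}^\vee_-=\mathrm{id}\times\mathrm{id}\times\ov{\pi}^\vee_{\la_-}$, this yields the asserted bijection onto $\{\,b_1b_2b_3\in\ms{B}(\infty)\mid b_3\ot t_{\la_-}\in\ms{B}_{0|n}(\la_-)\,\}$. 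The commutation with $\te_i,\tf_i$ for $i<m$ then follows by comparing \eqref{eq:abstract crystal B(infty)} and \eqref{eq:abstract crystal parabolic Verma}: for $i<m$ the operators act by the same tensor rule on the first two factors $\ms{B}(\K)\times\ms{B}_{m|0}(\infty)$ and fix the last factor, while $\ov{\pi}^\vee_-$ is the identity on those first two factors, so $\ov{\pi}^\vee_-\circ\tilde{x}_i=\tilde{x}_i\circ\ov{\pi}^\vee_-$.

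The main obstacle is the very first step, namely justifying the factorization $\pi^\vee_-=\mathrm{id}\ot\pi^\vee_{\la_-}$ at the level of the $q=0$ crystal lattices. This is not formal, because $\ms{L}_{0|n}(\infty)$ is defined through the $q=\infty$ dual bar involution crystal structure rather than the naive $q=0$ one, so a priori it is unclear that dualizing the projection carries $A_0$-lattices to $A_0$-lattices; this is exactly the content of Lemma \ref{lem:embedding}, and the point is that once the $\U_{0|n}$-factor is settled by \eqref{eq:dual embedding}, the identity behaviour on $\K\cdot\U_{m|0}^-$ makes the remainder routine. Finally I would explain why (3) cannot be extended past $i<m$: for $i>m$ the $\U_{0|n}$-crystal structure carried by the $\ms{B}(\K)$-factor couples with $b_-$ through the tensor rule in \eqref{eq:abstract crystal parabolic Verma}, whereas in $\ms{B}(\infty)$ the operator $\tilde{x}_i$ acts only on the $\ms{B}_{0|n}(\infty)$-factor by \eqref{eq:abstract crystal B(infty)}, so the two actions genuinely differ and $\ov{\pi}^\vee_-$ cannot intertwine them.
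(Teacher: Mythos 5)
Your proposal is correct and follows essentially the same route as the paper, which leaves this corollary without a separate proof precisely because it is assembled from the ingredients you use: the factorization $\pi^\vee_-=\mathrm{id}_{\K\ot\,\U_{m|0}^-}\ot\,\pi^\vee_{\la_-}$ built into the definition of $\pi^\vee_-$ via \eqref{eq:decomp of U^-}, \eqref{eq:decomp of M(la)} and \eqref{eq:dual embedding}, the lattice description in Theorem \ref{thm:crystal base of parabolic Verma}(2), the image characterization \eqref{eq: iso 0nla}, and the comparison of \eqref{eq:abstract crystal B(infty)} with \eqref{eq:abstract crystal parabolic Verma} for $i<m$. Your closing observation on why commutation breaks down beyond $i<m$ also matches the paper's Remark \ref{rem: compatibility} and Example \ref{ex: bxla bkla}.
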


\begin{rem} \label{rem: compatibility}
{\rm 
Let $b_0 \in \ms{B}(\infty)$ be given.
Suppose $b_0=\ov{\pi}^\vee_-(b)$ for some $b \in \ms{B}(X(\la))$.
Then we have
$\ov{\pi}_+(\tilde{x}_i b)= \tilde{x}_i \ov{\pi}_+(b)$ 
for $i\in I$, where $x=f$ for $i<m$ and $x=e,f$ for $i\ge m$.
Also, we have
$\ov{\pi}^\vee_-(\tilde{x}_i b)= \tilde{x}_i \ov{\pi}^\vee_-(b)$ 
for $x=e,f$ and $i<m$.
On the other hand, 
we have 
{$\ov{\pi}^\vee_-(\tilde{x}_i b)= \tilde{x}_i \ov{\pi}^\vee_-(b)$, 
for $x=e, f$ if $i>m$ and $\la_- \gg 0$}, or {if $i=m$ and $\langle {\rm wt}(b),\alpha_{m+1}^\vee \rangle\gg 0$} (see Examples \ref{ex:B infty} and \ref{ex: bxla bkla}).
}
\end{rem}

\subsection{Crystal $\ms{B}(\infty)$}

Let us write $\ms{B}_{m|n}(\infty) = \ms{B} (\infty)$.

\begin{prop} \label{prop:Bm1 cnn}
	The crystal $\ms{B}_{m|1}(\infty) \cong \ms{B}(\K_{m|1}) \times \ms{B}_{m|0}(\infty)$ is connected.
	Moreover, $\ms{B}_{m|1}(\infty) = \{ \, \tilde{f}_{k_1}\cdots\tilde{f}_{k_r} (\emptyset ,1) \,|\, r\geq 0, k_1,\ldots, k_r\in I\,\}\setminus\{{\bf 0}\}$.
\end{prop}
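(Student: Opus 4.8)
The plan is to prove the ``Moreover'' assertion first, since connectedness then follows immediately: it exhibits every element as lying in the $\tf$-orbit of the single element $(\emptyset,1)$. The guiding idea is to show that $(\emptyset,1)$ is the unique \emph{source} of the crystal, i.e.\ the unique $b$ with $\te_i b={\bf 0}$ for all $i\in I$. Note first that for $n=1$ one has $\ell=m+1$, so $I=\{1,\dots,m\}$ has no index exceeding $m$ and $\ms{B}_{0|1}(\infty)$ is a single point; hence by Theorem~\ref{thm:crystal base of U^-} (see \eqref{eq:abstract crystal B(infty)}) the crystal $\ms{B}_{m|1}(\infty)$ identifies with $\ms{B}(\K_{m|1})\times\ms{B}_{m|0}(\infty)$, whose operators are: for $i<m$, the operators of Proposition~\ref{prop:tensor product rule}(2) acting on the tensor product $S\ot b_+$; and for $i=m$, the operators of Case~1 of Section~\ref{subsec: BKla}, which act on $S$ alone by adjoining or deleting the root $-\alpha_m$.

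The key step I would carry out is the lemma: \emph{if $b=(S,b_+)$ satisfies $\te_i b={\bf 0}$ for every $i\in I$, then $S=\emptyset$ and $b_+=1$.} From $\te_m b={\bf 0}$ and Case~1 of Section~\ref{subsec: BKla}, the root $-\alpha_m$ does not belong to $S$. For $i<m$, the condition $\te_i b={\bf 0}$ says exactly that $S\ot b_+$ is annihilated by $\te_i$ in $\ms{B}(\K_{m|1})\ot\ms{B}_{m|0}(\infty)$; by the tensor product rule this forces $\te_i S={\bf 0}$ together with $\varphi_i(S)\ge\varepsilon_i(b_+)$, the alternative branch being excluded since it would require $0\le\varphi_i(S)<\varepsilon_i(b_+)=0$. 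Thus $S$ is a $\U_{m|0}$-highest weight element of $\ms{B}(\K_{m|1})$.

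Next I would classify these highest weight elements. Writing $\Phi^-_{\ov 1}=\{-\delta_a+\delta_{m+1}\mid 1\le a\le m\}$ and recording $S$ as the set of indices $a$ with $-\delta_a+\delta_{m+1}\in S$, the explicit rule of Case~2 of Section~\ref{subsec: BKla} gives $\te_i(-\delta_i+\delta_{m+1})=-\delta_{i+1}+\delta_{m+1}$, so that $\cP(\Phi^-_{\ov1})$ is, as a $\U_{m|0}$-crystal, a disjoint union of the connected components in \eqref{eq: BK}. A direct check (or the description \eqref{eq: BK}) shows that the $\U_{m|0}$-highest weight elements are exactly the top-justified sets $S_k=\{-\delta_{m-k+1}+\delta_{m+1},\dots,-\delta_m+\delta_{m+1}\}$ for $0\le k\le m$, each of which contains $-\alpha_m=-\delta_m+\delta_{m+1}$ as soon as $k\ge 1$. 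Since we already know $-\alpha_m\notin S$, this forces $k=0$, i.e.\ $S=\emptyset$. With $S=\emptyset$ we have $\varphi_i(\emptyset)=0$, so $\varphi_i(S)\ge\varepsilon_i(b_+)$ yields $\varepsilon_i(b_+)=0$ for all $i<m$; as $\ms{B}_{m|0}(\infty)$ is the crystal $B(\infty)$ of $U_q(\gl_m)^-$, whose unique element killed by all $\te_i$ is the highest weight element $1$ \cite{Kas91}, we conclude $b_+=1$, completing the lemma.

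Finally I would conclude. The weight of any $b\in\ms{B}_{m|1}(\infty)$ lies in $Q^-$ and each $\te_i$ raises it by $\alpha_i$, so the height of $b$ (with ${\rm wt}(b)=-\sum_i c_i\alpha_i$, the height is $\sum_i c_i$) is a nonnegative integer strictly decreased by every applicable raising operator. Starting from an arbitrary $b$ and repeatedly applying some nonzero $\te_i$ therefore terminates after finitely many steps at a source, which by the lemma is $(\emptyset,1)$; hence $b$ lies in the connected component of $(\emptyset,1)$ and $\ms{B}_{m|1}(\infty)$ is connected. Reading the resulting chain of raisings backwards and using $\tf_i\te_i b=b$ whenever $\te_i b\ne{\bf 0}$ writes $b=\tf_{k_1}\cdots\tf_{k_r}(\emptyset,1)$, which is the ``Moreover'' statement. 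I expect the main obstacle to be the analysis of the tensor product $S\ot b_+$: one must check that $\te_i(S\ot b_+)={\bf 0}$ genuinely forces $S$ itself to be highest weight (rather than the vanishing being absorbed into the $b_+$ factor) and then pin these $S$ down combinatorially together with the constraint coming from the odd operator $\te_m$; the remaining steps are routine crystal bookkeeping.
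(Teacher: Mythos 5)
Your proof is correct and follows essentially the same route as the paper's: both arguments show that every element of $\ms{B}(\K_{m|1})\times\ms{B}_{m|0}(\infty)$ other than $(\emptyset,1)$ admits a nonzero raising operator $\te_i$ and then conclude by induction (termination) on the height of the weight. The only cosmetic difference is that the paper exhibits such an operator directly — using $\te_m$ when $-\alpha_m\in S$, and otherwise taking $\beta=-\de_i+\de_{m+1}\in S$ with $i$ largest so that $\te_i$ survives the tensor product rule — whereas you argue the contrapositive by classifying the $\U_{m|0}$-highest-weight elements of $\ms{B}(\K_{m|1})$ as the sets $S_k$ via \eqref{eq: BK}; this is a slightly longer but equally valid verification of the same key step.
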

\pf
We recall that $\ms{B}(\K_{m|1}) = \cP(\Phi_{\ov 1}^-) $ 
where $\Phi_{\ov 1}^- = \{ \, - \de_a + \de_{m+1} \, | \, 1 \leq a \leq m \, \}$.
Let $b=(S,b_{+}) \in \ms{B}(\K_{m|1}) \times \ms{B}_{m|0}(\infty)$ be given.
We claim that $b= \tilde{f}_{k_1}\cdots\tilde{f}_{k_r} (\emptyset,1)$ 
for some $r\geq 0$ and $ k_1,\ldots, k_r\in I$.
Let $|b| = -\sum_{i\in I} c_i \alpha_i$ for some $c_i \in \Z_+$.
We use induction on $\sum_{i\in I} c_i$.

Suppose that $S=\emptyset$. If $b_{+}\neq 1$, then $\te_i b_+ \neq {\bf 0}$ for some $i\in I \setminus \{ m \}$,
and hence $\te_i b = \te_i (\emptyset, b_+) = (\emptyset, \te_i b_+) \neq {\bf 0}$.
By induction hypothesis, $\te_i b$ is connected to $(\emptyset, 1)$ by $\te_j$'s for $j \in I$, and so is $b$.

Suppose that $S \neq \emptyset$. 
If $-\alpha_m \in S$, then $\te_m S = S\setminus \{ -\alpha_m \} \neq {\bf 0}$, and hence $\te_m b = \te_m (S, b_+) = (\te_m S, b_+) \neq {\bf 0}$.
If $-\alpha_m \not\in S$, then we may take $\beta=- \de_i + \de_{m+1} \in S$ $(1 \leq i < m)$ such that $i$ is the largest one.
Then $\te_i S = (S \setminus \{ \beta \}) \cup \{ \beta + \alpha_i \} \neq {\bf 0}$. 
Then we have
\begin{equation*}
	\te_i b =
	\begin{cases}
		(\te_i S, b_+) & \text{if $\varphi_i(S)\geq\varepsilon_i(b_+)$}, \\
		(S, \te_i b_+) & \text{if $\varphi_i(S)<\varepsilon_i(b_+)$}.
	\end{cases}
\end{equation*}
In both cases, $\te_i b \neq {\bf 0}$ due to our choice of $i$.
Therefore $b$ is connected to $(\emptyset, 1)$ by $\te_j$'s for $j \in I$ by induction hypothesis.
\qed

Let $\ms{B}_{m|1}(\infty) \times \ms{B}_{0|n}(\infty)$ be a $\U$-crystal, where for $b=(b_1,b_2)$ and $i\in I$, 
\begin{equation}\label{eq:abstract crystal B(infty) connected component}
\begin{split}
 {\rm wt}(b) &={\rm wt}(b_1)+{\rm wt}(b_2), \\
 \tilde{x}_i b &= 
 \begin{cases}
 (\tilde{x}_i b_1,b_2) & \text{if $i\leq m$},\\
 (b_1,\tilde{x}_i b_2) & \text{if $i>m$},
 \end{cases}
 \quad (x=e,f),
\end{split}
\end{equation}
and $\varepsilon_i(b), \varphi_i(b)$ are given as in \eqref{eq:ep phi}.
We assume that $\tilde{x}_i b={\bf 0}$ if any of its component on the right-hand side is $0$.
Note that it follows from Proposition \ref{prop:Bm1 cnn} and \eqref{eq:abstract crystal B(infty) connected component} that the $\U$-crystal $\ms{B}_{m|1}(\infty) \times \ms{B}_{0|n}(\infty)$ is connected.

\begin{thm}\label{thm:structure of Binfty}
	We have the following:
	\begin{itemize}
		\item[(1)] Any connected component of $\ms{B}_{m|n}(\infty)$ is isomorphic to $\ms{B}_{m|1}(\infty) \times \ms{B}_{0|n}(\infty)$ 
		as a $\U$-crystal up to shift of weight.
		\item[(2)] The number of connected components of $\ms{B}_{m|n}(\infty)$ is $2^{m(n-1)}$.
	\end{itemize}
\end{thm}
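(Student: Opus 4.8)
\smallskip
The plan is to separate the three tensor factors of $\ms{B}_{m|n}(\infty)$ according to which crystal operators act on them, and then to reduce the whole statement to the behaviour of $\ms{B}_{m|0}(\infty)\cong B(\infty)$ under tensoring. By Theorem \ref{thm:crystal base of U^-} we identify $\ms{B}_{m|n}(\infty)$ with $\ms{B}(\K)\times\ms{B}_{m|0}(\infty)\times\ms{B}_{0|n}(\infty)$, equipped with the crystal structure \eqref{eq:abstract crystal B(infty)}. Inspecting \eqref{eq:abstract crystal B(infty)}, the operators $\te_i,\tf_i$ for $i\le m$ modify only the pair $(S,b_+)$, while those for $i>m$ modify only $b_-\in\ms{B}_{0|n}(\infty)$. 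Since $\ms{B}_{0|n}(\infty)$, being the crystal of $U_\ttq(\gl_n)^-$, is connected, the crystal $\ms{B}_{m|n}(\infty)$ is the product of the $\gl(m|1)$-crystal on $\ms{B}(\K)\times\ms{B}_{m|0}(\infty)$ (the operators $i\le m$) and the connected $\gl(0|n)$-crystal $\ms{B}_{0|n}(\infty)$. Consequently its connected components are exactly the products of the connected components of $\ms{B}(\K)\times\ms{B}_{m|0}(\infty)$ with $\ms{B}_{0|n}(\infty)$, and both assertions follow once the $\gl(m|1)$-crystal $\ms{B}(\K)\times\ms{B}_{m|0}(\infty)$ is analysed.

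\smallskip
First I would peel off the distinguished column $m+1$. Because the order $\prec$ on $\Phi^+_{\ov1}$ orders odd roots primarily by their second index, the roots $-\de_a+\de_{m+1}$ are $\prec$-smallest, so viewing $\ms{B}(\K)$ as a $\gl_m$-crystal through the tensor product rule of Proposition \ref{prop:tensor product rule}(2) gives an isomorphism of $\gl_m$-crystals $\ms{B}(\K)\cong\ms{B}(\K_{m|1})\ot\ms{M}$, where $\ms{B}(\K_{m|1})$ is column $m+1$ and $\ms{M}$ records columns $m+2,\dots,m+n$, a finite $\gl_m$-crystal on $m\times(n-1)$ binary matrices with $|\ms{M}|=2^{m(n-1)}$. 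The key point is that the odd operator $\te_m,\tf_m$ merely toggles the entry $-\de_m+\de_{m+1}$ (the description of $\tilde x_m$ in Section \ref{subsec: BKla}), so it is supported on the factor $\ms{B}(\K_{m|1})$ alone. Therefore, as a $\gl(m|1)$-crystal,
\[
\ms{B}(\K)\times\ms{B}_{m|0}(\infty)\ \cong\ \ms{B}(\K_{m|1})\ot\big(\ms{M}\ot\ms{B}_{m|0}(\infty)\big),
\]
with $\gl_m$ acting diagonally on the whole product and the odd operator acting on the left factor.

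\smallskip
The heart of the matter is the $\gl_m$-crystal $\ms{M}\ot\ms{B}_{m|0}(\infty)$. Here I would invoke the standard absorption property of $B(\infty)$: for a finite seminormal $\gl_m$-crystal $\ms{M}$ there is an isomorphism of $\gl_m$-crystals $\ms{M}\ot\ms{B}_{m|0}(\infty)\cong\bigsqcup_{j\in J}\big(\ms{B}_{m|0}(\infty)\ot T_{\mu_j}\big)$ in which every connected component is isomorphic to $\ms{B}_{m|0}(\infty)\cong B(\infty)$ up to a weight shift. Comparing characters, $\mathrm{ch}(\ms{M})\,\mathrm{ch}(B(\infty))=\big(\sum_{j}e^{\mu_j}\big)\mathrm{ch}(B(\infty))$, whence $\sum_j e^{\mu_j}=\mathrm{ch}(\ms{M})$ and $|J|=|\ms{M}|=2^{m(n-1)}$. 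As the odd operator is supported on the untouched factor $\ms{B}(\K_{m|1})$, tensoring this decomposition on the left with $\ms{B}(\K_{m|1})$ lifts it to an isomorphism of $\gl(m|1)$-crystals
\[
\ms{B}(\K)\times\ms{B}_{m|0}(\infty)\ \cong\ \bigsqcup_{j\in J}\ \ms{B}(\K_{m|1})\ot\big(\ms{B}_{m|0}(\infty)\ot T_{\mu_j}\big)\ \cong\ \bigsqcup_{j\in J}\ \ms{B}_{m|1}(\infty)\ot T_{\mu_j},
\]
using $\ms{B}_{m|1}(\infty)=\ms{B}(\K_{m|1})\times\ms{B}_{m|0}(\infty)$. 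By Proposition \ref{prop:Bm1 cnn} each summand $\ms{B}_{m|1}(\infty)\ot T_{\mu_j}$ is connected, hence is a single connected component isomorphic to $\ms{B}_{m|1}(\infty)$ up to weight shift, and there are exactly $|J|=2^{m(n-1)}$ of them. Combined with the first paragraph and the connectedness of $\ms{B}_{m|1}(\infty)\times\ms{B}_{0|n}(\infty)$ recorded after \eqref{eq:abstract crystal B(infty) connected component}, this yields both (1) and (2).

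\smallskip
I expect the main obstacle to be the third paragraph, i.e.\ establishing the absorption isomorphism $\ms{M}\ot\ms{B}_{m|0}(\infty)\cong\bigsqcup_j\big(\ms{B}_{m|0}(\infty)\ot T_{\mu_j}\big)$ in the precise form that every component is $B(\infty)$ up to shift (which is what makes the character count give $|J|=|\ms{M}|$), together with the verification that this $\gl_m$-isomorphism is genuinely compatible with the odd operator and with the factorization $\ms{B}(\K)\cong\ms{B}(\K_{m|1})\ot\ms{M}$ dictated by $\prec$. The subsidiary facts---that the operators $i\le m$ and $i>m$ decouple the second and third factors, that $\tilde x_m$ is localized to column $m+1$, and that $\ms{B}_{m|1}(\infty)\times\ms{B}_{0|n}(\infty)$ is connected---are either immediate from \eqref{eq:abstract crystal B(infty)} or already available from Proposition \ref{prop:Bm1 cnn} and Section \ref{subsec: BKla}.
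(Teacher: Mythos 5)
Your proposal follows the paper's proof in all essentials: the same decoupling of the operators for $i>m$ onto the connected factor $\ms{B}_{0|n}(\infty)$, the same splitting of $\ms{B}(\K)$ into the column-$(m+1)$ part $\ms{B}(\K_{m|1})$ (carrying $\tilde{x}_m$) times the remaining binary matrices (the paper's $\ms{C}$, your $\ms{M}$, via \eqref{eq: thm cnn - 1}), the same absorption of the finite factor into $\ms{B}_{m|0}(\infty)$, and the same appeal to Proposition \ref{prop:Bm1 cnn} for connectedness of each resulting component.

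The one soft spot is your third paragraph. You invoke absorption, $\ms{M}\ot B(\infty)\cong\bigsqcup_j\bigl(B(\infty)\ot T_{\mu_j}\bigr)$, as a ``standard'' fact for an arbitrary finite \emph{seminormal} $\gl_m$-crystal. What the paper actually has (Corollary \ref{cor:Bla ot Binf}, proved via the semisimple category of $B_q$-modules in the appendix) is this statement only for the \emph{normal} crystals $\ms{B}_{m|0}(\la)$, and it is neither proved in the paper nor clearly true at the seminormal level of generality you assert, since the module-theoretic proof needs $\ms{M}$ to be the crystal of an actual finite-dimensional $U_q(\gl_m)$-module. In the application the gap is easily closed: $\ms{C}$ is normal, because the $(\gl_m,\gl_n)$-bicrystal structure on binary matrices gives $\ms{C}\cong\bigsqcup_{\ell(\la)\le m,\,\ell(\la^t)<n}\ms{B}_{m|0}(\la)^{\oplus m_\la}$, which is exactly the step \eqref{eq: thm cnn - 3} that the paper inserts before applying Corollary \ref{cor:Bla ot Binf} componentwise. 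With that citation added, your argument is complete; your character-cancellation count of the components is then just an equivalent bookkeeping for the paper's direct count $|\ms{C}|=2^{m(n-1)}$. You did flag this step yourself as the main obstacle, and that diagnosis is exactly right.
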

\pf
	Let $X = \{ \, - \de_a + \de_{m+1} \, | \, 1 \leq a \leq m \, \} \subset \Phi_{\ov 1}^-$ and $Y=\Phi_{\ov 1}^- \setminus X$. 
	Let $\ms{C}=\{\, S \,|\, S \subset Y \,\} \subset \ms{B}(\K_{m|n})$.
	As a $\U_{m|0}$-crystal, $\ms{C}$ is isomorphic to a finite disjoint union of $ \ms{B}_{m|0}(\la) $ for $\la \in P^+$ 
	 (see the proof of Lemma \ref{lem: crystal equiv on K0}).
	Then we have an isomorphism of $\U_{m|1}$-crystals,
	\begin{equation}\label{eq: thm cnn - 1}
		\xymatrixcolsep{2pc}\xymatrixrowsep{0pc}\xymatrix{
			 \ms{B}(\K_{m|n})  \ \ar@{->}[r] &\ \ms{B}(\K_{m|1}) \times \ms{C} \\
		\quad S \ \ar@{|->}[r] &\ (S \cap X, S \cap Y)
		},
	\end{equation}
	where the $\U_{m|1}$-crystal structure on $\ms{B}(\K_{m|1}) \times \ms{C}$ 
	is defined in the same manner as in Proposition \ref{prop: description of BKla} for $1 \leq i\leq m$.

	Let us regard $\cP_{m|0} \subset P^+$ (cf. \eqref{eq:highest weight correspondence}).
	By \eqref{eq: BK}, we get
	\begin{equation} \label{eq: thm cnn - 3}
		\ms{C} \cong \bigsqcup_{\substack{\ell(\la) \leq m \\ \ell(\la^t) < n}} \ms{B}_{m|0}(\la) ^{\oplus m_\la}
	\end{equation}
	as a $\U_{m|0}$-crystal, where $m_\la = |\ms{B}_{0|n-1}(\la^t)|$ and $B^{\oplus m}$ denotes the crystal $ B \sqcup \dots \sqcup B$ ($m$ times) for a crystal $B$ and $m \geq 1$.
	This implies
	\begin{equation}\label{eq: thm cnn - 4}
		\ms{C} \ot \ms{B}_{m|0}(\infty) \cong \bigsqcup_{\substack{\ell(\la) \leq m \\ \ell(\la^t) < n}} \bigsqcup_{b \in \ms{B}_{m|0}(\la)} \left( \ms{B}_{m|0}(\infty) \ot T_{{\rm wt}(b)} \right) ^{\oplus m_\la},
	\end{equation}
	by Corollary \ref{cor:Bla ot Binf}.
	Therefore we have isomorphisms of $\U$-crystals
	\begin{equation*}
		\begin{split}
			\ms{B}_{m|n}(\infty) & \cong \ms{B}(\K_{m|n}) \times \ms{B}_{m|0}(\infty) \times \ms{B}_{0|n}(\infty) \\
			& \cong \left( \ms{B}(\K_{m|1}) \times \ms{C} \right) \times \ms{B}_{m|0}(\infty) \times \ms{B}_{0|n}(\infty) \quad \text{by \eqref{eq: thm cnn - 1}}\\
			& \cong \bigsqcup_{\substack{\ell(\la) \leq m \\ \ell(\la^t) < n}} \bigsqcup_{b \in \ms{B}_{m|0}(\la)} 
			\left( \ms{B}(\K_{m|1}) \times  ( \ms{B}_{m|0}(\infty) \ot T_{{\rm wt}(b)} )  \times \ms{B}_{0|n}(\infty) \right) ^{\oplus m_\la} \quad \text{by {\eqref{eq: thm cnn - 4}}}\\
			& \cong \bigsqcup_{\substack{\ell(\la) \leq m \\ \ell(\la^t) < n}} \bigsqcup_{b \in \ms{B}_{m|0}(\la)} 
			\left( \left( \ms{B}_{m|1}(\infty)  \times \ms{B}_{0|n}(\infty) \right) \ot T_{{\rm wt}(b)}  \right) ^{\oplus m_\la}.
		\end{split}
	\end{equation*}
	Note that $\ms{B}_{m|1}(\infty) \times \ms{B}_{0|n}(\infty)$ is connected by Proposition \ref{prop:Bm1 cnn}.
	Also we see from the above decomposition that the number of connected components of $\ms{B}_{m|n}(\infty)$ is $|\ms{C}|=2^{m(n-1)}$ by \eqref{eq: thm cnn - 3}.
\qed

\subsection{Examples}

Let us give examples to describe the crystal operators $\tf_i$ on $\ms{B}(\infty)\cong \ms{B}(\K)\times \ms{B}_{m|0}(\infty) \times \ms{B}_{0|n}(\infty)$.

Note that an explicit description of the crystal $\ms{B}(\K)$ is given in Section \ref{subsec: BKla}, where we identify $\ms{B}(\K)$ with $\ms{P}(\Phi^-_{\ov 1})$ or equivalently with the set of PBW-type monomials $\bff_S$ in \eqref{eq: monomial F_S}.
Hence by using a well-known realization of $\ms{B}_{m|0}(\infty)$ and $\ms{B}_{0|n}(\infty)$ and applying Theorem \ref{thm: isomorphism kappa}, one may compute $\tf_i b$ ($b\in\ms{B}(\infty)$). Here, let us use the crystals of PBW bases (or the corresponding Lusztig data) for $\ms{B}_{m|0}(\infty)$ and $\ms{B}_{0|n}(\infty)$ with respect to \eqref{eq: PBW m0} and \eqref{eq: PBW 0n}, respectively, and a combinatorial description of $\tf_i$ on them (see \cite{Re, SST}).

Under the above identification,	we may write $b = (S, b_+, b_-) \in \ms{B}(\infty)$ as 
	\begin{equation} \label{eq: Lusztig data in 3|4}
		S = (c_{st})_{\delta_s - \delta_t \in \Phi_{\ov 1}^+}, \quad
		b_+ = (c_{st})_{\delta_s - \delta_t \in \Phi_{m|0}^+}, \quad
		b_- = (c_{st})_{\delta_s - \delta_t \in \Phi_{0|n}^+},
	\end{equation}
	where $c_{st}$ is the multiplicity of $\de_s - \de_t$ or its root vector $\bff_{\de_s - \de_t } $ in $b$ with
	$ c_{st} \in \{ 0, 1 \}$ for $\delta_s - \delta_t \in \Phi_{\ov 1}^+$ and $c_{st} \in \Z_+$ otherwise.

\begin{ex} \label{ex:B infty}
	{\em 
	Let $m = 3$ and $n = 4$.
 	Let $b = (S, b_+, b_-) \in \ms{B}(\infty)$ be given as in \eqref{eq: Lusztig data in 3|4}.
	We identify $b$ with the following array of $(c_{st})$:
	
	\begin{equation*}
	\begin{tikzpicture}[baseline=(current  bounding  box.center), every node/.style={scale=1}]
		\node (root_34) at (0,0) {$\overset{\delta_3 - \delta_4}{\ot}$};
		\node (root_35) at (0.7,0.7) {$\overset{\delta_3 - \delta_5}{\ot}$};
		\node (root_36) at (1.4,1.4) {$\overset{\delta_3 - \delta_6}{\ot}$};
		\node (root_37) at (2.1,2.1) {$\overset{\delta_3 - \delta_7}{\ot}$};
		\node (root_24) at (0.7,-0.7) {$\overset{\delta_2 - \delta_4}{\ot}$};
		\node (root_25) at (1.4,0) {$\overset{\delta_2 - \delta_5}{\ot}$};
		\node (root_26) at (2.1,0.7) {$\overset{\delta_2 - \delta_6}{\ot}$};
		\node (root_27) at (2.8,1.4) {$\overset{\delta_2 - \delta_7}{\ot}$};
		\node (root_23) at (3.5,2.1) {$\overset{\delta_2 - \delta_3}{\oplus}$};
		\node (root_13) at (4.2,1.4) {$\overset{\delta_1 - \delta_3}{\oplus}$};
		\node (root_12) at (4.9,2.1) {$\overset{\delta_1 - \delta_2}{\oplus}$};	
		\node (root_14) at (1.4,-1.4) {$\overset{\delta_1 - \delta_4}{\ot}$};
		\node (root_15) at (2.1,-0.7) {$\overset{\delta_1 - \delta_5}{\ot}$};
		\node (root_16) at (2.8,0) {$\overset{\delta_1 - \delta_6}{\ot}$};
		\node (root_17) at (3.5,0.7) {$\overset{\delta_1 - \delta_7}{\ot}$};
		\node (root_45) at (2.8,-1.4) {$\overset{\delta_4 - \delta_5}{\ominus}$};
		\node (root_46) at (3.5,-0.7) {$\overset{\delta_4 - \delta_6}{\ominus}$};
		\node (root_47) at (4.2,0) {$\overset{\delta_4 - \delta_7}{\ominus}$};
		\node (root_56) at (4.2,-1.4) {$\overset{\delta_5 - \delta_6}{\ominus}$};
		\node (root_57) at (4.9,-0.7) {$\overset{\delta_5 - \delta_7}{\ominus}$};
		\node (root_67) at (5.6,-1.4) {$\overset{\delta_6 - \delta_7}{\ominus}$};
	\end{tikzpicture}
	\,\,\,
	\begin{tikzpicture}[baseline=(current  bounding  box.center), every node/.style={scale=0.9}]
		\node (root_34) at (0,0) {$c_{34}$};
		\node (root_35) at (0.7,0.7) {$c_{35}$};
		\node (root_36) at (1.4,1.4) {$c_{36}$};
		\node (root_37) at (2.1,2.1) {$c_{37}$};
		\node (root_24) at (0.7,-0.7) {$c_{24}$};
		\node (root_25) at (1.4,0) {$c_{25}$};
		\node (root_26) at (2.1,0.7) {$c_{26}$};
		\node (root_27) at (2.8,1.4) {$c_{27}$};
		\node (root_23) at (3.5,2.1) {$c_{23}$};
		\node (root_13) at (4.2,1.4) {$c_{13}$};
		\node (root_12) at (4.9,2.1) {$c_{12}$};
		\node (root_14) at (1.4,-1.4) {$c_{14}$};
		\node (root_15) at (2.1,-0.7) {$c_{15}$};
		\node (root_16) at (2.8,0) {$c_{16}$};
		\node (root_17) at (3.5,0.7) {$c_{17}$};
		\node (root_45) at (2.8,-1.4) {$c_{45}$};
		\node (root_46) at (3.5,-0.7) {$c_{46}$};
		\node (root_47) at (4.2,0) {$c_{47}$};
		\node (root_56) at (4.2,-1.4) {$c_{56}$};
		\node (root_57) at (4.9,-0.7) {$c_{57}$};
		\node (root_67) at (5.6,-1.4) {$c_{67}$};
	\end{tikzpicture}
	\end{equation*}\smallskip
	
	\noindent where the positive roots in $\Phi_{\ov 1}^+$ (resp. $\Phi_{3|0}^+$ and $\Phi_{0|4}^+$) are marked as $\ot$ (resp. $\oplus$ and $\ominus$). (The array of the roots on the left is equal to the one representing a convex order on the set of positive roots for $\gl(m+n)$ associated to the longest element in the Weyl group of $\gl(m|n)$ adapted to the oriented Dynkin diagram with a unique sink at $\de_3-\de_4$.)
	
		Let us compute $\tf_i b$ for 
	\begin{equation} \label{eq: b}
	b = \quad
	\scalebox{0.8}{$
	\begin{tikzpicture}[baseline=(current  bounding  box.center), every node/.style={scale=0.9}]
		\node (root_34) at (0,0) {$0$};
		\node (root_35) at (0.7,0.7) {$1$};
		\node (root_36) at (1.4,1.4) {$0$};
		\node (root_37) at (2.1,2.1) {$1$};
		\node (root_24) at (0.7,-0.7) {$1$};
		\node (root_25) at (1.4,0) {$1$};
		\node (root_26) at (2.1,0.7) {$1$};
		\node (root_27) at (2.8,1.4) {$0$};
		\node (root_14) at (1.4,-1.4) {$0$};
		\node (root_15) at (2.1,-0.7) {$0$};
		\node (root_16) at (2.8,0) {$1$};
		\node (root_17) at (3.5,0.7) {$0$};
		\node (root_23) at (3.5,2.1) {$2$};
		\node (root_13) at (4.2,1.4) {$1$};
		\node (root_12) at (4.9,2.1) {$2$};	
		\node (root_45) at (2.8,-1.4) {$2$};
		\node (root_46) at (3.5,-0.7) {$1$};
		\node (root_47) at (4.2,0) {$1$};
		\node (root_56) at (4.2,-1.4) {$1$};
		\node (root_57) at (4.9,-0.7) {$2$};
		\node (root_67) at (5.6,-1.4) {$1$};
	\end{tikzpicture}
	$}\quad .
	\end{equation}
	\smallskip

	{\it Case 1}. Suppose that $i = 3$.
	We have $\te_3 b = {\bf 0}$ since $c_{34} = 0$, and
	\begin{equation*}
	\tf_3 b= \quad
	\scalebox{0.8}{$
	\begin{tikzpicture}[baseline=(current  bounding  box.center), every node/.style={scale=0.9}]
		\node (root_34) at (0,0) {$\red{\mb 1}$};
		\node (root_35) at (0.7,0.7) {$\color{gray}{1}$};
		\node (root_36) at (1.4,1.4) {$\color{gray}{0}$};
		\node (root_37) at (2.1,2.1) {$\color{gray}{1}$};
		\node (root_24) at (0.7,-0.7) {$\color{gray}{1}$};
		\node (root_25) at (1.4,0) {$\color{gray}{1}$};
		\node (root_26) at (2.1,0.7) {$\color{gray}{1}$};
		\node (root_27) at (2.8,1.4) {$\color{gray}{0}$};
		\node (root_14) at (1.4,-1.4) {$\color{gray}{0}$};
		\node (root_15) at (2.1,-0.7) {$\color{gray}{0}$};
		\node (root_16) at (2.8,0) {$\color{gray}{1}$};
		\node (root_17) at (3.5,0.7) {$\color{gray}{0}$};
		\node (root_23) at (3.5,2.1) {$\color{gray}{2}$};
		\node (root_13) at (4.2,1.4) {$\color{gray}{1}$};
		\node (root_12) at (4.9,2.1) {$\color{gray}{2}$};	
		\node (root_45) at (2.8,-1.4) {$\color{gray}{2}$};
		\node (root_46) at (3.5,-0.7) {$\color{gray}{1}$};
		\node (root_47) at (4.2,0) {$\color{gray}{1}$};
		\node (root_56) at (4.2,-1.4) {$\color{gray}{1}$};
		\node (root_57) at (4.9,-0.7) {$\color{gray}{2}$};
		\node (root_67) at (5.6,-1.4) {$\color{gray}{1}$};
	\end{tikzpicture}
	$}\quad .
	\end{equation*}
	\smallskip

	{\it Case 2}. Suppose that $i = 1$.
	Let us first compute $\varphi_1(S)$.
	Let
	\begin{equation}\label{eq: sigma(S)}
		\sigma(S) = ( +^{c_{24}} , -^{c_{14}} , +^{c_{25}} , -^{c_{15}} , \dots , +^{c_{27}} , -^{c_{17}})=(\sigma_1, \sigma_2, \dots),
	\end{equation}
	be a finite sequence of $\pm$,
	where $\pm^{a} = (\pm , \dots , \pm)$ ($a$ times) and $\pm^0=\cdot \,$.
	We replace a pair $(\sigma_i, \sigma_j) = (+ , -)$ by $(\cdot , \cdot)$, where $i < j$ and $\sigma_k = \cdot$ for $i < k < j$,
	and repeat this process until we get a sequence $\sigma^{\rm red}(S)$ with no $-$ placed to the right of $+$.
	Then $\varphi_1(S)$ (resp. $\varepsilon_1(S)$) is the number of $+$'s (resp. $-$'s) in $\sigma^{\rm red}(S)$ (cf.~Section \ref{subsec: BKla}).
	In this case, we have $\varphi_1(S)=2$ and $\varepsilon_1(S)=0$ since $\sigma(S)=(\, + , \, \cdot \, , + , \, \cdot \, , + , - , \, \cdot \, , \, \cdot \,)$ 
	and $\sigma^{\rm red}(S)=(\, + , \, \cdot \, , + , \, \cdot \, , \, \cdot \, , \, \cdot \, , \, \cdot \, , \, \cdot \,)$.
	
	Next, we compute $\varepsilon_1(b_+)$. Let
	\begin{equation}\label{eq: sigma(b_+)}
		\sigma(b_+) = (-^{c_{13}} , +^{c_{23}} , -^{c_{12}}),
	\end{equation}
	and let $\sigma^{\rm red}(b_+)$ be given in the same way as above.
	Then $\varepsilon_1(b_+)$ is the number of $-$'s in $\sigma^{\rm red}(b_+)$.
	In this case, we have $\varepsilon_1(b_+)=1$ since $\sigma(b_+) = (-,+,+,-,-)$ and $\sigma^{\rm red}(b_+)=(\, - , \, \cdot \, , \, \cdot \, , \, \cdot \, , \, \cdot \,)$.

	Therefore by \eqref{eq:abstract crystal B(infty)}, we have
	\begin{equation} \label{eq:f1b}
	\tf_1 b= (\tf_1 S, b_+, b_-) = \quad
	\scalebox{0.8}{$
	\begin{tikzpicture}[baseline=(current  bounding  box.center), every node/.style={scale=0.9}]
		\node (root_34) at (0,0) {\color{gray}{$0$}};
		\node (root_35) at (0.7,0.7) {\color{gray}{$1$}};
		\node (root_36) at (1.4,1.4) {\color{gray}{$0$}};
		\node (root_37) at (2.1,2.1) {\color{gray}{$1$}};
		\node (root_24) at (0.7,-0.7) {$\red{\mb 0}$};
		\node (root_25) at (1.4,0) {${\mb 1}$};
		\node (root_26) at (2.1,0.7) {${\mb 1}$};
		\node (root_27) at (2.8,1.4) {${\mb 0}$};
		\node (root_14) at (1.4,-1.4) {$\red{\mb 1}$};
		\node (root_15) at (2.1,-0.7) {${\mb 0}$};
		\node (root_16) at (2.8,0) {${\mb 1}$};
		\node (root_17) at (3.5,0.7) {${\mb 0}$};
		\node (root_23) at (3.5,2.1) {${\mb 2}$};
		\node (root_13) at (4.2,1.4) {${\mb 1}$};
		\node (root_12) at (4.9,2.1) {${\mb 2}$};	
		\node (root_45) at (2.8,-1.4) {\color{gray}{$2$}};
		\node (root_46) at (3.5,-0.7) {\color{gray}{$1$}};
		\node (root_47) at (4.2,0) {\color{gray}{$1$}};
		\node (root_56) at (4.2,-1.4) {\color{gray}{$1$}};
		\node (root_57) at (4.9,-0.7) {\color{gray}{$2$}};
		\node (root_67) at (5.6,-1.4) {\color{gray}{$1$}};
	\end{tikzpicture}
	$}\quad 	,
	\end{equation}
	where the entry $1$ at $\de_2 - \de_4$ corresponding to the leftmost $+$ in $\sigma^{\rm red}(S)$ is moved to the place at $\de_1 - \de_4$.
Here the bold-faced multiplicities denote the ones appearing in the sequences \eqref{eq: sigma(S)} and \eqref{eq: sigma(b_+)}.
	Similarly, applying $\tf_1$ once again, we get
	\begin{equation} \label{eq:f1b2}
		\tf^2_1 b = (\tf_1 S, \tf_1 b_+, b_-) = \quad
		\scalebox{0.8}{$
		\begin{tikzpicture}[baseline=(current  bounding  box.center), every node/.style={scale=0.9}]
			\node (root_34) at (0,0) {\color{gray}{$0$}};
			\node (root_35) at (0.7,0.7) {\color{gray}{$1$}};
			\node (root_36) at (1.4,1.4) {\color{gray}{$0$}};
			\node (root_37) at (2.1,2.1) {\color{gray}{$1$}};
			\node (root_24) at (0.7,-0.7) {${\bm 0}$};
			\node (root_25) at (1.4,0) {${\bm 1}$};
			\node (root_26) at (2.1,0.7) {${\bm 1}$};
			\node (root_27) at (2.8,1.4) {${\bm 0}$};
			\node (root_14) at (1.4,-1.4) {${\bm 1}$};
			\node (root_15) at (2.1,-0.7) {${\bm 0}$};
			\node (root_16) at (2.8,0) {${\bm 1}$};
			\node (root_17) at (3.5,0.7) {${\bm 0}$};
			\node (root_23) at (3.5,2.1) {${\bm 2}$};
			\node (root_13) at (4.2,1.4) {${\bm 1}$};
			\node (root_12) at (4.9,2.1) {$\red{\bm 3}$};	
			\node (root_45) at (2.8,-1.4) {\color{gray}{$2$}};
			\node (root_46) at (3.5,-0.7) {\color{gray}{$1$}};
			\node (root_47) at (4.2,0) {\color{gray}{$1$}};
			\node (root_56) at (4.2,-1.4) {\color{gray}{$1$}};
			\node (root_57) at (4.9,-0.7) {\color{gray}{$2$}};
			\node (root_67) at (5.6,-1.4) {\color{gray}{$1$}};
		\end{tikzpicture}
		$}\quad
		,
	\end{equation}

	\noindent
	where $\tf_1 b_+$ is given by adding $1$ at $\de_1 -\de_2$ since there is no $+$ in $\sigma^{\rm red}(b_+)$.
	\smallskip

	{\it Case 3}. Suppose that $i = 5$.
	By \eqref{eq:abstract crystal B(infty)}, we have 
	\begin{equation*}
	\tf_5 b = (S, b_+, \tf_5 b_-) = \quad
	\scalebox{0.8}{$
	\begin{tikzpicture}[baseline=(current  bounding  box.center), every node/.style={scale=0.9}]
		\node (root_34) at (0,0) {$\color{gray}{0}$};
		\node (root_35) at (0.7,0.7) {$\color{gray}{1}$};
		\node (root_36) at (1.4,1.4) {$\color{gray}{0}$};
		\node (root_37) at (2.1,2.1) {$\color{gray}{1}$};
		\node (root_24) at (0.7,-0.7) {$\color{gray}{1}$};
		\node (root_25) at (1.4,0) {$\color{gray}{1}$};
		\node (root_26) at (2.1,0.7) {$\color{gray}{1}$};
		\node (root_27) at (2.8,1.4) {$\color{gray}{0}$};
		\node (root_14) at (1.4,-1.4) {$\color{gray}{0}$};
		\node (root_15) at (2.1,-0.7) {$\color{gray}{0}$};
		\node (root_16) at (2.8,0) {$\color{gray}{1}$};
		\node (root_17) at (3.5,0.7) {$\color{gray}{0}$};
		\node (root_23) at (3.5,2.1) {$\color{gray}{2}$};
		\node (root_13) at (4.2,1.4) {$\color{gray}{1}$};
		\node (root_12) at (4.9,2.1) {$\color{gray}{2}$};	
		\node (root_45) at (2.8,-1.4) {$\red{\bm 1}$};
		\node (root_46) at (3.5,-0.7) {$\red{\bm 2}$};
		\node (root_47) at (4.2,0) {$\color{gray}{1}$};
		\node (root_56) at (4.2,-1.4) {${\bm 1}$};
		\node (root_57) at (4.9,-0.7) {$\color{gray}{2}$};
		\node (root_67) at (5.6,-1.4) {$\color{gray}{1}$};
	\end{tikzpicture}
	$}\quad
	,
	\end{equation*}\smallskip
	
	\noindent
	where $\tf_5 b_-$ is computed in the same way as $\tf_1b_+$ in {\it Case 2}.
	}
	\end{ex}
	
	\begin{ex} \label{ex: bxla bkla}
	{\em
	Let us compare the crystal operators on $ \ms{B}(\infty)$ with  the those on $\ms{B}(X(\la))$ and $\ms{B}(K(\la))$ for $\la\in P^+$ (\eqref{eq:abstract crystal parabolic Verma} and Proposition \ref{prop: description of BKla}).
 Let $b=(S,b_+,b_-) \in \ms{B}(\infty) $ given.
%
	Let $m=3$, $n=4$ and $\la = 6 \de_1 + 4 \de_2 + \de_3 + 3 \de_4 + 2 \de_5 - 4 \de_7 \in P^+ $. Let $b=(S,b_+,b_-)$ as in Example \ref{ex:B infty}.

		First consider $\ms{B}(X(\la))$.
	We claim that $b_- \ot t_{\la_-} \in \ms{B}_{0|n}(\la_-)$.
	For example, to compute $\varepsilon_5^*(b_-)$, we consider
	\begin{equation*}
		\sigma_*(b_-) = ( -^{c_{57}} , +^{c_{67}} , -^{c_{56}}) = (-,-,+,-) , \qquad \sigma_* ^{\rm red}(b_-) = (-,-, \, \cdot \, , \, \cdot \,),
	\end{equation*}
	where $\sigma_* ^{\rm red}(b_-)$ is given in the same way as in Example \ref{ex:B infty}.
	Then $ \varepsilon^*_5(b_-) = 2 $, which is the number of $-$'s in $\sigma_* ^{\rm red}(b_-)$ (cf.~\cite{CT,Re}).
	Similarly we have $\varepsilon^*_4(b_-) =1 $ and $\varepsilon^*_6(b_-) = 1$.
	By \eqref{eq: iso 0nla}, we have $b_-^{\la_-}:=b_- \ot t_{\la_-} \in \ms{B}_{0|n}(\la_-)$, 
	since $\varepsilon^*_4(b_-)=1 \leq 1$, $\varepsilon^*_5(b_-)=2 \leq 2$ and $\varepsilon^*_6(b_-)=1 \leq 4$.
	Hence we may regard $b':=(S,b_+,b_-^{\la_-}) \, \ot \, t_{\la_+}$ as an element in $\ms{B}(X(\la))$,
	where we still identify $b'$ with the array given in \eqref{eq: b}.
	
	Let us compute $\tf_i b'$. 
	When $1 \leq i \leq m$, $\tf_i b'$ is same as in {\it Cases 1, 2} of Example \ref{ex:B infty} by \eqref{eq:abstract crystal parabolic Verma}.
	
	Consider the case when $i=5$.
	We first have $\varphi_5(b_-^{\la_-})=\varepsilon_5(b_-^{\la_-}) + \langle {\rm wt}(b_-^{\la_-}) , \alpha_5^\vee \rangle = 1$,
	since $\varepsilon_5(b_-^{\la_-}) = \varepsilon_5(b_-)=1$ and $\langle {\rm wt}(b_-^{\la_-}) , \alpha_5^\vee \rangle=0$.
	To compute $\varepsilon_5(S)$, we consider
	\begin{equation*}
		\sigma(S) = ( +^{c_{15}} , -^{c_{16}} , +^{c_{25}} , -^{c_{26}} , +^{c_{35}} , -^{c_{36}}) = ( \, \cdot \, , - , + , - , + , \, \cdot \, ), \quad \sigma^{\rm red}(S) =( \, \cdot \, , - , \, \cdot \, , \, \cdot \, , + , \, \cdot \, ),
	\end{equation*}
	which implies $\varepsilon_5(S)=1$ and $\varphi_5(S)=1$.
	Since $\varphi_5(b_-^{\la_-})=1 \leq \varepsilon_5(S)=1$, we have by Proposition \ref{prop:tensor product rule},

	\begin{equation*}
		\tf_5 b' = (\tf_5 S, b_+, b_-^{\la_-}) = \quad
		\scalebox{0.8}{$
		\begin{tikzpicture}[baseline=(current  bounding  box.center), every node/.style={scale=0.9}]
			\node (root_34) at (0,0) {$\color{gray}{0}$};
			\node (root_35) at (0.7,0.7) {$\red{\bm 0}$};
			\node (root_36) at (1.4,1.4) {$\red{\bm 1}$};
			\node (root_37) at (2.1,2.1) {$\color{gray}{1}$};
			\node (root_24) at (0.7,-0.7) {$\color{gray}{1}$};
			\node (root_25) at (1.4,0) {${\bm 1}$};
			\node (root_26) at (2.1,0.7) {${\bm 1}$};
			\node (root_27) at (2.8,1.4) {$\color{gray}{0}$};
			\node (root_14) at (1.4,-1.4) {$\color{gray}{0}$};
			\node (root_15) at (2.1,-0.7) {${\bm 0}$};
			\node (root_16) at (2.8,0) {${\bm 1}$};
			\node (root_17) at (3.5,0.7) {$\color{gray}{0}$};
			\node (root_23) at (3.5,2.1) {$\color{gray}{2}$};
			\node (root_13) at (4.2,1.4) {$\color{gray}{1}$};
			\node (root_12) at (4.9,2.1) {$\color{gray}{2}$};	
			\node (root_45) at (2.8,-1.4) {${\bm 2}$};
			\node (root_46) at (3.5,-0.7) {${\bm 1}$};
			\node (root_47) at (4.2,0) {$\color{gray}{1}$};
			\node (root_56) at (4.2,-1.4) {${\bm 1}$};
			\node (root_57) at (4.9,-0.7) {$\color{gray}{2}$};
			\node (root_67) at (5.6,-1.4) {$\color{gray}{1}$};
		\end{tikzpicture}
		$}\quad
		,
	\end{equation*}
	\smallskip

\noindent
where the entry $1$ at $\de_3 - \de_5$ corresponding to the leftmost $+$ in $\sigma^{\rm red}(S)$ is moved to the place at $\de_3-\de_6$.
Note that $\tf_5 b' $ is not equal to $\tf_5 b$ in Example \ref{ex:B infty} (see Remark \ref{rem: compatibility}).

Next consider $\ms{B}(K(\la))$.
We can check that $b_+^{\la_+}:=b_+ \ot t_{\la_+} \in \ms{B}_{m|0}(\la_+)$ by \eqref{eq: iso 0nla}, and may regard $b'':=(S,b_+^{\la_+},b_-^{\la_-}) $ as an element in $\ms{B}(K(\la))$,
where $b_-^{\la_-} = b_- \ot t_{\la_-}$.
When $i \geq m$, $\tf_i b''$ is the same as $\tf_i b'$ by Proposition \ref{prop: description of BKla}.
When $i = 1$, $\tf_1 b''$ is the same as $\tf_1 b$ \eqref{eq:f1b}.
However, we have $\tf^2_1 b'' = (\tf_1 S, \tf_1 b_+^{\la_+}, b_-^{\la_-}) = {\bf 0}$,
since $\varepsilon^*_1(\tf_1 b_+)=3 >2$ and hence $\tf_1 b_+^{\la_+} = {\bf 0} $, while $\tf^2_1 b \ne {\bf 0}$ \eqref{eq:f1b2}.
	}
	
\end{ex}

\appendix
\section{The algebra $B_q(U)$ and tensor product rule} \label{sec: q-Boson and tensor product}\label{sec:app}

\subsection{The algebra $B_q(U)$}\label{app:B_q}

Let $U$ be given as in \eqref{eq: def of U}.
Let $B_q(U)=B_q$ be the associative $\Bbbk$-algebra with $1$ 
generated by $k_\mu$, $e_i'$, $f_i$ for $\mu \in P$ and $i \in I$ with the following relations:
\begin{gather*}
	k_0 = 1, \quad k_{\mu+\mu'} = k_\mu k_{\mu'} \quad (\mu, \mu' \in P), \\
	k_\mu e_i' k_\mu^{-1} = q^{\langle \mu, \alpha_i^\vee \rangle} e_i' \quad 
	k_\mu f_i k_\mu^{-1} = q^{-\langle \mu, \alpha_i^\vee \rangle} f_i \quad (\mu \in P, i \in I),\\
	e_i'f_j = q^{\langle \alpha_j, \alpha_i^\vee \rangle} f_j e_i' + \delta_{ij} \quad (i, j \in I), \\
	e_i' e_j' -  e_j' e_i' = f_i f_j -  f_j f_i =0 \quad (|i-j|>1), \\
	{e'}_i^2 e_j'- [2] {e'}_i e_j' e_i' + e_j' {e'}_i^2= f_i^2 f_j- [2] f_i f_j f_i+f_j f_i^2 = 0 \quad (|i-j|=1),
\end{gather*}
which is isomorphic to $B_q(\gl_n)$ in \cite{Na}.
We denote by $B_q^\circ$ the subalgebra of $B_q$ generated by $e_i'$ and $f_i$ for $i \in I$ \cite{Kas91}.
Note that $U^-$ is a left $B_q$-module where the actions of $e'_i$, $f_i$, and $k_\mu$ are given by \eqref{eq: derivation}, left multiplication, and conjugation, respectively.
Indeed, $U^-$ is irreducible since it is an irreducible $B_q^\circ$-module \cite[Lemma 3.4.2, Corollary 3.4.9]{Kas91}.

Let $\mc{O}$ be the category of ${\Boson}$-modules $V$ such that $V$ has a weight space decomposition $V=\bigoplus_{\mu\in P} V_\mu$ with ${\rm dim}\,V_\mu < \infty$, and for any $v \in V$, there exists $l$ such that $e_{i_1}'\dots e_{i_l}' v = 0$ for any $i_1, \dots, i_l \in I$.
Note that $U^-$ is the highest weight ${\Boson}$-module in $\mc{O}$ with highest weight $0$.
The following is known in \cite[Remark 3.4.10]{Kas91}, \cite[Propositions 2.3 and 2.4]{Na}.
\begin{prop} \label{prop: basic prop of B-modules}
The category $\mc{O}$ is semisimple, where each irreducible module in $\mc{O}$ is a highest weight module and it is isomorphic to $U^-$ as a $\Boson^\circ$-module.
\end{prop}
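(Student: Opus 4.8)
The plan is to reduce everything to two inputs: the irreducibility of $U^-$ as a $B_q^\circ$-module (already recorded above, following \cite{Kas91}), and a structural decomposition expressing every object of $\mc{O}$ as copies of $U^-$ glued along a space of vacuum vectors. For $V \in \mc{O}$ I would set
\[
N(V) = \{\, v \in V \mid e_i' v = 0 \ (i \in I) \,\},
\]
a $P$-graded subspace with finite-dimensional graded pieces. First I would establish the existence of vacuum vectors: if $V \neq 0$, choose a nonzero weight vector $v$, and by the local nilpotency hypothesis defining $\mc{O}$ pick a product $e_{i_1}' \cdots e_{i_r}' v \neq 0$ of maximal length $r$ (finite, since some length-$l$ product annihilates $v$); this vector lies in $N(V) \setminus \{0\}$. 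Next, for any nonzero $n \in N(V)$, the assignment $f_{j_1} \cdots f_{j_s} \mapsto f_{j_1} \cdots f_{j_s}\, n$ defines a surjective $B_q^\circ$-homomorphism $U^- \twoheadrightarrow B_q^\circ\, n$ sending $1 \mapsto n$; since $U^-$ is irreducible and $n \neq 0$, its kernel is a proper submodule, hence zero, so $B_q^\circ\, n \cong U^-$.

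The heart of the argument is the structural map
\[
\mu_V \colon U^- \otimes_{\Bbbk} N(V) \longrightarrow V, \qquad u \otimes n \longmapsto u\, n,
\]
which is $B_q^\circ$-linear for the natural action on the source realizing it as a direct sum of copies of $U^-$ indexed by a weight basis of $N(V)$. Granting that $\mu_V$ is an isomorphism, the proposition is immediate: $V$ becomes a direct sum of copies of $U^-$, yielding semisimplicity, and every irreducible is forced to be a highest weight module isomorphic to $U^-$. I would prove that $\mu_V$ is bijective by a rank-one reduction followed by induction on weight. For a fixed $i$, the subalgebra generated by $e_i', f_i, k_\mu$ is a $q$-Weyl algebra, and the relation $e_i' f_i = q^{2} f_i e_i' + 1$ together with local nilpotency of $e_i'$ produces the free decomposition $V = \bigoplus_{k \ge 0} f_i^{(k)}\bigl(\ker_V e_i'\bigr)$; in particular each $f_i$ is injective and $e_i'$ maps the degree-$k$ part onto the degree-$(k-1)$ part.

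Feeding these rank-one facts into an induction on the height of $\mathrm{wt}$-elements (using finiteness of the weight spaces and the $Q$-grading) would give surjectivity and injectivity of $\mu_V$ together: surjectivity, because any weight vector lying outside the current image can be reached by lowering, via some $f_i$, from a strictly higher weight already handled by the inductive hypothesis; injectivity, because a minimal nonzero relation $\sum_j u_j n_j = 0$ among basis elements $u_j$ of $U^-$ can be stripped by applying a suitable $e_i'$, contradicting the $q$-derivation rule \eqref{eq: derivation}.

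The step I expect to be the main obstacle is exactly this passage from the rank-one $q$-Weyl decompositions to the global tensor decomposition $U^- \otimes N(V) \cong V$: the operators $e_i'$ for distinct $i$ do not commute, so one must track their interaction through the quadratic and Serre-type relations of $B_q$ while keeping the bookkeeping compatible with the $Q$-grading. Concretely, the delicate point is to show that a vacuum vector of the quotient $V/\bigl(U^- \cdot N(V)\bigr)$ — which would exist if the image of $\mu_V$ were proper — lifts to a genuine element of $N(V)$, i.e.\ that $N(-)$ is right exact on short exact sequences in $\mc{O}$; this is what forces $\mu_V$ to be surjective and, combined with the cited irreducibility of $U^-$, completes the proof. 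This is precisely the place where one may instead simply invoke \cite[Remark 3.4.10]{Kas91} and \cite[Propositions 2.3 and 2.4]{Na}.
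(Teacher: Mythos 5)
The paper gives no argument of its own here: the proposition is stated as known, with the proof delegated wholesale to \cite[Remark 3.4.10]{Kas91} and \cite[Propositions 2.3 and 2.4]{Na}. Your proposal therefore does strictly more than the paper, and what you actually prove is sound and is precisely the skeleton of the cited proofs rather than anything in the paper itself: the existence of nonzero vacuum vectors via a maximal nonvanishing product $e'_{i_1}\cdots e'_{i_r}v$; the identification $B_q^\circ\, n\cong U^-$ for $0\neq n\in N(V)$ (the map $u\mapsto u\,n$ is indeed $B_q^\circ$-linear, since the relation $e_i'f_j=q^{\langle\alpha_j,\alpha_i^\vee\rangle}f_je_i'+\delta_{ij}$ in $\Boson$ reproduces the derivation rule \eqref{eq: derivation} once $e_i'\,n=0$, and irreducibility of $U^-$ kills the kernel); and the rank-one decomposition $V=\bigoplus_{k\ge 0}f_i^{(k)}\bigl(\ker e_i'\bigr)$. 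So your route coincides with the literature's; what each side buys is clear: the paper buys brevity by citation, you buy an explicit reduction of the whole proposition to one lifting lemma.

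Two points would need sharpening to make your sketch self-contained. For injectivity of $\mu_V$, the stripping argument requires the fact that $\bigcap_{i}\ker e_i'=\Bbbk\cdot 1$ inside $U^-$ (which follows from nondegeneracy of the form $(\ ,\ )$ on $U^-$ via $(f_ix,u)=(x,e_i'(u))$): with it, a homogeneous relation $\sum_j u_j n_j=0$ of minimal height gives $e_i'(u_j)=0$ for all $i,j$, hence $u_j\in\Bbbk$, contradicting independence of the $n_j$; without it the phrase ``contradicting the $q$-derivation rule'' does not close the loop. For surjectivity, the induction ``on height of weights'' does not literally get off the ground: weights of an object of $\mc{O}$ need not be bounded above, since local nilpotency of the $e_i'$ is a per-vector condition, so there is no global top weight to start from. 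The correct formulation is the one you arrive at yourself --- lift a vacuum vector of $V/\bigl(U^-\!\cdot N(V)\bigr)$ to $N(V)$, i.e.\ prove that extensions by sums of copies of $U^-$ split --- and that is exactly the step carried out in \cite{Kas91} and \cite{Na}. Since you invoke those references only at that point, your proof is no less complete than the paper's, which invokes them for the entire statement.
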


Let $V$ be a ${\Boson}$-module in $\mc{O}$.
For a weight vector $u \in V$ and $i\in I$, 
we have $u = \sum_{k \ge 0} f_i^{(k)}u_k$ with $e_i'(u_k) = 0$ for all $k \ge 0$, 
and define $\te_i u$ and $\tf_i u$ as in \eqref{eq:Kashiwara operators for U^-}.
We may define a crystal base $(L,B)$ of $V$ satisfying the conditions (C1)--(C5) in Section \ref{subsec:crystal base of hw gl q=0} with respect to $\te_i$ and $\tf_i$ $(i \in I)$.
For example, $(L(\infty),B(\infty))$ in \eqref{eq: CB for U-} is a crystal base of $U^{-}$.
\begin{cor}$($\cite[Remark 3.5.1]{Kas91}$)$ \label{cor: Boson module has crystal}
Any ${\Boson}$-module in $\mc{O}$ has a crystal base, which is isomorphic to a direct sum of $(L(\infty), B(\infty))$'s up to shift of weights.
\end{cor}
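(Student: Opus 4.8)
The plan is to reduce everything to the single highest weight module $U^-$ by exploiting the semisimplicity of $\mc{O}$. First I would apply Proposition \ref{prop: basic prop of B-modules} to decompose a given $\Boson$-module $V\in \mc{O}$ as a direct sum $V=\bigoplus_j V_j$ of irreducible $\Boson$-modules. Each $V_j$ is a highest weight module; choose a highest weight vector $u_j\in V_j$ of weight $\mu_j\in P$ (so that $e'_i u_j=0$ for all $i\in I$). By the same Proposition there is an isomorphism $\phi_j:U^-\longrightarrow V_j$ of $\Boson^\circ$-modules with $\phi_j(1)=u_j$.

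The crux is that the crystal operators $\te_i,\tf_i$ are intrinsic to the $\Boson^\circ$-structure: for a weight vector $u$ they are defined through the decomposition $u=\sum_{k\ge 0}f_i^{(k)}u_k$ with $e'_i(u_k)=0$, which involves only $e'_i$ and $f_i$. Consequently $\phi_j$ intertwines $\te_i$ and $\tf_i$ for all $i\in I$. Setting $L_j=\phi_j(L(\infty))$ and letting $B_j$ be the image of $B(\infty)$ in $L_j/qL_j$, the pair $(L_j,B_j)$ is therefore a crystal base of $V_j$ which is isomorphic to $(L(\infty),B(\infty))$ as a crystal. The only discrepancy is the weight function: since $\phi_j$ carries the weight-$0$ generator $1$ to the weight-$\mu_j$ vector $u_j$, while $\te_i,\tf_i$ shift weights by $\pm\alpha_i$ in both $U^-$ and $V_j$, the weight on $B_j$ equals that on $B(\infty)$ translated by $\mu_j$; this is precisely the shift of weight in the statement.

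Finally I would assemble these into a crystal base of $V$ by taking $(L,B)=\left(\bigoplus_j L_j,\ \bigsqcup_j B_j\right)$. Because the weight space decomposition, the $\te_i,\tf_i$-stability, and the defining conditions (C1)--(C5) all respect direct sums, $(L,B)$ is a crystal base of $V$, and by construction it is isomorphic to $\bigoplus_j (L(\infty),B(\infty))$ up to the weight shifts $\mu_j$. The main obstacle, and really the only nontrivial point, is justifying that the locally defined crystal operators genuinely transport along the $\Boson^\circ$-isomorphism $\phi_j$ and that the weight shift is correctly tracked; once one observes that $\te_i,\tf_i$ see only the $\Boson^\circ$-action and that $\phi_j$ preserves weights up to the constant translation by $\mu_j$, the remainder is a routine direct-sum verification.
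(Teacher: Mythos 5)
Your proof is correct and follows exactly the route the paper intends: the corollary is stated as an immediate consequence of Proposition \ref{prop: basic prop of B-modules} (semisimplicity of $\mc{O}$ plus each irreducible being isomorphic to $U^-$ as a $\Boson^\circ$-module, citing \cite[Remark 3.5.1]{Kas91}), and your fleshing-out --- transporting $(L(\infty),B(\infty))$ along the $\Boson^\circ$-isomorphism, noting that $\te_i,\tf_i$ depend only on $e'_i,f_i$ and the weight grading, and tracking the weight shift by $\mu_j$ --- is precisely the omitted verification.
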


Let $\cmB : \Boson \longrightarrow U \ot \Boson$
be a homomorphism of $\Bbbk$-algebra given by
\begin{equation} \label{eq: comul on B}
\begin{split}
	\cmB(k_\mu) &= k_\mu \ot k_\mu, \\
	\cmB(e_i') &= (q^{-1}-q) k_ie_i \ot 1 + k_i \ot e_i', \\
	\cmB(f_i) &= f_i \ot 1 + k_i \ot f_i,
\end{split}
\end{equation}
for $\mu \in P$ and $i \in I$, which satisfies the coassociativity law \cite[Remark 3.4.11]{Kas91} (see also \cite[Proposition 1.2]{Na}). 
Let $V_1$ be a finite-dimensional $U$-module and let $V_2$ be a ${\Boson}$-module in $\mc{O}$.
Then $V_1 \ot V_2$ is a $\Boson$-module in $\mc{O}$ via \eqref{eq: comul on B}.

\subsection{Tensor product rule}
The following is an analogue of \cite[Theorem 1]{Kas91}.
\begin{thm}$($\cite[Remark 3.5.1]{Kas91}$)$ \label{thm: tensor product rule for Boson}
Let $V_1$ be a finite-dimensional $U$-module and let $V_2$ be a ${\Boson}$-module in $\mc{O}$.
Let $(L_k, B_k)$ be a crystal base of $V_k$ for $k = 1, 2$. 
Then $V_1 \ot V_2$ is in $\mc{O}$, and the pair $(L_1 \ot L_2,B_1 \ot B_2)$ is a crystal base of $V_1 \ot V_2$. 
Moreover, $\te_i$ and $\tf_i$ $(i \in I)$ act on $B_1 \ot B_2$ by
{\allowdisplaybreaks
\begin{equation}\label{eq:tensor product rule for Boson}
\begin{split}
&\te_i(b_1\otimes b_2)= \begin{cases}
\te_i b_1 \otimes b_2, & \text{if $\varphi_i(b_1)\geq\varepsilon_i(b_2)$}, \\ 
 b_1 \otimes \te_ib_2, & \text{if $\varphi_i(b_1)<\varepsilon_i(b_2)$},\\
\end{cases}
\\
&\tf_i(b_1\otimes b_2)=
\begin{cases}
\tf_ib_1 \otimes  b_2, & \text{if $\varphi_i(b_1)>\varepsilon_i(b_2)$}, \\
b_1 \otimes \tf_i  b_2, & \text{if $\varphi_i(b_1)\leq\varepsilon_i(b_2)$}, 
\end{cases}
\end{split}
\end{equation}}
\!\!for $b_1 \ot b_2 \in B_1 \ot B_2$,
where $\varphi_i(b_1) =\max\{\, k \,|\, \tf_i^k b_1  \neq {\bf 0} \, \}$
and
$\varepsilon_i(b_2)=\max\{\, k \,|\, \te_i^k b_2  \neq {\bf 0} \, \}$.
\end{thm}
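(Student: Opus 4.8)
The plan is to adapt Kashiwara's proof of the tensor product theorem for $U_q$-modules \cite[Theorem 1]{Kas91} to the comultiplication $\cmB$ of \eqref{eq: comul on B}; the assertion is in essence \cite[Remark 3.5.1]{Kas91}. First I would confirm that $V_1\ot V_2\in\mc{O}$: its weight spaces are finite-dimensional since $V_1$ is finite-dimensional and $V_2\in\mc{O}$, and $\cmB(e_i')=(q^{-1}-q)k_ie_i\ot 1+k_i\ot e_i'$ is locally nilpotent because $e_i$ is nilpotent on the finite-dimensional $V_1$ and $e_i'$ is locally nilpotent on $V_2$ (the coassociativity of $\cmB$ needed to make $V_1\ot V_2$ a $\Boson$-module is recalled in \cite[Remark 3.4.11]{Kas91}). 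It then remains to verify the crystal base conditions (C1)--(C5) and the action formula \eqref{eq:tensor product rule for Boson}.

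The essential step is a reduction to rank one. The operators $\te_i,\tf_i$ and the statistics $\varepsilon_i,\varphi_i$ are defined purely in terms of the $i$-th decomposition $u=\sum_k f_i^{(k)}u_k$ with $e_i'(u_k)=0$, and $\cmB$ carries the triple $\{e_i',f_i,k_i\}$ into $U^{(i)}\ot\Boson^{(i)}$, where $U^{(i)}$ and $\Boson^{(i)}$ are the rank-one subalgebras attached to $i$. Hence, fixing $i$ (and dropping it from the notation), I may verify (C1),(C4),(C5) and \eqref{eq:tensor product rule for Boson} using only the $i$-th $\mathfrak{sl}_2$-data; conditions (C2),(C3) are immediate from the weight grading and from $B_k$ being a basis of $L_k/qL_k$. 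Through the $i$-th operators, $V_2$ is a sum of boson strings $\{u_k=f^{(k)}u_0\}_{k\ge0}$ with $e'(u_0)=0$ (local nilpotency of $e'$, cf.\ Proposition \ref{prop: basic prop of B-modules} and Corollary \ref{cor: Boson module has crystal}), and $V_1$ is a sum of finite strings $\{v_j=f^{(j)}v_0\}_{0\le j\le l}$ with $e(v_0)=0$. Since crystal bases and both branches of \eqref{eq:tensor product rule for Boson} are additive over direct sums of strings, it suffices to treat one pair, $V_1=V(l)$ and $V_2=\Bbbk[f]u_0$, with $L_1=\bigoplus_j A_0v_j$ and $L_2=\bigoplus_k A_0u_k$.

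The technical heart, and the step I expect to be the main obstacle, is the explicit rank-one computation. From \eqref{eq: comul on B} one has $f(v_j\ot u_k)=[j+1]\,v_{j+1}\ot u_k+q^{l-2j}[k+1]\,v_j\ot u_{k+1}$, together with the analogous expansion of $e'(v_j\ot u_k)$, and the task is to rewrite the basis $\{v_j\ot u_k\}$ in the canonical form $\sum_m f^{(m)}w_m$ with $e'(w_m)=0$ required by \eqref{eq:Kashiwara operators for U^-}. Identifying the $e'$-highest weight vectors $w_m$ and keeping track of the resulting powers of $q$ is exactly the bookkeeping performed in \cite[Theorem 1]{Kas91}; the upshot is that, modulo $qL$ with $L=L_1\ot L_2$, the extra factor in $\tf(v_j\ot u_k)$ lands on $v_{j+1}\ot u_k$ when $\varphi(v_j)=l-j>k=\varepsilon(u_k)$ and on $v_j\ot u_{k+1}$ when $l-j\le k$, the crossover occurring precisely at $\varphi_i(b_1)=\varepsilon_i(b_2)$. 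This simultaneously delivers the stability $\te_iL,\tf_iL\subset L$ and the mod-$q$ formulas \eqref{eq:tensor product rule for Boson}, with the mirror computation giving the rule for $\te_i$.

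Finally I would assemble the global statement. Lattice stability yields (C1) and (C4); the explicit formulas give $\te_i(B_1\ot B_2),\tf_i(B_1\ot B_2)\subset(B_1\ot B_2)\cup\{0\}$ and the equivalence $\tf_ib=b'\Leftrightarrow\te_ib'=b$, which is (C5), while (C2) and (C3) are clear. As these were checked one index at a time after the rank-one reduction, they hold for the original $U$ and $\Boson$, so $(L_1\ot L_2,B_1\ot B_2)$ is a crystal base of $V_1\ot V_2$ realizing \eqref{eq:tensor product rule for Boson}.
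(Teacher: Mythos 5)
Your overall architecture coincides with the paper's own proof: checking $V_1\ot V_2\in\mc{O}$, reducing to a single index $i$ and then to one pair of strings (a finite $\mathfrak{sl}_2$-string tensored with one infinite boson string), an explicit rank-one computation modulo $q$, and reassembling (C1)--(C5); your predicted crossover at $\varphi_i(b_1)=\varepsilon_i(b_2)$, with the two branches you describe, agrees exactly with the paper's key congruence \eqref{eq:claim1}. The gap is that the rank-one computation --- which you yourself identify as the technical heart --- is never performed but is delegated to ``the bookkeeping in \cite[Theorem 1]{Kas91}.'' That delegation does not go through as stated: Kashiwara's Theorem 1 concerns a tensor product of two integrable $U_q$-modules with the standard comultiplication, whereas here the second factor is an infinite $\Boson$-string and the relevant operator is $e_i'$ with the twisted rule $\cmB(e_i')=(q^{-1}-q)k_ie_i\ot 1+k_i\ot e_i'$ of \eqref{eq: comul on B}. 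Indeed \cite[Remark 3.5.1]{Kas91} states the result without proof, and supplying the missing computation is precisely the purpose of the paper's appendix; the analogy with Theorem 1 tells you what answer to expect, but it does not furnish the estimates.

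Concretely, what your proposal omits is the content of the paper's Step 1: the explicit $e'$-highest weight vectors $E_t=\sum_{i=0}^{t}a_{t,i}(q)\,f^{(i)}v_1\ot f^{(t-i)}v_2$ with $a_{t,i}(q)=\prod_{0\le j\le i-1}q^{l-t+1}/(q^{2(l-j)}-1)$, the verification that they span $\mathrm{Ker}\,e'$ and satisfy $E_t\equiv v_1\ot f^{(t)}v_2 \pmod{qL_1\ot L_2}$ (this uses $l-t+1>0$, a point invisible in your sketch), and above all the estimate of $\cmB(f^{(s)})E_t$ modulo $q$. For $s>l-t$ this forces the introduction of the coefficients $C(s,k)$ of \eqref{eq:def of Csk}, a recursion in $s$ coming from $\cmB(f^{(s+1)})=\frac{1}{[s+1]}\cmB f\cdot\cmB(f^{(s)})$, the $q$-binomial identity of Lemma \ref{lem:Akito identity}, and an induction computing the minimal degrees $d(s,k)$ in \eqref{eq:claim2}, concluding $C(s,k)\in qA_0$ for $k\ne t$ and $C(s,t)\in 1+qA_0$. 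Your formula $f(v_j\ot u_k)=[j+1]\,v_{j+1}\ot u_k+q^{l-2j}[k+1]\,v_j\ot u_{k+1}$ is correct but is only the starting point of this analysis. In short: right strategy, right predicted answer, and a sound reduction (your string decomposition matches the paper's Step 2, where one passes to the $U_i$- and $B_q(U_i)$-submodules generated by $v_1$ and $v_2$), but the decisive degree estimate is asserted rather than proved, and it is not quotable from the literature.
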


\begin{cor} \label{cor:Bla ot Binf}
	For $\la \in P^+$, we have
	\begin{equation*}
		B(\la) \ot B(\infty) \cong \bigsqcup_{b \in B(\la)} B(\infty) \ot T_{{\rm wt}(b)},
	\end{equation*}
	as a $U$-crystal.
\end{cor}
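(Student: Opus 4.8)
\emph{Plan of proof.} The plan is to realize the left-hand side as the crystal of a $\Boson$-module in $\mc{O}$ and then read off the decomposition from the semisimplicity of $\mc{O}$ together with a character count. First I would view $V(\la)$ as a finite-dimensional $U$-module with crystal base $(L(\la),B(\la))$ and $U^-$ as the Boson module in $\mc{O}$ with crystal base $(L(\infty),B(\infty))$ from \eqref{eq: CB for U-}. Applying Theorem \ref{thm: tensor product rule for Boson} with $V_1=V(\la)$ and $V_2=U^-$, the tensor product $V(\la)\ot U^-$ again lies in $\mc{O}$, the pair $(L(\la)\ot L(\infty),\,B(\la)\ot B(\infty))$ is a crystal base of it, and the operators $\te_i,\tf_i$ on $B(\la)\ot B(\infty)$ are given by the rule \eqref{eq:tensor product rule for Boson}.

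Next I would invoke the semisimplicity of $\mc{O}$. By Corollary \ref{cor: Boson module has crystal}, the $U$-crystal $B(\la)\ot B(\infty)$ is isomorphic to a disjoint union of shifted copies of $B(\infty)$; that is, there is a multiset of weights $\{\mu_j\}_j$ with
\begin{equation*}
	B(\la)\ot B(\infty)\ \cong\ \bigsqcup_{j} B(\infty)\ot T_{\mu_j}.
\end{equation*}
Here I use that for any $\mu\in P$ the tensor product rule forces $\te_i(b\ot t_\mu)=\te_i b\ot t_\mu$ and $\tf_i(b\ot t_\mu)=\tf_i b\ot t_\mu$ (since $\varepsilon_i(t_\mu)=-\infty$), so that $B(\infty)\ot T_\mu$ is connected with unique source $u_\infty\ot t_\mu$ and is isomorphic to $B(\infty)$ up to the weight shift by $\mu$; consequently each $\mu_j$ is the weight of the source of the corresponding connected component.

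It then remains to determine the multiset $\{\mu_j\}_j$, and this is the only real content. I would compare formal characters in a suitable completion $\widehat{\Z[P]}$ of the group algebra $\Z[P]$, which is legitimate because every module in $\mc{O}$ has finite-dimensional weight spaces and weights lying in a finite union of sets of the form $\nu-Q^+$. Taking characters of both sides of the displayed isomorphism yields ${\rm ch}\,B(\la)\cdot{\rm ch}\,B(\infty)=\big(\sum_j e^{\mu_j}\big)\cdot{\rm ch}\,B(\infty)$. Since ${\rm ch}\,B(\infty)$ has constant term $1$ (its unique highest weight element $u_\infty$ has weight $0$, and all other weights lie in $-Q^+\setminus\{0\}$), it is a unit in $\widehat{\Z[P]}$, so I may cancel it to obtain $\sum_j e^{\mu_j}={\rm ch}\,B(\la)=\sum_{b\in B(\la)}e^{{\rm wt}(b)}$. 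Hence $\{\mu_j\}_j=\{\,{\rm wt}(b)\mid b\in B(\la)\,\}$ as multisets, and the displayed decomposition becomes exactly $\bigsqcup_{b\in B(\la)}B(\infty)\ot T_{{\rm wt}(b)}$.

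The hard part will be the character-cancellation step: one must set up the completion carefully so that ${\rm ch}\,B(\infty)$ is invertible and the resulting equality can be read off coefficientwise. Alternatively, one can avoid characters by identifying the sources of $B(\la)\ot B(\infty)$ directly: by \eqref{eq:tensor product rule for Boson} and the normality of $B(\la)$, an element $b_1\ot b_2$ is annihilated by every $\te_i$ precisely when $b_1=u_\la$ and $\varepsilon_i(b_2)\le\langle\la,\alpha_i^\vee\rangle$ for all $i$, and this set of $b_2$ is carried, by the weight-preserving involution $*$ and Kashiwara's embedding \eqref{eq: iso 0nla}, onto a weight-shifted copy of $B(\la)$; this recovers the same multiset of source weights $\{\la+{\rm wt}(b_2)\}=\{\,{\rm wt}(b)\mid b\in B(\la)\,\}$ and hence the same conclusion.
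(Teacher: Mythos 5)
Your proposal is correct, and its primary route is genuinely different from the paper's. You and the paper begin identically: apply Theorem \ref{thm: tensor product rule for Boson} to $V(\la)\ot U^-$ and Corollary \ref{cor: Boson module has crystal} to decompose $B(\la)\ot B(\infty)$ into connected components of the form $B(\infty)\ot T_{\mu_j}$, one for each source. The divergence is in determining the multiset $\{\mu_j\}$. The paper does this by an explicit identification of the sources: by \eqref{eq:tensor product rule for Boson}, $b_1\ot b_2$ is a source exactly when $b_1=u_\la$ and $\varepsilon_i(b_2)\le\langle\la,\alpha_i^\vee\rangle$ for all $i$, and the map $u_\la\ot b_2\mapsto b_2^*\ot t_\la$ is a weight-preserving bijection onto $B(\la)$ viewed inside $B(\infty)\ot T_\la$ via the $*$-involution (using ${\rm wt}(b_2)={\rm wt}(b_2^*)$ and $\varepsilon_i(b_2)=\varepsilon_i^*(b_2^*)$) --- this is precisely the ``alternative'' you sketch in your final paragraph. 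Your main route instead cancels ${\rm ch}\,B(\infty)$ in a completed group ring, and it does go through: all supports lie in $\la-Q^+$, weight multiplicities are finite (so the a priori possibly infinite sum $\sum_j e^{\mu_j}$ lives in the completion), the simple roots are linearly independent in $P$ so multiplication on cone-supported series is well defined, and ${\rm ch}\,B(\infty)=1+(\text{terms supported on } -Q^+\setminus\{0\})$ is a unit by the usual geometric-series argument. What each buys: your character argument avoids the $*$-involution and Kashiwara's embedding \eqref{eq: iso 0nla} entirely and is arguably more robust (it would apply verbatim in any setting with the same semisimplicity and tensor rule), but it only recovers the multiset of source weights; the paper's argument is sharper, producing a canonical weight-preserving bijection between the set of sources and $B(\la)$, at the cost of invoking the $*$-machinery. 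Both suffice for the stated isomorphism, since $B(\infty)\ot T_\mu$ is determined up to isomorphism by $\mu$.
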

\pf
	Let $\left( B(\la) \ot B(\infty) \right) ^h := \{\, b^\circ \in B(\la) \ot B(\infty) \, | \, \te_i(b^\circ) = 0  \text{ for } i \in I \, \}$.
	By Theorem \ref{thm: tensor product rule for Boson} and Corollary \ref{cor: Boson module has crystal},
	we have 
	\begin{equation*}
		B(\la) \ot B(\infty) \cong \bigsqcup_{b^\circ \in \left( B(\la) \ot B(\infty) \right) ^h } B(\infty) \ot T_{{\rm wt}(b^\circ)},
	\end{equation*}
	since the connected component of $b^\circ \in \left( B(\la) \ot B(\infty) \right) ^h$ is isomorphic to $B(\infty) \ot T_{{\rm wt}(b^\circ)}$.
	By \eqref{eq:tensor product rule for Boson}, we have that $b^\circ = b_1 \ot b_2 \in \left( B(\la) \ot B(\infty) \right) ^h$
	if and only if $b_1=u_\la$ and $\varepsilon_i(b_2) \le \langle \la, \alpha_i^\vee \rangle$ for all $i \in I$.
	Let $*$ denote the involution on $B(\infty)$ induced by an involution $*$ on $U^-$ \cite[Theorem 2.1.1]{Kas93}.
	Note that
	${\rm wt}(b_2) = {\rm wt}(b_2^*)$ and $\varepsilon_i(b_2) = \varepsilon_i^*(b_2^*)$ for $i \in I$, where $\varepsilon^*_i(b')=\varepsilon_i((b')^*)$ for $b' \in B(\infty)$.
	Since $B(\la) = \{ \, b \ot t_\la \in B(\infty) \ot T_\la \, | \, \varepsilon^*_i(b) \leq \langle \la , \alpha^\vee_i \rangle \text{ for } i \in I \, \}$, 
	we have a weight preserving bijection from $\left( B(\la) \ot B(\infty) \right) ^h$ to $B(\la)$ sending $b^\circ=u_\la \ot b_2$ to $b_2^* \ot t_\la$.
	Therefore
	\begin{equation*} 
		B(\la) \ot B(\infty) \cong \bigsqcup_{b \in B(\la)} B(\infty) \ot T_{{\rm wt}(b)}.
	\end{equation*}
	
\qed

For the reader's convenience, we give a simple self-contained proof of Theorem \ref{thm: tensor product rule for Boson},
which also works for the case associated with symmetrizable Kac-Moody algebras in \cite{Kas91} (cf.~\cite[17.1]{Lu}).

\begin{lem} \label{lem:identity for binomial coefficients}
	For integers $a \ge b \ge 0$, we have
	\begin{equation*}
		\begin{bmatrix} a \\ b \end{bmatrix} = q^{-b} \begin{bmatrix} a-1 \\ b \end{bmatrix} + q^{a-b} \begin{bmatrix} a-1 \\ b-1 \end{bmatrix} \text{.}
	\end{equation*}
	Here, for $c,d \in \Z$, $\begin{bmatrix} c \\ d \end{bmatrix} := \cfrac{[c]!}{[c-d]![d]!}$ if $c \geq d \geq 0$, and $\begin{bmatrix} c \\ d \end{bmatrix} := 0$ otherwise.
\end{lem}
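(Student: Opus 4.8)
The plan is to verify the identity by a direct computation with the $q$-integers, reducing everything to the single scalar identity $q^{-b}[a-b] + q^{a-b}[b] = [a]$. First I would dispose of the boundary cases. When $b=0$, both sides equal $1$, since $\begin{bmatrix} a-1 \\ -1 \end{bmatrix}=0$ by the stated convention and $\begin{bmatrix} a \\ 0 \end{bmatrix}=\begin{bmatrix} a-1 \\ 0 \end{bmatrix}=1$; when $b=a$, the first term on the right vanishes because $\begin{bmatrix} a-1 \\ a \end{bmatrix}=0$, and the remaining term is $q^{a-b}\begin{bmatrix} a-1 \\ a-1 \end{bmatrix}=1=\begin{bmatrix} a \\ a \end{bmatrix}$. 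So I may assume $a>b>0$, where all three coefficients are genuine factorial quotients.

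Next I would factor the common quantity $\dfrac{[a-1]!}{[a-b]![b]!}$ out of the right-hand side. Using $[a-1-b]! = [a-b]!/[a-b]$ and $[b-1]! = [b]!/[b]$, the right-hand side becomes $\dfrac{[a-1]!}{[a-b]![b]!}\bigl(q^{-b}[a-b] + q^{a-b}[b]\bigr)$. Thus the lemma is equivalent to showing that the bracketed factor equals $[a]$, since then $\dfrac{[a-1]![a]}{[a-b]![b]!}=\dfrac{[a]!}{[a-b]![b]!}=\begin{bmatrix} a \\ b \end{bmatrix}$.

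The final step is the scalar identity. Expanding $[k]=(q^{k}-q^{-k})/(q-q^{-1})$, I would compute $q^{-b}[a-b]+q^{a-b}[b]=\dfrac{1}{q-q^{-1}}\bigl((q^{a-2b}-q^{-a})+(q^{a}-q^{a-2b})\bigr)=\dfrac{q^a-q^{-a}}{q-q^{-1}}=[a]$, where the two $q^{a-2b}$ terms cancel. Substituting this back into the factored expression completes the argument.

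There is no real obstacle here: the computation is entirely routine, and the only point requiring a little care is that the paper uses the symmetric normalization $[k]=(q^k-q^{-k})/(q-q^{-1})$ rather than the one-variable Gaussian convention, together with the boundary conventions $\begin{bmatrix} c \\ d \end{bmatrix}=0$ for $d<0$ or $d>c$. Keeping track of these is exactly what pins down the precise powers $q^{-b}$ and $q^{a-b}$ appearing in the statement, as opposed to the $q^{b}$, $q^{b-a}$ one would get from the mirror recursion.
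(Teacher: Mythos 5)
Your proof is correct and takes the same route as the paper, whose entire proof is the one-line remark that the identity ``follows directly from the definition of $q$-binomial coefficient'': you simply carry out that direct verification, and the key reduction $q^{-b}[a-b]+q^{a-b}[b]=[a]$ is exactly right under the symmetric convention $[k]=(q^k-q^{-k})/(q-q^{-1})$. One microscopic caveat, inherited from the statement itself rather than introduced by you: in the degenerate case $a=b=0$ the stated conventions make the right-hand side $0$ while the left-hand side is $1$ (so your boundary claims $\begin{bmatrix} a-1 \\ 0 \end{bmatrix}=1$ and $\begin{bmatrix} a-1 \\ a-1 \end{bmatrix}=1$ need $a\ge 1$), but this case never arises in the lemma's use in the proof of Lemma \ref{lem:Akito identity}.
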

\pf
	It follows directly from the definition of $q$-binomial coefficient.
\qed

\begin{lem} \label{lem:Akito identity}
	For $a, b \in \Z_{+}$, we have
	\begin{equation*} 
		\sum^{b}_{i=\max(0,b-a)} q^{(a-1)(b-i)} C_i(q) \begin{bmatrix} a \\ b-i \end{bmatrix}=q^{2ab},
	\end{equation*}
	where $C_i(q)=\prod_{i+1 \leq k \leq b} (q^{2k}-1)$ for $i< b$ and $C_b(q)=1$.
\end{lem}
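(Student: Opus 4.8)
The plan is to prove the identity by induction on $a$, with $b$ ranging freely over all nonnegative integers, after first rewriting the sum in a more convenient index. Substituting $j=b-i$, so that $i$ runs from $\max(0,b-a)$ to $b$ exactly when $j$ runs from $0$ to $\min(a,b)$, the claim becomes
\begin{equation*}
S(a,b) := \sum_{j=0}^{\min(a,b)} q^{(a-1)j}\,C_{b-j}(q)\begin{bmatrix} a \\ j \end{bmatrix} = q^{2ab},
\end{equation*}
where I continue to write $C_{b-j}(q)=\prod_{b-j+1\le k\le b}(q^{2k}-1)$ with the upper product limit $b$ fixed. The base case $a=0$ is immediate: only $j=0$ survives, and $C_b(q)\begin{bmatrix}0\\0\end{bmatrix}=1=q^{0}$, as required for every $b$.

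For the inductive step I would first apply the Pascal-type recurrence of Lemma \ref{lem:identity for binomial coefficients} (with $b$ replaced by $j$), namely $\begin{bmatrix} a \\ j \end{bmatrix} = q^{-j}\begin{bmatrix} a-1 \\ j \end{bmatrix} + q^{a-j}\begin{bmatrix} a-1 \\ j-1 \end{bmatrix}$, and split $S(a,b)$ accordingly. The first resulting sum is $\sum_j q^{(a-2)j} C_{b-j}(q)\begin{bmatrix}a-1\\j\end{bmatrix}$, which is precisely $S(a-1,b)$ and hence equals $q^{2(a-1)b}$ by the induction hypothesis. For the second sum I would reindex $j\mapsto j+1$ and pull out the resulting power of $q$, obtaining $q^{2a-2}\sum_{j} q^{(a-2)j}\,C_{b-1-j}(q)\begin{bmatrix}a-1\\j\end{bmatrix}$.

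The one genuinely nontrivial point — and the step I expect to be the main obstacle — is that the coefficient $C_{b-1-j}(q)$ appearing here is the one attached to the value $b$, i.e.\ with product upper limit $b$, whereas the coefficient occurring in $S(a-1,b-1)$ carries upper limit $b-1$. Writing $C^{(b)}_{\,\bullet}$ to record this upper limit, a direct comparison of the two products isolates the single extra factor indexed by $k=b$, giving $C^{(b)}_{b-1-j}(q)=(q^{2b}-1)\,C^{(b-1)}_{(b-1)-j}(q)$. Hence the second sum equals $q^{2a-2}(q^{2b}-1)\,S(a-1,b-1)=q^{2a-2}(q^{2b}-1)\,q^{2(a-1)(b-1)}$. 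Getting this shift of the $C$-product right (the matching of summation ranges being automatic, since $\begin{bmatrix}a-1\\j\end{bmatrix}$ vanishes outside $0\le j\le a-1$ and the reindexed range never reaches $j=b$) is the crux; everything else is routine bookkeeping.

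Finally I would assemble the recurrence
\begin{equation*}
S(a,b) = S(a-1,b) + q^{2a-2}(q^{2b}-1)\,S(a-1,b-1)
\end{equation*}
and substitute the two inductive values, using $2a-2+2(a-1)(b-1)=2(a-1)b$:
\begin{equation*}
S(a,b) = q^{2(a-1)b} + q^{2a-2}(q^{2b}-1)\,q^{2(a-1)(b-1)} = q^{2(a-1)b} + \bigl(q^{2ab}-q^{2(a-1)b}\bigr) = q^{2ab},
\end{equation*}
which closes the induction and proves the identity.
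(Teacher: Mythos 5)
Your proof is correct and follows essentially the same route as the paper: induction on $a$ using the Pascal-type recurrence of Lemma \ref{lem:identity for binomial coefficients} to derive the two-term recursion $S(a,b)=S(a-1,b)+q^{2a-2}(q^{2b}-1)S(a-1,b-1)$, which is exactly the paper's $P(a+1,b)=P(a,b)+q^{2a}(q^{2b}-1)P(a,b-1)$ after a shift in $a$. Your explicit treatment of the shift in the upper limit of the $C$-product (the $C^{(b)}$ versus $C^{(b-1)}$ bookkeeping) fills in a detail the paper leaves implicit, but the argument is the same.
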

\pf
Put
\begin{equation*}
	P(a,b) = \sum^{b}_{i=\max(0,b-a)} q^{(a-1)(b-i)} C_i(q) \begin{bmatrix} a \\ b-i \end{bmatrix}.
\end{equation*}
Note that $P(0,b) = 1$ for $b \in \Z_{+}$.
We use induction on $a$.
Suppose that $P(a,b) = q^{2ab}$ for $b \in \Z_{+}$.
Since $P(a+1,0) = 1$, we may assume that $b > 0$.
By Lemma \ref{lem:identity for binomial coefficients}, we have the following recursive formula:
\begin{equation*}
	P(a+1,b) = P(a,b) + q^{2a}(q^{2b}-1) P(a,b-1).
\end{equation*}
Then by induction hypothesis, we have
\begin{equation*}
	\begin{split}
		P(a+1,b) = q^{2ab}+q^{2a}(q^{2b}-1)q^{2a(b-1)} = q^{2(a+1)b}.
	\end{split}
\end{equation*}
This completes the induction.
\qed
\smallskip

\begin{proof}[Proof of Theorem \ref{thm: tensor product rule for Boson}] 
We divide the proof into the following steps.

\noindent {\em Step 1.}
Let $U = U_q(\mathfrak{sl}_2)=\langle e,f,k,k^{-1} \rangle$ and let $B=\Boson(U)=\langle e',f,k,k^{-1} \rangle$.
Let 
\begin{equation*}
	V_1 = \bigoplus_{k = 0}^l \Bbbk f^{(k)} v_1, \qquad
	V_2 = \bigoplus_{k \ge 0} \Bbbk f^{(k)} v_2,
\end{equation*}
be irreducible representations of $U$ and $B$ generated by $v_1$ and $v_2$, respectively, where $e \, v_1 = 0$ and $e' \, v_2 = 0$.
Let
\begin{equation*}
	L_1 = \bigoplus_{k = 0}^l A_0 f^{(k)} v_1, \qquad
	L_2 = \bigoplus_{k \ge 0} A_0 f^{(k)} v_2.
\end{equation*}
be the crystal lattices of $V_1$ and $V_2$, respectively.
For $0 \le t \le l$, we let
\begin{equation*}
	E_t = \sum_{i=0}^{t} a_{t,i}(q) f^{(i)}v_1 \otimes f^{(t-i)}v_2 \in L_1 \otimes L_2,
\end{equation*}
where $a_{t,i}(q) \in A_0$ is given by
\begin{equation*}
a_{t,i}(q) = \prod_{0 \leq j \leq i-1} \frac{q^{l-t+1}}{q^{2(l-j)}-1}, \quad a_{t,0}(q)=1.
\end{equation*}
We can check that $E_t$ is a highest weight vector of $V_1 \otimes V_2$, that is, $e' E_t = \cmB (e')(E_t) = 0$, and $\{ E_t \, | \, 0 \le t \le l \}$ is a $\Bbbk$-basis of $\mathrm{Ker} \, e'$.
Since $l-t+1 > 0$, we also have
\begin{equation*}
	\qquad E_t \equiv v_1 \otimes f^{(t)} v_2 \quad (\mathrm{mod} \,\,\, q\, L_1 \otimes L_2).
\end{equation*}

By \eqref{eq: comul on B}, we have for $s \ge 0$
\begin{equation*} \label{eq:comulti formula for fs}
	\cmB (f^{(s)}) = \sum_{i=0}^{s} q^{-i(s-i)}f^{(s-i)}k^i \otimes f^{(i)},
\end{equation*}
and
\begin{equation*}
	\begin{split}
		\cmB(f^{(s)})(E_t) & = \sum^{t}_{i=0} \sum^{s}_{j=0} a_{t,i}(q) q^{-j(s-j)}f^{(s-j)}k^j f^{(i)} v_1 \otimes f^{(j)} f^{(t-i)} v_2 \\
		& = \sum^{t}_{i=0} \sum^{s}_{j=0} a_{t,i}(q) q^{(l-2i-s+j)j} \begin{bmatrix} s-j+i \\ i \end{bmatrix} \begin{bmatrix} t-i+j \\ j \end{bmatrix} f^{(s-j+i)} v_1 \otimes f^{(t-i+j)} v_2.
	\end{split}
\end{equation*}
\smallskip

Now, it is enough to show that
\begin{equation} \label{eq:claim1}
	\cmB(f^{(s)})(E_t) \equiv 
	\begin{cases}
		f^{(s)}v_1\otimes f^{(t)}v_2  & \text{if $s \le l-t$,} \\
		f^{(l-t)}v_1\otimes f^{(2t+s-l)}v_2  & \text{if $s > l-t$,}
	\end{cases}
	\pmod{qL_1 \ot L_2}.
\end{equation}
\smallskip

{\it Case 1.} $s\leq l-t$. In this case,
we can check that 
$$ a_{t,i}(q) q^{(l-2i-s+j)j} \begin{bmatrix} s-j+i \\ i \end{bmatrix} \begin{bmatrix} t-i+j \\ j \end{bmatrix} \in q A_0 $$
if $(i,j)\ne (0,0)$. Therefore $ \cmB(f^{(s)})(E_t) \equiv f^{(s)}v_1\otimes f^{(t)}v_2	\pmod{qL_1 \otimes L_2}$.
\smallskip

{\it Case 2.} $s> l-t$. It is enough to consider the case of $j \geq s+i-l$ since we have $f^{(s-j+i)}v_1=0$ if $s-j+i>l$.
Thus we may write
\begin{equation} \label{eq:equation of Csk}
	\begin{split}
		& \cmB(f^{(s)})E_t  \\
		&= \sum^{t}_{i=0} \sum^{s}_{j=\max(0,s+i-l)} a_{t,i}(q) q^{(l-2i-s+j)j} 
		\begin{bmatrix} s-j+i \\ i \end{bmatrix} \begin{bmatrix} t-i+j \\ j \end{bmatrix} 
		f^{(s-j+i)}v_1 \otimes f^{(t-i+j)}v_2 \\
		&= \sum^{l}_{k=0} C(s,k) f^{(l-k)}v_1 \otimes f^{(t+s-l+k)}v_2 \text{,}
	\end{split}
\end{equation}
where
\begin{equation} \label{eq:def of Csk}
	C(s,k) = \sum^{\min(t,l-k)}_{i=\max(0,l-s-k)} a_{t,i}(q) q^{(k-i)(i-l+s+k)} \begin{bmatrix} l-k \\ i \end{bmatrix} \begin{bmatrix} t-l+s+k \\ t-i \end{bmatrix}.
\end{equation}

Let us call an integer $d$ a {\em minimal degree} of $f(q) \in \mathbb{Q}(q)$ if $d$ is maximal such that $f(q) \in q^d A_0 $.
Let $d(s,k)$ be the minimal degree of $C(s,k)$. To prove \eqref{eq:claim1}, we show
\begin{equation} \label{eq:claim2}
	d(s,k)=
	\begin{cases}
	0 & \text{if  } k = t \text{,} \\
	(s+k-l)(k-t) & \text{if  } k > t \text{,} \\
	(s+t+1-l)(t-k) & \text{if  } k < t \text{.}
	\end{cases}
\end{equation}

First, assume that $k \geq t$.
Note that the minimal degree of
$$a_{t,i}(q) q^{(k-i)(i-l+s+k)} \begin{bmatrix} l-k \\ i \end{bmatrix} \begin{bmatrix} t-l+s+k \\ t-i \end{bmatrix}$$ 
is 
\begin{equation} \label{eq:degree of k>t}
	i^2+(2k-2t+1)i-(l-s-k)(k-t)\text{.}
\end{equation}
Since $(2k-2t+1) \geq 1$, the minimal value of \eqref{eq:degree of k>t} is $ -(l-s-k)(k-t) \geq 0 $ which appears only when $i=0$ for $0=\max (0,l-s-k) \le i \le \min (t,l-k)$.
Therefore, \eqref{eq:claim2} holds in this case.
\smallskip
Next, assume that $k<t$. We need the following recursive formula for $C(s,k)$:
\begin{equation} \label{eq:recursive for Bmstk}
	C(s+1,k) = \frac{1}{[s+1]} (q^{2k-l}[t+s-l+k+1]C(s,k)+[l-k]C(s,k+1)),
\end{equation}
which follows from \eqref{eq:equation of Csk} and the identity
\begin{equation*}
	\cmB(f^{(s+1)}) = \frac{1}{[s+1]} \cmB f \cdot \cmB (f^{(s)}).
\end{equation*}
We use induction on $k$ to verify \eqref{eq:claim2}. 
Note that one can check directly from \eqref{eq:def of Csk} and Lemma \ref{lem:Akito identity} that
\begin{equation*}
	C(s,0) = \frac{q^{t(s+t-l+1)}}{(q^{2t}-1)\cdots (q^{2}-1)}, \quad
	d(s,0) = t(s+t-l+1).
\end{equation*}
By \eqref{eq:recursive for Bmstk}, we have
$$C(s,k+1) = \frac{1}{[l-k]}([s+1]C(s+1,k)-q^{2k-l}[t+s-l+k+1]C(s,k)),$$
for $k<t$ and $s>l-t$.
Since the minimal degree of $\frac{[s+1]}{[l-k]}C(s+1,k)$ is 
$$Y=(l-k-s-1)+(s+1+t-l+1)(t-k),$$
and that of $\frac{q^{2k-l}[t+s-l+k+1]}{[l-k]}C(s,k)$ is
\begin{equation*}
	\begin{split}
		Z&=(l-k-(t+s-l+k+1))+(2k-l)+(s+t-l+1)(t-k)\\
		&=(s+t-l+1)(t-(k+1)),
	\end{split}
\end{equation*}
by induction hypothesis,
we have $Y-Z=2(t-k)>0$, that is, $Y>Z$.
Therefore, $d(s,k+1)=Z=(s+t-l+1)(t-(k+1))$.
We conclude that $C(s,k) \in q A_0$ if $k \ne t$ by \eqref{eq:claim2}, and $C(s,t) \in 1+ q A_0$,
which implies $\cmB(f^{(s)})E_t \equiv f^{(l-t)}v_1\otimes f^{(2t+s-l)}v_2 \,(\mathrm{mod} \,\,\, q\,L_1 \otimes L_2)$.

The proof of \eqref{eq:claim1} completes by {\em Case 1} and {\em Case 2}. 
\smallskip

\noindent {\em Step 2.} 
Consider the case when $U=U_q(\gl_n)$.
Let us check that $(L, B)$ in Theorem \ref{thm: tensor product rule for Boson} satisfies the conditions (C1)--(C5) and \eqref{eq:tensor product rule for Boson}.
Note that $(V_1 \ot V_2)_{\lambda} = \bigoplus_{\mu + \nu = \lambda} (V_1)_\mu \ot (V_2)_\nu$ for $\la \in P$ with respect to $\cmB$ \eqref{eq: comul on B}.
It is clear that (C1)--(C3) hold.
Let us consider (C4). 
Fix $i\in I$. Let $u_1 \in L_1$ and $u_2 \in L_2$ be weight vectors such that $u_1 + qL_1 =: b_1 \in B_1$ and $u_2 + qL_2 =: b_2 \in B_2$.
We may assume that $u_1 = f_i^{(m_1)}v_1$ with $e_i\, v_1 = 0 $ and $u_2 = f_i^{(m_2)}v_2$ with $e_i'(v_2) = 0$.
Let $U_i=\langle e_i,f_i,k_i,k_i^{-1} \rangle \subset U$. 
Then $U_i\cong U_q(\frak{sl}_2)$ and $B_q(U_i) \cong \langle e'_i,f_i,k_i,k_i^{-1} \rangle \subset B_q(U)$. 
Consider the $U_i$-submodule $V'_1$ of $V_1$ generated by $v_1$ and the $B_q(U_i)$-submodule $V'_2$ of $V_2$ generated by $v_2$.
If we let
\begin{gather*}
	L'_1 = \bigoplus_{k = 0}^{\langle \mathrm{wt}(v_1), \alpha^\vee_i \rangle} A_0 f^{(k)}_i v_1, \quad
	L'_2 = \bigoplus_{k \ge 0} A_0 f^{(k)}_i v_2, \\
	B'_1 = \{ \, f^{(k)}_i v_1 \!\!\! \pmod{q L'_1} \, | \, 0 \le k \le \langle \mathrm{wt}(v_1), \alpha^\vee_i \rangle \, \},  \quad 
	B'_2 = \{ \, f^{(k)}_i v_2 \!\!\! \pmod{q L'_2} \, | \, k \ge 0 \, \},
\end{gather*}
then $(L'_t ,B'_t)$ is the crystal base of $V'_t$ for $t=1,2$.
By {\em Step 1}, we have
\begin{gather*}
	\tilde{x}_i(u_1 \ot u_2) \in L'_1 \ot L'_2 \subset L_1 \ot L_2 = L, \\
	\tilde{x}_i (b_1 \ot b_2) \in B'_1 \ot B'_2 \sqcup \{0\} \subset (B_1 \ot B_2) \sqcup \{0\},
\end{gather*}
$(x=e,f)$ 
together with \eqref{eq:tensor product rule for Boson}.
Thus (C4) is satisfied.
The condition (C5) follows immediately from \eqref{eq:tensor product rule for Boson}.
\end{proof}

{\small
}

\end{document}